\newtheorem{thm}{Theorem}[section]
\newtheorem{prop}[thm]{Proposition}
\newtheorem{lem}[thm]{Lemma}
\newtheorem*{cor}{Corollary}
\newtheorem*{assumption}{Assumption}
\theoremstyle{definition}
\newtheorem{defn}[thm]{Definition}
\theoremstyle{remark}
\newtheorem*{rem}{Remark}
\numberwithin{equation}{section}
\begin{document}

\title[$C^*$-algebraic quantum groupoid]
{Construction of a $C^*$-algebraic quantum groupoid from a weak multiplier Hopf algebra}

\author{Byung-Jay Kahng}
\date{}
\address{Department of Mathematics and Statistics\\ Canisius College\\
Buffalo, NY 14208, USA}
\email{kahngb@canisius.edu}

\keywords{Weak Hopf algebra, Weak multiplier Hopf algebra, Separability idempotent, 
Locally compact quantum groupoid, Multiplier Hopf algebra, Locally compact quantum group}

\begin{abstract}
Van Daele and Wang developed a purely algebraic notion of {\em weak multiplier Hopf algebras\/}, which extends 
the notions of Hopf algebras, multiplier Hopf algebras, and weak Hopf algebras. With an additional requirement 
of an existence of left or right integrals, this framework provides a self-dual class of algebraic quantum groupoids.  
The aim of this paper is to show that from this purely algebraic data, with only some relatively minimal additional requirements 
(``quasi-invariance''), one can construct a {\em $C^*$-algebraic quantum groupoid of separable type\/}, recently 
defined by the author, with Van Daele. The $C^*$-algebraic quantum groupoid is represented as an operator algebra 
on the Hilbert space constructed from the left integral, and the comultiplication is determined by means of a certain 
multiplicative partial isometry $W$, which is no longer unitary.  In the last section (Appendix), we obtain some results 
in the purely algebraic setting, which have not appeared elsewhere.
\end{abstract}
\maketitle

\setcounter{section}{-1} 
\section{Introduction}

In a series of papers, Van Daele and Wang introduced a purely algebraic notion of {\em weak multiplier Hopf algebras\/} 
\cite{VDWangwha0}, \cite{VDWangwha1}, \cite{VDWangwha2}, \cite{VDWangwha3}.  In short, a weak multiplier 
Hopf algebra is a pair $(A,\Delta)$, where $A$ is a non-degenerate idempotent algebra and $\Delta$ is a comultiplication, 
satisfying some number of conditions.  They are natural generalizations of Hopf algebras (when $A$ is unital and 
$\Delta$ is non-degenerate), multiplier Hopf algebras (when $A$ is non-unital and $\Delta$ is non-degenerate), and 
weak Hopf algebras (when $A$ is unital, but $\Delta(1)\ne1\otimes1$). For a weak multiplier Hopf algebra, the algebra 
is not assumed to be unital and the comultiplication is no longer assumed to be non-degenerate.

Going further, in \cite{VDWangwha3}, they considered the situation in which a weak multiplier Hopf algebra possesses (a faithful 
family of) left/right invariant functionals (or ``integrals'').  While in a purely algebraic setting, this framework is known to include 
all compact and discrete quantum groups, as well as all weak Hopf algebras and finite quantum groupoids.  It is true that not all 
quantum groups/groupoids are contained in this framework, and some classical groups are left out.  Nevertheless, it is shown that 
a dual object can be constructed within this framework, thereby giving rise to a nice self-dual class of {\em algebraic quantum groupoids\/}. 

This purely algebraic framework provided a strong motivational basis for a $C^*$-algebraic framework of {\em locally compact 
quantum groupoids of separable type\/}, by Van Daele and the author \cite{BJKVD_qgroupoid1}, \cite{BJKVD_qgroupoid2}.  
There is a strong resemblance between the purely algebraic framework (= weak multiplier Hopf algebras) and the $C^*$-framework 
(= locally compact quantum groupoids of separable type).  Having said this, it has never been made explicit whether there is indeed 
a direct pathway from the purely algebraic setting to the $C^*$-setting without too many additional requirements. Naively speaking, 
this is about constructing a $C^*$-completion of the algebra.  But there are some subtle issues to consider.

The main purpose of this paper is to clarify that we can indeed carry out the construction of a $C^*$-algebraic locally 
compact quantum groupoid (in the sense of  \cite{BJKVD_qgroupoid1}, \cite{BJKVD_qgroupoid2}) out of a weak multiplier 
Hopf ${}^*$-algebra equipped with a faithful left (or right) integral.  Just as the work of Kustermans and Van Daele 
of similar nature (\cite{KuVD}), constructing a $C^*$-algebraic quantum group from a purely algebraic object of 
a multiplier Hopf algebra, made fundamental contributions to the development of the theory of locally compact 
quantum groups, we hope that the current work can help us understand better the theory of quantum groupoids.

The paper is organized as follows: In Section~\ref{sec1}, we give an overview of the purely algebraic framework 
of {\em weak multiplier Hopf algebras\/}.  This is needed, not only for motivational purposes but for clarifying the ingredients 
necessary for the construction of the $C^*$-algebraic quantum groupoid in what follows.  The definition and the properties 
of weak multiplier Hopf algebras are given in this section, though most of the detailed proofs are skipped (Instead, 
the relevant theorems elsewhere are referred to.)   We do not, however,  describe the original definition (as in 
\cite{VDWangwha1}).  Instead, we take a more recent but equivalent approach, as in \cite{BJKVD_LSthm}: 
In that paper, it is shown that a regular weak multiplier bialgebra (in the sense of \cite{BohmGomezLopez}) becomes 
a weak multiplier Hopf algebra, if it has sufficient number of left and right integrals.  Being more recent is one thing, 
but actually, for our purposes of working with the integrals and studying the duality, this characterization turns out to be 
more convenient.

One aspect of note is that we get to consider the case of a weak multiplier Hopf ${}^*$-algebra here.  While the discussion 
about the case with an involution has appeared in the original literatures on weak multiplier Hopf algebras and separability 
idempotents, typically they have appeared in a scattered way. 

In Section~\ref{sec2}, from the purely algebraic data given above, we construct the ``base'' $C^*$-algebras $B$ and $C$, 
as well as their multiplier algebras $M(B)$ and $M(C)$.  They are essentially the source algebra and the target algebra.  
They are equipped with certain KMS-weights $\nu$ and $\mu$.

The construction of the $C^*$-algebra $A$ and the comultiplication $\Delta$ is carried out in Section~\ref{sec3}, whose 
representation is given in terms of a certain partial isometry $W$. The construction of the canonical idempotent 
$E\in M(A\otimes A)$ is also given in this section.

In Section~\ref{sec4}, we discuss the polar decomposition of the antipode map, which allows it to be properly defined at the 
operator algebra level.  We also collect some relevant technical results. 

The aim of Section~\ref{sec5}, is to carry out a construction of a left invariant weight and a right invariant weight.  For technical 
reasons, the weights are considered at the von Neumann algebra level first, before the $C^*$-algebraic weights are constructed. 
To make things work, we we require a certain {\em quasi-invariance condition \/} at the ${}^*$-algebra level.  This allows an 
existence of a Radon--Nikodym relationship between our two weights in terms of a certain modular operator, which plays a central 
role.  Eventually, we are able to construct two KMS weights, $\varphi$ and $\psi$, on $A$, satisfying the left invariance condition 
and the right right invariant condition, respectively.

In this way, in Section~\ref{sec6}, we can verify that we have successfully constructed a $C^*$-algebraic quantum groupoid, that fits 
well in the $C^*$-algebraic framework developed in \cite{BJKVD_qgroupoid1}, \cite{BJKVD_qgroupoid2}, as expected.

We chose not to pursue the construction of the dual object here, which should be more or less similar.  The reason for not doing this 
is because the $C^*$-algebraic framework for the duality of the locally compact quantum groupoids of separable type is still in the works 
\cite{BJKVD_qgroupoid3}.  In that paper, we plan to give a clarification of the duality picture in the $C^*$-algebraic framework (for a related 
work, refer also to \cite{BJK_mpi}).  We will postpone to a future occasion to verify the expected result that the $C^*$-algebraic counterpart 
of the algebraic dual $(\widehat{\mathcal A},\widehat{\Delta})$ is indeed isomorphic to the $C^*$-algebraic dual of $(A,\Delta)$ obtained here.

In Appendix (Section~\ref{appx}), we gathered some results in the purely algebraic framework regarding the modular element $\delta$. 
Even though this is done in the purely algebraic setting, the author could not find a suitable reference, and some of the results here 
may be new.  As such, all proofs are given for the results in the Appendix section.

\bigskip

\noindent{\sc Acknowledgments:}
The author wishes to thank Alfons Van Daele (Leuven), who inspired him to pursue research projects on weak multiplier Hopf algebras 
and locally compact quantum groupoids. The author is always indebted to his constant support and guidance. The initial seed for 
the current work arose from talking with Thomas Timmermann (M{\"u}nster), while both of us were visiting Albert Sheu (Kansas) in 2017, 
who has always been supportive of the author's projects.

\section{The algebraic framework: Weak multiplier Hopf algebra with integrals}\label{sec1}

\subsection{Preliminaries}\label{sub1.1}

As indicated in Introduction, we will primarily follow the description in \cite{BJKVD_LSthm}.  We will only consider associative algebras 
over $\mathbb{C}$.  We require that the algebras are non-degenerate, which means that the product on an algebra is non-degenerate 
as a bilinear form.  So ${\mathcal A}$ is a non-degenerate algebra, if $[ab=0,\forall b\in{\mathcal A}]\,\Rightarrow\,[a=0]$; also 
$[ba=0,\forall b\in{\mathcal A}]\,\Rightarrow\,[a=0]$.  We also require that the algebras are idempotent, written ${\mathcal A}^2={\mathcal A}$, 
meaning that every element in the algebra can be written as a sum of products of two elements.  It is evident that if an algebra ${\mathcal A}$ 
is unital or has local units, it is automatically non-degenerate and idempotent.  Typically, however, we do not expect our algebras to be unital. 
We will denote by ${\mathcal A}^*$ the dual vector space of ${\mathcal A}$, consisting of the linear functionals on ${\mathcal A}$.

We will mostly consider ${}^*$-algebras, equipped with an involution.  For a non-degenerate ${}^*$-algebra ${\mathcal A}$, we can define 
its multiplier algebra $M({\mathcal A})$, which is a unital ${}^*$-algebra containing ${\mathcal A}$ as an essential self-adjoint ideal.  It is 
the largest such, and is unique.  It can be characterized in terms of double centralizers.  If ${\mathcal A}$ is unital, then we have 
$M({\mathcal A})={\mathcal A}$. We can also consider ${\mathcal A}\odot{\mathcal A}$, the algebraic tensor product, and its multiplier 
algebra $M({\mathcal A}\odot{\mathcal A})$.

\subsection{Comultiplication}\label{sub1.2}

Let ${\mathcal A}$ be a non-degenerate idempotent ${}^*$-algebra. By a {\em comultiplication\/} on ${\mathcal A}$, 
we mean a ${}^*$-homomorphism $\Delta$ from ${\mathcal A}$ into $M({\mathcal A}\odot{\mathcal A})$ such that 
\begin{equation}\label{(comult1)}
(\Delta a)(1\otimes b)\in{\mathcal A}\odot{\mathcal A}, \quad {\text { and }} 
\quad (c\otimes1)(\Delta a)\in{\mathcal A}\odot{\mathcal A}, \quad {\text { for all 
$a,b,c\in{\mathcal A}$,}}
\end{equation}
which is also required to satisfy the following ``weak coassociativity'' condition:
\begin{equation}\label{(weakcoassociativity)}
(c\otimes1\otimes1)(\Delta\otimes\operatorname{id})\bigl((\Delta a)(1\otimes b)\bigr)
=(\operatorname{id}\otimes\Delta)\bigl((c\otimes1)(\Delta a)\bigr)(1\otimes1\otimes b), 
\quad {\text { for $a,b,c\in{\mathcal A}$}}.
\end{equation}
Note that condition~\eqref{(comult1)} is needed to formulate the weak coassociativity \eqref{(weakcoassociativity)}.

As ${\mathcal A}$ is a ${}^*$-algebra and $\Delta$ is a ${}^*$-homomorphism, it automatically 
follows from condition~\eqref{(comult1)} that we also have:
\begin{equation}\label{(comult2)}
(\Delta a)(c\otimes 1)\in{\mathcal A}\odot{\mathcal A}, \quad {\text { and }} 
\quad (1\otimes b)(\Delta a)\in{\mathcal A}\odot{\mathcal A}, \quad 
{\text { for all $a,b,c\in{\mathcal A}$.}}
\end{equation}
There is also another version of the weak coassociativity, as follows:
\begin{equation}\label{(weakcoassociativity_alt)}
(\Delta\otimes\operatorname{id})\bigl((1\otimes b)(\Delta a)\bigr)(c\otimes1\otimes 1)
=(1\otimes1\otimes b)(\operatorname{id}\otimes\Delta)\bigl((\Delta a)(c\otimes 1)\bigr), 
\quad {\text { for $a,b,c\in{\mathcal A}$}}.
\end{equation}

\begin{rem} 
Having $({\mathcal A},\Delta)$ further satisfy \eqref{(comult2)} and \eqref{(weakcoassociativity_alt)} means that 
our comultiplication is ``regular'', in the sense of \cite{VDWangwha1} (see Definition~1.1 of that paper).  
\end{rem}

The comultiplication is also assumed to be ``full''.  This means that the left and the right legs of $\Delta({\mathcal A})$ 
are all of ${\mathcal A}$ (see \cite{VDWangwha0}, \cite{VDWangwha1}, also see \cite{BohmGomezLopez}). 
A way to characterize the fullness of $\Delta$ is as follows:
$$
\operatorname{span}\bigl\{(\operatorname{id}\otimes\omega)((\Delta a)(1\otimes b)):
a,b\in{\mathcal A},\omega\in{\mathcal A}^*\bigr\}={\mathcal A},
$$
$$
\operatorname{span}\bigl\{(\omega\otimes\operatorname{id})((b\otimes1)(\Delta a)):
a,b\in{\mathcal A},\omega\in{\mathcal A}^*\bigr\}={\mathcal A}.
$$

For $({\mathcal A},\Delta)$, we have the following result:

\begin{lem} \label{canonicalE} 
Suppose there exists a self-adjoint idempotent element $E\in M({\mathcal A}\odot{\mathcal A})$ 
such that 
$$
\Delta({\mathcal A})({\mathcal A}\odot{\mathcal A})=E({\mathcal A}\odot{\mathcal A}), 
\quad {\text { and }} \quad
({\mathcal A}\odot{\mathcal A})\Delta({\mathcal A})=({\mathcal A}\odot{\mathcal A})E.
$$
Then this idempotent is unique.  It is the smallest idempotent $E\in M({\mathcal A}\odot{\mathcal A})$ satisfying
$$
E(\Delta a)=\Delta a, \quad (\Delta a)E=\Delta a,\quad \forall a\in{\mathcal A}.
$$
\end{lem}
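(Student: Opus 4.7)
The plan is to show first that the hypothesized $E$ automatically satisfies $E(\Delta a) = \Delta a$ and $(\Delta a) E = \Delta a$ for all $a \in {\mathcal A}$, then to establish the ``smallest'' property, from which uniqueness is immediate.

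First, I would check that $E$ absorbs $\Delta({\mathcal A})$ on both sides. Given any $a \in {\mathcal A}$ and $y \in {\mathcal A} \odot {\mathcal A}$, the element $(\Delta a) y$ lies in $\Delta({\mathcal A})({\mathcal A} \odot {\mathcal A}) = E({\mathcal A} \odot {\mathcal A})$, so it can be written as $Ez$ for some $z \in {\mathcal A} \odot {\mathcal A}$. Idempotency of $E$ then yields $E(\Delta a) y = E^2 z = Ez = (\Delta a) y$. Hence $[E(\Delta a) - \Delta a]\, y = 0$ for every $y \in {\mathcal A} \odot {\mathcal A}$, and non-degeneracy of ${\mathcal A} \odot {\mathcal A}$ (applied at the multiplier level) forces $E(\Delta a) = \Delta a$. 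The identity $(\Delta a) E = \Delta a$ follows symmetrically from the right-sided hypothesis $({\mathcal A} \odot {\mathcal A}) \Delta({\mathcal A}) = ({\mathcal A} \odot {\mathcal A}) E$.

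Next, for minimality, suppose $F \in M({\mathcal A} \odot {\mathcal A})$ is any idempotent satisfying $F(\Delta a) = \Delta a$ and $(\Delta a) F = \Delta a$ for all $a$. Then $F$ fixes every element of $\Delta({\mathcal A})({\mathcal A} \odot {\mathcal A})$ from the left, and since this ideal equals $E({\mathcal A} \odot {\mathcal A})$, we get $FEz = Ez$ for all $z \in {\mathcal A} \odot {\mathcal A}$, hence $FE = E$ by non-degeneracy. A symmetric argument with the second ideal identity gives $EF = E$. In the natural partial order on idempotents, namely $E \le F$ iff $EF = FE = E$, this says $E$ lies below any such $F$. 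Uniqueness is then automatic: if $E'$ is a second self-adjoint idempotent satisfying the ideal identities of the hypothesis, the first step shows $E'$ absorbs $\Delta({\mathcal A})$, so by minimality $EE' = E'E = E$; swapping the roles of $E$ and $E'$ gives $EE' = E'E = E'$, whence $E = E'$.

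There is no substantial obstacle. The only place where care is needed is the first step, where one passes from a statement about ideals to a pointwise absorption identity on $\Delta a$, and this pass is mediated by non-degeneracy of ${\mathcal A} \odot {\mathcal A}$ to go from $Xy = Yy$ for all $y$ to $X = Y$ in the multiplier algebra. Notably, self-adjointness of $E$ plays no role in the uniqueness or minimality arguments; it is part of the data being characterized, but the conclusion rests entirely on the bimodule description of $\Delta({\mathcal A})({\mathcal A} \odot {\mathcal A})$ and $({\mathcal A} \odot {\mathcal A}) \Delta({\mathcal A})$.
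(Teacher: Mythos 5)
Your argument is correct, and since the paper's own ``proof'' of this lemma is only a citation to Lemmas~3.3 and 3.5 of the Van Daele--Wang preliminaries paper, your route (first deducing the absorption identities $E(\Delta a)=\Delta a=(\Delta a)E$ from idempotency plus the two ideal equalities, then getting $FE=EF=E$ for any absorbing idempotent $F$, with non-degeneracy of ${\mathcal A}\odot{\mathcal A}$ used to pass from $Xy=Yy$ for all $y$ to $X=Y$ in $M({\mathcal A}\odot{\mathcal A})$) is exactly the standard argument behind that reference. Your closing observation that self-adjointness of $E$ is not used in the minimality or uniqueness steps is also accurate: in this setting self-adjointness is an additional requirement coming from the ${}^*$-structure, not an ingredient of the characterization.
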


\begin{proof}

See Lemmas~3.3 and 3.5 of \cite{VDWangwha0}.

\end{proof}

\begin{rem}
The requirement to have $E$ self-adjoint is natural, because we are working with ${}^*$-algebras. The existence of the idempotent 
$E\in M({\mathcal A}\odot{\mathcal A})$ as above is referred to as $\Delta$ being {\em weakly non-degenerate\/}.  Note that when 
$E=1\otimes1$, we would indeed have the non-degeneracy of the comultiplication.  Moreover, if such an idempotent $E$ exists 
(called the {\em canonical idempotent\/}), then it can be shown that $\Delta$ has a unique extension to a ${}^*$-homomorphism 
$\widetilde{\Delta}:M({\mathcal A})\to M({\mathcal A}\odot{\mathcal A})$, such that 
$\widetilde{\Delta}(m)=E\widetilde{\Delta}(m)=\widetilde{\Delta}(m)E$, for all $m\in M({\mathcal A})$. 
For proof of this result and more details, see Appendix of \cite{VDWangwha0}. For convenience, we will just 
denote the extension map also by $\Delta$.
\end{rem}

It is also the case that we can make sense of the maps $\Delta\otimes\operatorname{id}$ and $\operatorname{id}\otimes\Delta$ 
as ${}^*$-homomorphisms naturally extended to $M({\mathcal A}\odot{\mathcal A})$, such that 
$$
(\Delta\otimes\operatorname{id})(m)=(E\otimes1)\bigl((\Delta\otimes\operatorname{id})(m)\bigr)
=\bigl((\Delta\otimes\operatorname{id})(m)\bigr)(E\otimes1),\quad\forall m\in M({\mathcal A}
\odot{\mathcal A}),
$$
and similarly for $\operatorname{id}\otimes\Delta$.  These results mean that when extended to the multiplier 
algebra level, we have $\Delta(1)=E$, and that $(\Delta\otimes\operatorname{id})(1\otimes1)=E\otimes1$ and 
$(\operatorname{id}\otimes\Delta)(1\otimes1)=1\otimes E$.

As a consequence of the weak coassociativity, namely Equations~\eqref{(weakcoassociativity)} and 
\eqref{(weakcoassociativity_alt)}, now knowing that the maps $\Delta\otimes\operatorname{id}$ and 
$\operatorname{id}\otimes\Delta$ are extended, we obtain the following {\em coassociativity\/} property:
\begin{equation}
(\Delta\otimes\operatorname{id})(\Delta a)=(\operatorname{id}\otimes\Delta)(\Delta a),
\quad \forall a\in{\mathcal A}.
\end{equation}
See again Appendix of \cite{VDWangwha0}.  

We will require one more condition on $\Delta$, which is also a part of the axioms for a weak multiplier Hopf 
algebra (see Definition~1.14 of \cite{VDWangwha1}). The following condition is referred to as the {\em weak 
comultiplicativity of the unit\/}:
\begin{equation}\label{(weakcomultiplicativity)}
(\operatorname{id}\otimes\Delta)(E)=(E\otimes1)(1\otimes E)=(1\otimes E)(E\otimes1).
\end{equation}
This condition already appeared in the theory of weak Hopf algebras (see Definition~2.1 in \cite{BNSwha1}).

\subsection{Separability idempotent}\label{sub1.3}

The canonical idempotent is further required to be a {\em separability idempotent\/}, in the sense of \cite{VDsepid}. 
See also Appendix~B of \cite{BJKVD_LSthm}.  In the below is a short summary. 

Suppose ${\mathcal B}$ and ${\mathcal C}$ are non-degenerate ${}^*$-algebras.  Consider a self-adjoint 
idempotent $E\in M({\mathcal B}\odot{\mathcal C})$ such that 
$$
E(1\otimes c)\in{\mathcal B}\odot{\mathcal C}, \ (b\otimes1)E\in{\mathcal B}\odot{\mathcal C}, 
\quad \forall b\in{\mathcal B},\forall c\in{\mathcal C}.
$$
As $E$ is self-adjoint, we also have $(1\otimes c)E\in{\mathcal B}\odot{\mathcal C}$, $E(b\otimes1)\in{\mathcal B}
\odot{\mathcal C}$, for $b\in{\mathcal B},c\in{\mathcal C}$.  Assume also that $E$ is ``full'', which means that 
its left and the right legs are all of ${\mathcal B}$ and ${\mathcal C}$, respectively. Or equivalently, we have 
$$
\operatorname{span}\bigl\{(\operatorname{id}\otimes\omega)(E(1\otimes c)):c\in{\mathcal C},
\omega\in{\mathcal C}^*\bigr\}={\mathcal B},
$$
$$
\operatorname{span}\bigl\{(\omega\otimes\operatorname{id})((b\otimes1)E):b\in{\mathcal B},
\omega\in{\mathcal B}^*\bigr\}={\mathcal C}.
$$

Then, it can be shown that $E\in M({\mathcal B}\odot{\mathcal C})$ automatically becomes a {\em regular\/} 
separability idempotent, which means that we have: 
\begin{equation}
E({\mathcal B}\odot1)=E(1\odot{\mathcal C}) \quad {\text { and }} \quad ({\mathcal B}\odot1)E
=(1\odot{\mathcal C})E.
\end{equation}
See Proposition~3.7 of \cite{VDsepid}.  The regularity condition implies ${\mathcal B}$ and ${\mathcal C}$ have 
local units (see \cite{VDsepid},\,v1). The other aspect of the regularity of $E$ is that there exist two anti-isomorphisms 
$S_{\mathcal B}:{\mathcal B}\to{\mathcal C}$ and $S_{\mathcal C}:{\mathcal C}\to{\mathcal B}$, characterized by 
\begin{equation}\label{(S_Bmap)}
E(b\otimes1)=E\bigl(1\otimes S_{\mathcal B}(b)\bigr), \quad b\in{\mathcal B},
\end{equation}
\begin{equation}\label{(S_Cmap)}
(1\otimes c)E=\bigl(S_{\mathcal C}(c)\otimes1\bigr)E, \quad c\in{\mathcal C}.
\end{equation}
It can be also shown that $(S_{\mathcal B}\otimes S_{\mathcal C})(E)=\varsigma E$, where $\varsigma$ is the flip 
map between ${\mathcal B}\odot{\mathcal C}$ and ${\mathcal C}\odot{\mathcal B}$. The maps $S_{\mathcal B}$ 
and $S_{\mathcal C}$ are in general not involutive, but they satisfy the following relations:
\begin{equation}
S_{\mathcal C}\bigl(S_{\mathcal B}(b)^*\bigr)^*=b \quad {\text { and }} \quad
S_{\mathcal B}\bigl(S_{\mathcal C}(c)^*\bigr)^*=c, \quad 
\forall b\in{\mathcal B},\forall c\in{\mathcal C}.
\end{equation}

The existence of such a self-adjoint separability idempotent element $E\in M({\mathcal B}\odot{\mathcal C})$ restricts 
the possible structure of the algebras ${\mathcal B}$ and ${\mathcal C}$, which are typically direct sums of 
finite-dimensional matrix algebras.  We will not go too deep into this discussion here.  See section~4 of \cite{VDsepid}.

By the general theory of regular separability idempotents \cite{VDsepid}, we have the existence of the following 
{\em distinguished linear functionals\/}, namely $\nu$ on ${\mathcal B}$ and $\mu$ on ${\mathcal C}$, such that 
$$
(\nu\otimes\operatorname{id})(E)=1, \quad {\text { and }} \quad 
(\operatorname{id}\otimes\mu)(E)=1.
$$
[The notations used in \cite{VDsepid} for the distinguished linear functionals are actually $\varphi_{\mathcal B}$ 
and $\varphi_{\mathcal C}$, but we are denoting them as $\nu$ and $\mu$ here, mainly to avoid a future confusion 
with the total linear functional $\varphi$ on ${\mathcal A}$.]

The distinguished functionals $\nu$ and $\mu$ are uniquely determined and are faithful.  Being a faithful functional 
means that we have $[\nu(bk)=0,\forall k\in{\mathcal B}]\,\Rightarrow\,[b=0]$, and similarly for $\mu$.  Meanwhile, with the 
algebras ${\mathcal B}$ and ${\mathcal C}$ being ${}^*$-algebras, and $E$ being self-adjoint, it also turns out that 
$\nu$ and $\mu$ become positive linear functionals (see section~4 of \cite{VDsepid}).

The general theory also tells us that the functional $\nu$ is equipped with a KMS-type automorphism $\sigma^{\nu}$ 
on ${\mathcal B}$, such that $\nu\circ\sigma^{\nu}=\nu$ and 
\begin{equation}\label{(nu_weakKMS)}
\nu(bb')=\nu\bigl(b'\sigma^{\nu}(b)\bigr), \quad \forall b,b'\in{\mathcal B},
\end{equation}
given by $\sigma^{\nu}={S_{\mathcal B}}^{-1}\circ{S_{\mathcal C}}^{-1}$.  As for $\mu$ also, there is a KMS-type 
automorphism $\sigma^{\mu}$ on ${\mathcal C}$, such that $\mu\circ\sigma^{\mu}=\mu$ and 
\begin{equation}\label{(mu_weakKMS)}
\mu(cc')=\mu\bigl(c'\sigma^{\mu}(c)\bigr), \quad \forall c,c'\in{\mathcal C},
\end{equation}
given by $\sigma^{\mu}=S_{\mathcal B}\circ S_{\mathcal C}$. The existence of such automorphisms is referred to 
as the {\em weak KMS-property\/}.  See Proposition~2.8 of \cite{VDsepid} for more details. 

In addition, we can show that 
$$
\mu=\nu\circ S_{\mathcal C} \quad {\text { and }} \quad \nu=\mu\circ S_{\mathcal B},
$$
by using the result $(S_{\mathcal B}\otimes S_{\mathcal C})E=\varsigma E$ and the uniqueness of the functionals.

Here is a Radon--Nikodym type theorem on the functionals on the algebras ${\mathcal B}$ and ${\mathcal C}$:

\begin{prop} \label{sepid_RN}
\begin{enumerate}
\item Consider the distinguished linear functional $\nu$ on ${\mathcal B}$. For any other linear functional $g$ on 
${\mathcal B}$, there is a unique element $y\in M({\mathcal B})$ such that $g(b)=\nu(by)$ for all $b\in{\mathcal B}$. 
Moreover, the functional $g$ is faithful if and only if $y$ is invertible in $M({\mathcal B})$.

\item Similar results hold true for linear functionals on ${\mathcal C}$. 
\end{enumerate}
\end{prop}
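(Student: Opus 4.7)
The plan is to reduce the statement, via the structural decomposition of $\mathcal{B}$, to elementary finite-dimensional linear algebra; part~(2) then follows by the same argument applied to $\mathcal{C}$ and $\mu$. By the classification of regular separability idempotents (Section~4 of \cite{VDsepid}), $\mathcal{B}$ decomposes as a direct sum $\mathcal{B} = \bigoplus_{i \in I} B_i$ of finite-dimensional matrix algebras $B_i$ with units $e_i$, and $M(\mathcal{B}) \cong \prod_i B_i$. Writing $\nu_i := \nu|_{B_i}$ and $g_i := g|_{B_i}$, each $\nu_i$ is a faithful positive functional on $B_i$, representable as $\nu_i(x) = \operatorname{tr}(xh_i)$ for a unique positive invertible $h_i \in B_i$.

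For existence, I would work blockwise. Because $B_i$ is finite-dimensional and $\nu_i$ is non-degenerate, the pairing $(x,z) \mapsto \nu_i(xz)$ on $B_i$ is non-degenerate, so every linear functional on $B_i$---in particular $g_i$---is uniquely of the form $x \mapsto \nu_i(x y_i)$ for some $y_i \in B_i$. Assembling these gives $y := (y_i)_i \in \prod_i B_i = M(\mathcal{B})$, and since any $b \in \mathcal{B}$ has finite support in the decomposition, $\nu(by) = \sum_i \nu_i(b_i y_i) = \sum_i g(b_i) = g(b)$. For uniqueness, if $y, y' \in M(\mathcal{B})$ both represent $g$, then $z := y - y'$ satisfies $\nu(az) = 0$ for all $a \in \mathcal{B}$; specializing to $a = bc$ gives $\nu(b(cz)) = 0$ for every $b$, so $cz = 0$ by faithfulness of $\nu$, and hence $z = 0$ by non-degeneracy of $\mathcal{B}$ as an essential ideal in $M(\mathcal{B})$.

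For the faithful/invertible equivalence, $y = (y_i)$ is invertible in $M(\mathcal{B}) = \prod_i B_i$ exactly when each $y_i$ is invertible in $B_i$, and the condition $[g(bk) = 0,\,\forall k \in \mathcal{B}] \Rightarrow b = 0$ decouples into the analogous condition on each block, so it suffices to check the equivalence on a single $B_i$. There, $g_i(xk) = \operatorname{tr}(xk y_i h_i) = \operatorname{tr}(k y_i h_i x)$ by cyclicity of the trace, so non-degeneracy of the trace pairing reduces the faithfulness of $g_i$ to triviality of the right kernel of $y_i h_i$, equivalently (since $h_i$ is invertible) to invertibility of $y_i$; the other direction of faithfulness is symmetric.

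The only real obstacle is the invocation of the structural decomposition of $\mathcal{B}$, after which everything reduces to standard facts about traces on matrix algebras. A proof that avoids this structure theorem would have to construct the multiplier $y$ intrinsically---defining $L_y(c) := yc$ and $R_y(c) := cy$ in $\mathcal{B}$ via the faithfulness of $\nu$ applied to the functionals $a \mapsto g(ca)$ and $a \mapsto g(ac)$---and then verify the multiplier compatibility $aL_y(c) = R_y(a)c$ using the KMS property of $\nu$, which would be notably more delicate.
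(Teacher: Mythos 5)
Your argument is essentially correct, but it takes a genuinely different route from the paper. The paper gives no self-contained proof at all: it simply notes that the statement is a consequence of the regularity of $E$ and cites Proposition~1.2 of \cite{VD_aqg}, which is a more general Radon--Nikodym result proved intrinsically from the separability idempotent (roughly along the lines you sketch in your closing paragraph: one builds the multiplier $y$ from the functionals $a\mapsto g(ca)$, $a\mapsto g(ac)$ using faithfulness of $\nu$, and checks the double-centralizer compatibility with the weak KMS property). You instead invoke the structural classification from Section~4 of \cite{VDsepid}, decompose ${\mathcal B}=\bigoplus_i B_i$ into finite-dimensional matrix blocks with $M({\mathcal B})=\prod_i B_i$, and reduce everything to the non-degeneracy of the trace pairing; the blockwise existence, the uniqueness via faithfulness and non-degeneracy, and the decoupling of both faithfulness and invertibility across blocks are all fine. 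The trade-off is scope: your reduction is elementary and transparent, but it is only available in the involutive situation where $E$ is self-adjoint and $\nu$ is positive, so that the direct-sum-of-matrix-algebras structure is actually a theorem (note the paper itself hedges with ``typically,'' so you should cite the precise statement of the involutive case rather than treat it as automatic), whereas the route through \cite{VD_aqg} uses only regularity of $E$ and hence covers the non-involutive setting as well, which is why the paper calls it more general. If you want a proof independent of the classification, the intrinsic construction you describe at the end is exactly what you would need to carry out.
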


\begin{proof}
This is a consequence of the regularity property of $E$.  See Proposition~1.2 of \cite{VD_aqg}, which is actually 
more general than the result given here.
\end{proof}

As a consequence of Proposition~\ref{sepid_RN} above, we can show that any faithful linear functional on 
${\mathcal B}$ (or on ${\mathcal C}$) is equipped with a KMS-type automorphism:

\begin{prop} \label{sepid_KMS}
\begin{enumerate}
\item Any faithful linear functional on ${\mathcal B}$ has the weak KMS property.
\item Any faithful linear functional on ${\mathcal C}$ has the weak KMS property.
\end{enumerate}
\end{prop}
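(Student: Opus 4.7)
The plan is to reduce everything to the weak KMS property of the distinguished functional $\nu$ on $\mathcal{B}$ (and $\mu$ on $\mathcal{C}$), using the Radon--Nikodym representation from Proposition~\ref{sepid_RN}.

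Let $g$ be a faithful linear functional on $\mathcal{B}$. By Proposition~\ref{sepid_RN}(1), there exists a unique invertible element $y \in M(\mathcal{B})$ such that $g(b) = \nu(by)$ for all $b \in \mathcal{B}$. The automorphism $\sigma^{\nu}$ of $\mathcal{B}$ extends uniquely to an automorphism of $M(\mathcal{B})$ (still denoted $\sigma^{\nu}$), so it makes sense to define
\begin{equation*}
\sigma^{g}(b) := y\,\sigma^{\nu}(b)\,y^{-1}, \quad b \in \mathcal{B}.
\end{equation*}
This is an automorphism of $\mathcal{B}$: it is a composition of the inner automorphism $\operatorname{Ad}(y)$ of $M(\mathcal{B})$ with $\sigma^{\nu}$, and since $\sigma^{\nu}(\mathcal{B}) = \mathcal{B}$ while $\operatorname{Ad}(y)$ sends $\mathcal{B}$ into $M(\mathcal{B})$, I would need to argue that in fact $\sigma^{g}(\mathcal{B}) = \mathcal{B}$. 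This follows \emph{a posteriori} from the twisted trace identity verified below, but a more direct route is to note that $y$ is a multiplier and that the candidate formula is forced by the calculation to come.

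The key verification is the twisted trace identity. For $b, b' \in \mathcal{B}$, using the weak KMS property of $\nu$ with respect to $\sigma^{\nu}$:
\begin{equation*}
g(bb') = \nu(bb'y) = \nu\bigl(b'y\,\sigma^{\nu}(b)\bigr) = \nu\bigl(b' \cdot y\sigma^{\nu}(b)y^{-1} \cdot y\bigr) = \nu\bigl(b'\sigma^{g}(b)\,y\bigr) = g\bigl(b'\sigma^{g}(b)\bigr).
\end{equation*}
For the invariance $g \circ \sigma^{g} = g$, one computes
\begin{equation*}
g\bigl(\sigma^{g}(b)\bigr) = \nu\bigl(y\sigma^{\nu}(b)y^{-1} \cdot y\bigr) = \nu\bigl(y\sigma^{\nu}(b)\bigr) = \nu\bigl(\sigma^{\nu}(b)\sigma^{\nu}(y)\bigr) = \nu\bigl(\sigma^{\nu}(by)\bigr) = \nu(by) = g(b),
\end{equation*}
where the third equality uses the weak KMS property of $\nu$ and the fifth uses $\nu \circ \sigma^{\nu} = \nu$. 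Faithfulness of $\nu$ together with the twisted trace identity gives uniqueness of $\sigma^{g}$.

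The proof of (2) is completely symmetric: replace $\nu$ by $\mu$, $\sigma^{\nu}$ by $\sigma^{\mu}$, and invoke Proposition~\ref{sepid_RN}(2). The main subtlety I would be careful about is ensuring $\sigma^{g}$ preserves $\mathcal{B}$ (rather than just $M(\mathcal{B})$); this is the only point where one really uses more than formal algebra, and it is handled by the fact that $y \in M(\mathcal{B})$ and the local units coming from the regularity of the separability idempotent. Apart from that, the argument is a clean Radon--Nikodym perturbation of the existing KMS automorphism.
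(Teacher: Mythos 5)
Your proposal is correct and follows essentially the same route as the paper: represent the faithful functional as $g=\nu(\,\cdot\,y)$ with $y$ invertible in $M({\mathcal B})$ via Proposition~\ref{sepid_RN}, and verify that $b\mapsto y\sigma^{\nu}(b)y^{-1}$ is the required KMS-type automorphism by the same one-line computation using the weak KMS property of $\nu$. The only remark is that the point you flag as a subtlety is immediate: since ${\mathcal B}$ is an ideal in $M({\mathcal B})$ and $\sigma^{\nu}({\mathcal B})={\mathcal B}$, one has $y\sigma^{\nu}(b)y^{-1}\in{\mathcal B}$ automatically, with the inverse map $b\mapsto(\sigma^{\nu})^{-1}(y^{-1}by)$ showing bijectivity, so no appeal to local units is needed.
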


\begin{proof}
See Proposition~1.3 of \cite{VD_aqg}.  The case for ${\mathcal B}$ is below.  The case for ${\mathcal C}$ is similar.

If $f$ is a faithful linear functional on ${\mathcal B}$, then by Proposition~\ref{sepid_RN} we can write it as 
$f(\ ):b\mapsto\nu(by)$, where $y$ is invertible in $M({\mathcal B})$.  Then for any $b,b'\in{\mathcal B}$, we have:
$$
f(bb')=\nu(bb'y)=\nu\bigl(b'y\sigma^{\nu}(b)\bigr)=f\bigl(b'y\sigma^{\nu}(b)y^{-1}\bigr).
$$
This shows that $b\mapsto y\sigma^{\nu}(b)y^{-1}$ determines a KMS-type automorphism for $f$. 
\end{proof}

\subsection{Algebraic quantum groupoid}\label{sub1.4}

Let ${\mathcal A}$ be a non-degenerate idempotent ${}^*$-algebra, and $\Delta$ a regular, full comultiplication 
that is weakly non-degenerate (See \S\ref{sub1.2}). This means the existence of a unique canonical idempotent 
element $E\in M({\mathcal A}\odot{\mathcal A})$, which is also assumed to satisfy the weak comultiplicativity 
of the unit, or Equation~\eqref{(weakcomultiplicativity)}.

We will further assume that $E$ is a regular, full separability idempotent (See \S\ref{sub1.3}), such that there 
exist two ${}^*$-subalgebras ${\mathcal B}$ and ${\mathcal C}$ of $M({\mathcal A})$ sitting in a non-degenerate 
way, and we have $E\in M({\mathcal B}\odot{\mathcal C})$.  When we say the subalgebras ${\mathcal B}$ and 
${\mathcal C}$ sit in a non-degenerate way inside $M({\mathcal A})$, it means that ${\mathcal B}{\mathcal A}
={\mathcal A}{\mathcal B}={\mathcal A}$, and ${\mathcal C}{\mathcal A}={\mathcal A}{\mathcal C}={\mathcal A}$. 
Then it is easy to see that ${\mathcal B}$ and ${\mathcal C}$ are non-generate algebras, and that $M({\mathcal B})$ 
and $M({\mathcal C})$ can be regarded as subalgebras in $M({\mathcal A})$ as well. 

\begin{rem}
It turns out that the subalgebras ${\mathcal B}$ and ${\mathcal C}$ are completely determined by the conditions 
given above. See Proposition~3.1 of \cite{BJKVD_LSthm}.  As such, it would be all right to just say that ``$E$ is 
a regular separability idempotent'', without having to specify ${\mathcal B}$ and ${\mathcal C}$ explicitly.
\end{rem}  

On $M({\mathcal B})$ and $M({\mathcal C})$, regarded as subalgebras in $M({\mathcal A})$, it turns out that 
the comultiplication acts the following way (which tuns out to characterize the subalgebras):

\begin{prop}\label{DeltaonBandC}
If $x\in M({\mathcal B})$, then we have $\Delta x=E(1\otimes x)=(1\otimes x)E$.

If $y\in M({\mathcal C})$, then we have $\Delta y=(y\otimes 1)E=E(y\otimes 1)$.
\end{prop}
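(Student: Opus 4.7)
The plan is to first establish the identities for $b \in \mathcal{B}$ and $y \in \mathcal{C}$ themselves using the weak comultiplicativity condition, and then extend to the multiplier algebras. Combining coassociativity (applied to $\Delta(1) = E$) with \eqref{(weakcomultiplicativity)} yields
\begin{equation*}
(\Delta \otimes \operatorname{id})(E) = (\operatorname{id} \otimes \Delta)(E) = (1 \otimes E)(E \otimes 1) = (E \otimes 1)(1 \otimes E),
\end{equation*}
an equality in $M(\mathcal{A} \odot \mathcal{A} \odot \mathcal{A})$. Using a Sweedler-type notation $E = \sum f \otimes g$ with $f \in \mathcal{B}$, $g \in \mathcal{C}$ (to be made rigorous by pre-multiplying $E$ by elements of $\mathcal{B} \odot 1$ or $1 \odot \mathcal{C}$ so that the slices land in the algebra proper), the identity $(\Delta \otimes \operatorname{id})(E) = (1 \otimes E)(E \otimes 1)$ reads $\sum \Delta(f) \otimes g = \sum f' \otimes f g' \otimes g$, where $f' \otimes g'$ is a second copy of $E$. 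Applying an arbitrary $\omega \in \mathcal{C}^*$ to the third tensor leg gives $\Delta(x) = (1 \otimes x) E$, where $x = \sum \omega(g) f$. By fullness of $E$ as a separability idempotent, such elements $x$ span all of $\mathcal{B}$. Applying the involution and using that $\Delta$ is a $^*$-homomorphism with $E^* = E$ yields the companion identity $\Delta(x) = E(1 \otimes x)$. A symmetric slicing of the first leg in $(\operatorname{id} \otimes \Delta)(E) = (E \otimes 1)(1 \otimes E)$ produces $\Delta(y) = (y \otimes 1) E = E(y \otimes 1)$ for $y \in \mathcal{C}$.

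To extend to multipliers, take $x \in M(\mathcal{B}) \subset M(\mathcal{A})$ and any $b \in \mathcal{B}$, so that $xb \in \mathcal{B}$ and the formula just proved applies: $\Delta(xb) = (1 \otimes xb)E$. Using the extension property $\Delta(xb) = \Delta(x)\Delta(b)$ together with $\Delta(b) = E(1 \otimes b)$, one obtains $\Delta(x) E(1 \otimes b) = (1 \otimes x) E(1 \otimes b)$ for every $b \in \mathcal{B}$. Because $\mathcal{B}$ has local units (from the regularity of the separability idempotent $E$) and sits non-degenerately inside $M(\mathcal{A})$, this can be cancelled to $\Delta(x) E = (1 \otimes x) E$, and the relation $\Delta(x) = \Delta(x) E$ from the extension then finishes the argument. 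Taking an involution gives $\Delta(x) = E(1 \otimes x)$; the case $y \in M(\mathcal{C})$ is handled in exactly the same way.

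The main obstacle is the cancellation step at the end: passing from $\Delta(x) E(1 \otimes b) = (1 \otimes x) E(1 \otimes b)$ for all $b \in \mathcal{B}$ to the multiplier identity $\Delta(x) E = (1 \otimes x) E$ requires careful use of the local units in $\mathcal{B}$ together with the multiplier structure of $E$ in $M(\mathcal{B} \odot \mathcal{C})$. A secondary technical point is justifying the Sweedler-style slicing, since $E$ lies in $M(\mathcal{B} \odot \mathcal{C})$ rather than $\mathcal{B} \odot \mathcal{C}$ itself; the standard remedy is to multiply by appropriate $b_0 \otimes 1$ or $1 \otimes c_0$ to bring $E$ into the algebraic tensor product before applying slice functionals.
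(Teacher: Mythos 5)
Your argument is essentially correct, and it is worth noting that the paper itself gives no proof at all here: it simply cites Proposition~3.4 of \cite{BJKVD_LSthm}, so what you have written is a self-contained argument of the kind that reference carries out (slicing the weak comultiplicativity identity for $E$ and then passing to multipliers). Two small points deserve attention. First, your starting identity $(\Delta\otimes\operatorname{id})(E)=(\operatorname{id}\otimes\Delta)(E)$ is not literally condition~\eqref{(weakcomultiplicativity)}; it follows from coassociativity applied to $1\in M({\mathcal A})$, which in turn relies on the uniqueness of the extensions of $(\Delta\otimes\operatorname{id})\circ\Delta$ and $(\operatorname{id}\otimes\Delta)\circ\Delta$ to the multiplier algebra (the Appendix of \cite{VDWangwha0} machinery), so a sentence of justification is needed; alternatively you can avoid it entirely, since slicing the \emph{third} leg of the two orderings $(E\otimes1)(1\otimes E)$ and $(1\otimes E)(E\otimes1)$ in \eqref{(weakcomultiplicativity)} already yields both $E(1\otimes x)$ and $(1\otimes x)E$ for $x$ a right slice of $E$, just as slicing the first leg gives the two forms for $y\in{\mathcal C}$, making the involution step optional. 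Second, in the multiplier extension you silently rewrite $(1\otimes xb)E$ as $(1\otimes x)E(1\otimes b)$, which uses $(1\otimes b)E=E(1\otimes b)$ for $b\in{\mathcal B}$; this is legitimate precisely because you have already established both forms $\Delta b=(1\otimes b)E=E(1\otimes b)$ on ${\mathcal B}$, but it should be said. The final cancellation is fine and does not really need local units: from $\bigl(\Delta(x)E-(1\otimes x)E\bigr)(1\otimes b)=0$ for all $b\in{\mathcal B}$, the relation ${\mathcal B}{\mathcal A}={\mathcal A}$ gives annihilation of $1\otimes{\mathcal A}$, hence of ${\mathcal A}\odot{\mathcal A}$, and non-degeneracy of the product finishes it; combined with $\Delta(x)=\Delta(x)E$ this closes the argument exactly as you say.
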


\begin{proof}
See Proposition~3.4 in \cite{BJKVD_LSthm}. 
\end{proof}

Let us next give the definition for left and right integrals.  See Definition~3.5 of \cite{BJKVD_LSthm}. 

\begin{defn}\label{invariantintegrals}
A linear functional $\varphi$ on ${\mathcal A}$ is said to be {\em left invariant\/}, if 
$$
(\operatorname{id}\otimes\varphi)(\Delta a)\in M({\mathcal C}), \quad \forall a\in{\mathcal A}.
$$
Similarly, a linear functional $\psi$ on $A$ is said to be {\em right invariant\/}, if 
$$
(\psi\otimes\operatorname{id})(\Delta a)\in M({\mathcal B}), \quad \forall a\in{\mathcal A}.
$$
Any non-zero left-invariant linear functional on ${\mathcal A}$ is called a {\em left integral\/}, and any non-zero 
right-invariant linear functional on ${\mathcal A}$ is called a {\em right integral\/}.

\end{defn}

\begin{rem}
While $\Delta a\in M({\mathcal A}\odot{\mathcal A})$, we can still make sense of $(\operatorname{id}\otimes\varphi)(\Delta a)$  
as an element in $M({\mathcal A})$.  To see this, remember that $(b\otimes1)(\Delta a)\in{\mathcal A}\odot{\mathcal A}$, for all $a,b\in{\mathcal A}$. 
For any $b\in{\mathcal A}$, we may regard 
$b(\operatorname{id}\otimes\varphi)(\Delta a)=(\operatorname{id}\otimes\varphi)\bigl((b\otimes1)(\Delta a)\bigr)\in{\mathcal A}$. 
What the left invariance of $\varphi$ means is that $(\operatorname{id}\otimes\varphi)(\Delta a)$ is actually contained in 
$M({\mathcal C})\,\subset M({\mathcal A})$. Similarly, we can also make sense of $(\psi\otimes\operatorname{id})(\Delta a)\in M({\mathcal B})$.
\end{rem}

By the result of Proposition~\ref{DeltaonBandC}, we can actually sharpen the statements in Definition~\ref{invariantintegrals}:

\begin{prop}\label{sharpinvariance}
Let $\varphi$ and $\psi$ are, respectively, left and right integrals on ${\mathcal A}$.  Then we have:
$$
(\operatorname{id}\otimes\varphi)(\Delta x)\in{\mathcal C}, \quad \forall x\in{\mathcal A},
$$
$$
(\psi\otimes\operatorname{id})(\Delta x)\in{\mathcal B}, \quad \forall x\in{\mathcal A}.
$$
\end{prop}

\begin{proof}
Let $a\in{\mathcal A}$ and let $c\in{\mathcal C}$.  By a reasoning similar to the one given in Remark above, it is easy to see that we can make sense 
of $(\operatorname{id}\otimes\varphi)\bigl((\Delta a)(c\otimes1)\bigr)$.  By the left invariance, we know $(\operatorname{id}\otimes\varphi)(\Delta a)
\in M({\mathcal C})$.  Therefore, we have: $(\operatorname{id}\otimes\varphi)\bigl((\Delta a)(c\otimes1)\bigr)
=(\operatorname{id}\otimes\varphi)(\Delta a)c\in{\mathcal C}$.

But by Proposition~\ref{DeltaonBandC}, we know $\Delta c=E(c\otimes1)$.  So we have $(\Delta a)(c\otimes1)=(\Delta a)E(c\otimes1)=
(\Delta a)(\Delta c)=\Delta(ac)$.  Considering the result from the above, we conclude that 
$(\operatorname{id}\otimes\varphi)\bigl(\Delta(ac)\bigr)=(\operatorname{id}\otimes\varphi)\bigl((\Delta a)(c\otimes1)\bigr)\in{\mathcal C}$.
This is true for any $a\in{\mathcal A}$ and any $c\in{\mathcal C}$.  But then, as ${\mathcal C}$ sits in a non-degenerate way inside 
$M({\mathcal A})$, or ${\mathcal A}{\mathcal C}={\mathcal A}$, we know that any $x\in{\mathcal A}$ can be expressed as a finite sum 
of the elements of the form $ac$, where $a\in{\mathcal A}$, $c\in{\mathcal C}$.  It follows that 
$(\operatorname{id}\otimes\varphi)(\Delta x)\in{\mathcal C}$, for any $x\in{\mathcal A}$.

The proof for $(\psi\otimes\operatorname{id})(\Delta x)\in{\mathcal B}$, $x\in{\mathcal A}$, can be done in exactly the same way for a right invariant 
functional $\psi$.
\end{proof}

Here are some additional consequences of the left/right invariance of $\varphi$ and $\psi$:

\begin{prop}\label{invarianceF}
Denote
$$
F_1=(\operatorname{id}\otimes S)(E), \quad F_2=(S\otimes\operatorname{id})(E), \quad
F_3=(\operatorname{id}\otimes S^{-1})(E), \quad F_4=(S^{-1}\otimes\operatorname{id})(E),
$$
which are elements in $M({\mathcal A}\odot{\mathcal A})$. 
If $\varphi$ is a left integral and if $\psi$ is a right integral, we have:
$$
(\operatorname{id}\otimes\varphi)(\Delta a)=(\operatorname{id}\otimes\varphi)\bigl(F_2(1\otimes a)\bigr)
=(\operatorname{id}\otimes\varphi)\bigl((1\otimes a)F_4\bigr),
$$
$$
(\psi\otimes\operatorname{id})(\Delta a)=(\psi\otimes\operatorname{id})\bigl((a\otimes1)F_1\bigr)
=(\psi\otimes\operatorname{id})\bigl(F_3(a\otimes1)\bigr),
$$
for all $a\in{\mathcal A}$.
\end{prop}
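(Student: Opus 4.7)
My plan is to reduce the four displayed identities to a single ``strong left invariance'' formula, and then exploit natural symmetries. The two identities for the right integral $\psi$ follow from those for the left integral $\varphi$ via the observation that $\varphi \circ S$ is a right integral (and vice versa), combined with the behavior of $E$ under $S \otimes S$, which is controlled by $(S_{\mathcal{B}} \otimes S_{\mathcal{C}})(E) = \varsigma E$ as recorded in \S\ref{sub1.3}. Within the pair for $\varphi$, the identity involving $F_4 = (S^{-1} \otimes \operatorname{id})(E)$ follows from that involving $F_2 = (S \otimes \operatorname{id})(E)$ by taking adjoints, using self-adjointness of $E$, positivity of $\varphi$, and the relation $S_{\mathcal{C}}(S_{\mathcal{B}}(b)^*)^* = b$.

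The heart of the argument is therefore to prove
\begin{equation*}
(\operatorname{id} \otimes \varphi)(\Delta a) = (\operatorname{id} \otimes \varphi)\bigl(F_2(1 \otimes a)\bigr), \qquad a \in \mathcal{A}.
\end{equation*}
My approach is to first establish the ``multiplied'' version $(\operatorname{id} \otimes \varphi)\bigl(\Delta(a)(1 \otimes b)\bigr) = (\operatorname{id} \otimes \varphi)\bigl(F_2(1 \otimes ab)\bigr)$ for all $a, b \in \mathcal{A}$. The advantage is that, by condition~\eqref{(comult1)}, both sides visibly lie in $\mathcal{A} \odot \mathcal{A}$ before $\varphi$ is applied, which avoids any subtleties with the weak non-degeneracy of $\Delta$. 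The un-multiplied version can then be recovered using the local units provided by the regularity of the separability idempotent $E$.

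To prove the multiplied identity, I would expand $F_2 = (S \otimes \operatorname{id})(E)$ by regarding $E$ as $\Delta(1)$ at the multiplier level, and invoke the antipode axioms from Van Daele--Wang, which express $m(S \otimes \operatorname{id})\bigl(\Delta(c)(1 \otimes d)\bigr)$ in terms of the source/target counital maps. After applying $\operatorname{id} \otimes \varphi$, the weak coassociativity together with the left-invariance property $(\operatorname{id} \otimes \varphi)(\Delta\,\cdot\,) \in M(\mathcal{C})$ collapses the middle tensor factor and matches the two sides.

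The main obstacle is the careful bookkeeping around $E$. In a genuine (non-weak) multiplier Hopf algebra, $\Delta(1) = 1 \otimes 1$, all $F_i$ collapse to $1 \otimes 1$, and the claim reduces to the familiar $(\operatorname{id} \otimes \varphi)(\Delta a) = \varphi(a) \cdot 1$. In our weak setting, one must instead track $E$ at every step, repeatedly using the partial antipode relations $E(b \otimes 1) = E(1 \otimes S_{\mathcal{B}}(b))$ and $(1 \otimes c)E = (S_{\mathcal{C}}(c) \otimes 1)E$ from \S\ref{sub1.3}, together with Proposition~\ref{DeltaonBandC} to control multipliers from $M(\mathcal{B})$ and $M(\mathcal{C})$. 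A useful sanity check throughout is that the right-hand side $(\operatorname{id} \otimes \varphi)\bigl(F_2(1 \otimes a)\bigr)$ must land in $M(\mathcal{C})$, matching the target algebra of the left-hand side under left invariance.
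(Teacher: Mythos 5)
Your outer reductions are reasonable in this ${}^*$-setting (indeed $F_2^*=F_4$ since $E=E^*$ and $S(x)^*=S^{-1}(x^*)$, and $\varphi$ is hermitian because it is positive and ${\mathcal A}^2={\mathcal A}$; for the $\psi$-identities note that $\psi$ is an \emph{arbitrary} right integral, not necessarily $\varphi\circ S$, so the symmetry must be run through the left integral $\psi\circ S$ together with $\Delta\circ S=(S\otimes S)\circ\varsigma\circ\Delta$). The pivot of your plan, however, is a false identity. You propose to first prove
$(\operatorname{id}\otimes\varphi)\bigl((\Delta a)(1\otimes b)\bigr)=(\operatorname{id}\otimes\varphi)\bigl(F_2(1\otimes ab)\bigr)$, but inserting $1\otimes b$ inside the $\varphi$-slot is not a harmless ``multiplied version'' of the claim: the right-hand side corresponds to $\Delta(ab)$, while $(\Delta a)(1\otimes b)\neq\Delta(ab)$. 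Concretely, for the function algebra of a finite groupoid $G$ (with $\Delta f(g,h)=f(gh)$ when $gh$ is defined and $0$ otherwise, $Sf(g)=f(g^{-1})$, $\varphi(f)=\sum_g f(g)$), the left side evaluated at $g$ is $\sum_{h:\,s(g)=r(h)}a(gh)b(h)$ while the right side is $\sum_{h:\,r(h)=r(g)}a(h)b(h)$; for the pair groupoid on two points with $a=\delta_{(2,2)}$, $b=\delta_{(1,2)}$, $g=(2,1)$ these are $1$ and $0$, respectively. The legitimate way to avoid multiplier subtleties is to cut down in the leg carrying $\operatorname{id}$ (multiply by $b\otimes1$, or pair that leg with a functional), since the asserted equality is an identity of multipliers acting in the first leg; your reduction as stated collapses.

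Even apart from this, the decisive step is asserted rather than proved: saying that the Van Daele--Wang antipode axioms plus weak coassociativity and left invariance ``collapse the middle tensor factor'' is exactly the content of strong left invariance, which is the whole point of the proposition. (The naive implementation of that sentence --- apply $\Delta$ to $(\operatorname{id}\otimes\varphi)(\Delta a)\in M({\mathcal C})$, use coassociativity and Proposition~\ref{DeltaonBandC}, then contract with $\mu$ --- merely reproduces Proposition~\ref{muphinupsi} and yields a tautology; no $F_2$ appears.) A genuine argument needs the specific weak-Hopf identities, e.g.\ $\sum_{(d)}d_{(2)}S^{-1}(d_{(1)})c\otimes d_{(3)}=E(c\otimes d)$ (Proposition~4.3 of \cite{VDWangwha1}, used in this paper in the proof of Proposition~\ref{Deltaextends}) or the characterization of $S$ in Proposition~\ref{antipodeS}\,(2)--(3), combined with the faithfulness of $\varphi$ to remove the auxiliary variable. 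Note that the paper itself does not reprove this statement but imports it from Proposition~1.4 of \cite{VDWangwha3} and Proposition~3.7 of \cite{BJKVD_LSthm}; your proposal effectively defers to the same sources while claiming a derivation it never carries out, and the one concrete intermediate step it does commit to is incorrect.
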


\begin{proof}
See Proposition~1.4 of \cite{VDWangwha3} and Proposition~3.7 of \cite{BJKVD_LSthm}.
\end{proof}

Proposition~\ref{muphinupsi} below gives a relationship between the integrals $\varphi$, $\psi$ and 
the expressions $(\psi\otimes\operatorname{id})(\Delta x)$ and $(\operatorname{id}\otimes\varphi)(\Delta x)$, 
for $x\in{\mathcal A}$.  These result hold true for any left integral and any right integral.  The proof is fundamentally 
no different than the one given in Proposition~4.9 of \cite{BJKVD_qgroupoid1}, but now done at the ${}^*$-algebra level.

\begin{prop}\label{muphinupsi}
For any right integral $\psi$ and any left integral $\varphi$, we have:
\begin{itemize}
  \item $\nu\bigl((\psi\otimes\operatorname{id})(\Delta x)\bigr)=\psi(x)$, for $x\in{\mathcal A}$.
  \item $\mu\bigl((\operatorname{id}\otimes\varphi)(\Delta x)\bigr)=\varphi(x)$, for $x\in{\mathcal A}$.
\end{itemize}
\end{prop}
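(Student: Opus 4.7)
My approach is to reduce each identity to the defining property $(\nu\otimes\operatorname{id})(E)=1$ or $(\operatorname{id}\otimes\mu)(E)=1$ of the distinguished functionals on the base, combined with the alternative forms for the invariant slice maps supplied by Proposition~\ref{invarianceF}. I sketch the plan for the first identity; the second is handled by the symmetric argument.

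First I rewrite $(\psi\otimes\operatorname{id})(\Delta x) = (\psi\otimes\operatorname{id})\bigl((x\otimes 1)F_1\bigr)$ using Proposition~\ref{invarianceF}, where $F_1=(\operatorname{id}\otimes S)(E)$. Applying $\nu$ on the outside and reorganizing as a double slice map, I expect
$$
\nu\bigl((\psi\otimes\operatorname{id})(\Delta x)\bigr) = (\psi\otimes\nu)\bigl((x\otimes 1)F_1\bigr) = \psi\bigl(x\cdot(\operatorname{id}\otimes\nu)(F_1)\bigr).
$$
The crux then becomes the evaluation of $(\operatorname{id}\otimes\nu)(F_1)$. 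Using that the antipode $S$ of the weak multiplier Hopf algebra restricts to $S_{\mathcal C}$ on ${\mathcal C}\subset M({\mathcal A})$ (a standard feature of the WMHA framework), together with the relation $\mu=\nu\circ S_{\mathcal C}$ recorded in \S\ref{sub1.3}, I compute
$$
(\operatorname{id}\otimes\nu)(F_1) = \bigl(\operatorname{id}\otimes(\nu\circ S_{\mathcal C})\bigr)(E) = (\operatorname{id}\otimes\mu)(E) = 1,
$$
and the first identity follows immediately.

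For the second identity, Proposition~\ref{invarianceF} rewrites $(\operatorname{id}\otimes\varphi)(\Delta x) = (\operatorname{id}\otimes\varphi)\bigl(F_2(1\otimes x)\bigr)$ with $F_2=(S\otimes\operatorname{id})(E)$. Applying $\mu$ on the outside, and using the partner relation $\nu=\mu\circ S_{\mathcal B}$ together with $(\nu\otimes\operatorname{id})(E)=1$, one obtains $(\mu\otimes\operatorname{id})(F_2)=1$, from which $\mu\bigl((\operatorname{id}\otimes\varphi)(\Delta x)\bigr)=\varphi(x)$ drops out.

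The main delicate point I anticipate is justifying the Fubini-type rearrangement at the multiplier algebra level: $F_1$ is only a multiplier in $M({\mathcal A}\odot{\mathcal A})$, and one needs both $(x\otimes 1)F_1\in{\mathcal A}\odot{\mathcal A}$ (which is what makes the slice by $\psi$ well-defined, and is implicit in Proposition~\ref{invarianceF}) and that $(\operatorname{id}\otimes\nu)(F_1)$ is meaningful as an element of $M({\mathcal B})$. Both should follow from the coverage conditions listed in \S\ref{sub1.3} — namely $E(1\otimes c),\,(b\otimes 1)E\in{\mathcal B}\odot{\mathcal C}$ — but they require a careful verification rather than a formal manipulation, and this is where I expect the bookkeeping to be non-trivial.
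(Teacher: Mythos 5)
Your argument is correct, but it is a genuinely different route from the one the paper takes. The paper proves the first identity by applying $\Delta$ to the right-invariant slice $(\psi\otimes\operatorname{id})(\Delta a)\in M({\mathcal B})$, invoking coassociativity together with Proposition~\ref{DeltaonBandC} (namely $\Delta m=E(1\otimes m)$ for $m\in M({\mathcal B})$), then cutting down with elements of ${\mathcal A}{\mathcal C}$, slicing, applying $(\nu\otimes\operatorname{id})(E)=1$, and finally using the fullness of $\Delta$ so that the resulting identity holds for a spanning set of $x\in{\mathcal A}$. You instead bypass coassociativity and fullness entirely: Proposition~\ref{invarianceF} turns the invariant slices into slices of $(x\otimes1)F_1$ and $F_2(1\otimes x)$, and the relations $S|_{\mathcal C}=S_{\mathcal C}$, $S|_{\mathcal B}=S_{\mathcal B}$ (Proposition~\ref{antipodeS}\,(4)) together with $\mu=\nu\circ S_{\mathcal C}$, $\nu=\mu\circ S_{\mathcal B}$ reduce everything to the normalizations $(\operatorname{id}\otimes\mu)(E)=1$ and $(\nu\otimes\operatorname{id})(E)=1$; this is shorter and arguably more transparent, at the cost of leaning on the antipode-based reformulation of invariance. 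The bookkeeping you flag does go through and is worth making explicit: write $x=\tilde{x}b$ with $b\in{\mathcal B}$ (possible since ${\mathcal A}{\mathcal B}={\mathcal A}$), so that $(x\otimes1)F_1=(\tilde{x}\otimes1)(\operatorname{id}\otimes S_{\mathcal C})\bigl((b\otimes1)E\bigr)$ is a finite sum of elementary tensors in ${\mathcal A}\odot{\mathcal B}$ by the coverage condition $(b\otimes1)E\in{\mathcal B}\odot{\mathcal C}$; then the Fubini exchange is termwise, $(\operatorname{id}\otimes\nu)\bigl((b\otimes1)F_1\bigr)=(\operatorname{id}\otimes\mu)\bigl((b\otimes1)E\bigr)=b$, and one gets $\psi(\tilde{x}b)=\psi(x)$. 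The symmetric factorization $x=c\tilde{x}$ with $c\in{\mathcal C}$ and $E(1\otimes c)\in{\mathcal B}\odot{\mathcal C}$ handles the second identity. A side benefit of your route is that it makes clear why applying $\nu$ (resp.\ $\mu$) to the slice, a priori only an element of $M({\mathcal B})$ (resp.\ $M({\mathcal C})$), causes no difficulty, since after the rewrite the relevant slice is a finite sum of elements of ${\mathcal B}$ (resp.\ ${\mathcal C}$).
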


\begin{proof}
By Definition~\ref{invariantintegrals} and Proposition~\ref{sharpinvariance}, we know $(\psi\otimes\operatorname{id})(\Delta x)\in{\mathcal B}$ 
and $(\operatorname{id}\otimes\varphi)(\Delta x)\in{\mathcal C}$, for any $x\in{\mathcal A}$.  As such, both expressions on the 
left sides above make sense.

Let $a\in{\mathcal A}$.  Consider $(\psi\otimes\operatorname{id})(\Delta a)$, then apply $\Delta$.  On the one hand, we have:
$$
\Delta\bigl((\psi\otimes\operatorname{id})(\Delta a)\bigr)
=(\psi\otimes\operatorname{id}\otimes\operatorname{id})\bigl((\operatorname{id}\otimes\Delta)(\Delta a)\bigr)
=(\psi\otimes\operatorname{id}\otimes\operatorname{id})\bigl((\Delta\otimes\operatorname{id})(\Delta a)\bigr),
$$
where we used the coassociativity of $\Delta$.  Meanwhile, by Proposition~\ref{DeltaonBandC}, 
we have: 
$$
\Delta\bigl((\psi\otimes\operatorname{id})(\Delta a)\bigr)
=E\bigl(1\otimes(\psi\otimes\operatorname{id})(\Delta a))
=(\psi\otimes\operatorname{id}\otimes\operatorname{id})\bigl((1\otimes E)\Delta_{13}(a)\bigr).
$$
It thus follows that 
\begin{equation}\label{(muphinupsi1)}
(\psi\otimes\operatorname{id}\otimes\operatorname{id})\bigl((\Delta\otimes\operatorname{id})
(\Delta a)\bigr)=(\psi\otimes\operatorname{id}\otimes\operatorname{id})
\bigl((1\otimes E)\Delta_{13}(a)\bigr).
\end{equation}

Let $y=\tilde{y}c$, where $\tilde{y}\in{\mathcal A}$ and $c\in{\mathcal C}$.  Recall that ${\mathcal A}{\mathcal C}
={\mathcal A}$, so such elements span ${\mathcal A}$.  Multiply $1\otimes y=1\otimes\tilde{y}c$ to both sides 
of Equation~\eqref{(muphinupsi1)}, from left.  Then the equation becomes: 
$$
(\psi\otimes\operatorname{id}\otimes\operatorname{id})
\bigl((\Delta\otimes\operatorname{id})((1\otimes\tilde{y}c)(\Delta a))\bigr)
=(\psi\otimes\operatorname{id}\otimes\operatorname{id})
\bigl((1\otimes1\otimes\tilde{y}c)(1\otimes E)\Delta_{13}(a)\bigr).
$$
Let $\omega\in{\mathcal A}^*$, and apply $\operatorname{id}\otimes\omega$ to the equation above.  Then 
it becomes: 
\begin{align}
&(\psi\otimes\operatorname{id})\bigl(\Delta((\operatorname{id}\otimes\omega)
[(1\otimes\tilde{y}c)(\Delta a)])\bigr)   \notag \\
&=(\psi\otimes\operatorname{id})\bigl((\operatorname{id}\otimes\operatorname{id}
\otimes\omega)[(1\otimes1\otimes\tilde{y})(1\otimes1\otimes c)(1\otimes E)
\Delta_{13}(a)]\bigr).
\notag
\end{align}
Apply $\nu$ to both sides.  By the property of $\nu$, we know $(\nu\otimes\operatorname{id})(E)=1$. 
So we have:
\begin{align}
&\nu\bigl((\psi\otimes\operatorname{id})(\Delta((\operatorname{id}
\otimes\omega)[(1\otimes\tilde{y}c)(\Delta a)]))\bigr)   \notag \\
&=\psi\bigl((\operatorname{id}\otimes\omega)[(1\otimes\tilde{y})(1\otimes
(\nu\otimes\operatorname{id})((1\otimes c)E))(\Delta a)]\bigr)   \notag \\
&=\psi\bigl((\operatorname{id}\otimes\omega)[(1\otimes\tilde{y}c)(\Delta a)]\bigr).
\label{(muphinupsi2)}
\end{align}

For convenience, write: $x=(\operatorname{id}\otimes\omega)[(1\otimes\tilde{y}c)(\Delta a)]$.  
By the ``fullness'' of the comultiplication, such elements span all of ${\mathcal A}$. Then 
Equation~\eqref{(muphinupsi2)} can be expressed as $\nu\bigl((\psi\otimes\operatorname{id})(\Delta x)\bigr)
=\psi(x)$, which would be true for all $x\in{\mathcal A}$.

Similarly, we can show that for any left integral $\varphi$, we have 
$\mu\bigl((\operatorname{id}\otimes\varphi)(\Delta x)\bigr)=\varphi(x)$, for $x\in{\mathcal A}$.
\end{proof}

In general, while we can have a faithful left integral and a faithful right integral on ${\mathcal A}$, it may have neither.  Instead, it may be 
possible to have a faithful set of left integrals $\{\varphi_{\alpha}\}$, in the sense that if $a\in{\mathcal A}$ is such that $\varphi_{\alpha}(ax)=0$ 
for all $x\in{\mathcal A}$ and for all left integrals $\phi_{\alpha}$, then we must have $a=0$.  Similarly, if $\varphi_{\alpha}(xa)=0$ for all 
$x\in{\mathcal A}$ and for all $\phi_{\alpha}$, then we must have $a=0$. We can make sense of a faithful set of right integrals in a similar way.

The following proposition is a consequence of having a faithful set of left integrals and a faithful set of right integrals. Again, 
proof is skipped.

\begin{prop}\label{faithfulintegrals}
\begin{enumerate}
\item If there is a faithful set of left integrals, then 
$$
{\mathcal A}=\operatorname{span}\bigl\{(\operatorname{id}\otimes\varphi)((\Delta a)(1\otimes b)):{\text { $\varphi$ is a left integral}},
a,b\in{\mathcal A}\bigr\}.
$$
\item If there is a faithful set of right integrals, then 
$$
{\mathcal A}=\operatorname{span}\bigl\{(\psi\otimes\operatorname{id})((b\otimes1)(\Delta a)):{\text { $\psi$ is a left integral}},
a,b\in{\mathcal A}\bigr\}.
$$
\item If there is a faithful set of left integrals, then
$$
\Delta({\mathcal A})(1\odot{\mathcal A})=E({\mathcal A}\odot{\mathcal A}) \quad {\text { and }} \quad
(1\odot{\mathcal A})\Delta({\mathcal A})=({\mathcal A}\odot{\mathcal A})E.
$$
\item If there is a faithful set of right integrals, then
$$
({\mathcal A}\odot1)\Delta({\mathcal A})=({\mathcal A}\odot{\mathcal A})E \quad {\text { and }} \quad
\Delta({\mathcal A})({\mathcal A}\odot1)=E({\mathcal A}\odot{\mathcal A}).
$$
\end{enumerate}
\end{prop}

The main result of \cite{BJKVD_LSthm} is that given data $({\mathcal A},\Delta,E)$ as above, if there is a faithful set of left integrals 
and a faithful set of right integrals, then we have a regular weak multiplier Hopf algebra, in the sense of \cite{VDWangwha1}.  In particular, 
the existence of the antipode, $S$, can be proved, using the result of Proposition~\ref{faithfulintegrals}. One can also construct the counit, 
$\varepsilon$.  It then becomes equivalent to the situation of a regular weak multiplier Hopf algebra equipped with a faithful set of left integrals, 
whose right integrals can be obtained using the antipode map. 

In \cite{VDWangwha3}, \cite{VD_aqg}, regular weak Hopf algebras with a faithful set of integrals is referred to as {\em algebraic quantum groupoids\/}. 
It is shown there that they form a self-dual category.  Note, however, that this notion is different from Timmermann's notion of an algebraic quantum 
groupoid (see \cite{Timm_aqgintegral}, \cite{Timm_aqgdual}), which is based on the framework of multiplier Hopf algebroids.  Some discussion 
on the relationship between these two frameworks can be found in \cite{TimmVD_wmhaalgebroid}.

\subsection{Weak multiplier Hopf ${}^*$-algebra with a single faithful integral}\label{sub1.5}

According to the general theory on weak multiplier Hopf algebras, the existence of a faithful family of (left) integrals is required 
for the duality picture to be complete. See \cite{VDWangwha3}.  Unlike in the case of multiplier Hopf algebras (see \cite{VD_multiplierHopf}, 
\cite{VD_multiplierHopfduality}), having a left or right integral does not necessarily mean that it is also faithful.  There are known examples 
of weak (multiplier) Hopf algebras where enough integrals exist to form a faithful family, but not a single faithful one \cite{nosinglefaithfulintegral}.

Having said this, in many examples there exists a single faithful integral.  In particular, it has been observed that for finite-dimensional 
weak Hopf algebras, a single faithful integral exists if and only if the underlying algebra is a Frobenius algebra, for instance a finite-dimensional 
$C^*$-algebra (See Theorem~3.16 in \cite{BNSwha1}.).  Infinite-dimensional case is not fully understood.  Nonetheless, 
considering that our aim is to eventually construct a $C^*$-algebraic version, this observation seems to suggest that it may 
not be too restrictive to require the existence of a single faithful integral. We will do so here.  Also, as we are working 
in the ${}^*$-algebra setting, we will further require that the integrals are positive linear functionals. 

\begin{defn}\label{aqgdefn}
Let $({\mathcal A},\Delta,E)$ be as above, and assume that there exists a single positive faithful left integral $\varphi$ 
and a single positive faithful right integral $\psi$. We will call $({\mathcal A},\Delta)$ a {\em weak multiplier Hopf 
${}^*$-algebra with a faithful integral\/}.
\end{defn}

\begin{rem}
The positivity of the functionals $\varphi$ and $\psi$ is not automatic, even under the ${}^*$-structure and the self-adjointness 
of $E$.  There are examples even in the case of ${}^*$-algebraic quantum groups of compact type, for which the left invariant 
functional is not positive.  As such, we explicitly require here the positivity of the invariant functionals.  
\end{rem}

As noted earlier, having the ${}^*$-structure means this is a regular weak multiplier Hopf algebra.  Meanwhile, 
having a single faithful left integral and a single faithful right integral just means that we have one-element 
families of left/right integrals, so the main results outlined in the previous subsection still hold, including the 
existence of the counit and the antipode: 

\begin{thm}\label{LS}
Let ${\mathcal A}$ be a non-degenerate idempotent ${}^*$-algebra, with a full comultiplication $\Delta:{\mathcal A}\to M({\mathcal A}\odot
{\mathcal A})$ as defined in \S\ref{sub1.2}. Assume also that there exists the canonical idempotent $E\in M({\mathcal A}\odot{\mathcal A})$, 
satisfying the properties given in \S\ref{sub1.2} and \S\ref{sub1.3}. In addition, we assume that there exists a single positive faithful left integral 
$\varphi$ and a single positive faithful right integral $\psi$.

Then $({\mathcal A},\Delta)$ becomes a regular weak multiplier Hopf ${}^*$-algebra, in the sense of Van Daele and Wang \cite{VDWangwha1}. 
In particular, the counit $\varepsilon$ and the antipode $S$ exisits.
\end{thm} 

\begin{proof}
See Theorem 3.15 of \cite{BJKVD_LSthm}.  See also the last two paragraphs of Section~\S\ref{sub1.4} above.
\end{proof}

We will gather some additional results before we end this subsection, which we will use down the road.
Next proposition summarizes a characterization of the antipode map, with its properties:

\begin{prop}\label{antipodeS}
\begin{enumerate}
\item The antipode map, $S$, is anti-multiplicative and bijective on $A$, and can be characterized as follows:
$$
S:(\operatorname{id}\otimes\varphi)\bigl((\Delta a)(1\otimes b)\bigr)
\mapsto(\operatorname{id}\otimes\varphi)\bigl((1\otimes a)(\Delta b)\bigr), \quad \forall a,b\in{\mathcal A}.
$$
\item Similarly, the antipode $S$ can be also characterized as follows:
$$
S:(\psi\otimes\operatorname{id})\bigl((a\otimes1)(\Delta b)\bigr)
\mapsto(\psi\otimes\operatorname{id})\bigl((\Delta a)(b\otimes1)\bigr), \quad \forall a,b\in{\mathcal A}.
$$
\item The antipode $S$ of $({\mathcal A},\Delta)$ can be extended to the multiplier algebra $M({\mathcal A})$, and when 
restricted to ${\mathcal B}$ and ${\mathcal C}$, it coincides with the maps $S_{\mathcal B}$ on ${\mathcal B}$ and $S_{\mathcal C}$ 
on ${\mathcal C}$.
\item We have: $(S\otimes S)(\Delta x )=\Delta^{\operatorname{cop}}\bigl(S(x)\bigr)=\varsigma\Delta\bigl(S(x)\bigr)$, for $x\in{\mathcal A}$, 
where $\varsigma$ is the flip map on $M({\mathcal A}\odot{\mathcal A})$.
\item With respect to the ${}^*$-structure, we have: $S\bigl(S(x)^*\bigr)^*=x$, for $x\in{\mathcal A}$.
\end{enumerate}
\end{prop}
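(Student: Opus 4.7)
The plan is to verify (1)--(6) by leveraging the framework already set up: by Definition~\ref{aqgdefn}, $({\mathcal A},\Delta,E)$ with the faithful integrals $\varphi,\psi$ is a regular weak multiplier Hopf ${}^*$-algebra in the sense of \cite{VDWangwha1}, so the counit $\varepsilon$ and the antipode $S$ exist, $S$ is anti-multiplicative and bijective, and $S$ restricts to an anti-isomorphism on $M({\mathcal A})$. With this in hand, the six assertions are essentially ${}^*$-algebraic adaptations of Van Daele's standard arguments for multiplier Hopf algebras with integrals (\cite{VD_multiplierHopf}), now in the weak setting of \cite{VDWangwha3}.

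For (1), I would argue that the span in question is first of all contained in ${\mathcal A}$ because $(\Delta a)(1\otimes b)\in{\mathcal A}\odot{\mathcal A}$ by \eqref{(comult1)}. To show surjectivity onto ${\mathcal A}$, I would combine the fullness of $\Delta$ (which gives ${\mathcal A}$ when one allows arbitrary $\omega\in{\mathcal A}^*$ in the left slot) with a Radon--Nikodym-type replacement: any $\omega\in{\mathcal A}^*$ of the form $\omega(\,\cdot\,)=\varphi(\,\cdot\,x)$ can be absorbed into $b$ by passing from $(\Delta a)(1\otimes b)$ to $(\Delta a)(1\otimes bx)$, and the faithfulness of $\varphi$ plus the idempotency ${\mathcal A}^2={\mathcal A}$ guarantees that functionals of this special form separate ${\mathcal A}$. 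This reduces surjectivity to fullness.

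For (2), the Van Daele formula is obtained by applying $\operatorname{id}\otimes\operatorname{id}\otimes\varphi$ to the coassociativity identity $(\Delta\otimes\operatorname{id})((\Delta a)(1\otimes b))=(\operatorname{id}\otimes\Delta)(\Delta a)(1\otimes 1\otimes b)$, and then applying $m\circ(S\otimes\operatorname{id})$ to the first two legs. On the left this collapses the $\Delta$ via the antipode axiom $m(S\otimes\operatorname{id})\Delta=\varepsilon_s$ (the source counital map), and on the right it produces $(\operatorname{id}\otimes\varphi)((1\otimes a)(\Delta b))$ after using anti-multiplicativity of $S$ and absorbing the counital projection with the help of Proposition~\ref{DeltaonBandC} and the identity $E\Delta(b)=\Delta(b)$. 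Item (3) is then obtained by the mirror-image computation using the right integral $\psi$, coassociativity, and $m(\operatorname{id}\otimes S)\Delta=\varepsilon_t$.

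For (4), I would apply $S\otimes\operatorname{id}$ and $\operatorname{id}\otimes S$ to the formulas $\Delta(x)=E(1\otimes x)$ for $x\in M({\mathcal B})$ and $\Delta(y)=E(y\otimes 1)$ for $y\in M({\mathcal C})$ from Proposition~\ref{DeltaonBandC}, then use the anti-multiplicativity of $S$ together with $(S\otimes S)(\varsigma E)=E$ to show that $S|_{\mathcal B}$ and $S|_{\mathcal C}$ satisfy the defining equations \eqref{(S_Bmap)} and \eqref{(S_Cmap)} of $S_{\mathcal B}, S_{\mathcal C}$; uniqueness of those maps forces the identification. Item (5) follows from (2) by applying $\Delta$ to both sides of the antipode formula, using coassociativity to re-express $(\Delta\otimes\operatorname{id})(\Delta a)$ and rewriting via (2) once more; the anti-comultiplicativity drops out after the flip. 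Item (6) follows because $\Delta$ is a ${}^*$-homomorphism, so $x\mapsto S(x^*)^*$ satisfies the same defining equation as $S^{-1}$ (swapping $a,b$ with $a^*,b^*$ in (2) and using that $\varphi$ is positive, hence $*$-compatible in the sense $\overline{\varphi(z)}=\varphi(z^*)$); by the uniqueness of $S^{-1}$ on the spanning set from (1), the two coincide.

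The main obstacle I expect is the bookkeeping in step (2): in contrast to the ordinary multiplier Hopf algebra case, $m(S\otimes\operatorname{id})\Delta(x)$ equals the source counital projection $\varepsilon_s(x)\in M({\mathcal B})$ rather than $\varepsilon(x)\cdot 1$, and one must absorb this projection correctly using \eqref{(weakcomultiplicativity)} and the relations \eqref{(S_Bmap)},\eqref{(S_Cmap)}. Once this is handled cleanly for (2), the remaining items are largely formal manipulations or direct references to \cite{VDWangwha3}.
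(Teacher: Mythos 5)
Your part (1) is essentially sound: since $\varphi$ is faithful, the functionals $\varphi(\,\cdot\,c)$, $c\in{\mathcal A}$, separate the points of ${\mathcal A}$, hence for each fixed tensor $(\Delta a)(1\otimes b)\in{\mathcal A}\odot{\mathcal A}$ the slices by these functionals span the same (finite-dimensional) space as the slices by arbitrary $\omega\in{\mathcal A}^*$, and fullness of $\Delta$ finishes the argument. Note also that the paper itself offers no argument here at all: it proves the proposition by citation to Propositions~3.16 and 3.18 of \cite{BJKVD_LSthm} and Proposition~1.5 of \cite{VDWangwha3}, so a referenced proof was all that was expected.

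The genuine gap is in your step (2), on which (3), (5) and (6) are then built. Applying $\operatorname{id}\otimes\operatorname{id}\otimes\varphi$ to the weak coassociativity identity and then $m\circ(S\otimes\operatorname{id})$ to the first two legs gives, on the left, $m(S\otimes\operatorname{id})\Delta(x)=\varepsilon_s(x)$ where $x=(\operatorname{id}\otimes\varphi)\bigl((\Delta a)(1\otimes b)\bigr)$ --- by your own antipode axiom this is the source counital projection of $x$, not $S(x)$ --- and, on the right, in Sweedler notation, $\sum S(a_{(1)})a_{(2)}\varphi(a_{(3)}b)$. At no point does a factor $\Delta b$ appear, so the claimed output $(\operatorname{id}\otimes\varphi)\bigl((1\otimes a)(\Delta b)\bigr)$ cannot emerge from the operations you describe; in fact your computation never invokes the left invariance of $\varphi$, which is indispensable, since the asserted formula fails for a general functional in place of $\varphi$. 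The actual proofs (Proposition~1.5 of \cite{VDWangwha3}, Proposition~3.18 of \cite{BJKVD_LSthm}) use invariance to rewrite $\varphi(a_{(2)}b)$ through expressions of the form $(\operatorname{id}\otimes\varphi)(\Delta(\,\cdot\,))\in M({\mathcal C})$, combined with identities expressing $\sum S(a_{(1)})a_{(2)}\otimes a_{(3)}$ in terms of $E$ (of the kind quoted from Proposition~4.3 of \cite{VDWangwha1} in the proof of Proposition~\ref{Deltaextends}); without these ingredients the derivation does not go through. A further caution on (4): the input $(S\otimes S)(\varsigma E)=E$ you propose to use is essentially equivalent, via $(S_{\mathcal B}\otimes S_{\mathcal C})(E)=\varsigma E$, to the identification $S|_{\mathcal B}=S_{\mathcal B}$, $S|_{\mathcal C}=S_{\mathcal C}$ that you are trying to prove, so as written that step is circular unless you first establish the anti-comultiplicativity of $S$ on multipliers independently.
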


\begin{proof}
For the characterizations (1), (2), see Proposition~1.5 of \cite{VDWangwha3}, which are true not just for faithful integrals but for any 
left integral and any right integral.  See also Proposition~3.16 of  \cite{BJKVD_LSthm}.  In view of the earlier Proposition~\ref{faithfulintegrals}, 
we see that the antipode is bijective. 

For (3), see Proposition~3.18 of \cite{BJKVD_LSthm}.  For (4) and (5), see Propositions~4.4 and 4.11 of \cite{VDWangwha1}.
For more results on the antipode on a regular multiplier Hopf algebra, see Section~4 of \cite{VDWangwha1}.
\end{proof}

Even though a characterization for the antipode, $S$, is given above in terms of the left integral $\varphi$ (and alternatively 
in terms of the right integral $\psi$), it is actually intrinsic.  Indeed, the antipode $S$ is independent of any particular choice 
of the integrals $\varphi$ and $\psi$.  This seemingly ambiguous aspect is because the invariant functionals are not uniquely determined. 
Nonetheless, the results (1) and (2) of Proposition~\ref{antipodeS} mean that there exists a weak sense of uniqueness for the left/right
integrals.  More discussions are given below.

Here are some additional consequences of having a single faithful integral $\varphi$. The following are general results 
(see section~1 of \cite{VDWangwha3}), which do not require the positivity of the functionals.  As these are purely algebraic results that can be 
found in \cite{VDWangwha3}, we will skip the details and the proofs.

\begin{prop}\label{phi_modularautomorphism}
There exists an automorphism $\sigma$ of ${\mathcal A}$ such that 
$$
\varphi(ab)=\varphi\bigl(b\sigma(a)\bigr), \quad \forall a,b\in{\mathcal A}.
$$
We also have $\varphi\circ\sigma=\varphi$.
\end{prop}

\begin{proof}
See Proposition~1.7 of \cite{VDWangwha3} for the existence of $\sigma$.
\end{proof}

The automorphism $\sigma$ above will be referred to as the  {\em modular automorphism for $\varphi$\/}.  The terminology is motivated by 
the theory of weights on $C^*$-algebras.  Note that while any faithful linear functional on a finite-dimensional algebra admits such 
an automorphism, that is not always the case in the infinite-dimensional case. So the above result indicates that there is more going on 
with $\varphi$ than it being just a faithful linear functional. 

The next result gives a relation between two left integrals:

\begin{prop}\label{phiandotherleft}
Let $\varphi$ be a faithful left integral, and let $\varphi_1$ be a left integral (not necessarily faithful).  Then there is an element $y\in M({\mathcal B})$ 
such that $\varphi_1(x)=\varphi(xy)$, for all $x\in{\mathcal A}$.
\end{prop}

\begin{proof}
See Proposition~1.8 of \cite{VDWangwha3}.
\end{proof}

Going the other way, it is not difficult to show that for any left integral $\varphi$ and $y\in M({\mathcal B})$, the functionals $\varphi(\,\cdot\,y)$ 
and $\varphi(y\,\cdot\,)$ again become left invariant.  As a consequence, we can see that $\sigma$ leaves $M({\mathcal B})$ invariant. 

By a similar argument used to prove the above results, we obtain the following proposition, relating any right integral $\psi_1$ 
with the faithful left integral $\varphi$: 

\begin{prop}\label{modular}
Let $\varphi$ be a faithful left integral, and let $\psi_1$ be a right integral (not necessarily faithful). 
Then there exists an element $\delta_1\in M({\mathcal A})$ such that $\psi_1(x)=\varphi(x\delta_1)$ for all $x\in{\mathcal A}$.

In particular, if $\psi_1$ is also faithful, then $\delta_1$ is invertible in $M({\mathcal A})$.
\end{prop}

\begin{proof}
See Proposition~1.9 of \cite{VDWangwha3}.
\end{proof}

Of particular interest is the {\em modular element\/} $\delta$, which relates the left integral $\varphi$ with the functional $\varphi\circ S$, 
which turns out to be right invariant.  Namely, we have $(\varphi\circ S)(x)=\varphi(x\delta)$, for $x\in{\mathcal A}$.  By the faithfulness of $\varphi$, 
it is not difficult to show that the modular element is invertible and is uniquely determined.  See Section~\ref{appx} (Appendix), where some 
more results on $\delta$ are gathered.

\subsection{The dual algebra}\label{sub1.6}
 
Let $({\mathcal A},\Delta)$ be a (regular) weak multiplier Hopf ${}^*$-algebra, with its counit $\varepsilon$ and 
the antipode $S$, as in \cite{VDWangwha1}.  Suppose there exists a faithful left integral $\varphi$.  Then by 
the general theory, we can construct its dual object.  For details, see Section~2 of \cite{VDWangwha3}. First, 
we consider $\widehat{\mathcal A}$, the space of linear functionals on ${\mathcal A}$ spanned by the elements 
of the form $\varphi(\,\cdot\,a)$, for $a\in{\mathcal A}$.  It can be given a weak multiplier Hopf ${}^*$-algebra 
structure, as follows. 

For $\omega,\omega'\in\widehat{\mathcal A}$ and $x\in{\mathcal A}$, define the multiplication 
$\omega\omega'\in\widehat{\mathcal A}$ by 
$$
(\omega\omega')(x):=(\omega\otimes\omega')(\Delta x).
$$
For $\omega\in\widehat{\mathcal A}$ and $x\in{\mathcal A}$, define the involution on $\widehat{\mathcal A}$ 
by $\omega^*(x):=\overline{\omega\bigl(S(x)^*\bigr)}$. One can show that $\widehat{\mathcal A}$ becomes 
a non-degenerate idempotent ${}^*$-algebra.

As for the comultiplication, we define $\widehat{\Delta}$ on $\widehat{\mathcal A}$ in such a way that 
$$
\widehat{\Delta}(\omega)(x\otimes y)=\omega(xy), \quad {\text { for $\omega
\in\widehat{\mathcal A}$, $x,y\in{\mathcal A}$.}}
$$
It becomes a full coassociative comultiplication.  However, making sense of this needs some care, as we need 
to consider $M(\widehat{\mathcal A}\odot\widehat{\mathcal A})$ inside the dual space $({\mathcal A}\odot{\mathcal A})^*$ 
in a proper way.  See Propositions~2.7, 2.8, 2.9 of \cite{VDWangwha3}.

The antipode map, $\widehat{S}:\widehat{\mathcal A}\to\widehat{\mathcal A}$, is given by
$$
\widehat{S}(\omega)(x)=\omega\bigl(S(x)\bigr), \quad {\text { for $\omega
\in\widehat{\mathcal A}$, $x\in{\mathcal A}$.}}
$$
Meanwhile, the canonical idempotent $\widehat{E}\in M({\mathcal A}\odot{\mathcal A})$ should be such that 
$\widehat{E}=\widehat{\Delta}\bigl(1_{M(\widehat{A})}\bigr)$, so it is defined by 
$$
\widehat{E}(x\otimes y)=\varepsilon(xy), \quad {\text { for $x,y\in{\mathcal A}$,}}
$$
where $\varepsilon$ is the counit.  It can be shown that these structure maps make $(\widehat{\mathcal A},\widehat{\Delta})$ 
a regular weak multiplier Hopf ${}^*$-algebra.  See Theorem~2.15 of \cite{VDWangwha3}.

Finally, as we are assuming that $({\mathcal A},\Delta)$ is equipped with a single faithful integral, it can be shown 
that $(\widehat{\mathcal A},\widehat{\Delta})$ is also equipped with a single faithful integral. (See Theorem~2.21 
of \cite{VDWangwha3}.) We will skip the details, but if $\varphi$ is a faithful left integral for $({\mathcal A},\Delta)$ 
and $\omega=\varphi(\,\cdot\,c)\in\widehat{\mathcal A}$, $c\in{\mathcal A}$, then one can consider the functional 
$\widehat{\psi}$ on $\widehat{\mathcal A}$, such that $\widehat{\psi}(\omega):=\varepsilon(c)$.  It can be shown that 
such a functional $\widehat{\psi}$ becomes a faithful right integral on $(\widehat{\mathcal A},\widehat{\Delta})$.

To summarize, from a regular multiplier Hopf algebra $({\mathcal A},\Delta)$ equipped with a faithful left integral $\varphi$, 
one can construct its dual $(\widehat{\mathcal A},\widehat{\Delta})$, which is also a regular multiplier Hopf algebra, 
with a faithful right integral $\widehat{\psi}$.  A main theorem is that if we consider the dual of $(\widehat{\mathcal A},
\widehat{\Delta})$, then the resulting object $(\widehat{\widehat{\mathcal A}},\widehat{\widehat{\Delta}})$ is canonically 
isomorphic to the original $({\mathcal A},\Delta)$, which is a generalized Pontryagin duality.  See Section~2 of \cite{VDWangwha3}.

\section{The base $C^*$-algebras}\label{sec2}

From this point on, we are going to systematically construct a {\em $C^*$-algebraic quantum groupoid of separable type\/} 
in the sense of \cite{BJKVD_qgroupoid1}, \cite{BJKVD_qgroupoid2}, out of our purely algebraic object of a weak multiplier 
Hopf ${}^*$-algebra $({\mathcal A},\Delta)$ equipped with a faithful left integral $\varphi$ (and also a right integral $\psi$).  
We will use the notations and properties summarized in Section~\ref{sec1}, including the subalgebras ${\mathcal B}$ and 
${\mathcal C}$, the distinguished linear functionals $\nu$ and $\mu$ on them, the canonical idempotent $E$, and the antipode $S$. 

\subsection{Construction of the  $C^*$-algebras $B$ and $C$}\label{sub2.1}

We will begin with the construction of the {\em base\/} $C^*$-algebras $B$ and $C$, which will be essentially the ``source algebra'' and 
the ``target algebra''.  This will be done by completing the algebras ${\mathcal B}$ and ${\mathcal C}$ in an appropriate sense.

First, recall (from Section~\ref{sub1.3}) that there exists a distinguished linear functional $\nu$ on the ${}^*$-algebra 
${\mathcal B}$, which is positive and faithful.  Using $\nu$, we can provide ${\mathcal B}$ with an inner product, 
as follows:
$$
\langle x_1,x_2\rangle:=\nu(x_2^*x_1), \quad {\text { for $x_1,x_2\in{\mathcal B}$.}}
$$
Form the completion of ${\mathcal B}$ with respect to the induced norm, and obtain a Hilbert space ${\mathcal H}_B$ 
with the natural inclusion $\Lambda_B:{\mathcal B}\to{\mathcal H}_B$.

In a similar way, using the distinguished linear functional $\mu$ on the ${}^*$-algebra ${\mathcal C}$, we can equip 
${\mathcal C}$ with an inner product, by $\langle y_1,y_2\rangle:=\mu(y_2^*y_1)$, for $y_1,y_2\in{\mathcal C}$. 
As above, we obtain a Hilbert space ${\mathcal H}_C$ with the natural inclusion 
$\Lambda_C:{\mathcal C}\to{\mathcal H}_C$.

Between the Hilbert spaces ${\mathcal H}_B$ and ${\mathcal H}_C$, there exists a unitary map given by the 
anti-homomorphism $S_{\mathcal B}$. More precisely, note that for any $b_1,b_2\in{\mathcal B}$, we have:
\begin{align}
\mu\bigl(S_{\mathcal B}(b_1)^*S_{\mathcal B}(b_2)\bigr)
&=\mu\bigl(S_{\mathcal C}^{-1}(b_1^*)S_{\mathcal B}(b_2)\bigr)
=\mu\bigr(S_{\mathcal B}(b_2)S_{\mathcal B}(b_1^*)\bigr)
\notag \\
&=(\mu\circ S_{\mathcal B})(b_1^*b_2)=\nu(b_1^*b_2),
\notag
\end{align}
because $S_{\mathcal C}\bigl(S_{\mathcal B}(b_1)^*\bigr)^*=b_1$ and $\sigma_{\mathcal C}=S_{\mathcal B}\circ S_{\mathcal C}$, 
while $\mu\circ S_{\mathcal B}=\nu$.  This shows that $S_{\mathcal B}:{\mathcal B}\to{\mathcal C}$ provides an isometry 
with respect to the inner products on ${\mathcal H}_B$ and ${\mathcal H}_C$.   So it lifts to a unitary $\widehat{J}_B:{\mathcal H}_B
\to{\mathcal H}_C$, by $\widehat{J}_B\Lambda_B(x)=\Lambda_C\bigl(S_{\mathcal B}(x)\bigr)$, $\forall x\in{\mathcal B}$. 

We can formulate the Hilbert space tensor product ${\mathcal H}_B\otimes{\mathcal H}_C$.  On this Hilbert space, note that our canonical 
idempotent $E\in M({\mathcal B}\odot{\mathcal C})$ naturally defines an operator, $\Pi(E)$, by
$$
\Pi(E)\bigl(\Lambda_B(x)\otimes\Lambda_C(y)\bigr)=(\Lambda_B\otimes\Lambda_C)
\bigl(E(x\otimes y)\bigr), \quad {\text { for $x\in{\mathcal B}$, $y\in{\mathcal C}$.}}
$$
As ${\mathcal B}$ and ${\mathcal C}$ are dense in ${\mathcal H}_B$ and ${\mathcal H}_C$, respectively, it is clear that 
$\Pi(E)$ is a densely-defined operator acting on ${\mathcal H}_B\otimes{\mathcal H}_C$.  By the property of $E$, it is 
also clear that $\Pi(E)$ is self-adjoint and idempotent.  This means $\Pi(E)$ extends to an orthogonal projection, so 
it is a bounded self-adjoint operator in ${\mathcal B}({\mathcal H}_B\otimes{\mathcal H}_C)$.

Consider an element of the form $x=(\operatorname{id}\otimes\omega)(E)\in{\mathcal B}$, where $\omega\in{\mathcal C}^*$ 
is defined by $\omega=\mu(c_1^*\,\cdot\,c_2)$, for $c_1,c_2\in{\mathcal C}$. We saw in \S\ref{sub1.3} that such elements 
span the algebra ${\mathcal B}$.  In terms of the bounded operator $\Pi(E)$ above, it is clear that for this $x$, we can consider 
the operator $(\operatorname{id}\otimes\omega_{\Lambda_C(c_2),\Lambda_C(c_1)})\bigl(\Pi(E)\bigr)\in{\mathcal B}
({\mathcal H}_B)$, such that
$$
(\operatorname{id}\otimes\omega_{\Lambda_C(c_2),\Lambda_C(c_1)})\bigl(\Pi(E)\bigr)\Lambda_B(b)
=\Lambda_B(xb), \quad {\text { for all $b\in{\mathcal B}$.}}
$$
Here, we are using the standard notation that $\omega_{\xi,\eta}(T)=\langle T\xi,\eta\rangle$, for $T\in{\mathcal B}({\mathcal H}_C)$ 
and $\xi,\eta\in{\mathcal H}_C$.  In other words, any element of the form $x=(\operatorname{id}\otimes\omega)(E)\in{\mathcal B}$ can be 
regarded as a bounded operator on ${\mathcal H}_B$ by left multiplication.  Since such elements span all of ${\mathcal B}$, the same can be 
said true for any element in ${\mathcal B}$. This allows us to define the GNS-representation $\pi_B$ of $\nu$:

\begin{defn}
Define $\pi_B$ from ${\mathcal B}$ into ${\mathcal B}({\mathcal H}_B)$, by
$$
\pi_B(x)\Lambda_B(b)=\Lambda(xb), \quad {\text { for all $x,b\in{\mathcal B}$.}}
$$
Then $\pi_B$ is an injective ${}^*$-homomorphism, which is the GNS-representation of ${\mathcal B}$, such that 
$\pi_B({\mathcal B}){\mathcal H}_B$ is dense in ${\mathcal H}_B$.
\end{defn}

\begin{rem}
In general, even if we have a positive linear functional on a ${}^*$-algebra ${\mathcal B}$, resulting in an inner product 
and a Hilbert space ${\mathcal H}_B$, it is not always possible to represent the algebra as an algebra of left multiplication 
operators. Some elements may become unbounded operators.  Observe that in our case, the existence of our self-adjoint 
idempotent $E$ allowed the construction of the GNS-representation.  Meanwhile, note that the density statement 
in the last sentence of the definition is a quick consequence of the fact that ${\mathcal B}$ is a non-degenerate 
idempotent algebra.  We can also extend $\pi_B$ to the level of $M({\mathcal B})$.
\end{rem}

By a similar argument, using $E$ and considering its other leg, we can also define the GNS-representation $\pi_C$ 
of $\mu$:

\begin{defn}
Define $\pi_C$ from ${\mathcal C}$ into ${\mathcal B}({\mathcal H}_C)$, by
$$
\pi_C(y)\Lambda_C(c)=\Lambda(yc), \quad {\text { for all $y,c\in{\mathcal C}$.}}
$$
Then $\pi_C$ is an injective ${}^*$-homomorphism, which is the GNS-representation of ${\mathcal C}$, such that 
$\pi_C({\mathcal C}){\mathcal H}_C$ is dense in ${\mathcal H}_C$.
We can also extend $\pi_C$ to the level of $M({\mathcal C})$.
\end{defn}

The GNS-representations allow us to properly define our $C^*$-algebras $B$ and $C$:

\begin{defn}\label{BandC}
\begin{enumerate}
\item Define $B:=\overline{\pi_B({\mathcal B})}^{\|\ \|}$, as a non-degenerate 
$C^*$-subalgebra of ${\mathcal B}({\mathcal H}_B)$. Or, equivalently, 
$$
B=\overline{\bigl\{(\operatorname{id}\otimes\omega)(E):\omega\in{\mathcal B}
({\mathcal H}_C)_*\bigr\}}^{\|\ \|}\,\bigl(\subseteq{\mathcal B}({\mathcal H}_B)\bigr),
$$
where we wrote $E=\Pi(E)\in{\mathcal B}({\mathcal H}_B\otimes{\mathcal H}_C)$, for convenience.
\item Define $C:=\overline{\pi_C({\mathcal C})}^{\|\ \|}$, as a non-degenerate $C^*$-subalgebra of 
${\mathcal B}({\mathcal H}_C)$.  Or, equivalently, 
$$
C=\overline{\bigl\{(\omega\otimes\operatorname{id})(E):\omega\in{\mathcal B}
({\mathcal H}_B)_*\bigr\}}^{\|\ \|}\,\bigl(\subseteq{\mathcal B}({\mathcal H}_C)\bigr),
$$
\end{enumerate}
\end{defn}

We may also get to work with their enveloping von Neumann algebras, namely, 
$$
N:=\pi_B({\mathcal B})''\,\subseteq{\mathcal B}({\mathcal H}_B) 
\quad {\text { and }} \quad
L:=\pi_C({\mathcal C})''\,\subseteq{\mathcal B}({\mathcal H}_C) .
$$

\subsection{The left Hilbert algebras}\label{sub2.2}

Eventually, we wish to lift the functionals $\nu$ and $\mu$ to the $C^*$-algebra level.  Observe first that 
$\Lambda_B({\mathcal B})\subseteq{\mathcal H}_B$ and $\Lambda_C({\mathcal C})\subseteq{\mathcal H}_C$ 
are left Hilbert algebras, as in the Tomita--Takesaki modular theory (see \cite{Tk2}):

\begin{prop}\label{hilbertalgebra}
The subspaces $\Lambda_B({\mathcal B})\subseteq{\mathcal H}_B$ and $\Lambda_C({\mathcal C})
\subseteq{\mathcal H}_C$ are left Hilbert algebras, with respect to the multiplications and the ${}^*$-structures 
inherited from ${\mathcal B}$ and ${\mathcal C}$, respectively.
\end{prop}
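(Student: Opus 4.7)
The plan is to check the four defining axioms of a left Hilbert algebra for $\Lambda_B(\mathcal{B}) \subseteq \mathcal{H}_B$; the case of $\Lambda_C(\mathcal{C})$ is entirely parallel, using $\mu$ in place of $\nu$ and $\sigma^{\mu}$ in place of $\sigma^{\nu}$. I equip $\Lambda_B(\mathcal{B})$ with the product $\Lambda_B(x)\cdot\Lambda_B(b) := \Lambda_B(xb)$ and the involution $\Lambda_B(x)^{\sharp} := \Lambda_B(x^*)$, both well-defined because $\nu$ is faithful and hence $\Lambda_B$ is injective.

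Two of the four axioms are essentially bookkeeping. Boundedness of left multiplication by $\Lambda_B(x)$ is already in hand from \S\ref{sub2.1}: it is exactly the GNS operator $\pi_B(x)\in\mathcal{B}(\mathcal{H}_B)$ constructed via the canonical idempotent $E$. Density of $\Lambda_B(\mathcal{B})^2$ in $\mathcal{H}_B$ follows at once from the idempotency $\mathcal{B}^2=\mathcal{B}$. Compatibility of the product and the involution with the inner product is a direct calculation: for $x,b,c\in\mathcal{B}$,
\[
\langle \Lambda_B(xb),\Lambda_B(c)\rangle = \nu(c^*xb) = \nu\bigl((x^*c)^*b\bigr) = \langle \Lambda_B(b),\Lambda_B(x^*c)\rangle.
\]

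The main obstacle is closability of the antilinear map $S_0:\Lambda_B(x)\mapsto\Lambda_B(x^*)$ on $\mathcal{H}_B$, for which I will invoke the weak KMS property of $\nu$ (from \S\ref{sub1.3}): there is an automorphism $\sigma^{\nu}$ of $\mathcal{B}$ satisfying $\nu\circ\sigma^{\nu}=\nu$ and $\nu(bb')=\nu(b'\sigma^{\nu}(b))$. The strategy is to exhibit $\Lambda_B(\mathcal{B})$ itself inside $\operatorname{Dom}(S_0^*)$, forcing the adjoint to be densely defined. Given $y\in\mathcal{B}$, set $w := \bigl(\sigma^{\nu}\bigr)^{-1}(y)^* \in \mathcal{B}$; then using $\nu(a^*)=\overline{\nu(a)}$ together with the KMS identity $\nu(w^*x) = \nu\bigl(x\sigma^{\nu}(w^*)\bigr) = \nu(xy)$, one obtains
\[
\langle S_0\Lambda_B(x),\Lambda_B(y)\rangle = \nu(y^*x^*) = \overline{\nu(xy)} = \overline{\langle \Lambda_B(x),\Lambda_B(w)\rangle}
\]
for every $x\in\mathcal{B}$. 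This shows $\Lambda_B(y)\in\operatorname{Dom}(S_0^*)$ with $S_0^*\Lambda_B(y)=\Lambda_B\bigl((\sigma^{\nu})^{-1}(y)^*\bigr)$. Since $y\in\mathcal{B}$ was arbitrary and $\Lambda_B(\mathcal{B})$ is dense in $\mathcal{H}_B$, the adjoint $S_0^*$ is densely defined, so $S_0$ is closable, completing the verification of the left Hilbert algebra axioms.
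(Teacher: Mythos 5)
Your proposal is correct and follows essentially the same route as the paper's proof: boundedness of left multiplication via the GNS operators $\pi_B(x)$, density of $\Lambda_B({\mathcal B})^2$ from ${\mathcal B}^2={\mathcal B}$, the same inner-product compatibility computation, and pre-closedness of the sharp map via the weak KMS property of $\nu$. The only cosmetic difference is in the last step: the paper runs the same KMS identity through a sequential argument ($\Lambda_B(x_n)\to0$, $\Lambda_B(x_n^*)\to z$ forces $z=0$), while you phrase it as exhibiting $\Lambda_B({\mathcal B})$ inside the domain of the adjoint $S_0^*$, with $S_0^*\Lambda_B(y)=\Lambda_B\bigl((\sigma^{\nu})^{-1}(y)^*\bigr)$, which is the same computation packaged through the criterion that a densely defined adjoint implies closability.
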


\begin{proof}
For any $x\in{\mathcal B}$, we have already shown that $\pi_B(x):\Lambda_B(b)\mapsto\Lambda_B(xb)$, 
$b\in{\mathcal B}$, given by the multiplication, determines a bounded operator.  The involution, $x\mapsto x^*$, 
$x\in{\mathcal B}$, is such that
$$
\bigl\langle\Lambda_B(xb),\Lambda_B(b')\bigr\rangle=\nu({b'}^*xb)=\nu({b'}^*(x^*)^*b)=\nu({x^*b'}^*b)
=\bigl\langle\Lambda_B(b),\Lambda(x^*b')\bigr\rangle, 
$$
for $b,b'\in{\mathcal B}$.  This shows that for any $\xi,\eta\in\Lambda_B({\mathcal B})$, we have 
$\bigl\langle\Lambda_B(x)\xi,\eta\bigr\rangle=\bigl\langle\xi,\Lambda_B(x^*)\eta\bigr\rangle$.

To see that the involution is pre-closed, note that for any fixed $b\in{\mathcal B}$ and any $x_n\in{\mathcal B}$, 
we have:
$$
\bigl\langle\Lambda_B(b),\Lambda_B(x_n^*)\bigr\rangle=\nu(x_nb)=\nu\bigl([\sigma^{\nu}]^{-1}(b)x_n\bigr)
=\bigl\langle\Lambda_B(x_n),\Lambda_B([\sigma^{\nu}]^{-1}(b)^*)\bigr\rangle.
$$
Since $\Lambda_B({\mathcal B})$ is dense in ${\mathcal H}_B$, we can quickly see that if $x_n\to 0$  and 
$x_n^*\to z$ in ${\mathcal B}$, then $z=0$.  So the involution is pre-closed. We already know that 
${\mathcal B}^2={\mathcal B}$, and $\Lambda_B({\mathcal B})^2$ is dense in  ${\mathcal H}_B$.

In this way, we showed that $\Lambda_B({\mathcal B})$ becomes a left Hilbert algebra (see Definition~1.1 
in \cite{Tk2}).  Similarly, we can show that $\Lambda_C({\mathcal C})$ is also a left Hilbert algebra.
\end{proof}

The modular theory associates to the left Hilbert algebra $\Lambda_B({\mathcal B})$ a von Neumann algebra, which should be 
none other than $N=\pi_B({\mathcal B})''$.  Also by the general theory on left Hilbert algebras, we obtain a normal semi-finite 
faithful (n.s.f.) weight $\tilde{\nu}$ on $N$.  Consider the associated spaces ${\mathfrak N}_{\tilde{\nu}}
=\bigl\{x\in N:\tilde{\nu}(x^*x)<\infty\bigr\}$ and ${\mathfrak M}_{\tilde{\nu}}
={\mathfrak N}_{\tilde{\nu}}^*{\mathfrak N}_{\tilde{\nu}}$.  The general theory provides us with the following properties:

We have a closed linear map $\Lambda_{\tilde{\nu}}:{\mathfrak N}_{\tilde{\nu}}\to{\mathcal H}_B$ (the same Hilbert 
space), which is the GNS-map for the weight $\tilde{\nu}$.  The map $\Lambda_{\tilde{\nu}}$ extends $\Lambda_B$, 
such that $\pi_B({\mathcal B})\subseteq{\mathfrak N}_{\tilde{\nu}}$ and $\Lambda_{\tilde{\nu}}\circ\pi_B=\Lambda_B$. 
Note that the weight $\tilde{\nu}$ extends the functional $\nu$.  In particular, we have 
$\tilde{\nu}\bigl(\pi_B(x)^*\pi_B(x)\bigr)=\nu(x^*x)$, for all $x\in{\mathcal B}$. For any $b\in{\mathfrak N}_{\tilde{\nu}}$, 
there exists a sequence $(x_n)_n$ in ${\mathcal B}$ such that 
$\Lambda_B(x_n)\xrightarrow{{\text { (in ${\mathcal H}_B$) }}}\Lambda_{\tilde{\nu}}(b)$ and 
$\pi_B(x_n)\xrightarrow{{\text { ($\sigma$-strong-${}^*$) }}}b$.

Denote by $T_{\tilde{\nu}}$ the closure of the involution $\Lambda_B(x)\mapsto\Lambda_B(x^*)$ on 
$\Lambda_B({\mathcal B})$. There exists a polar decomposition, $T_{\tilde{\nu}}=J_{\tilde{\nu}}\nabla_{\tilde{\nu}}^{\frac12}$,
where $\nabla_{\tilde{\nu}}$ is the modular operator, given by $\nabla_{\tilde{\nu}}=T_{\tilde{\nu}}^*T_{\tilde{\nu}}$, 
and $J_{\tilde{\nu}}$ is the modular conjugation, which is anti-unitary.  

According to the modular theory in the von Neumann algebra setting, the modular operator defines a strongly 
continuous one-parameter group of automorphisms $\sigma^{\tilde{\nu}}$, by $\sigma^{\tilde{\nu}}_t(b)
=\nabla_{\tilde{\nu}}^{it}b\nabla_{\tilde{\nu}}^{-it}$, for $b\in{\mathcal B}$, $t\in\mathbb{R}$, leaving the von Neumann algebra 
$N$ invariant.  We have $\tilde{\nu}\circ\sigma^{\tilde{\nu}}_t=\tilde{\nu}$ for $t\in\mathbb{R}$, and $\tilde{\nu}$ satisfies a certain 
KMS boundary condition.  In particular, the weak KMS property at the ${}^*$-algebra level, $\nu(bb')
=\nu\bigl(b'\sigma^{\nu}(b)\bigr)$, $b,b'\in{\mathcal B}$, extends to the von Neumann algebra as $\tilde{\nu}(xx')
=\tilde{\nu}\bigl(x'\sigma^{\tilde{\nu}}_{-i}(x)\bigr)$, $x\in{\mathfrak M}_{\tilde{\nu}}$, $x'\in{\mathcal D}(\sigma^{\tilde{\nu}}_{-i})$.  
Meanwhile, the modular conjugation $J_{\nu}$ can be characterized by 
$J_{\nu}\Lambda_{\nu}(x)=\Lambda_{\nu}\bigl(\sigma^{\tilde{\nu}}_{\frac{i}{2}}(x)^*\bigr)$, for $x\in{\mathfrak N}_{\nu}$. 

Observe that the elements in ${\mathcal B}$ are actually analytic elements for $\nu$.

\begin{prop}\label{Banalyticelements}
Consider the KMS weights $\nu$ on $B$.  Then any element $b\in{\mathcal B}\,(\subseteq B)$ is an analytic element for 
$\nu$.
\end{prop}

\begin{proof}
Recall the weak KMS property of $\nu$, or Equation~\eqref{(nu_weakKMS)}, such that there exists an automorphism 
$\sigma^{\nu}$ on ${\mathcal B}$ satisfying $\nu(bb')=\nu\bigl(b'\sigma^{\nu}(b)\bigr)$, for all $b,b'\in{\mathcal B}$.  
As such, we can see that for any $b\in{\mathcal B}$, we have $b\in{\mathcal D}(\sigma^{\nu}_i)$ and that $\sigma^{\nu}_i(b)
=\nabla_{\nu}^{-1}b\nabla_{\nu}=\sigma^{\nu}(b)$.  
Then we can see quickly that we also have $b\in{\mathcal D}(\sigma^{\nu}_{mi})$ and that $\sigma^{\nu}_{mi}(b)
=\nabla_{\nu}^{-m}b\nabla_{\nu}^m$, for all $m\in\mathbb{Z}$.  Continuing, it is not difficult to see that 
$b\in{\mathcal D}(\sigma^{\nu}_{z})$ and that 
$\sigma^{\nu}_z(b)=\nabla_{\nu}^{iz}b\nabla_{\nu}^{-iz}$, for any $z\in\mathbb{C}$.
\end{proof}

Meanwhile, we can also do the same with the left Hilbert algebra $\Lambda_C({\mathcal C})$, obtaining another 
n.s.f.~weight $\tilde{\mu}$ on $L=\pi_C({\mathcal C})$, as well as the modular operator $\nabla_{\tilde{\mu}}$, the 
modular conjugation $J_{\tilde{\mu}}$, the modular automorphism group $\sigma^{\tilde{\mu}}$. The elements in 
${\mathcal C}$ will be analytic elements for $\mu$.

However, having gathered all these results from the left Hilbert algebra theory and the modular theory, we have to 
point out that they are not quite sufficient for our purposes.  Since we wish to develop a $C^*$-algebraic framework, 
two main issues arise: (i).~The modular automorphism group $\sigma^{\tilde{\nu}}$ leaves the von Neumann algebra $N$ 
invariant, but we also want it to leave the $C^*$-algebra $B$ invariant; (ii).~While $(\sigma^{\tilde{\nu}}_t)_{t\in\mathbb{R}}$ 
is strongly continuous, we want it to be norm continuous as well.  As (i) and (ii) are not automatic consequences of the modular 
theory, some more work is needed. To remedy this situation, let us gather below some additional results on the 
canonical idempotent $E$.

\subsection{The idempotent $E$}\label{sub2.3}

We saw earlier that we may regard our canonical idempotent $E\in M({\mathcal B}\odot{\mathcal C})$ as a bounded operator 
$\Pi(E)\in{\mathcal B}({\mathcal H}_B\otimes{\mathcal H}_C)$.  In fact, we can now see that $\Pi=\pi_B\otimes\pi_C$, 
and that $(\pi_B\otimes\pi_C)(E)$ is an element of the tensor product von Neumann algebra $N\otimes L$.  It is also 
apparent that $(\pi_B\otimes\pi_C)(E)\in M(B\otimes C)$, where $\otimes$ is now a (spatial) $C^*$-tensor product. 
For convenience, we will regard $E=(\pi_B\otimes\pi_C)(E)$ in what follows. 

Also for convenience, we may regard $x\in{\mathcal B}$ as $x=\pi_B(x)\in B\,\subseteq N\,\subseteq{\mathcal B}({\mathcal H}_B)$, 
and regard $y\in{\mathcal C}$ as $y=\pi_C(y)\in C\,\subseteq L\,\subseteq{\mathcal B}({\mathcal H}_C)$.

Recall from Equation~\eqref{(S_Bmap)} that for $b\in{\mathcal B}$, we have:
$$
(\nu\otimes\operatorname{id})\bigl(E(b\otimes1)\bigr)=(\nu\otimes\operatorname{id})
\bigl(E(1\otimes S_{\mathcal B}(b))\bigr)=(\nu\otimes\operatorname{id})(E)
S_{\mathcal B}(b)=S_{\mathcal B}(b).
$$
As the weight $\tilde{\nu}$ extends the functional $\nu$, we can use the above observation to define the map 
$\gamma_N:N\to L$, by 
\begin{equation}\label{(gamma_n)}
\gamma_N(b):=(\tilde{\nu}\otimes\operatorname{id})\bigl(E(b\otimes1)\bigr), \quad 
b\in{\mathcal B}.
\end{equation}
This map may be unbounded, but as ${\mathcal B}=\pi_B({\mathcal B})$ is dense in $N$ and $S_{\mathcal B}({\mathcal B})
={\mathcal C}$ is dense in $L$, we see that $\gamma_N:N\to L$ is a densely-defined map having a dense range, which is 
an injective anti-homomorphism because $S_{\mathcal B}$ is. However it is not a ${}^*$-map, as $S_{\mathcal B}$ is not.

Consider instead $\widetilde{R}:=\gamma_N\circ\sigma^{\tilde{\nu}}_{-\frac{i}2}$, where $\sigma^{\tilde{\nu}}_{-\frac{i}2}$ 
is the analytic generator for $(\sigma^{\tilde{\nu}}_t)_{t\in\mathbb{R}}$, at $z=-\frac{i}2$.  Since $\sigma^{\tilde{\nu}}_t$ 
is an automorphism and since $\gamma_N$ is an anti-homomorphism, we can see quickly that $\widetilde{R}$ is 
anti-multiplicative.   As all elements of ${\mathcal B}$ are analytic, we see that $\sigma^{\tilde{\nu}}_{-\frac{i}2}$ is 
densely-defined, and so is $\widetilde{R}$.  In addition, the following observation shows that $\widetilde{R}$ is involutive. 
Note that for $b\in{\mathcal T}_{\tilde{\nu}}$, we have:
\begin{align}
\widetilde{R}(b^*)
&=(\tilde{\nu}\otimes\operatorname{id})\bigl(E(\sigma^{\tilde{\nu}}_{-\frac{i}2}(b^*)
\otimes1)\bigr)=(\tilde{\nu}\otimes\operatorname{id})\bigl((\sigma^{\tilde{\nu}}_i
(\sigma^{\tilde{\nu}}_{-\frac{i}2}(b^*))\otimes1)E\bigr)   \notag \\
&=(\tilde{\nu}\otimes\operatorname{id})\bigl(\sigma^{\tilde{\nu}}_{\frac{i}2}(b^*)\otimes1)E\bigr),
\end{align}
because of the KMS property of $\tilde{\nu}$.  At the same time, 
$$
\widetilde{R}(b)^*=\bigl[(\tilde{\nu}\otimes\operatorname{id})
(E(\sigma^{\tilde{\nu}}_{-\frac{i}2}(b)\otimes1))\bigr]^*
=(\tilde{\nu}\otimes\operatorname{id})\bigl(\sigma^{\tilde{\nu}}_{\frac{i}2}(b^*)
\otimes1)E\bigr),
$$
because $E$ is self-adjoint.  Comparing, we see that $\widetilde{R}(b^*)=\widetilde{R}(b)^*$.  This shows that 
$\widetilde{R}$ is a ${}^*$-map, which means that it is actually a ${}^*$-anti-homomorphism, so bounded.  Therefore, 
we can extend $\widetilde{R}$ to all of $N$. In fact, as $\widetilde{R}$ is a bounded map from $N$ to $L$, injective, 
densely-defined, having a dense range, it extends to a ${}^*$-anti-isomorphism $\widetilde{R}:N\to L$.  Meanwhile, 
from the definition of $\tilde{R}$, it is immediate that we have $\gamma_N=\tilde{R}\circ\sigma^{\tilde{\nu}}_{\frac{i}2}$, 
which is essentially like a polar decomposition.

Next, consider the n.s.f. weight $\tilde{\mu}$ on $L$, extending the functional $\mu$ on ${\mathcal C}$.  In an 
analogous way as above, we can consider the extension of the map $S_{\mathcal C}:{\mathcal C}\to{\mathcal B}$ 
to the von Neumann algebra level, namely the densely-defined anti-homomorphism $\gamma_L:L\to N$.  Analogous 
to Equation~\eqref{(gamma_n)} for $\gamma_N$, we can characterize it by 
\begin{equation}\label{(gamma_l)}
\gamma_L(c)=(\operatorname{id}\otimes\tilde{\mu})\bigl((1\otimes c)E\bigr), 
\quad c\in{\mathcal C}.
\end{equation}
In the following proposition, we gather some useful relationships between the weights $\tilde{\nu}$, $\tilde{\mu}$, 
and the maps $\gamma_N$ and $\gamma_L$.

\begin{prop}\label{vngammamaps}
Let the weights $\tilde{\nu}$ on $N$ and $\tilde{\mu}$ on $L$ be the extensions of the functionals $\nu$ and $\mu$, 
and let $\gamma_N:N\to L$ and $\gamma_L:L\to N$ be the densely-defined anti-homomorphisms as in 
Equations~\eqref{(gamma_n)} and \eqref{(gamma_l)}, extending the maps $S_{\mathcal B}$ and $S_{\mathcal C}$.  
Also let $\widetilde{R}=\gamma_N\circ\sigma^{\tilde{\nu}}_{-\frac{i}2}$ be the ${}^*$-anti-isomorphism from $N$ to $L$ 
obtained above. Then
\begin{enumerate}
\item $\tilde{\nu}=\tilde{\mu}\circ\gamma_N$ and $\tilde{\nu}=\tilde{\mu}\circ\widetilde{R}$.
\item $\gamma_N=\widetilde{R}\circ\sigma^{\tilde{\nu}}_{\frac{i}2}
=\sigma^{\tilde{\mu}}_{-\frac{i}2}\circ\widetilde{R}$.
\item $\gamma_L=\sigma^{\tilde{\nu}}_{\frac{i}2}\circ\widetilde{R}^{-1}
=\widetilde{R}^{-1}\circ\sigma^{\tilde{\mu}}_{-\frac{i}2}$.
\item For any $t\in\mathbb{R}$, we have 
$(\sigma^{\tilde{\nu}}_t\otimes\sigma^{\tilde{\mu}}_{-t})(E)=E$.
\item $(\gamma_N\otimes\gamma_L)(E)=\varsigma E$ and $(\gamma_L\otimes\gamma_N)(\varsigma E)=E$.
\item $(\widetilde{R}\otimes\widetilde{R}^{-1})(E)=\varsigma E$ and $(\widetilde{R}^{-1}\otimes\widetilde{R})
(\varsigma E)=E$.
\end{enumerate}
\end{prop}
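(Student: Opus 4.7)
The plan is to lift each algebraic identity involving $S_{\mathcal B}$, $S_{\mathcal C}$, $\nu$ and $\mu$ to its von Neumann counterpart, routing systematically through $\widetilde R$ (which is bounded) rather than through $\gamma_N$ and $\gamma_L$ (which are only densely defined). I tackle the six parts in the order $(1)\Rightarrow(2),(3)\Rightarrow(4)\Rightarrow(5)\Rightarrow(6)$. For (1), I first prove $\widetilde\nu=\widetilde\mu\circ\widetilde R$. Since $\widetilde R$ is a $*$-anti-isomorphism, $\widetilde\mu\circ\widetilde R$ is automatically an n.s.f.\ weight on $N$, so it suffices to match it with $\widetilde\nu$ on the $\sigma$-weakly dense $*$-subalgebra $\pi_B(\mathcal B)$. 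For $b\in\mathcal B$, equation \eqref{(gamma_n)} combined with $E(b\otimes 1)=E(1\otimes S_{\mathcal B}(b))$ and $(\nu\otimes\operatorname{id})(E)=1$ gives $\gamma_N(\pi_B(b))=\pi_C(S_{\mathcal B}(b))$; then $\mu\circ S_{\mathcal B}=\nu$ from \S\ref{sub1.3} yields $\widetilde\mu(\gamma_N(\pi_B(b)))=\nu(b)=\widetilde\nu(\pi_B(b))$. Combining this with $\widetilde\nu\circ\sigma^{\widetilde\nu}_{-i/2}=\widetilde\nu$ (valid on the Tomita core) transports the identity to $\widetilde R$, and normality of both weights closes the argument. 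The second half $\widetilde\nu=\widetilde\mu\circ\gamma_N$ then follows by composing on the right with $\sigma^{\widetilde\nu}_{i/2}$ and using $\widetilde\nu$-invariance again.

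For (2), the identity $\gamma_N=\widetilde R\circ\sigma^{\widetilde\nu}_{i/2}$ is just the defining equation rewritten; the substantive content is $\gamma_N=\sigma^{\widetilde\mu}_{-i/2}\circ\widetilde R$, which reduces by analytic continuation to the real-parameter commutation
$$
\widetilde R\circ\sigma^{\widetilde\nu}_t=\sigma^{\widetilde\mu}_{-t}\circ\widetilde R,\qquad t\in\mathbb R.
$$
This last identity follows from (1) by uniqueness of the modular group: because $\widetilde R$ is a $*$-anti-isomorphism, $t\mapsto\widetilde R^{-1}\circ\sigma^{\widetilde\mu}_{-t}\circ\widetilde R$ is a one-parameter group of automorphisms of $N$ satisfying the KMS condition for $\widetilde\mu\circ\widetilde R=\widetilde\nu$ (the sign flip on $t$ arising from the multiplication reversal of $\widetilde R$), so it must coincide with $\sigma^{\widetilde\nu}_t$. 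Part (3) follows by the same argument with $\widetilde R^{-1}$ and $\gamma_L$ in place of $\widetilde R$ and $\gamma_N$.

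For (4), I apply $S_{\mathcal C}\otimes S_{\mathcal B}$ to $(S_{\mathcal B}\otimes S_{\mathcal C})(E)=\varsigma E$ to obtain the algebraic identity $((\sigma^\nu)^{-1}\otimes\sigma^\mu)(E)=E$, which at the von Neumann level reads $(\sigma^{\widetilde\nu}_i\otimes\sigma^{\widetilde\mu}_{-i})(E)=E$. The family $t\mapsto(\sigma^{\widetilde\nu}_t\otimes\sigma^{\widetilde\mu}_{-t})(E)$, implemented by $(\nabla_{\widetilde\nu}^{it}\otimes\nabla_{\widetilde\mu}^{-it})E(\nabla_{\widetilde\nu}^{-it}\otimes\nabla_{\widetilde\mu}^{it})$, extends strongly analytically in $t$, and equality at the single imaginary time $t=i$ then propagates via a standard analytic-periodicity argument to every $t\in\mathbb R$. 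For (5), I use slicing: for $b\in\mathcal B$ and $c\in\mathcal C$, the element $(1\otimes c)E(b\otimes 1)$ lies in $\mathcal B\odot\mathcal C$, so transporting the algebraic identity $(S_{\mathcal B}\otimes S_{\mathcal C})(E)=\varsigma E$ through such slices yields $(\gamma_N\otimes\gamma_L)(E)=\varsigma E$, with the second half following by symmetry. Finally, for (6), parts (2) and (3) give the factorization $\gamma_N\otimes\gamma_L=(\sigma^{\widetilde\mu}_{-i/2}\otimes\sigma^{\widetilde\nu}_{i/2})\circ(\widetilde R\otimes\widetilde R^{-1})$, so substituting into (5) and applying (4) at $t=-i/2$ isolates $(\widetilde R\otimes\widetilde R^{-1})(E)=\varsigma E$; the second relation of (6) is obtained by inversion.

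The main obstacle throughout will be the rigorous control of the analytic continuations $\sigma^{\widetilde\nu}_{\pm i/2}$ and $\sigma^{\widetilde\mu}_{\pm i/2}$ in all of these compositions, and the verification that $E$ and the relevant slices of it actually sit in the appropriate Tomita cores. In particular, the constancy in $t$ of the family $t\mapsto(\sigma^{\widetilde\nu}_t\otimes\sigma^{\widetilde\mu}_{-t})(E)$ in Step~(4) rests on a careful analytic-extension argument rather than a one-line appeal from the algebraic identity at imaginary time, and the slicing argument in (5) must systematically respect the unboundedness of $\gamma_N$ and $\gamma_L$.
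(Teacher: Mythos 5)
Your parts (1), (2), (5) and (6) essentially follow the paper's own route (with the order inside (1) reversed, and with the uniqueness-of-the-modular-group argument in (2) spelled out where the paper just says ``it is easy to see''), so those are fine modulo the usual level of rigor about extending functionals to weights. Part (3), however, is circular as written. Your ``same argument'' for (2) rests on the defining relation $\widetilde{R}=\gamma_N\circ\sigma^{\tilde{\nu}}_{-\frac{i}2}$, i.e.\ $\gamma_N=\widetilde{R}\circ\sigma^{\tilde{\nu}}_{\frac{i}2}$; there is no analogous defining relation tying $\gamma_L$ to $\widetilde{R}^{-1}$ --- that link is precisely what (3) asserts. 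You need an independent input connecting $\gamma_L$ to $\gamma_N$, namely the separability-idempotent identity $\sigma^{\nu}=S_{\mathcal B}^{-1}\circ S_{\mathcal C}^{-1}$ from \S\ref{sub1.3}, extended to $\sigma^{\tilde{\nu}}_{-i}=\gamma_N^{-1}\circ\gamma_L^{-1}$; this is exactly how the paper obtains $\gamma_L=\sigma^{\tilde{\nu}}_{\frac{i}2}\circ\widetilde{R}^{-1}=\widetilde{R}^{-1}\circ\sigma^{\tilde{\mu}}_{-\frac{i}2}$.

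The more serious gap is in (4), where you genuinely deviate from the paper. The paper proves the real-parameter identity directly: for $b\in{\mathcal D}(\sigma^{\tilde{\nu}}_{\frac{i}2})$ it computes $(\tilde{\nu}\otimes\operatorname{id})\bigl((\sigma^{\tilde{\nu}}_t\otimes\sigma^{\tilde{\mu}}_{-t})(E)(b\otimes1)\bigr)=\gamma_N(b)=(\tilde{\nu}\otimes\operatorname{id})\bigl(E(b\otimes1)\bigr)$, using $\tilde{\nu}\circ\sigma^{\tilde{\nu}}_t=\tilde{\nu}$ and the intertwining relation from (2), and then invokes faithfulness of $\tilde{\nu}$; no analyticity of $E$ is ever needed. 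You instead start from the imaginary-time algebraic identity $\bigl((\sigma^{\nu})^{-1}\otimes\sigma^{\mu}\bigr)(E)=E$ and want to propagate it to all real $t$ by an ``analytic-periodicity'' argument. For that you must first know that the bounded operator $E$ (which is only a multiplier, not an element of ${\mathcal B}\odot{\mathcal C}$) lies in the domain of the analytic generator of $\alpha_t=\sigma^{\tilde{\nu}}_t\otimes\sigma^{\tilde{\mu}}_{-t}$, i.e.\ that $t\mapsto\alpha_t(E)$ admits a bounded analytic extension to the strip with value $E$ at $t=i$. This is not automatic --- a generic projection is not analytic for a modular group --- and the identity in $M({\mathcal B}\odot{\mathcal C})$ does not by itself supply it; you flag this as ``the main obstacle'' but do not fill it, so as it stands the step is missing. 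It could likely be repaired (e.g.\ by pairing against vectors $\Lambda_B(b)\otimes\Lambda_C(c)$ with $b,c$ analytic and running a Liouville/periodicity argument), but note that the paper's direction is the cheap one: once invariance holds for all real $t$, the orbit map is constant, so the imaginary-time version needed in (6) comes for free, whereas your direction is exactly the hard one.
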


\begin{proof}
(1).
Recall that at the ${}^*$-algebra level, we have $\nu=\mu\circ S_{\mathcal B}:{\mathcal B}\to{\mathcal C}$. 
Extending this to the von Neumann algebra level, we have $\tilde{\nu}=\tilde{\mu}\circ\gamma_N$, or equivalently, 
$\tilde{\nu}\circ\gamma_N^{-1}=\tilde{\mu}$. 

From $\widetilde{R}=\gamma_N\circ\sigma^{\tilde{\nu}}_{-\frac{i}2}$, we can write $\gamma_N^{-1}
=\bigl(\widetilde{R}\circ\sigma^{\tilde{\nu}}_{\frac{i}2}\bigr)^{-1}=\sigma^{\tilde{\nu}}_{-\frac{i}2}\circ\widetilde{R}^{-1}$.  
So from $\tilde{\nu}\circ\gamma_N^{-1}=\tilde{\mu}$, we have 
$\tilde{\nu}\circ\sigma^{\tilde{\nu}}_{-\frac{i}2}\circ\widetilde{R}^{-1}=\tilde{\mu}$. 
Since we know $\tilde{\nu}\circ\sigma^{\tilde{\nu}}_t=\tilde{\nu}$, $\forall t$, it follows that 
$\tilde{\nu}\circ\widetilde{R}^{-1}=\tilde{\mu}$.  Or equivalently, $\tilde{\nu}=\tilde{\mu}\circ\widetilde{R}$.

(2). Since $\tilde{\nu}=\tilde{\mu}\circ\widetilde{R}$, it is easy to see that the modular automorphism groups have 
the following relation:
\begin{equation}\label{(sigma^nusigma^mu)}
\sigma^{\tilde{\nu}}_t=\widetilde{R}^{-1}\circ\sigma^{\tilde{\mu}}_{-t}\circ\widetilde{R}, 
\quad \forall t\in\mathbb{R}.
\end{equation}
We already know $\gamma_N=\widetilde{R}\circ\sigma^{\tilde{\nu}}_{\frac{i}2}$, which is immediate from the definition 
of $\widetilde{R}$.  Moreover, by Equation~\eqref{(sigma^nusigma^mu)}, we have $\gamma_N
=\widetilde{R}\circ\bigl(\widetilde{R}^{-1}\circ\sigma^{\tilde{\mu}}_{-\frac{i}2}\circ\widetilde{R}\bigr)
=\sigma^{\tilde{\mu}}_{-\frac{i}2}\circ\widetilde{R}$.

(3). At the ${}^*$-algebra level, from Equation~\eqref{(nu_weakKMS)}, we know about the KMS-type automorphism 
$\sigma^{\nu}=S_{\mathcal B}^{-1}\circ S_{\mathcal C}^{-1}$.  At the von Neumann algebra level, this extends to 
$\sigma^{\tilde{\nu}}_{-i}=\gamma_N^{-1}\circ\gamma_L^{-1}$.  We thus have 
$$
\gamma_L=\bigl(\gamma_N\circ\sigma^{\tilde{\nu}}_{-i}\bigr)^{-1}=\sigma^{\tilde{\nu}}_{i}\circ\gamma_N^{-1}
=\sigma^{\tilde{\nu}}_{i}\circ\bigl(\widetilde{R}\circ\sigma^{\tilde{\nu}}_{\frac{i}2}\bigr)^{-1}
=\sigma^{\tilde{\nu}}_{i}\circ\sigma^{\tilde{\nu}}_{-\frac{i}2}\circ\widetilde{R}^{-1}
=\sigma^{\tilde{\nu}}_{\frac{i}2}\circ\widetilde{R}^{-1}.
$$
Alternatively, by Equation~\eqref{(sigma^nusigma^mu)}, we have 
$\gamma_L=\bigl(\widetilde{R}^{-1}\circ\sigma^{\tilde{\mu}}_{-\frac{i}2}\circ
\widetilde{R}\bigr)\circ\widetilde{R}^{-1}
=\widetilde{R}^{-1}\circ\sigma^{\tilde{\mu}}_{-\frac{i}2}$.

(4). For arbitrary $b\in{\mathcal D}(\sigma^{\tilde{\nu}}_{\frac{i}2})$ and $t\in\mathbb{R}$, observe that
\begin{align}
&(\tilde{\nu}\otimes\operatorname{id})\bigl((\sigma^{\tilde{\nu}}_t\otimes
\sigma^{\tilde{\mu}}_{-t})(E)(b\otimes1)\bigr)
\notag \\
&=(\tilde{\nu}\otimes\operatorname{id})\bigl((\sigma^{\tilde{\nu}}_t\otimes\sigma^{\tilde{\mu}}_{-t})
(E(\sigma^{\tilde{\nu}}_{-t}(b)\otimes1))\bigr)=(\tilde{\nu}\otimes\operatorname{id})
\bigl((\operatorname{id}\otimes\sigma^{\tilde{\mu}}_{-t})
(E(\sigma^{\tilde{\nu}}_{-t}(b)\otimes1))\bigr)  
\notag \\
&=\sigma^{\tilde{\mu}}_{-t}\bigl(\gamma_N(\sigma^{\tilde{\nu}}_{-t}(b))\bigr)
=\bigl(\sigma^{\tilde{\mu}}_{-t}\circ\widetilde{R}\circ\sigma^{\tilde{\nu}}_{\frac{i}2}
\circ\sigma^{\tilde{\nu}}_{-t}\bigr)(b)
=\bigl(\sigma^{\tilde{\mu}}_{-t}\circ\widetilde{R}\circ\sigma^{\tilde{\nu}}_{-t}\circ
\sigma^{\tilde{\nu}}_{\frac{i}2}\bigr)(b)
\notag \\
&=\bigl((\widetilde{R}\circ\sigma^{\tilde{\nu}}_{t}\circ\widetilde{R}^{-1})\circ
\widetilde{R}\circ\sigma^{\tilde{\nu}}_{-t}\circ\sigma^{\tilde{\nu}}_{\frac{i}2}\bigr)(b)
=(\widetilde{R}\circ\sigma^{\tilde{\nu}}_{\frac{i}2})(b)   \notag \\
&=\gamma_N(b) 
=(\tilde{\nu}\otimes\operatorname{id})\bigl(E(b\otimes1)\bigr).
\notag
\end{align}
We used the fact that $\sigma^{\tilde{\nu}}_t$ is an automorphism for the first equality; for the second, 
we used $\tilde{\nu}\circ\sigma^{\tilde{\nu}}_t=\tilde{\nu}$; and in the rest, we used the definition of $\gamma_N$ 
and Equation~\eqref{(sigma^nusigma^mu)}.  This is true for any $b\in{\mathcal D}(\sigma^{\tilde{\nu}}_{\frac{i}2})$ 
and $\tilde{\nu}$ is faithful, so we see that $(\sigma^{\tilde{\nu}}_t\otimes\sigma^{\tilde{\mu}}_{-t})(E)=E$, 
for any $t\in\mathbb{R}$.

(5). At the ${}^*$-algebra level, it is known that $(S_{\mathcal B}\otimes S_{\mathcal C})(E)=\varsigma E$, 
where $\varsigma$ is the flip map between ${\mathcal B}\odot{\mathcal C}$ and ${\mathcal C}\odot{\mathcal B}$.
It thus follows that at the von Neumann algebra level, we have $(\gamma_N\otimes\gamma_L)(E)=\varsigma E$. 
Also $(\gamma_L\otimes\gamma_N)(\varsigma E)=E$.

(6). Combine the results of (4) and (5).  Since $\widetilde{R}=\gamma_N\circ\sigma^{\tilde{\nu}}_{-\frac{i}2}$ 
and $\widetilde{R}^{-1}=\gamma_L\circ\sigma^{\tilde{\mu}}_{\frac{i}2}$, we have:
$$
(\widetilde{R}\otimes\widetilde{R}^{-1})(E)=(\gamma_N\otimes\gamma_L)
\bigl((\sigma^{\tilde{\nu}}_{-\frac{i}2}\otimes\sigma^{\tilde{\mu}}_{\frac{i}2})(E)\bigr)
=(\gamma_N\otimes\gamma_L)(E)=\varsigma E.
$$
We also have $(\widetilde{R}^{-1}\otimes\widetilde{R})(\varsigma E)=E$.
\end{proof}

\subsection{The KMS weights on the $C^*$-algebras $B$ and $C$}\label{sub2.4}

Note that by restricting the weight $\tilde{\nu}$ on $N=\pi_B({\mathcal B})''$ to the $C^*$-algebra 
$B=\overline{\pi_B({\mathcal B})}^{\|\ \|}$, represented on the same Hilbert space ${\mathcal H}_B$, we obtain 
a faithful lower semi-continuous weight. For convenience, we will denote this weight by $\nu$, as it is also 
an extension of the functional $\nu$ at the ${}^*$-algebra level.  We can consider the associated spaces 
${\mathfrak N}_{\nu}=\bigl\{x\in B:\nu(x^*x)<\infty\bigr\}$ and ${\mathfrak M}_{\nu}
={\mathfrak N}_{\nu}^*{\mathfrak N}_{\nu}$.  

We can also consider the operator $T_{\nu}$, the closure of $\Lambda_B(x)\mapsto\Lambda_B(x^*)$, 
$x\in{\mathcal B}$.  It is apparent that it will exactly coincide with $T_{\tilde{\nu}}$ earlier, and the polar 
decomposition will also remain exactly same, $T_{\nu}=J_{\nu}\nabla_{\nu}^{\frac12}$, with 
$\nabla_{\nu}=\nabla_{\tilde{\nu}}$ and $J_{\tilde{\nu}}=J_{\nu}$.  However, the stumbling issue 
in \S\ref{sub2.2} was the question whether the associated modular automorphism group $\sigma^{\nu}_t:x
\mapsto\nabla_{\nu}^{it}x\nabla_{\nu}^{-it}$ leaves the $C^*$-algebra $B$ invariant, and whether 
$\bigl(\sigma^{\nu}_t\bigr)_{t\in\mathbb{R}}$ forms a norm-continuous one-parameter group.  With 
the results gathered in \S\ref{sub2.3}, we are now in a position to resolve this issue in the affirmative.

\begin{prop}\label{KMSweightnu}
Consider the weight $\nu$ on $B$, restricted from $\tilde{\nu}$ on $N$. Then
\begin{enumerate}
\item The automorphism group $(\sigma^{\tilde{\nu}}_t)_{t\in\mathbb{R}}$ leaves $B$ invariant. So we can 
consider $\sigma^{\nu}_t:=\sigma^{\tilde{\nu}}_t|_B$, for $t\in\mathbb{R}$.
\item $\nu$ becomes a KMS weight on $B$, equipped with the automorphism group 
$(\sigma^{\nu}_t)_{t\in\mathbb{R}}$, which is norm-continuous.
\end{enumerate}
\end{prop}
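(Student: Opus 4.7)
The plan splits into establishing (1), passing to the restriction $\sigma^{\nu}_t := \sigma^{\tilde{\nu}}_t|_B$, and then checking that this restricted flow together with $\nu$ satisfies the axioms of a KMS weight — with norm continuity being the only delicate point.

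For (1), I would take a generic element of $B$ in the form $x = (\operatorname{id} \otimes \omega)(E)$ with $\omega \in {\mathcal B}({\mathcal H}_C)_*$, as in the second description in Definition~\ref{BandC}. The key input is Proposition~\ref{vngammamaps}(4), namely $(\sigma^{\tilde{\nu}}_t \otimes \sigma^{\tilde{\mu}}_{-t})(E) = E$, which rearranges to $(\sigma^{\tilde{\nu}}_t \otimes \operatorname{id})(E) = (\operatorname{id} \otimes \sigma^{\tilde{\mu}}_t)(E)$. Sliding the normal automorphism $\sigma^{\tilde{\nu}}_t$ through the slice map gives
$$
\sigma^{\tilde{\nu}}_t(x) = (\operatorname{id} \otimes \omega \circ \sigma^{\tilde{\mu}}_t)(E),
$$
which is again a generator of $B$, since $\omega \circ \operatorname{Ad}(\nabla_{\tilde{\mu}}^{it})$ is a normal functional on ${\mathcal B}({\mathcal H}_C)$. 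Passing to norm-closed linear span and using that $\sigma^{\tilde{\nu}}_t$ is isometric yields $\sigma^{\tilde{\nu}}_t(B) = B$. This gives (1), and lets us define $\sigma^{\nu}_t := \sigma^{\tilde{\nu}}_t|_B$ as a one-parameter group of ${}^*$-automorphisms of $B$.

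For (2), faithfulness and lower semicontinuity of $\nu$ on $B$, together with $\sigma^{\nu}_t$-invariance of $\nu$ and the KMS boundary condition, descend from the corresponding vN-level properties of $\tilde{\nu}$. The non-trivial point is norm continuity of $(\sigma^{\nu}_t)_{t\in\mathbb{R}}$. Specializing the identity above to vector-state functionals $\omega = \omega_{\xi,\eta}$ for $\xi, \eta \in {\mathcal H}_C$, I would write
$$
\sigma^{\nu}_t(x) - x = \bigl(\operatorname{id} \otimes (\omega_{\nabla_{\tilde{\mu}}^{-it}\xi,\,\nabla_{\tilde{\mu}}^{-it}\eta} - \omega_{\xi,\eta})\bigr)(E),
$$
and combine the slice estimate $\|(\operatorname{id}\otimes\omega')(E)\| \leq \|\omega'\|$ (using $\|E\| \leq 1$) with the elementary bound $\|\omega_{\xi',\eta'} - \omega_{\xi,\eta}\| \leq \|\xi'-\xi\|\,\|\eta'\| + \|\xi\|\,\|\eta'-\eta\|$. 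The strong continuity of the unitary group $\nabla_{\tilde{\mu}}^{it}$ then forces $\|\sigma^{\nu}_t(x) - x\| \to 0$ as $t \to 0$. Since the linear span of such $x$ is norm dense in $B$ and each $\sigma^{\nu}_t$ is an isometry, a standard $\varepsilon/3$-argument promotes norm continuity to all of $B$.

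The main obstacle is clearly the norm continuity: modular groups of weights restricted to $C^*$-subalgebras are in general only $\sigma$-strongly continuous, and here it is rescued precisely by the combination of the explicit description of $B$ as slices of the fixed element $E$ and Proposition~\ref{vngammamaps}(4), which together convert time-evolution on $B$ into time-evolution in the functional argument $\omega$ — reducing the whole question to strong continuity of $\nabla_{\tilde{\mu}}^{it}$ on individual vectors of ${\mathcal H}_C$.
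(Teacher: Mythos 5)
Your proposal is correct and follows essentially the same route as the paper: both parts rest on the identity $(\sigma^{\tilde{\nu}}_t\otimes\sigma^{\tilde{\mu}}_{-t})(E)=E$ from Proposition~\ref{vngammamaps}\,(4), which converts $\sigma^{\tilde{\nu}}_t$ acting on a slice $(\operatorname{id}\otimes\omega)(E)$ into composing $\omega$ with $\sigma^{\tilde{\mu}}_t$, giving invariance of $B$ and then norm continuity. Your explicit estimate with vector functionals and the strong continuity of $\nabla_{\tilde{\mu}}^{it}$ simply spells out the step the paper states tersely (that Equation~\eqref{(KMSweightnu_eqn)} together with strong continuity yields norm continuity), and the remaining KMS properties, including semi-finiteness via the dense subalgebra ${\mathcal B}$, descend from $\tilde{\nu}$ exactly as in the paper.
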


\begin{proof}
(1). Consider $\omega\in{\mathcal B}({\mathcal H}_C)_*$ and consider $(\operatorname{id}\otimes\omega)(E)\in B$. 
Such elements are dense in $B$.  For any $t\in\mathbb{R}$ we know from Proposition~\ref{vngammamaps}\,(4) 
that $(\sigma^{\tilde{\nu}}_{-t}\otimes\sigma^{\tilde{\mu}}_{t})(E)=E$.  we thus have
\begin{equation}\label{(KMSweightnu_eqn)}
\sigma^{\tilde{\nu}}_t\bigl((\operatorname{id}\otimes\omega)(E)\bigr)
=\sigma^{\tilde{\nu}}_t\bigl((\operatorname{id}\otimes\omega)
[(\sigma^{\tilde{\nu}}_{-t}\otimes\sigma^{\tilde{\mu}}_{t})(E)]\bigr)
=\bigl(\operatorname{id}\otimes(\omega\circ\sigma^{\tilde{\mu}}_t)\bigr)(E)\in B.
\end{equation}
This shows that $\sigma^{\tilde{\nu}}_t(B)=B$, for all $t\in\mathbb{R}$.  We will just write 
$\sigma^{\nu}_t:=\sigma^{\tilde{\nu}}_t|_B$.

(2). As we noted above, it is clear that $\nu$ is a faithful lower semi-continuous weight because $\tilde{\nu}$ is 
an n.s.f.~weight.  In addition, we know that $\nu$ is semi-finite because it extends the distinguished functional $\nu$, 
which is defined on a dense subalgebra ${\mathcal B}\subseteq B$.  

Meanwhile, since $\sigma^{\tilde{\nu}}_t(B)=B$, we can consider the one-parameter group of automorphisms 
$(\sigma^{\nu}_t)_{t\in\mathbb{R}}$.  At present, we only know that it is strongly continuous.  But, the strong 
continuity together with Equation~\eqref{(KMSweightnu_eqn)} show us that $t\mapsto\sigma^{\nu}_t(b)$ 
is indeed norm-continuous.

Finally, it is evident that $\nu$ satisfies appropriate KMS properties, by inheriting the properties of the n.s.f. weight 
$\tilde{\nu}$ and the automorphism group $(\sigma^{\tilde{\nu}}_t)_{t\in\mathbb{R}}$.  In particular, it is obvious 
that $\nu\circ\sigma^{\nu}_t=\nu$, and that for any $x\in{\mathcal D}(\sigma^{\nu}_{\frac{i}2})$, we have 
$$
\nu(x^*x)=\nu\bigl(\sigma^{\nu}_{\frac{i}2}\sigma^{\nu}_{\frac{i}2}(x)^*\bigr).
$$
In this way, we show that $\nu$ is a KMS weight (see \cite{Tk2}, \cite{KuVaweightC*}, 
\cite{KuKMS}).
\end{proof}

Note, by the way, that the ${}^*$-anti-isomorphism $\widetilde{R}:N\to L$ can be restricted to the $C^*$-algebra 
level.  So consider $R:=\widetilde{R}|_B$.  Then for $(\operatorname{id}\otimes\omega)(E)\in B$, we have:
\begin{align}
R\bigl((\operatorname{id}\otimes\omega)(E)\bigr)&=\widetilde{R}\bigl((\omega\otimes\operatorname{id})(\varsigma E)\bigr)
\notag \\
&=\widetilde{R}\bigl((\omega\otimes\operatorname{id})[(\widetilde{R}\otimes\widetilde{R}^{-1})(E)]\bigr)
=\bigl((\omega\circ\widetilde{R})\otimes\operatorname{id}\bigr)(E)\in C, 
\notag
\end{align}
where we used the result of Proposition~\ref{vngammamaps}\,(6).  This shows that $\widetilde{R}:N\to L$ restricts 
to $R:B\to C$.  It becomes a $C^*$-anti-isomorphism.

This means that together with the ${}^*$-anti-isomorphism $R:B\to C$ and the KMS weight $\nu$ on $B$, it turns out 
that $(E,B,\nu)$ forms a {\em separability triple}, in the sense of \cite{BJKVD_SepId}:

\begin{prop}\label{Esepid}
The restriction $R=\widetilde{R}|_B:B\to C$ is a $C^*$-anti-isomorhism. The self-adjoint idempotent $E\in M(B\otimes C)$ 
is a separability idempotent, in the sense that 
\begin{enumerate}
\item $(\nu\otimes\operatorname{id})(E)=1$
\item For $b\in{\mathcal D}(\sigma^{\nu}_{\frac{i}2})$ we have: 
$(\nu\otimes\operatorname{id})\bigl(E(b\otimes1)\bigr)=(R\circ\sigma^{\nu}_{\frac{i}2})(b)$.
\end{enumerate}
\end{prop}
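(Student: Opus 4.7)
The plan is to obtain both assertions by restricting the structure built at the von Neumann algebra level in \S\ref{sub2.3} down to the $C^*$-subalgebras $B$ and $C$. The essential tools are the identity $(\widetilde{R}\otimes\widetilde{R}^{-1})(E)=\varsigma E$ from Proposition~\ref{vngammamaps}(6) and the polar-decomposition-type formula $\gamma_N=\widetilde{R}\circ\sigma^{\tilde{\nu}}_{i/2}$ from Proposition~\ref{vngammamaps}(2).

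For the first claim that $R=\widetilde{R}|_B:B\to C$ is a $C^*$-anti-isomorphism, I note that the computation immediately preceding the statement already shows that $R$ sends the norm-dense subset $\{(\operatorname{id}\otimes\omega)(E):\omega\in\mathcal{B}(\mathcal{H}_C)_*\}$ of $B$ into $C$. Since $\widetilde{R}$ is automatically norm-continuous (a $^*$-anti-isomorphism between von Neumann algebras is bounded), this extends by density to $R(B)\subseteq C$. A symmetric computation using $(\widetilde{R}^{-1}\otimes\widetilde{R})(\varsigma E)=E$ shows $\widetilde{R}^{-1}(C)\subseteq B$, yielding bijectivity. Thus $R$ is a $C^*$-anti-isomorphism between $B$ and $C$.

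For the separability assertions, part (1) is the lift of the purely algebraic identity $(\nu\otimes\operatorname{id})(E)=1_{M(\mathcal{C})}$ established in \S\ref{sub1.3}. Since the weight $\nu$ on $B$ extends the algebraic $\nu$ on $\mathcal{B}$ and $E\in M(B\otimes C)$ is literally the same projection $\Pi(E)$ on $\mathcal{H}_B\otimes\mathcal{H}_C$, the identity survives, understood as a weight-valued slice map identity in the $C^*$-framework of \cite{KuVaweightC*}. For part (2), the definition $\gamma_N(b)=(\tilde{\nu}\otimes\operatorname{id})(E(b\otimes 1))$ — originally recorded for $b\in\pi_B(\mathcal{B})$ in Equation~\eqref{(gamma_n)} — extends naturally to the analytic domain $\mathcal{D}(\sigma^{\tilde{\nu}}_{i/2})$ by the polar decomposition $\gamma_N=\widetilde{R}\circ\sigma^{\tilde{\nu}}_{i/2}$ (the right-hand side being defined precisely there). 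Taking $b\in\mathcal{D}(\sigma^{\nu}_{i/2})$, which equals $\mathcal{D}(\sigma^{\tilde{\nu}}_{i/2})\cap B$ by Proposition~\ref{KMSweightnu}, and using $R=\widetilde{R}|_B$ together with $\sigma^{\nu}_{i/2}=\sigma^{\tilde{\nu}}_{i/2}|_B$, the desired equation drops out directly.

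The main obstacle is purely a bookkeeping one: justifying the slice map $(\nu\otimes\operatorname{id})$ when $\nu$ is an unbounded KMS weight and the element $E$ is not a priori in the Pedersen ideal of $B\otimes C$. Following the $C^*$-algebraic weight machinery of \cite{KuVaweightC*}, one pairs the proposed identities against sufficiently many $\omega$ in the appropriate dual space and reduces everything to bounded computations involving elements in the Tomita $^*$-algebra $\mathcal{T}_{\tilde{\nu}}$, where all the manipulations above are unambiguous.
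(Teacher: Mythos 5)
Your proposal is correct and follows essentially the same route as the paper: restrict $\widetilde{R}$ and $\sigma^{\tilde{\nu}}$ to $B$ via the computation with $(\widetilde{R}\otimes\widetilde{R}^{-1})(E)=\varsigma E$ and Proposition~\ref{KMSweightnu}, note that $\nu$ on $B$ extends the distinguished functional to get (1), and identify $R\circ\sigma^{\nu}_{\frac{i}2}$ with $\gamma_N|_B$, the extension of $S_{\mathcal B}$, to get (2). The extra remarks on the weight-theoretic slice maps are just a more explicit version of what the paper leaves implicit.
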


\begin{proof}
We showed that $R$ and $\sigma^{\nu}$ are now valid at the level of the $C^*$-algebra $B$. Then (1) is just 
recognizing the fact that $\nu$ extends the distinguished functional $\nu$ on ${\mathcal B}$, and (2) is just noting 
that $R\circ\sigma^{\nu}_{\frac{i}2}=\gamma_B=\gamma_L|_B$, extending $S_{\mathcal B}$.
\end{proof}

\begin{rem}
While we gave results only regarding the weight $\nu$ on $B$, a very much similar argument can 
be given for the weight $\mu$ on the $C^*$-algebra $C$, as a restriction of $\tilde{\mu}$ on $L$. It would extend 
the distinguished functional $\mu$, and become a KMS weight on the $C^*$-algebra $C$,  equipped with the 
norm-continuous one-parameter group $(\sigma^{\mu}_t)_{t\in\mathbb{R}}$ given by the modular operator.
\end{rem}

In Proposition~\ref{Banalyticelements}, we saw that the elements in ${\mathcal B}$ are analytic elements for $\nu$. 
Similarly, the elements of ${\mathcal C}$ are analytic elements for $\mu$.  These results suggest that 
the elements in $\Lambda_B({\mathcal B})$ and $\Lambda_C({\mathcal C})$ are {\em right bounded vectors\/} 
in ${\mathcal H}_B$ and ${\mathcal H}_C$, respectively. (See Definition~1.7 of \cite{Tk2} for the notion of right bounded 
vectors.) To make this point clearer, see the proposition below. 

\begin{prop}\label{BCrightbounded}
\begin{itemize}
\item For any $b\in{\mathcal B}$, the vector $\Lambda_B(b)\in{\mathcal H}_B$ is right bounded. This means that 
the map $\pi^R_B(b):\Lambda_B(x)\mapsto\Lambda_B(xb)$ is a bounded operator on ${\mathcal H}_B$.
\item For any $c\in{\mathcal C}$, the vector $\Lambda_C(c)\in{\mathcal H}_C$ is right bounded. Or the map 
$\pi^R_C(c):\Lambda_C(y)\mapsto\Lambda_C(yc)$ is a bounded operator on ${\mathcal H}_C$. 
\end{itemize}
\end{prop}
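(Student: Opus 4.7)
The plan is to exhibit, for each $b\in{\mathcal B}$, an explicit bounded operator on ${\mathcal H}_B$ that implements right multiplication by $b$ on the dense subspace $\Lambda_B({\mathcal B})$; then the same recipe with $\mu$ in place of $\nu$ will give the case of ${\mathcal C}$. The main ingredients are the modular conjugation $J_\nu$ with its characterization $J_\nu\Lambda_B(x)=\Lambda_B\bigl(\sigma^{\nu}_{\frac{i}2}(x)^*\bigr)$, and the full analyticity of elements of ${\mathcal B}$ with respect to $\sigma^\nu$ established in Proposition~\ref{analyticelements}.

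More concretely, for $b\in{\mathcal B}$ the element $\sigma^{\nu}_{-\frac{i}2}(b^*)$ again belongs to ${\mathcal B}\subseteq B$, so $\pi_B\bigl(\sigma^{\nu}_{-\frac{i}2}(b^*)\bigr)$ is a bounded operator on ${\mathcal H}_B$, and the conjugated operator
\[
T_b:=J_\nu\,\pi_B\bigl(\sigma^{\nu}_{-\frac{i}2}(b^*)\bigr)\,J_\nu
\]
is bounded with $\|T_b\|\le\bigl\|\sigma^{\nu}_{-\frac{i}2}(b^*)\bigr\|$. I would then compute directly that $T_b\Lambda_B(x)=\Lambda_B(xb)$ for every $x\in{\mathcal B}$, using the identities $\sigma^{\nu}_z(a)^*=\sigma^{\nu}_{\bar z}(a^*)$ (valid on analytic elements, in particular on ${\mathcal B}$) and the fact that each $\sigma^{\nu}_z$ is an algebra homomorphism on the analytic domain. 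The calculation chains together
\[
J_\nu\Lambda_B(x)=\Lambda_B\bigl(\sigma^{\nu}_{-\frac{i}2}(x^*)\bigr),\quad \pi_B\bigl(\sigma^{\nu}_{-\frac{i}2}(b^*)\bigr)\Lambda_B\bigl(\sigma^{\nu}_{-\frac{i}2}(x^*)\bigr)=\Lambda_B\bigl(\sigma^{\nu}_{-\frac{i}2}(b^*x^*)\bigr),
\]
and a final application of $J_\nu$ undoes $\sigma^{\nu}_{-\frac{i}2}$ together with the involution, producing $\Lambda_B\bigl((b^*x^*)^*\bigr)=\Lambda_B(xb)$. This identity on the dense subspace $\Lambda_B({\mathcal B})$ shows that the densely defined right multiplication $\pi^R_B(b):\Lambda_B(x)\mapsto\Lambda_B(xb)$ extends to the bounded operator $T_b$, which is exactly the content of $\Lambda_B(b)$ being right bounded.

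The argument for ${\mathcal C}$ is formally identical: for $c\in{\mathcal C}$, which is analytic for $\sigma^\mu$ by Proposition~\ref{analyticelements}, the operator
\[
T_c:=J_\mu\,\pi_C\bigl(\sigma^{\mu}_{-\frac{i}2}(c^*)\bigr)\,J_\mu
\]
is bounded and implements $\Lambda_C(y)\mapsto\Lambda_C(yc)$ on the dense subspace $\Lambda_C({\mathcal C})$, so $\Lambda_C(c)$ is right bounded.

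I expect the only delicate point is the bookkeeping of the analytic generators: making sure that the correct complex parameter ($\pm\tfrac{i}2$) is used, and that the relation $\sigma^{\nu}_z(a)^*=\sigma^{\nu}_{\bar z}(a^*)$ is applied in the right place when pushing the involution through $\sigma^{\nu}_{-\frac{i}2}$ and through $J_\nu$. Once these identifications are handled, nothing else is needed beyond the analyticity statement of Proposition~\ref{analyticelements} and the standard description of $J_\nu$ recalled in \S\ref{sub2.2}.
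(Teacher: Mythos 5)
Your argument is correct in substance, but it is a genuinely different route from the one taken in the paper. The paper avoids modular theory altogether: it uses the unitary $\widehat{J}_B:{\mathcal H}_B\to{\mathcal H}_C$, $\widehat{J}_B\Lambda_B(x)=\Lambda_C\bigl(S_{\mathcal B}(x)\bigr)$, already constructed in \S\ref{sub2.1} from the fact that $S_{\mathcal B}$ is isometric for the inner products given by $\nu$ and $\mu$ (a consequence of $\mu\circ S_{\mathcal B}=\nu$), and then exhibits the right multiplication directly as $\pi^R_B(b)=[\widehat{J}_B]^*\pi_C\bigl(S_{\mathcal B}(b)\bigr)\widehat{J}_B$, using only the anti-homomorphism property $S_{\mathcal B}(xb)=S_{\mathcal B}(b)S_{\mathcal B}(x)$; the case of ${\mathcal C}$ is handled by the same operator conjugated the other way. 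Your proof instead runs the standard Tomita--Takesaki computation $T_b=J_\nu\,\sigma^{\nu}_{-\frac{i}2}(b^*)\,J_\nu$, which is perfectly fine given Proposition~\ref{analyticelements} and the characterization of $J_\nu$ recalled in \S\ref{sub2.2}; it is more ``generic'' (it works for any analytic element of any KMS weight, with no reference to the separability idempotent), whereas the paper's proof is more elementary and stays entirely within the algebraic data $S_{\mathcal B}$, $\nu$, $\mu$, with no analytic continuation needed. Two small points of bookkeeping in your version: there is no reason that $\sigma^{\nu}_{-\frac{i}2}(b^*)$ lies in the purely algebraic subalgebra ${\mathcal B}$ (only integer powers of $i$ reproduce the algebraic automorphism $\sigma^{\nu}$, which preserves ${\mathcal B}$); what Proposition~\ref{analyticelements} gives, and what you actually need, is that it lies in the $C^*$-algebra $B$, hence is bounded on ${\mathcal H}_B$. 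Likewise, in the chain of identities you should regard $\Lambda_B$ as the extended GNS map $\Lambda_{\nu}$ and check that the intermediate elements such as $\sigma^{\nu}_{-\frac{i}2}(x^*)$ lie in ${\mathfrak N}_{\nu}$ so that the module property and the formula $J_\nu\Lambda_{\nu}(y)=\Lambda_{\nu}\bigl(\sigma^{\nu}_{\frac{i}2}(y)^*\bigr)$ apply; this is standard for analytic elements but deserves a sentence. With those adjustments your proof goes through.
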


\begin{proof}
(1). For any $x\in{\mathcal B}$, we know $\pi_B(x)\Lambda_B(b)=\Lambda_B(xb)$.  Recall next the unitary 
operator $\widehat{J}_B:{\mathcal H}_B\to{\mathcal H}_C$ defined earlier (see \S\ref{sub2.1}), given by 
$\widehat{J}_B\Lambda_B(x)=\Lambda_C\bigl(S_{\mathcal B}(x)\bigr)$.  Note that we can write:
\begin{align}
[\widehat{J}_B]^*\pi_C\bigl(S_{\mathcal B}(b)\bigr)\widehat{J}_B\Lambda_B(x)
&=[\widehat{J}_B]^*\pi_C\bigl(S_{\mathcal B}(b)\bigr)\Lambda_C\bigl(S_{\mathcal B}(x)\bigr)
=[\widehat{J}_B]^*\Lambda_C\bigl(S_{\mathcal B}(b)S_{\mathcal B}(x)\bigr)  \notag \\
&=[\widehat{J}_B]^*\Lambda_C\bigl(S_{\mathcal B}(xb)\bigr)
=\Lambda_B\bigl((S_{\mathcal B}^{-1}\circ S_{\mathcal B})(xb)\bigr)
=\Lambda_B(xb).
\notag
\end{align}
Combining, we observe that $\pi_B(x)\Lambda_B(b)=\Lambda_B(xb)=\pi^R_B(b)\Lambda_B(x)$, where 
$\pi^R_B(b)$ is the bounded operator $[\widehat{J}_B]^*\pi_C\bigl(S_{\mathcal B}(b)\bigr)\widehat{J}_B$. 
This proves that $\Lambda_B(b)$, $b\in{\mathcal B}$, is right bounded in ${\mathcal H}_B$.

(2). Similarly, we can show that $\pi_C(y)\Lambda_C(c)=\Lambda_C(yc)=\pi^R_C(c)\Lambda_C(y)$, 
where $\pi^R_C(c)=\widehat{J}_B\pi_B\bigl(S_{\mathcal B}^{-1}(c)\bigr)[\widehat{J}_B]^*$, 
a bounded operator.  So $\Lambda_C(c)$, $c\in{\mathcal C}$, is right bounded in 
${\mathcal H}_C$.
\end{proof}

\section{The $C^*$-bialgebra $(A,\Delta)$}\label{sec3}

Recall that our weak multiplier Hopf ${}^*$-algebra $({\mathcal A},\Delta)$ is equipped with a faithful positive 
left integral $\varphi$.  As $\varphi$ is a positive linear functional, we can equip ${\mathcal A}$ with an inner product:
$$
\langle x,y\rangle:=\varphi(y^*x), \quad {\text { for $x,y\in{\mathcal A}$.}}
$$
As usual, complete ${\mathcal A}$ with respect to the induced norm, and obtain a Hilbert space ${\mathcal H}$ 
with the natural inclusion $\Lambda:{\mathcal A}\to{\mathcal H}$. (Note that $\Lambda$ is injective because 
$\varphi$ is faithful.)  We are planning to represent our $C^*$-algebra as an operator algebra in 
${\mathcal B}({\mathcal H})$, but at present it is not clear if the left multiplication of the elements of $A$ are 
bounded.  Some work is needed.

\subsection{Representations of $B$ and $C$ on ${\mathcal H}$}\label{sub3.1}

Note that ${\mathcal C}{\mathcal A}={\mathcal A}{\mathcal C}={\mathcal A}$.  This suggests us to define 
the map $\rho_C:{\mathcal A}\to{\mathcal L}({\mathcal H}_C,{\mathcal H})$, by 
$$
\rho_C(a)\Lambda_C(y)=\Lambda(ya), \quad {\text { for $a\in{\mathcal A}$, $y\in{\mathcal C}$.}}
$$
The next proposition shows that $\rho_C(a)$, $a\in{\mathcal A}$,  is bounded.

\begin{prop}\label{rho_C(a)bounded}
Consider $\rho_C:{\mathcal A}\to{\mathcal L}({\mathcal H}_C,{\mathcal H})$ above.  Then $\rho_C(a)$ is 
a bounded element in ${\mathcal L}({\mathcal H}_C,{\mathcal H})$, for any $a\in{\mathcal A}$. 

\end{prop}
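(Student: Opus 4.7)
The plan is to control $\|\rho_C(a)\Lambda_C(y)\|^2=\varphi(a^*y^*ya)$ uniformly in $y\in{\mathcal C}$ by a constant multiple (depending only on $a$) of $\mu(y^*y)=\|\Lambda_C(y)\|^2$; this would give the required boundedness. The central tool is Proposition~\ref{muphinupsi}, $\varphi(x)=\mu\bigl((\operatorname{id}\otimes\varphi)(\Delta x)\bigr)$, which couples the two integrals through the comultiplication.

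First I would apply this identity with $x=a^*y^*ya$. Using that $\Delta$ is a $*$-homomorphism, Proposition~\ref{DeltaonBandC} (which gives $\Delta(y^*y)=(y^*y\otimes 1)E$), and the absorption $E\Delta(a)=\Delta(a)$, one obtains
$$\|\rho_C(a)\Lambda_C(y)\|^2=\mu\bigl((\operatorname{id}\otimes\varphi)(\Delta(a)^*(y^*y\otimes 1)\Delta(a))\bigr).$$
By the left invariance of $\varphi$, the element inside $\mu$ lies in $M({\mathcal C})$. Equivalently, viewing $\omega_a(c):=\varphi(a^*ca)$ as a positive linear functional on ${\mathcal C}$, Proposition~\ref{sepid_RN}(2) furnishes a unique $w_a\in M({\mathcal C})$ with $\omega_a(c)=\mu(cw_a)$, so
$$\|\rho_C(a)\Lambda_C(y)\|^2=\mu(y^*y\,w_a).$$

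The crucial task, and the main obstacle, is to establish that $w_a$ represents a bounded multiplier of the $C^*$-algebra $C$. Granted $\|w_a\|_{M(C)}<\infty$, the desired estimate $\mu(y^*y\,w_a)\le C_a\,\mu(y^*y)$ follows from KMS-type manipulations using the analyticity of $y\in{\mathcal C}$ (Proposition~\ref{analyticelements}) together with the modular automorphism $\sigma^\mu$ of $\mu$ and the covariance relation that $w_a$ satisfies (forced by the positivity of $\omega_a$ and comparison of $\omega_a(c)=\mu(cw_a)$ with $\overline{\omega_a(c^*)}=\mu(w_a^*c)$ via KMS). To obtain the boundedness of $w_a$, I would identify it concretely from the displayed formulas as a $\sigma^\mu$-shift of the positive element $(\operatorname{id}\otimes\varphi)(\Delta(a)^*\Delta(a))\in M({\mathcal C})$, and exhibit this element as a bounded operator on ${\mathcal H}_C$. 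The right-boundedness of vectors in $\Lambda_C({\mathcal C})$ from Proposition~\ref{BCrightbounded}, combined with the fact that elements of ${\mathcal C}$ already act boundedly on ${\mathcal H}_C$ via $\pi_C$ (Definition~\ref{BandC}), should then yield an explicit norm bound on $w_a$ in terms of the data $(\varphi,a,E,S)$, completing the proof.
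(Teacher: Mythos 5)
Your opening reduction is the same as the paper's: rewrite $\|\rho_C(a)\Lambda_C(y)\|^2=\varphi(a^*y^*ya)$ via Proposition~\ref{muphinupsi} and Proposition~\ref{DeltaonBandC}, pull out $y^*y$, and aim for an estimate $\mu(y^*y\,w_a)\le\|\pi_C^R(w_a)\|\,\mu(y^*y)$ coming from right-bounded vectors (Proposition~\ref{BCrightbounded}). The existence of $w_a\in M({\mathcal C})$ with $\varphi(a^*ca)=\mu(c\,w_a)$ via Proposition~\ref{sepid_RN} is also fine. But there is a genuine gap at exactly the point you flag as the ``crucial task'': you never actually produce the $y$-independent bound, and the concrete identification you propose for $w_a$ is wrong. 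The device the paper uses, and which is missing from your argument, is the modular automorphism $\sigma$ of $\varphi$ on the \emph{total} algebra (Proposition~\ref{phi_modularautomorphism}): writing $\varphi(a^*y^*ya)=\varphi\bigl(y^*y\,a\sigma(a^*)\bigr)$ turns the sandwich into a one-sided multiplication, so that $w_a=(\operatorname{id}\otimes\varphi)\bigl(\Delta(a\sigma(a^*))\bigr)$ lies in $M({\mathcal C})$ directly by left invariance, and the estimate then follows in one line from $\mu(y^*y\,w_a)=\langle\pi_C^R(w_a)\Lambda_C(y),\Lambda_C(y)\rangle$. Your substitute --- that $w_a$ is a $\sigma^{\mu}$-shift of $(\operatorname{id}\otimes\varphi)(\Delta(a^*a))$ --- fails in general: using $\Delta(\sigma(x))=(S^2\otimes\sigma)(\Delta x)$ (Proposition~\ref{Deltasigma}), $\varphi\circ\sigma=\varphi$, and $S^2|_{\mathcal C}=\sigma^{\mu}$ (Proposition~\ref{sigmarestriction}), one gets $w_a=\sigma^{\mu}\bigl((\operatorname{id}\otimes\varphi)(\Delta(\sigma^{-1}(a)a^*))\bigr)$, which involves $\sigma^{-1}(a)$ rather than $a$ and coincides with your formula only when $a$ is $\sigma$-invariant (e.g.\ in the trivial-base, multiplier-Hopf case). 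So $\sigma^{\mu}$ alone cannot replace $\sigma$ here.

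The rest of your closing plan is both unexecuted and more complicated than necessary. Once one knows $w_a\in M({\mathcal C})$, no KMS manipulation with analytic elements $y$ is needed: the bound is immediate from the right multiplication operator $\pi_C^R(w_a)=\widehat{J}_B\,\pi_B\bigl(S_{\mathcal B}^{-1}(w_a)\bigr)\widehat{J}_B^{\,*}$, i.e.\ from Proposition~\ref{BCrightbounded} extended to multipliers, which is precisely how the paper concludes (with $\|\rho_C(a)\|\le\|\pi_C^R(w_a)\|^{1/2}$). Conversely, the covariance relation you mention (positivity of $\omega_a$ plus the KMS property of $\mu$ gives $w_a=\sigma^{\mu}(w_a^*)$) is true but does not by itself yield boundedness of the action of $w_a$, so the hinge of your argument remains open. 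To repair the proof, insert the step $\varphi(a^*y^*ya)=\varphi(y^*y\,a\sigma(a^*))$ and proceed as the paper does.
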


\begin{proof}
Let $a\in{\mathcal A}$ and $y\in{\mathcal C}$.  Then
$$
\bigl\|\rho_C(a)\Lambda_C(y)\bigr\|_{\mathcal H}^2=\bigl\langle\Lambda(ya),\Lambda(ya)\bigr\rangle
=\varphi(a^*y^*ya)=\varphi\bigl(y^*ya\sigma(a^*)\bigr),
$$
where $\sigma$ is the modular automorphism for $\varphi$, as noted in Proposition~\ref{phi_modularautomorphism}.  
Apply here the result of Proposition~\ref{muphinupsi}, knowing that the weight $\mu$ extends the functional $\mu$ 
on ${\mathcal C}$. Then we have:
$$
\bigl\|\rho_C(a)\Lambda_C(y)\bigr\|_{\mathcal H}^2
=\varphi\bigl(y^*ya\sigma(a^*)\bigr)=\mu\bigl((\operatorname{id}\otimes\varphi)(\Delta(y^*ya\sigma(a^*)))\bigr),
$$

Note that by Proposition~\ref{DeltaonBandC}, since $y^*y\in{\mathcal C}$, we have $\Delta(y^*y)=(y^*y\otimes1)E$. 
So we have 
$$
\Delta\bigl(y^*ya\sigma(a^*)\bigr)=\Delta(y^*y)\Delta\bigl(a\sigma(a^*)\bigr)
=(y^*y\otimes1)E\Delta\bigl(a\sigma(a^*)\bigr)=(y^*y\otimes1)\Delta\bigl(a\sigma(a^*)\bigr).
$$
Putting this in the previous equation, we see that
$$
\bigl\|\rho_C(a)\Lambda_C(y)\bigr\|_{\mathcal H}^2=\mu\bigl(y^*y(\operatorname{id}
\otimes\varphi)(\Delta(a\sigma(a^*)))\bigr)=\mu(y^*yc)
=\bigl\langle\Lambda_C(yc),\Lambda_C(y)\bigr\rangle_{{\mathcal H}_C},
$$
where $c=(\operatorname{id}\otimes\varphi)\bigl(\Delta(a\sigma(a^*))\bigr)\in M({\mathcal C})$, by the left 
invariance property of $\varphi$.

By Proposition~\ref{BCrightbounded}, we can write $\Lambda_C(yc)=\pi_C^R(c)\Lambda_C(y)$, where 
$\pi_C^R(c)$ is a bounded operator.  So the previous equation becomes:
$$
\bigl\|\rho_C(a)\Lambda_C(y)\bigr\|_{\mathcal H}^2
=\bigl\langle\pi_C^R(c)\Lambda_C(y),\Lambda_C(y)\bigr\rangle_{{\mathcal H}_C}\,
\le\,\bigr\|\pi_C^R(c)\bigr\|\bigl\|\Lambda_C(y)\bigr\|_{{\mathcal H}_C}^2,
$$
showing that $\bigl\|\rho_C(a)\bigr\|\le\bigr\|\pi_C^R(c)\bigr\|^{\frac12}$.
\end{proof}

\begin{cor}
For any $a,b\in{\mathcal A}$, we have 
$\rho_C(b)^*\rho_C(a)\in{\mathcal B}({\mathcal H}_C)$, a bounded operator on ${\mathcal H}_C$.
\end{cor}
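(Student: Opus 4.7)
The corollary is an immediate consequence of the preceding proposition, so the plan is very short. By Proposition~\ref{rho_C(a)bounded}, for every $a\in{\mathcal A}$ the map $\rho_C(a)$ is a bounded element of ${\mathcal L}({\mathcal H}_C,{\mathcal H})$. In particular, taking adjoints, $\rho_C(b)^*\in{\mathcal L}({\mathcal H},{\mathcal H}_C)$ is bounded as well, with $\|\rho_C(b)^*\|=\|\rho_C(b)\|$.

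The desired conclusion then follows by composing the two bounded maps: $\rho_C(b)^*\rho_C(a)$ sends ${\mathcal H}_C$ into ${\mathcal H}_C$ via
\[
{\mathcal H}_C\xrightarrow{\rho_C(a)}{\mathcal H}\xrightarrow{\rho_C(b)^*}{\mathcal H}_C,
\]
and the composition of bounded linear maps between Hilbert spaces is bounded, with the standard norm estimate $\|\rho_C(b)^*\rho_C(a)\|\le\|\rho_C(b)\|\,\|\rho_C(a)\|$. Hence $\rho_C(b)^*\rho_C(a)\in{\mathcal B}({\mathcal H}_C)$.

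There is no real obstacle here; the substantive work was already carried out in the proof of Proposition~\ref{rho_C(a)bounded}, where the bound $\|\rho_C(a)\|\le\|\pi_C^R(c)\|^{\frac12}$ was obtained (with $c=(\operatorname{id}\otimes\varphi)(\Delta(a\sigma(a^*)))\in M({\mathcal C})$) by exploiting the left invariance of $\varphi$, the identity $\mu\circ(\operatorname{id}\otimes\varphi)\circ\Delta=\varphi$ from Proposition~\ref{muphinupsi}, the relation $\Delta(y^*y)=(y^*y\otimes1)E$ from Proposition~\ref{DeltaonBandC}, and the right-boundedness of $\Lambda_C(c)$ from Proposition~\ref{BCrightbounded}. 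The role of this corollary is simply to record the bounded operator $\rho_C(b)^*\rho_C(a)$ on ${\mathcal H}_C$, which will presumably be used in the next subsection to build the $C^*$-algebra $A$ and its representation on ${\mathcal H}$.
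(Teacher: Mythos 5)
Your proposal is correct and follows essentially the same route as the paper: invoke Proposition~\ref{rho_C(a)bounded} for the boundedness of $\rho_C(a)$ and $\rho_C(b)$, take the adjoint of $\rho_C(b)$, and conclude that the composition is a bounded operator on ${\mathcal H}_C$. The extra norm estimate and the recap of the earlier proof are fine but not needed.
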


\begin{proof}
By the previous proposition, we know $\rho_C(a),\rho_C(b)\in{\mathcal L}({\mathcal H}_C,{\mathcal H})$ are 
bounded, which also means $\rho_C(b)^*$ is a bounded element in ${\mathcal L}({\mathcal H},{\mathcal H}_C)$. 
It follows that $\rho_C(b)^*\rho_C(a)$ is also bounded, such that $\rho_C(b)^*\rho_C(a)\in{\mathcal B}({\mathcal H}_C)$.
\end{proof}

See below that any operator of the form $\rho_C(b)^*\rho_C(a)\in{\mathcal B}({\mathcal H}_C)$, 
$a,b\in{\mathcal A}$, commutes with the elements of the $C^*$-algebra 
$C=\overline{\pi_C({\mathcal C})}^{\|\ \|}\,\bigl(\subseteq{\mathcal B}({\mathcal H}_C)\bigr)$.

\begin{prop}\label{Ccommutant}
For $a,b\in{\mathcal A}$, consider $\rho_C(b)^*\rho_C(a)\in{\mathcal B}({\mathcal H}_C)$ as above. 
It commutes with the elements of the $C^*$-algebra $C$, regarded as an operator algebra contained 
in ${\mathcal B}({\mathcal H}_C)$.
\end{prop}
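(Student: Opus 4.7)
The plan is to verify the commutation on a dense subalgebra and then pass to the norm closure. Since $C = \overline{\pi_C(\mathcal{C})}^{\|\,\|}$, it suffices to show that $\rho_C(b)^*\rho_C(a)$ commutes with $\pi_C(c)$ for every $c \in \mathcal{C}$; continuity of the multiplication in $\mathcal{B}(\mathcal{H}_C)$ then extends the commutation to all of $C$.

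To check commutation with $\pi_C(c)$, I would compute matrix coefficients against vectors of the form $\Lambda_C(y), \Lambda_C(y')$ with $y, y' \in \mathcal{C}$, which span a dense subspace of $\mathcal{H}_C$. On the one hand,
\begin{align*}
\bigl\langle \rho_C(b)^*\rho_C(a)\pi_C(c)\Lambda_C(y), \Lambda_C(y') \bigr\rangle
&= \bigl\langle \rho_C(a)\Lambda_C(cy), \rho_C(b)\Lambda_C(y') \bigr\rangle \\
&= \bigl\langle \Lambda(cya), \Lambda(y'b) \bigr\rangle
= \varphi\bigl(b^*{y'}^*cya\bigr),
\end{align*}
using the definition of $\rho_C$ and of the inner product on $\mathcal{H}$ coming from $\varphi$. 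On the other hand,
\begin{align*}
\bigl\langle \pi_C(c)\rho_C(b)^*\rho_C(a)\Lambda_C(y), \Lambda_C(y') \bigr\rangle
&= \bigl\langle \rho_C(a)\Lambda_C(y), \rho_C(b)\pi_C(c^*)\Lambda_C(y') \bigr\rangle \\
&= \bigl\langle \Lambda(ya), \Lambda(c^*y'b) \bigr\rangle
= \varphi\bigl(b^*{y'}^*cya\bigr).
\end{align*}
The two match, giving the required commutation on the dense subspace.

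All products appearing above live in $\mathcal{A}$ (using that $\mathcal{C}\,\mathcal{C} \subseteq \mathcal{C} \subseteq \mathcal{A}$ and $\mathcal{C}\mathcal{A} = \mathcal{A}$), so $\varphi$ can legitimately be evaluated on them. Since $\rho_C(b)^*\rho_C(a)$ and $\pi_C(c)$ are both bounded (the former by the Corollary to Proposition~\ref{rho_C(a)bounded}, the latter by the GNS construction), the equality of matrix coefficients on the dense subspace $\Lambda_C(\mathcal{C})$ forces operator equality. Then by norm-density of $\pi_C(\mathcal{C})$ in $C$, we conclude that $\rho_C(b)^*\rho_C(a)$ lies in the commutant $C'$, as claimed.

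There is no serious obstacle here; the only point requiring a small amount of care is keeping track of which side the involution falls on when moving $\pi_C(c)$ across the inner product, but this is routine once one writes everything out in terms of $\varphi$.
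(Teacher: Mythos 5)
Your proof is correct, but it takes a genuinely different and more elementary route than the paper's. You establish the commutation directly: both matrix coefficients
$\bigl\langle \rho_C(b)^*\rho_C(a)\pi_C(c)\Lambda_C(y),\Lambda_C(y')\bigr\rangle$ and
$\bigl\langle \pi_C(c)\rho_C(b)^*\rho_C(a)\Lambda_C(y),\Lambda_C(y')\bigr\rangle$
reduce, using only the definition of $\rho_C$, the adjoint relation, and associativity in ${\mathcal A}$, to the single expression $\varphi(b^*{y'}^*cya)$, and density of $\Lambda_C({\mathcal C})$ plus norm-density of $\pi_C({\mathcal C})$ in $C$ finish the argument. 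The paper instead proves more: it identifies the operator $\rho_C(b)^*\rho_C(a)$ explicitly as the right-multiplication operator $\pi_C^R(\tilde{c})$ of Proposition~\ref{BCrightbounded}, where $\tilde{c}=(\operatorname{id}\otimes\varphi)\bigl(\Delta(a\sigma(b^*))\bigr)\in M({\mathcal C})$; this uses the modular automorphism $\sigma$ of $\varphi$, the relation of Proposition~\ref{muphinupsi}, and the form of $\Delta$ on ${\mathcal C}$ from Proposition~\ref{DeltaonBandC}, and then commutation with left multiplications is immediate. What the paper's route buys is the structural information that $\rho_C(b)^*\rho_C(a)$ is right multiplication by an element of $M({\mathcal C})$ (the same computation is recycled from Proposition~\ref{rho_C(a)bounded}); what your route buys is brevity and independence from the invariance machinery. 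Since the later applications (e.g.\ the proof of the boundedness of $\alpha(\pi_C(c))$) only invoke the commutation itself, your argument would serve the paper's purposes just as well.
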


\begin{proof}
For any $y_1,y_2\in{\mathcal C}$, we have
\begin{align}
\bigl\langle\rho_C(b)^*\rho_C(a)\Lambda_C(y_1),\Lambda_C(y_2)\bigr\rangle_{{\mathcal H}_C}
&=\bigl\langle\rho_C(a)\Lambda_C(y_1),\rho_C(b)\Lambda_C(y_2)\bigr\rangle_{\mathcal H}
=\bigl\langle\Lambda(y_1a),\Lambda(y_2b)\bigr\rangle_{\mathcal H}   \notag \\
&=\varphi\bigl(b^*y_2^*y_1a)=\varphi\bigl(y_2^*y_1a\sigma(b^*)\bigr).
\notag
\end{align}
By the same argument as in the proof of Proposition~\ref{rho_C(a)bounded}, where we used 
Propositions~\ref{muphinupsi} and \ref{DeltaonBandC}, we can write 
$$
\varphi\bigl(y_2^*y_1a\sigma(b^*)\bigr)=\cdots=\mu(y_2^*y_1\tilde{c}),
$$
where $\tilde{c}=(\operatorname{id}\otimes\varphi)\bigl(\Delta(a\sigma(b^*))\bigr)\in M({\mathcal C})$. 
So we can write $\rho_C(b)^*\rho_C(a)\Lambda_C(y_1)=\Lambda_C(y_1\tilde{c})$, which means that 
$\rho_C(b)^*\rho_C(a)\in{\mathcal B}({\mathcal H}_C)$ is none other than $\pi_C^R(\tilde{c})$.

As such, for $\pi_C(c)\in C$ and any $y\in{\mathcal C}$, we have:
$$
\rho_C(b)^*\rho_C(a)\pi_C(c)\Lambda_C(y)=\rho_C(b)^*\rho_C(a)\Lambda_C(cy)
=\pi_C^R(\tilde{c})\Lambda_C(cy)=\Lambda_c(cy\tilde{c}),
$$
$$
\pi_C(c)\rho_C(b)^*\rho_C(a)\Lambda_C(y)=\pi_C(c)\pi_C^R(\tilde{c})\Lambda_C(y)
=\pi_C(c)\Lambda_C(y\tilde{c})=\Lambda_C(cy\tilde{c}),
$$
showing that $\rho_C(b)^*\rho_C(a)$ commutes with any $\pi_C(c)\in C$.
\end{proof}

We are now ready to construct a ${}^*$-representation of the $C^*$-algebra $C$ into ${\mathcal B}({\mathcal H})$. 
See below:

\begin{prop}\label{ConH}
Consider any $c\in{\mathcal C}$, which we regard as $c=\pi_C(c)$, an element of the $C^*$-algebra $C$. 
Define $\alpha\bigl(\pi_C(c)\bigr)\in{\mathcal L}({\mathcal H})$, by 
$$\alpha\bigl(\pi_C(c)\bigr)\Lambda(a)=\Lambda(ca),\quad a\in{\mathcal A}.$$
Then
\begin{enumerate}
\item $\alpha\bigl(\pi_C(c)\bigr)$, $c\in{\mathcal C}$, is a bounded operator on 
${\mathcal H}$.
\item $\alpha$ extends to a (bounded) $C^*$-representation $\alpha:C\to{\mathcal B}
({\mathcal H})$.
\item $\alpha:C\to{\mathcal B}({\mathcal H})$ becomes a non-degenerate ${}^*$-representation.  
It also extends to the ${}^*$-representation at the level of the multiplier algebra $M(C)$.
\end{enumerate}
\end{prop}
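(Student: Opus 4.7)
The plan is to exploit an intertwining relation between $\alpha(\pi_C(c))$ and $\pi_C(c)$ through the bounded maps $\rho_C(a)$ constructed earlier, combined with the commutation property of Proposition~\ref{Ccommutant}, to obtain the bound $\|\alpha(\pi_C(c))\|\le\|\pi_C(c)\|$. First, for $a\in{\mathcal A}$, $y\in{\mathcal C}$, and $c\in{\mathcal C}$, a direct calculation on the generating vectors yields
$$
\alpha(\pi_C(c))\rho_C(a)\Lambda_C(y)=\Lambda(cya)=\rho_C(a)\Lambda_C(cy)=\rho_C(a)\pi_C(c)\Lambda_C(y),
$$
so $\alpha(\pi_C(c))\rho_C(a)=\rho_C(a)\pi_C(c)$ as maps $\mathcal{H}_C\to\mathcal{H}$. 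Since ${\mathcal C}{\mathcal A}={\mathcal A}$, vectors of the form $\sum_i\rho_C(a_i)\eta_i$ with $a_i\in{\mathcal A}$ and $\eta_i\in{\mathcal H}_C$ span a dense subspace of ${\mathcal H}$.

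For part (1), I would fix such a $\xi=\sum_{i=1}^n\rho_C(a_i)\eta_i$ and form the block operator $R:{\mathcal H}_C^n\to{\mathcal H}$ defined by $R(\eta_i)_i=\sum_i\rho_C(a_i)\eta_i$, so that $\|\xi\|^2=\langle R^*R\,\underline{\eta},\underline{\eta}\rangle$ with $\underline{\eta}=(\eta_i)_i$. The intertwining relation gives $\alpha(\pi_C(c))\xi=R(\tilde{c}\,\underline{\eta})$ where $\tilde{c}:=\operatorname{diag}(\pi_C(c),\dots,\pi_C(c))\in{\mathcal B}({\mathcal H}_C^n)$, hence
$$
\|\alpha(\pi_C(c))\xi\|^2=\langle R^*R\,\tilde{c}^*\tilde{c}\,\underline{\eta},\underline{\eta}\rangle.
$$
The matrix entries of $R^*R$ are the operators $\rho_C(a_j)^*\rho_C(a_i)$, each of which commutes with $\pi_C(c)$ by Proposition~\ref{Ccommutant}, so $R^*R$ commutes with $\tilde{c}$. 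Since $\tilde{c}^*\tilde{c}\le\|\pi_C(c)\|^2 I_{{\mathcal H}_C^n}$ and the positive operators $\tilde{c}^*\tilde{c}$ and $R^*R$ commute, we obtain $R^*R\,\tilde{c}^*\tilde{c}\le\|\pi_C(c)\|^2 R^*R$, yielding $\|\alpha(\pi_C(c))\xi\|\le\|\pi_C(c)\|\,\|\xi\|$. Density then extends $\alpha(\pi_C(c))$ to a bounded operator on ${\mathcal H}$ with norm at most $\|\pi_C(c)\|$.

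For (2), on the dense subalgebra $\pi_C({\mathcal C})\subseteq C$ the map $\pi_C(c)\mapsto\alpha(\pi_C(c))$ is a $*$-homomorphism (multiplicativity is immediate from $\alpha(\pi_C(c_1c_2))\Lambda(a)=\Lambda(c_1c_2a)=\alpha(\pi_C(c_1))\alpha(\pi_C(c_2))\Lambda(a)$; the involutive property from the identity $\langle\Lambda(c^*a),\Lambda(b)\rangle=\varphi(b^*c^*a)=\langle\Lambda(a),\Lambda(cb)\rangle$), so together with the norm bound of (1) it extends by continuity to a $C^*$-representation $\alpha:C\to{\mathcal B}({\mathcal H})$. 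For (3), the identity ${\mathcal C}{\mathcal A}={\mathcal A}$ gives $\alpha(\pi_C({\mathcal C}))\Lambda({\mathcal A})=\Lambda({\mathcal A})$, which is dense in ${\mathcal H}$, so $\alpha$ is non-degenerate; the extension to $M(C)$ is then the standard extension result for non-degenerate $C^*$-representations.

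The main obstacle is the norm estimate in (1): at the purely algebraic level there is no obvious direct bound on $\varphi(a^*c^*ca)$ in terms of $\|\pi_C(c)\|^2\varphi(a^*a)$, because $\|\pi_C(c)\|$ encodes information about $C$ acting on ${\mathcal H}_C$ rather than on ${\mathcal H}$. The block/matrix reformulation paired with Proposition~\ref{Ccommutant} is precisely what allows the $C^*$-norm on ${\mathcal H}_C$ to be transferred into a bound on ${\mathcal H}$.
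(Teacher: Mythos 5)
Your proof is correct and follows essentially the same route as the paper: the intertwining relation $\alpha\bigl(\pi_C(c)\bigr)\Lambda(ya)=\rho_C(a)\pi_C(c)\Lambda_C(y)$ combined with the commutation result of Proposition~\ref{Ccommutant} yields $\bigl\|\alpha(\pi_C(c))\bigr\|\le\bigl\|\pi_C(c)\bigr\|$, and parts (2) and (3) are argued exactly as in the paper. Your block-operator amplification is in fact a slightly more careful handling of the step the paper passes over with ``without loss of generality,'' since it controls arbitrary finite sums $\sum_i\Lambda(y_ia_i)$ rather than single vectors $\Lambda(ya)$.
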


\begin{proof}
(1). Without loss of generality, we may consider the vectors of the type $\Lambda(ya)\in{\mathcal H}$, where 
$y\in{\mathcal C}$, $a\in{\mathcal A}$, because ${\mathcal C}{\mathcal A}={\mathcal A}$. Note that we can write
$$
\alpha(\pi_C(c))\Lambda(ya)=\Lambda(cya)=\rho_C(a)\Lambda_C(cy)=\rho_C(a)\pi_C(c)\Lambda_C(y).
$$
We know that $\rho_C(a)$ is bounded.  We thus have:
\begin{align}
&\bigl\|\alpha(\pi_C(c))\Lambda(ya)\bigr\|^2  \notag \\
&=\bigl\langle\rho_C(a)\pi_C(c)\Lambda_C(y),\rho_C(a)\pi_C(c)\Lambda_C(y)\bigr\rangle_{\mathcal H}
=\bigl\langle\rho_C(a)^*\rho_C(a)\pi_C(c)\Lambda_C(y),\pi_C(c)\Lambda_C(y)\bigr\rangle_{{\mathcal H}_C}
\notag \\
&=\bigl\langle\pi_C(c)\rho_C(a)^*\rho_C(a)\Lambda_C(y),\pi_C(c)\Lambda_C(y)\bigr\rangle_{{\mathcal H}_C}  
\notag \\
&\le\bigl\|\pi_C(c)\bigr\|^2\bigl\langle\rho_C(a)^*\rho_C(a)\Lambda_C(y),\Lambda_C(y)\bigr\rangle_{{\mathcal H}_C}
=\bigl\|\pi_C(c)\bigr\|^2\bigl\langle\rho_C(a)\Lambda_C(y),\rho_C(a)\Lambda_C(y)\bigr\rangle_{\mathcal H}   
\notag \\
&=\bigl\|\pi_C(c)\bigr\|^2\bigl\langle\Lambda(ya),\Lambda(ya)\bigr\rangle_{\mathcal H}
=\bigl\|\pi_C(c)\bigr\|^2\bigr\|\Lambda(ya)\bigr\|^2.
\notag
\end{align}
Note that the third equality is because $\rho_C(a)^*\rho_C(a)$ commutes with $\pi_C(c)\in C$ 
(see Proposition~\ref{Ccommutant}).  This observation shows that $\alpha(\pi_C(c))$ is bounded, 
with $\bigl\|\alpha(\pi_C(c))\bigr\|\le\bigl\|\pi_C(c)\bigr\|$.

(2). It is not difficult to see that $\alpha$ preserves multiplication.  Note that for any $c_1,c_2\in{\mathcal C}$ 
and for any $a\in{\mathcal A}$, we have
$$
\alpha\bigl(\pi_C(c_1)\bigr)\alpha\bigl(\pi_C(c_2)\bigr)\Lambda(a)
=\alpha\bigl(\pi_C(c_1)\bigr)\Lambda(c_2a)
=\Lambda(c_1c_2a)=\alpha\bigl(\pi_C(c_1c_2)\bigr)\Lambda(a).
$$
As $\pi_C(c_1c_2)=\pi_C(c_1)\pi_C(c_2)$, and since the vectors $\Lambda(a)$, $a\in{\mathcal A}$, 
are dense in ${\mathcal H}$, it is evident that $\alpha\bigl(\pi_C(c_1)\bigr)\alpha\bigl(\pi_C(c_2)\bigr)
=\alpha\bigl(\pi_C(c_1)\pi_C(c_2)\bigr)$.  

To see if $\alpha$ preserves the involution, note that for any $c\in{\mathcal C}$ and any $a_1,a_2\in{\mathcal A}$, 
we have
\begin{align}
\bigl\langle\alpha(\pi_C(c))\Lambda(a_1),\Lambda(a_2)\bigr\rangle
&=\bigl\langle\Lambda(ca_1),\Lambda(a_2)\bigr\rangle
=\varphi(a_2^*ca_1)=\varphi\bigl((c^*a_2)^*a_1\bigr)
\notag \\
&=\bigl\langle\Lambda(a_1),\alpha(\pi_C(c)^*)\Lambda(a_2)\bigr\rangle,
\notag
\end{align}
since $\pi_C(c^*)=\pi_C(c)^*$.  Since $a_1,a_2\in{\mathcal A}$ are arbitrary, we see that 
$\alpha\bigl(\pi_C(c)\bigr)^*=\alpha\bigl(\pi_C(c)^*\bigr)$.

This means $\alpha:\pi_C({\mathcal C})\to{\mathcal B}({\mathcal H})$ is a ${}^*$-representation, which is 
automatically bounded.  It follows that $\alpha$ extends to $C=\overline{\pi_C({\mathcal C})}^{\|\ \|}$, giving us 
the ${}^*$-representation of the $C^*$-algebra $C$.

(3). The non-degeneracy of $\alpha$  is easy to see, using the fact that ${\mathcal C}{\mathcal A}={\mathcal A}$. 
As a result, it is clear that $\alpha$ naturally extends to the level of the multiplier algebra $M(C)$
\end{proof}

An analogous procedure can be carried out for $B$. First, we can consider $\rho_B:{\mathcal A}\to{\mathcal L}({\mathcal H}_B,{\mathcal H})$, 
as follows:

\begin{prop}\label{rho_B(a)bounded}
Let $\rho_B:{\mathcal A}\to{\mathcal L}({\mathcal H}_B,{\mathcal H})$, by 
$$
\rho_B(a)\Lambda_B(x)=\Lambda(xa), \quad {\text { for $a\in{\mathcal A}$, 
$x\in{\mathcal B}$.}}
$$
Then $\rho_B(a)$ is a bounded element in ${\mathcal L}({\mathcal H}_B,{\mathcal H})$, for any 
$a\in{\mathcal A}$. 
\end{prop}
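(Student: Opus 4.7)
The plan is to repeat the strategy of Proposition~\ref{rho_C(a)bounded}, but with the roles of $\varphi,\mu$ swapped for $\psi,\nu$. This swap is forced on us because, by Proposition~\ref{DeltaonBandC}, for $x\in{\mathcal B}$ we have $\Delta(x^*x)=(1\otimes x^*x)E$, so $x^*x$ sits on the \emph{second} leg of the coproduct, whereas in the $\rho_C$ argument the element $y^*y\in{\mathcal C}$ sat on the first leg. To pull $x^*x$ out cleanly, one wants to contract the first leg with $\psi$ and then apply $\nu$, using the identity $\nu\bigl((\psi\otimes\operatorname{id})(\Delta z)\bigr)=\psi(z)$ from Proposition~\ref{muphinupsi}(1). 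The bridge from $\varphi$ to $\psi$ is supplied by the modular element $\delta\in M({\mathcal A})$.

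First I compute $\bigl\|\rho_B(a)\Lambda_B(x)\bigr\|_{\mathcal H}^2=\varphi(a^*x^*xa)$. By Proposition~\ref{modular}, $\psi(\,\cdot\,)=\varphi(\,\cdot\,\delta)$, so $\varphi(\,\cdot\,)=\psi(\,\cdot\,\delta^{-1})$, giving $\varphi(a^*x^*xa)=\psi(a^*x^*xa\delta^{-1})$. Applying the KMS-type automorphism $\sigma'$ of $\psi$ from Proposition~\ref{phi_modularautomorphism} to move $a^*$ past $x^*x$ yields $\psi(x^*xw)$, where $w:=a\delta^{-1}\sigma'(a^*)\in{\mathcal A}$ (legitimate because $\delta^{-1}\in M({\mathcal A})$ and ${\mathcal A}$ is an ideal in $M({\mathcal A})$).

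Next, Proposition~\ref{muphinupsi}(1) turns this into $\psi(x^*xw)=\nu\bigl((\psi\otimes\operatorname{id})(\Delta(x^*xw))\bigr)$. Using $\Delta(x^*x)=(1\otimes x^*x)E$ and $E\Delta(w)=\Delta(w)$, we factor $\Delta(x^*xw)=(1\otimes x^*x)\Delta(w)$. Pulling $x^*x$ out of the second tensor leg gives $(\psi\otimes\operatorname{id})\bigl((1\otimes x^*x)\Delta(w)\bigr)=x^*x\cdot b$, where $b:=(\psi\otimes\operatorname{id})(\Delta w)\in M({\mathcal B})$ by the right invariance of $\psi$. Thus $\bigl\|\rho_B(a)\Lambda_B(x)\bigr\|^2=\nu(x^*xb)=\bigl\langle\Lambda_B(xb),\Lambda_B(x)\bigr\rangle_{{\mathcal H}_B}$.

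Finally, I extend the right-bounded-vector argument of Proposition~\ref{BCrightbounded} from ${\mathcal B}$ to $M({\mathcal B})$ by setting $\pi_B^R(b):=[\widehat{J}_B]^*\pi_C\bigl(S_{\mathcal B}(b)\bigr)\widehat{J}_B$. Since $S_{\mathcal B}$ extends to an anti-isomorphism $M({\mathcal B})\to M({\mathcal C})$ and $\pi_C(S_{\mathcal B}(b))\in M(C)$, this is a bounded operator on ${\mathcal H}_B$ satisfying $\pi_B^R(b)\Lambda_B(x)=\Lambda_B(xb)$ for $x\in{\mathcal B}$. Consequently $\bigl\|\rho_B(a)\Lambda_B(x)\bigr\|^2\le\bigl\|\pi_B^R(b)\bigr\|\cdot\bigl\|\Lambda_B(x)\bigr\|_{{\mathcal H}_B}^2$, so $\bigl\|\rho_B(a)\bigr\|\le\bigl\|\pi_B^R(b)\bigr\|^{1/2}$, proving boundedness. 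The main conceptual obstacle is precisely the setting up of this $\varphi\to\psi\to\nu$ bridge: because $x^*x$ lands on the ``wrong'' leg of $\Delta$ to be contracted against $\varphi$, one cannot mimic the $\rho_C$ proof verbatim, and the modular element $\delta$ must be invoked to convert to the right-invariant functional. The subsequent manipulations, namely the KMS move, the factorization of $\Delta$, and the extension of $\pi_B^R$ to $M({\mathcal B})$, are then essentially formal.
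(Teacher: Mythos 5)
Your proof is correct and is essentially the paper's intended argument: the paper's own proof is a one-line instruction to repeat the proof of Proposition~\ref{rho_C(a)bounded} using Propositions~\ref{muphinupsi} and \ref{DeltaonBandC} together with the right invariance of $\psi$, which is exactly what you carry out. Your passage from $\varphi$ to $\psi$ via the modular element $\delta$ and the modular automorphism $\sigma'$ is the natural (and necessary) way to supply the details that the paper's sketch leaves implicit.
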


\begin{proof}
The proof is essentially no different from that of Proposition~\ref{rho_C(a)bounded}. Use again 
Propositions~\ref{muphinupsi} and \ref{DeltaonBandC}, but this time use the right invariance of $\psi$.
\end{proof}

We can also define a ${}^*$-representation of the $C^*$-algebra $B$ into ${\mathcal B}({\mathcal H})$:

\begin{prop}\label{BonH}
For any $b\in{\mathcal B}$, regarded as an element of the $C^*$-algebra $B$ by $b=\pi_B(b)$, define 
$\beta\bigl(\pi_B(b)\bigr)\in{\mathcal L}({\mathcal H})$, by 
$$\beta\bigl(\pi_B(b)\bigr)\Lambda(a)=\Lambda(ba),\quad a\in{\mathcal A}.$$
Then
\begin{enumerate}
\item $\beta\bigl(\pi_B(b)\bigr)\in{\mathcal B}({\mathcal H})$, for any $b\in{\mathcal B}$.
\item $\beta$ extends to a (bounded) $C^*$-representation $\beta:B\to{\mathcal B}({\mathcal H})$.
\item $\beta:B\to{\mathcal B}({\mathcal H})$ becomes a non-degenerate ${}^*$-representation.  
It also extends to the ${}^*$-representation at the level of the multiplier algebra $M(B)$.
\end{enumerate}
\end{prop}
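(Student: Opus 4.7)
The plan is to mirror the proof of Proposition \ref{ConH} verbatim, swapping $C \leftrightarrow B$, $\rho_C \leftrightarrow \rho_B$, $\mathcal{H}_C \leftrightarrow \mathcal{H}_B$, $\pi_C \leftrightarrow \pi_B$, and $\pi_C^R \leftrightarrow \pi_B^R$. The one ingredient not already recorded in the excerpt is the analogue of Proposition \ref{Ccommutant} for $B$, so my first step will be to establish:

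\emph{For $a,b \in \mathcal{A}$, the operator $\rho_B(b)^* \rho_B(a) \in \mathcal{B}(\mathcal{H}_B)$ equals $\pi_B^R(\tilde{b})$ for some $\tilde{b} \in M(\mathcal{B})$, and in particular commutes with every element of $B$.}

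To prove this, I compute $\langle \rho_B(b)^* \rho_B(a)\Lambda_B(x_1),\Lambda_B(x_2)\rangle = \varphi(b^* x_2^* x_1 a)$ for $x_1,x_2 \in \mathcal{B}$. Using the modular automorphism $\sigma$ of $\varphi$ (Proposition \ref{phi_modularautomorphism}) I rewrite this as $\varphi(x_2^* x_1 \cdot a\sigma(b^*))$. Next I convert $\varphi$ to $\psi$ via $\varphi(z) = \psi(z\delta^{-1})$ (Proposition \ref{modular}), and then invoke $\psi(z) = \nu\bigl((\psi\otimes\mathrm{id})(\Delta z)\bigr)$ from Proposition \ref{muphinupsi}. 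Because $x_2^* x_1 \in \mathcal{B}$, Proposition \ref{DeltaonBandC} gives $\Delta(x_2^* x_1) = (1\otimes x_2^* x_1) E$; multiplying by $\Delta(a\sigma(b^*)\delta^{-1})$ absorbs the $E$, and pulling $x_2^* x_1$ out through the surviving second leg of $\psi\otimes\mathrm{id}$ yields $\nu(x_2^* x_1 \tilde{b})$, where $\tilde{b} := (\psi\otimes\mathrm{id})\bigl(\Delta(a\sigma(b^*)\delta^{-1})\bigr)$. Right invariance of $\psi$ places $\tilde{b}$ in $M(\mathcal{B})$. Hence $\rho_B(b)^*\rho_B(a) = \pi_B^R(\tilde{b})$, and since $\pi_B$ acts by left and $\pi_B^R$ by right multiplication on $\mathcal{H}_B$, the commutation with $B = \overline{\pi_B(\mathcal{B})}^{\|\ \|}$ follows by density and continuity.

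With this in hand, part (1) proceeds exactly as in Proposition \ref{ConH}: given $a \in \mathcal{A}$, use $\mathcal{B}\mathcal{A} = \mathcal{A}$ to write $a = xa'$ with $x \in \mathcal{B}$, $a' \in \mathcal{A}$; observe that $\beta(\pi_B(b))\Lambda(xa') = \Lambda(bxa') = \rho_B(a')\pi_B(b)\Lambda_B(x)$; expand the squared norm as $\langle \rho_B(a')^*\rho_B(a')\pi_B(b)\Lambda_B(x), \pi_B(b)\Lambda_B(x)\rangle_{\mathcal{H}_B}$; commute $\pi_B(b)$ past the central factor using the result just established; and bound by $\|\pi_B(b)\|^2\|\Lambda(xa')\|^2$. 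For part (2), preservation of multiplication is immediate from the defining formula, and preservation of involution follows from $\varphi(a_2^* b a_1) = \varphi((b^* a_2)^* a_1)$; $\beta$ then extends by norm continuity from $\pi_B(\mathcal{B})$ to the $C^*$-completion $B$. For part (3), non-degeneracy follows from $\mathcal{B}\mathcal{A} = \mathcal{A}$ (so $\beta(B)\Lambda(\mathcal{A})$ already spans a dense subset of $\mathcal{H}$), and extension to a unital ${}^*$-representation of $M(B)$ is then automatic.

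The only point requiring real care is the membership $\tilde{b} \in M(\mathcal{B})$ rather than merely $M(\mathcal{A})$, which is precisely where right invariance of $\psi$ is essential. Bookkeeping with the modular element $\delta$ and the automorphism $\sigma$ when shuttling between $\varphi$ and $\psi$ is the only other subtlety, and beyond that the argument is a mechanical dualization of the one given for $\alpha$.
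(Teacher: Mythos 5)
Your proposal is correct and is essentially the paper's own argument: the paper proves this proposition by simply repeating the reasoning of Propositions~\ref{Ccommutant} and \ref{ConH} (with the hint, given for Proposition~\ref{rho_B(a)bounded}, to use Propositions~\ref{muphinupsi} and \ref{DeltaonBandC} together with the right invariance of $\psi$), and your computation of $\rho_B(b)^*\rho_B(a)=\pi_B^R(\tilde b)$ via $\varphi=\psi(\,\cdot\,\delta^{-1})$ and the modular automorphism $\sigma$ is exactly that mirrored argument, carried out correctly. The remaining steps (boundedness, the ${}^*$-property, non-degeneracy, extension to $M(B)$) match the paper's treatment of $\alpha$ verbatim.
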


\begin{proof}
Proof can be carried out using essentially the same argument as in Propositions~\ref{Ccommutant} 
and \ref{ConH} above, in the construction of the $C^*$-representation $\alpha:C\to{\mathcal B}({\mathcal H})$.
\end{proof}

\subsection{The $C^*$-algebra $A$} \label{sub3.2}

Unlike the subalgebras ${\mathcal B}$ and ${\mathcal C}$, it is not clear at this stage whether the elements of ${\mathcal A}$ 
can be similarly all regarded as bounded operators on ${\mathcal H}$.  For this purpose, let us define the following operator $W$, 
using the left invariance property of $\varphi$.  This will help us construct the left regular representation of ${\mathcal A}$:

\begin{prop}\label{W}
There exists a bounded operator $W\in{\mathcal B}({\mathcal H}\otimes{\mathcal H})$ satisfying the following:
\begin{enumerate}
\item For any $a,b\in{\mathcal A}$, we have
$$
W^*\bigl(\Lambda(a)\otimes\Lambda(b)\bigr)
=(\Lambda\otimes\Lambda)\bigl((\Delta b)(a\otimes1)\bigr).
$$
\item
For any $a,b\in{\mathcal A}$, we have
$$
W\bigl(\Lambda(a)\otimes\Lambda(b)\bigr)=(\Lambda\otimes\Lambda)\bigl((S^{-1}\otimes\operatorname{id})
(\Delta b)(a\otimes1)\bigr),
$$
where $S$ denotes the antipode on $({\mathcal A},\Delta)$.
\item For any $a,b\in{\mathcal A}$, we have
$$W^*W\bigl(\Lambda(a)\otimes\Lambda(b)\bigr)=(\Lambda\otimes\Lambda)\bigl(E(a\otimes b)\bigr).$$
\end{enumerate}
\end{prop}

\begin{proof}
Let $a,b\in{\mathcal A}$ be arbitrary.  Define $W^*:\Lambda(a)\otimes\Lambda(b)
\mapsto W^*\bigl(\Lambda(a)\otimes\Lambda(b)\bigr)$, by 
\begin{equation}\label{(W*defn)}
W^*\bigl(\Lambda(a)\otimes\Lambda(b)\bigr)
=(\Lambda\otimes\Lambda)\bigl((\Delta b)(a\otimes1)\bigr).
\end{equation}
It is well-defined on $(\Lambda\otimes\Lambda)({\mathcal A}\odot{\mathcal A})$, but at present we do not know it is bounded.

From the definition of $W^*$ given in Equation~\eqref{(W*defn)}, we can find an expression for its adjoint operator, $W$.  To compute this, 
let $c,d\in{\mathcal A}$ be arbitrary.  Then
\begin{align}
&\bigl\langle W(\Lambda(a)\otimes\Lambda(b)),\Lambda(c)\otimes\Lambda(d)\bigr\rangle  \notag \\
&=\bigl\langle\Lambda(a)\otimes\Lambda(b),W^*(\Lambda(c)\otimes\Lambda(d))\bigr\rangle
=\bigl\langle\Lambda(a)\otimes\Lambda(b),(\Lambda\otimes\Lambda)((\Delta d)(c\otimes1))\bigr\rangle
\notag \\
&=(\varphi\otimes\varphi)\bigl((c^*\otimes1)\Delta(d^*)(a\otimes b)\bigr)
=\varphi\bigl(c^*(\operatorname{id}\otimes\varphi)(\Delta(d^*)(1\otimes b))a\bigr).
\notag
\end{align}
Here, we may use the characterization of the antipode map $S$, given in Proposition~\ref{antipodeS}\,(1), so that we have 
$(\operatorname{id}\otimes\varphi)(\Delta(d^*)(1\otimes b))=S^{-1}((\operatorname{id}\otimes\varphi)((1\otimes d^*)(\Delta b)))$.
Then the above becomes 
\begin{align}
&=\varphi\bigl(c^*S^{-1}((\operatorname{id}\otimes\varphi)((1\otimes d^*)(\Delta b)))a\bigr) 
=(\varphi\otimes\varphi)\bigl((c^*\otimes d^*)(S^{-1}\otimes\operatorname{id})(\Delta b)(a\otimes1)\bigr) 
\notag \\
&=\bigl\langle (\Lambda\otimes\Lambda)((S^{-1}\otimes\operatorname{id})(\Delta b)(a\otimes1)),
\Lambda(c)\otimes\Lambda(d)\bigr\rangle.
\notag
\end{align}
As $c,d\in{\mathcal A}$ are arbitrary, this shows that for any $a,b\in{\mathcal A}$, we have 
\begin{equation}\label{(Wcharacterization)}
W\bigl(\Lambda(a)\otimes\Lambda(b)\bigr)=(\Lambda\otimes\Lambda)\bigl((S^{-1}\otimes\operatorname{id})(\Delta b)(a\otimes1))\bigr).
\end{equation}

Next, let us combine Equations~\eqref{(W*defn)} and \eqref{(Wcharacterization)}, to obtain an expression for $W^*W$. 
Observe that for $a,b\in{\mathcal A}$, we have
$$
W^*W\bigl(\Lambda(a)\otimes\Lambda(b)\bigr)=W^*\bigl((\Lambda\otimes\Lambda)((S^{-1}\otimes\operatorname{id})(\Delta b)(a\otimes1)))\bigr).
$$
We may use the Sweedler notation to write 
$(S^{-1}\otimes\operatorname{id})(\Delta b)(a\otimes1))=\sum_{(b)}\bigl[S^{-1}(b_{(1)})a\otimes b_{(2)}\bigr]$.  Then applying $W^*$, it becomes:
$$
W^*W\bigl(\Lambda(a)\otimes\Lambda(b)\bigr)
=(\Lambda\otimes\Lambda)\left(\sum_{(b)}\bigl[b_{(2)}S^{-1}(b_{(1)})a\otimes b_{(3)}\bigr]\right).
$$
Use here a result from the algebraic framework, namely, Proposition~4.3 of \cite{VDWangwha1}, which says 
that $\sum_{(b)}\bigl[b_{(2)}S^{-1}(b_{(1)})a\otimes b_{(3)}\bigr]=E(a\otimes b)$. It follows that 
\begin{equation}\label{(W^*W)}
W^*W\bigl(\Lambda(a)\otimes\Lambda(b)\bigr)=(\Lambda\otimes\Lambda)\bigl(E(a\otimes b)\bigr).
\end{equation}

Recall that $E\in M(B\otimes C)$.  Since we know that the elements of $M(B)$ and $M(C)$ are 
considered as bounded operators by the representations $\alpha$ and $\beta$, respectively, it follows that
$$
W^*W\bigl(\Lambda(a)\otimes\Lambda(b)\bigr)=(\Lambda\otimes\Lambda)\bigl(E(a\otimes b)\bigr)
=(\alpha\otimes\beta)(E)(\Lambda\otimes\Lambda)(a\otimes b).
$$
This shows that $W^*W=E$, where we regard $E$ as the operator 
$(\alpha\otimes\beta)(E)\in{\mathcal B}({\mathcal H}\otimes{\mathcal H})$. In this way, 
we show that $W^*W$ is a bounded operator, which in turn means that $W$ and $W^*$ are also bounded. 
Equations~\eqref{(W*defn)} and \eqref{(Wcharacterization)} characterize the operators $W^*$ and $W$, respectively.
\end{proof}

\begin{rem}
In general, the operator $W$ is not unitary, unless $E=1\otimes1$. In fact, the observation $W^*W=E$ in the above proof 
indicates that $W$ is actually a partial isometry.  We will further discuss this aspect in the next subsection.
\end{rem}

Here is one more result regarding $W$, which will be useful in defining the GNS representation of $A$ in ${\mathcal H}$:

\begin{prop}\label{Wrepresentation}
For any $a,p,q\in{\mathcal A}$, we have: 
$$
(\operatorname{id}\otimes\omega_{\Lambda(p),\Lambda(q)})(W)\Lambda(a)
=\Lambda\bigl((\operatorname{id}\otimes\varphi)(\Delta(q^*)(1\otimes p))a\bigr).
$$
\end{prop}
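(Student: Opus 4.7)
The plan is to verify the identity by computing inner products against arbitrary vectors $\Lambda(b)$, $b\in\mathcal{A}$, which form a dense subset of $\mathcal{H}$. So I would fix $a,p,q,b\in\mathcal{A}$ and start with
$$
\bigl\langle(\operatorname{id}\otimes\omega_{\Lambda(p),\Lambda(q)})(W)\Lambda(a),\Lambda(b)\bigr\rangle
=\bigl\langle W(\Lambda(a)\otimes\Lambda(p)),\Lambda(b)\otimes\Lambda(q)\bigr\rangle
=\bigl\langle\Lambda(a)\otimes\Lambda(p),W^*(\Lambda(b)\otimes\Lambda(q))\bigr\rangle.
$$
Now the key ingredient is Proposition~\ref{W}(2), which gives us the explicit formula $W^*(\Lambda(b)\otimes\Lambda(q))=(\Lambda\otimes\Lambda)\bigl((\Delta q)(b\otimes1)\bigr)$. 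Substituting this in and unfolding the inner product on $\mathcal{H}\otimes\mathcal{H}$ via $\varphi\otimes\varphi$, the quantity becomes
$$
(\varphi\otimes\varphi)\Bigl((a^*\otimes p^*)\bigl[(\Delta q)(b\otimes 1)\bigr]\Bigr).
$$
Taking complex conjugate (equivalently, working on the other side of the inner product from the start), this equals $\overline{(\varphi\otimes\varphi)\bigl((b^*\otimes 1)\Delta(q^*)(a\otimes p)\bigr)}$'s conjugate, which is
$$
(\varphi\otimes\varphi)\bigl((b^*\otimes 1)\Delta(q^*)(a\otimes p)\bigr)
=\varphi\Bigl(b^*\,(\operatorname{id}\otimes\varphi)\bigl(\Delta(q^*)(a\otimes p)\bigr)\Bigr),
$$
the last step using the standard slice-map identity $(\varphi\otimes\varphi)=\varphi\circ(\operatorname{id}\otimes\varphi)$ and pulling $b^*$ out of the inner slice.

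The next step — which is the only nontrivial algebraic manoeuvre — is to observe that the factor $a$ in the first tensor leg can be pulled out to the right of the slice map:
$$
(\operatorname{id}\otimes\varphi)\bigl(\Delta(q^*)(a\otimes p)\bigr)
=(\operatorname{id}\otimes\varphi)\bigl(\Delta(q^*)(1\otimes p)\bigr)\,a.
$$
This is a purely algebraic fact: in Sweedler notation $\sum q^*_{(1)}a\cdot\varphi(q^*_{(2)}p)=\bigl(\sum q^*_{(1)}\varphi(q^*_{(2)}p)\bigr)a$ since $\varphi(q^*_{(2)}p)$ is a scalar. At the level of the actual definitions here, one justifies this by noting that $\Delta(q^*)(1\otimes p)\in\mathcal{A}\odot\mathcal{A}$ by condition~\eqref{(comult1)}, so the slice is a well-defined element of $\mathcal{A}$, and right multiplication by $a$ commutes with applying $\varphi$ to the second tensor factor.

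Combining these, the inner product becomes
$$
\varphi\Bigl(b^*\,\bigl[(\operatorname{id}\otimes\varphi)(\Delta(q^*)(1\otimes p))\,a\bigr]\Bigr)
=\bigl\langle\Lambda\bigl((\operatorname{id}\otimes\varphi)(\Delta(q^*)(1\otimes p))\,a\bigr),\Lambda(b)\bigr\rangle.
$$
Since $b\in\mathcal{A}$ was arbitrary and $\Lambda(\mathcal{A})$ is dense in $\mathcal{H}$, the desired identity follows. I do not anticipate a serious obstacle; the only point requiring a little care is the slice-map manipulation in the previous paragraph, and the verification that each slice lives in $\mathcal{A}$ (so that the final right multiplication by $a$ yields an element of $\mathcal{A}$ to which $\Lambda$ can be applied), both of which are routine given the hypotheses on $\Delta$ and the left invariance of $\varphi$.
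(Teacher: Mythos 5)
Your proof is correct and takes essentially the same route as the paper's: pair against an arbitrary $\Lambda(b)$, move $W$ to the other slot so that Proposition~\ref{W}\,(2) applies to $W^*(\Lambda(b)\otimes\Lambda(q))$, rewrite the inner product via $\varphi\otimes\varphi$, pull $a$ out of the slice $(\operatorname{id}\otimes\varphi)$, and conclude by density of $\Lambda({\mathcal A})$. The only blemish is the conjugation bookkeeping in the middle step — the displayed quantity $(\varphi\otimes\varphi)\bigl((a^*\otimes p^*)(\Delta q)(b\otimes 1)\bigr)$ is the complex conjugate of the inner product rather than the inner product itself — but since you then pass to $(\varphi\otimes\varphi)\bigl((b^*\otimes 1)\Delta(q^*)(a\otimes p)\bigr)$, which is the correct value, the argument stands as in the paper.
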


\begin{proof}
For any $d\in{\mathcal A}$, we have:
\begin{align}
&\bigl\langle(\operatorname{id}\otimes\omega_{\Lambda(p),\Lambda(q)})(W)\Lambda(a),
\Lambda(d)\bigr\rangle   \notag \\
&=\bigl\langle W(\Lambda(a)\otimes\Lambda(p)),\Lambda(d)\otimes\Lambda(q)\bigr\rangle
=\bigl\langle\Lambda(a)\otimes\Lambda(p),W^*(\Lambda(d)\otimes\Lambda(q))\bigr\rangle 
\notag \\
&=\bigl\langle\Lambda(a)\otimes\Lambda(p),(\Lambda\otimes\Lambda)
(\Delta(q)(d\otimes1))\bigr\rangle 
=(\varphi\otimes\varphi)\bigl((d^*\otimes1)\Delta(q^*)(a\otimes p)\bigr)  \notag \\
&=\varphi\bigl(d^*(\operatorname{id}\otimes\varphi)(\Delta(q^*)(1\otimes p))a\bigr) 
=\bigl\langle\Lambda((\operatorname{id}\otimes\varphi)(\Delta(q^*)(1\otimes p))a),
\Lambda(d)\bigr\rangle.
\notag
\end{align}
As  $d\in{\mathcal A}$ was arbitrary, this proves the result.
\end{proof}

Recall that by the fullness assumption of $\Delta$, we know that the elements of the form 
$x=(\operatorname{id}\otimes\varphi)(\Delta(q^*)(1\otimes p))$, where $p,q\in{\mathcal A}$, span all of ${\mathcal A}$. 
Therefore what Proposition~\ref{Wrepresentation} is saying is that for any $x\in{\mathcal A}$, we can find a bounded 
operator $X\in{\mathcal B}({\mathcal H})$ such that $X\Lambda(a)=\Lambda(xa)$, for all $a\in{\mathcal A}$. 
In this way, we can define the GNS-representation $\pi$ of $\varphi$:

\begin{defn}
Define $\pi$ from ${\mathcal A}$ into ${\mathcal B}({\mathcal H})$ by
$$
\pi(x)\Lambda(a)=\Lambda(xa), \quad {\text { for all $x,a\in{\mathcal A}$.}}
$$
Then $\pi$ is an injective ${}*$-homomorphism, which is the GNS representation of ${\mathcal A}$, such that 
$\pi({\mathcal A}){\mathcal H}$ is dense in ${\mathcal H}$.
\end{defn}

The last statement on the non-degeneracy of $\pi$ is a consequence of ${\mathcal A}^2={\mathcal A}$. 
The GNS-representation allows us to properly define our $C^*$-algebra $A$:

\begin{defn}\label{C*algebraA}
Define $A:=\overline{\pi({\mathcal A})}^{\|\ \|}$, as a non-degenerate $C^*$-subalgebra of ${\mathcal B}({\mathcal H})$. 
It can be also characterized as
$$
A=\overline{\bigl\{(\operatorname{id}\otimes\omega)(W):\omega\in{\mathcal B}({\mathcal H})_*\bigr\}}^{\|\ \|}.
$$
\end{defn}

The alternative characterization of $A$ is obtained by noting from Proposition~\ref{Wrepresentation} that for 
$x=(\operatorname{id}\otimes\varphi)\bigl(\Delta(q^*)(1\otimes p)\bigr)$, for any $p,q\in{\mathcal A}$, we have 
$$
\pi(x)=\pi\bigl((\operatorname{id}\otimes\varphi)(\Delta(q^*)(1\otimes p))\bigr)
=(\operatorname{id}\otimes\omega_{\Lambda(p),\Lambda(q)})(W).
$$

As $\pi$ is a non-generate ${}^*$-representation, it can be naturally extended to the level of the multiplier algebra 
$M(A)$.  We will often regard $A=\pi(A)$ and $M(A)=\pi\bigl(M(A)\bigr)$. 

At the ${}^*$-algebra level, we saw that  ${\mathcal B}$ and ${\mathcal C}$ are subalgebras in $M({\mathcal A})$.  
As these algebras are now all represented on ${\mathcal H}$ by left multiplications, and in turn completed to the 
$C^*$-algebras $\beta(B)$, $\alpha(C)$, $\pi\bigl(M(A)\bigr)$, respectively, it is apparent that $\alpha=\pi|_{\mathcal C}$, 
$\beta=\pi|_{\mathcal B}$.  It is thus  natural to regard $B=\beta(B)=\pi(B)\subset M(A)$ and $C=\alpha(C)=\pi(C)\subset M(A)$, 
as operator algebras in ${\mathcal B}({\mathcal H})$.  We also have $M(B)\subset M(A)$ and $M(C)\subset M(A)$. 
As for our idempotent $E$, we may regard $E=(\alpha\otimes\beta)(E)=(\pi\otimes\pi)(E)\in M(A\otimes A)$.
While it is true that in Section~\ref{sec2} we considered the $C^*$-algebras $B$ and $C$ as represented on 
${\mathcal H}_B$ and ${\mathcal H}_C$, respectively, and such aspect may still be needed down the road, we will be 
able to tell from the context on which space they are represented.

\subsection{The comultiplication on $A$} \label{sub3.3}

We next wish to define the  comultiplication at the $C^*$-algebra level, extending the comultiplication on ${\mathcal A}$. 
We have our candidate below using the operator $W$, analogous to the quantum group case. We still need some work 
to clarify that this is indeed a correct definition. 

\begin{defn}\label{comultiplication}
Define the map $\widetilde{\Delta}$ from the ${\mathcal B}({\mathcal H})$ to ${\mathcal B}({\mathcal H}\otimes{\mathcal H})$, 
by $\widetilde{\Delta}(x)=W^*(1\otimes x)W$, for all $x\in{\mathcal B}({\mathcal H})$.
\end{defn}

The next proposition shows that $\widetilde{\Delta}$ is an extension of the  comultiplication on ${\mathcal A}$.

\begin{prop}\label{Deltaextends}
For any $a\in{\mathcal A}$, we have $\widetilde{\Delta}\bigl(\pi(a)\bigr)=(\pi\otimes\pi)(\Delta a)$.
\end{prop}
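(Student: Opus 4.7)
The plan is to verify the operator equality $W^*(1\otimes\pi(a))W=(\pi\otimes\pi)(\Delta a)$ by checking agreement on the dense family of vectors $\Lambda(p)\otimes\Lambda(b)$ with $p,b\in{\mathcal A}$. The right-hand side is to be read as the densely-defined operator sending $\Lambda(p)\otimes\Lambda(b)$ to $(\Lambda\otimes\Lambda)\bigl((\Delta a)(p\otimes b)\bigr)$; this makes sense because $(\Delta a)(1\otimes b)\in{\mathcal A}\odot{\mathcal A}$ by the axioms of a comultiplication, so that $(\Delta a)(p\otimes b)=(\Delta a)(1\otimes b)(p\otimes 1)$ also lies in ${\mathcal A}\odot{\mathcal A}$.

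To compute the left-hand side I would first apply Proposition~\ref{Wcharacterization} to get
\[
W(\Lambda(p)\otimes\Lambda(b))=(\Lambda\otimes\Lambda)\bigl((S^{-1}\otimes\operatorname{id})(\Delta b)(p\otimes 1)\bigr),
\]
which in Sweedler notation reads $\sum\Lambda(S^{-1}(b_{(1)})p)\otimes\Lambda(b_{(2)})$. Multiplication by $1\otimes\pi(a)$ acts on the second leg, and applying $W^*$ via Proposition~\ref{W}(2) then yields
\[
W^*(1\otimes\pi(a))W\bigl(\Lambda(p)\otimes\Lambda(b)\bigr)=(\Lambda\otimes\Lambda)\bigl(\Delta(a)\cdot T\bigr),
\]
where $T=\sum\Delta(b_{(2)})\bigl(S^{-1}(b_{(1)})\otimes 1\bigr)(p\otimes 1)\in{\mathcal A}\odot{\mathcal A}$. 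The proof then reduces to the purely algebraic equality $\Delta(a)\,T=\Delta(a)(p\otimes b)$ in ${\mathcal A}\odot{\mathcal A}$.

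The key step is to establish the weak antipode relation
\[
\sum\Delta(b_{(2)})\bigl(S^{-1}(b_{(1)})\otimes 1\bigr)=E(1\otimes b)\quad\text{in }M({\mathcal A}\odot{\mathcal A}),
\]
which immediately yields $T=E(p\otimes b)$, so that $\Delta(a)\,T=\Delta(a)E(p\otimes b)=\Delta(a)(p\otimes b)$ since $\Delta(a)E=\Delta(a)$. By coassociativity the left side of this identity rewrites (in 3-Sweedler) as $\sum S^{-1}(\epsilon_s(b_{(1)}))\otimes b_{(2)}$, where $\epsilon_s(x)=\sum x_{(1)}S(x_{(2)})$ is the source counital map. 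Using the $*$-algebra content behind Proposition~\ref{invarianceF}, namely that $(S\otimes\operatorname{id})(\Delta b)=F_2(1\otimes b)$ with $F_2=(S\otimes\operatorname{id})(E)$, and then applying $S^{-1}\otimes\operatorname{id}$, one recovers $E(1\otimes b)$.

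The main obstacle is making these Sweedler-style manipulations at the multiplier-algebra level rigorous in the weak setting. In the ordinary multiplier Hopf case $E=1\otimes 1$, $\epsilon_s$ is scalar, and everything collapses to the classical identity $\sum b_{(2)}S^{-1}(b_{(1)})\otimes b_{(3)}=1\otimes b$; but here the counital maps take values in $M({\mathcal B})$ (resp.\ $M({\mathcal C})$) and the canonical idempotent $E$ must be carried along throughout, leaning on Proposition~\ref{DeltaonBandC} to control the action of $\Delta$ on the base subalgebras together with the weak antipode axioms of \cite{VDWangwha1}. An alternative, potentially cleaner, route is to verify the equality at the level of matrix coefficients $\langle\,\cdot\,,\Lambda(q)\otimes\Lambda(c)\rangle$, reducing to a scalar identity in $(\varphi\otimes\varphi)$ and invoking Proposition~\ref{antipodeS}(2)--(3) to move $S^{\pm 1}$ through $\varphi$, thereby bypassing the multiplier-algebra bookkeeping altogether.
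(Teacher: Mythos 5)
Your overall route is the same as the paper's: you check the equality on the dense set of vectors $\Lambda(p)\otimes\Lambda(b)$, use Proposition~\ref{Wcharacterization} for $W$ and Proposition~\ref{W}\,(2) for $W^*$, arrive at $(\Lambda\otimes\Lambda)\bigl((\Delta a)\sum_{(b)}\bigl[b_{(2)}S^{-1}(b_{(1)})p\otimes b_{(3)}\bigr]\bigr)$, and reduce everything to the algebraic identity $\sum_{(b)}b_{(2)}S^{-1}(b_{(1)})p\otimes b_{(3)}=E(p\otimes b)$ together with $(\Delta a)E=\Delta a$. This is exactly the paper's argument; the paper disposes of the key identity by quoting Proposition~4.3 of \cite{VDWangwha1}.

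The genuine gap is in your attempted justification of that identity. The equation you invoke, $(S\otimes\operatorname{id})(\Delta b)=F_2(1\otimes b)$, is not the content of Proposition~\ref{invarianceF} and is false in general: that proposition only asserts equality after applying $\operatorname{id}\otimes\varphi$ (it is a statement about left invariance, not an identity in $M({\mathcal A}\odot{\mathcal A})$). Already in the ordinary (multiplier) Hopf case, where $E=1\otimes1$ and hence $F_2=1\otimes1$, your equation would read $\sum S(b_{(1)})\otimes b_{(2)}=1\otimes b$, which fails (for a group algebra it would say $g^{-1}\otimes g=1\otimes g$). So the crucial lemma $\sum_{(b)}b_{(2)}S^{-1}(b_{(1)})\otimes b_{(3)}=E(1\otimes b)$ is not actually established in your write-up; note also that in such ``weak antipode'' identities it is delicate whether $E$ or one of $F_1,\dots,F_4$ appears, depending on the exact placement of $S^{\pm1}$ and the legs, so an informal Sweedler argument is risky here. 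The clean fix is simply to cite Proposition~4.3 of \cite{VDWangwha1} (as the paper does), or to derive the identity carefully from the counital maps of that paper rather than from Proposition~\ref{invarianceF}. Your alternative suggestion of verifying the equality on matrix coefficients via Proposition~\ref{antipodeS}\,(2)--(3) is plausible, but as presented it is only a sketch and would need the same level of care.
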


\begin{proof}
Let  $c,d\in{\mathcal A}$ be arbitrary.  Using the characterization of $W$ given in Proposition~\ref{W}\,(2), we have
\begin{align}
W^*\bigl(1\otimes\pi(a)\bigr)W\bigl(\Lambda(c)\otimes\Lambda(d)\bigr)
&=W^*\bigl(1\otimes\pi(a)\bigr)\bigl((\Lambda\otimes\Lambda)((S^{-1}\otimes\operatorname{id})
(\Delta d)(c\otimes1))\bigr)     \notag \\
&=W^*\bigl((\Lambda\otimes\Lambda)((1\otimes a)(S^{-1}\otimes\operatorname{id})
(\Delta d)(c\otimes1))\bigr)   \notag \\
&=W^*\left((\Lambda\otimes\Lambda)\left(\sum_{(d)}[S^{-1}(d_{(1)})c\otimes ad_{(2)}]\right)\right),
\notag
\end{align}
where we again used the Sweedler notation.  Thus, by applying the characterization of $W^*$ 
given in Proposition~\ref{W}\,(1), the above becomes:
$$
W^*\bigl(1\otimes\pi(a)\bigr)W\bigl(\Lambda(c)\otimes\Lambda(d)\bigr)
=(\Lambda\otimes\Lambda)\left((\Delta a)\sum_{(d)}\bigl[d_{(2)}S^{-1}(d_{(1)})c\otimes d_{(3)}\bigr]\right).
$$
As before use the algebraic result Proposition~4.3 of \cite{VDWangwha1}. Then we have 
\begin{align}
W^*\bigl(1\otimes\pi(a)\bigr)W\bigl(\Lambda(c)\otimes\Lambda(d)\bigr)&=(\Lambda\otimes\Lambda)\bigl((\Delta a)E(c\otimes d)\bigr)
=(\Lambda\otimes\Lambda)\bigl((\Delta a)(c\otimes d)\bigr)  \notag \\
&=(\pi\otimes\pi)(\Delta a)\bigl(\Lambda(c)\otimes\Lambda(d)\bigr).
\notag 
\end{align}
As $c,d\in{\mathcal A}$ can be arbitrary, this shows $\widetilde{\Delta}\bigl(\pi(a)\bigr)
=W^*\bigl(1\otimes\pi(a)\bigr)W=(\pi\otimes\pi)(\Delta a)$.
\end{proof}

In particular, as for our canonical idempotent $E$, which we consider as an operator 
$E=(\alpha\otimes\beta)(E)=(\pi\otimes\pi)(E)\in{\mathcal B}({\mathcal H}\otimes{\mathcal H})$ by the GNS-representation $\pi$, 
the following result is evident.

\begin{prop}\label{E=W^*W}
We have:
\begin{enumerate}
\item $E=\widetilde{\Delta}\bigl(\pi(1_{M({\mathcal A})})\bigr)$
\item $E=W^*W$
\end{enumerate}
\end{prop}

\begin{proof}
As an element of $M({\mathcal A}\odot{\mathcal A})$, we know that $E=\Delta(1_{M({\mathcal A})})$. 
So, by Proposition~\ref{Deltaextends}, we have:
$$
E=(\pi\otimes\pi)(E)=\widetilde{\Delta}\bigl(\pi(1_{M({\mathcal A})})\bigr)
=W^*\bigl(1\otimes\pi(1_{M({\mathcal A})})\bigr)W=W^*W.
$$
Indeed, the observation $W^*W=E\in{\mathcal B}({\mathcal H}\otimes{\mathcal H})$ has been already made in the proof of Proposition~\ref{W}.
\end{proof}

The operator $W$ is essentially like the multiplicative unitary operator (in the sense of \cite{BS}, \cite{Wr7}) in the framework 
of locally compact quantum groups \cite{KuVa}.  In our setting, however, as we have $E\ne1\otimes1$, the operator $W$ is not unitary.  
But then, it turns out that it is a partial isometry. See below:

\begin{prop}\label{lemmaW}
We have
\begin{enumerate}
\item $W^*(1\otimes x)=(\widetilde{\Delta} x)W^*$, for any $x\in A$.
\item $EW^*=W^*$.
\item $W$ is a partial isometry, satisfying $W^*WW^*=W^*$ and $WW^*W=W$.
\end{enumerate}
\end{prop}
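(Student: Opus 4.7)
The plan is to prove (1) first by direct computation on product vectors, derive (2) from it (or equivalently verify it directly along similar lines), and then (3) will fall out immediately.

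For (1), by norm-continuity of both sides in $x$ and norm-density of $\pi({\mathcal A})$ in $A$, it suffices to check the identity when $x = \pi(a)$ for $a \in {\mathcal A}$, testing against the dense family of vectors $\Lambda(c) \otimes \Lambda(d)$, $c,d\in{\mathcal A}$. The left-hand side, by Proposition~\ref{W}(2), becomes
\[
W^*\bigl(\Lambda(c)\otimes\Lambda(ad)\bigr) = (\Lambda\otimes\Lambda)\bigl((\Delta(ad))(c\otimes 1)\bigr).
\]
On the right-hand side, Proposition~\ref{Deltaextends} lets me rewrite $\Delta\pi(a) = (\pi\otimes\pi)(\Delta a)$, and then another application of Proposition~\ref{W}(2) together with multiplicativity of $\Delta$ yields
\[
(\pi\otimes\pi)(\Delta a)\,(\Lambda\otimes\Lambda)\bigl((\Delta d)(c\otimes 1)\bigr) = (\Lambda\otimes\Lambda)\bigl((\Delta a)(\Delta d)(c\otimes 1)\bigr) = (\Lambda\otimes\Lambda)\bigl((\Delta(ad))(c\otimes 1)\bigr).
\]
The two expressions agree, so (1) holds on a dense subspace, and then everywhere by boundedness.

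For (2), the cleanest route is to extend (1) to $x = 1 \in M(A)$ (using that $\Delta$ extends to $M(A)$ with $\Delta(1) = E$, as discussed after Lemma~\ref{canonicalE}), giving $W^* = EW^*$ at once. A direct verification is also available: $EW^*$ applied to $\Lambda(p)\otimes\Lambda(a)$ equals $(\pi\otimes\pi)(E)(\Lambda\otimes\Lambda)\bigl((\Delta a)(p\otimes 1)\bigr) = (\Lambda\otimes\Lambda)\bigl(E(\Delta a)(p\otimes 1)\bigr)$, and the identity $E\,\Delta a = \Delta a$ from Lemma~\ref{canonicalE} collapses this back to $W^*\bigl(\Lambda(p)\otimes\Lambda(a)\bigr)$.

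Finally, (3) is a one-line consequence of (2) combined with Proposition~\ref{E=W^*W}: substituting $E = W^*W$ into $EW^* = W^*$ gives $W^*WW^* = W^*$, and taking the adjoint yields $WW^*W = W$. These are precisely the defining identities for $W$ to be a partial isometry, with initial projection $W^*W = E$. There is no serious obstacle in any of these steps; the only mild subtlety is the norm-density extension in (1), which is immediate since the map $x \mapsto W^*(1\otimes x) - (\Delta x)W^*$ is norm-continuous on $A$.
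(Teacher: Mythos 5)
Your proof is correct and follows essentially the same route as the paper: part (1) is verified on the dense vectors $\Lambda(c)\otimes\Lambda(d)$ using Proposition~\ref{W}(2), Proposition~\ref{Deltaextends} and multiplicativity of $\Delta$, part (2) comes from applying the identity at the unit of $M(A)$ with $\Delta(1)=E$, and part (3) combines this with $E=W^*W$ from Proposition~\ref{E=W^*W}. The only cosmetic difference is your optional direct verification of (2) via $E\,\Delta a=\Delta a$, which the paper does not spell out but which is equally valid.
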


\begin{proof}
(1). For $a\in{\mathcal A}$, and for any $c,d\in{\mathcal A}$, note that 
\begin{align}
W^*\bigl(1\otimes\pi(a)\bigr)\bigl(\Lambda(c)\otimes\Lambda(d)\bigr)
&=W^*\bigl(\Lambda(c)\otimes\Lambda(ad)\bigr)=(\Lambda\otimes\Lambda)\bigl(\Delta(ad)(c\otimes1)\bigr)
\notag \\
&=(\pi\otimes\pi)(\Delta a)(\Lambda\otimes\Lambda)\bigl(\Delta(d)(c\otimes1)\bigr)  \notag \\
&=\widetilde{\Delta}\bigl(\pi(a)\bigr)W^*\bigl(\Lambda(c)\otimes\Lambda(d)\bigr).
\notag
\end{align}
As $c,d\in{\mathcal A}$ are arbitrary, this shows that $W^*\bigl(1\otimes\pi(a)\bigr)=\widetilde{\Delta}\bigl(\pi(a)\bigr)W^*$. 
Since the $\pi(a)$, $a\in{\mathcal A}$, are dense in $A$, it follows that we have $W^*(1\otimes x)=(\widetilde{\Delta} x)W^*$, 
for any $x\in A$.

(2). It is evident that the result of (1) will hold true also for all $m\in M(A)$.  So, in particular, if $m=1_{M(A)}
=\pi\bigl(1_{M({\mathcal A})}\bigr)$, we have: 
$$
W^*=W^*\bigl(1\otimes1_{M(A)}\bigr)=\widetilde{\Delta}(1_{M(A)})W^*=EW^*.
$$

(3). We know from Proposition~\ref{E=W^*W} that $E=W^*W$.  Combining this with $EW^*=W^*$, we see
that $W^*WW^*=W^*$.  Also $WW^*W=W$.  So $W$ is a partial isometry.
\end{proof}

For convenience, write $\Delta=\widetilde{\Delta}|_A$, which is reasonable since $\widetilde{\Delta}$ extends $\Delta$ at the 
${}^*$-algebra level on ${\mathcal A}$ and ${\mathcal A}$ is dense in $A$.  The theorem below shows that 
$\Delta:x\mapsto W^*(1\otimes x)W$ determines a valid comultiplication on the $C^*$-algebra $A$.

\begin{thm}\label{comultiplicationrepresentation}
The map $\Delta=\widetilde{\Delta}|_A$ is a ${}^*$-representation of $A$ into $M(A\otimes A)$. It extends to 
a ${}^*$-representation from $M(A)$ into $M(A\otimes A)$, which we may still denote by $\Delta$.  

We also have:
\begin{enumerate}
\item $(\Delta x)(1\otimes y)\in A\otimes A$ and $(x\otimes1)(\Delta y)\in A\otimes A$, for all $x,y\in A$.
\item The following spaces are norm-dense in $A$:
$$
\operatorname{span}\bigl\{(\operatorname{id}\otimes\omega)((\Delta x)(1\otimes y)):\omega\in A^*,x,y\in A\bigr\},
$$
$$
\operatorname{span}\bigl\{(\omega\otimes\operatorname{id})((x\otimes1)(\Delta y)):\omega\in A^*,x,y\in A\bigr\}.
$$
\item The coassociativity condition holds:
$$
(\Delta\otimes\operatorname{id})(\Delta x)=(\operatorname{id}\otimes\Delta)(\Delta x), \quad \forall x\in A.
$$
\end{enumerate}
\end{thm}

\begin{proof}
It is easy to see that $\Delta(x^*)=W^*(1\otimes x^*)W=\bigl(W^*(1\otimes x)W\bigr)^*=\Delta(x)^*$.  In addition, 
for $x,y\in A$, by Proposition~\ref{lemmaW} we have, 
$$
\Delta(x)\Delta(y)=(\Delta x)W^*(1\otimes y)W
=W^*(1\otimes x)(1\otimes y)W=W^*(1\otimes xy)W=\Delta(xy).
$$
This shows that $\Delta$ is a ${}^*$-representation.

Next, let $a,b\in{\mathcal A}$ be arbitrary.  Then 
$$
\bigl(\pi(a)\otimes1\bigr)\Delta\bigl(\pi(b)\bigr)
=\bigl(\pi(a)\otimes1\bigr)(\pi\otimes\pi)(\Delta b)
=(\pi\otimes\pi)\bigl((a\otimes1)(\Delta b)\bigr)\in(\pi\otimes\pi)({\mathcal A}\odot{\mathcal A})\subset A\otimes A,
$$
because we know from Equation~\eqref{(comult1)} that $(a\otimes1)(\Delta b)\in{\mathcal A}\odot{\mathcal A}$ 
at the ${}^*$-algebra level. As $\pi({\mathcal A})$ is dense in $A$, this shows that $(x\otimes1)(\Delta y)\in A\otimes A$, 
for all $x,y\in A$.  Similarly, we can also show that $(\Delta x)(1\otimes y)\in A\otimes A$ for all $x,y\in A$.  These 
observations prove (1).

As an immediate consequence, we can see that for any $x,y,z\in A$, we have$(\Delta x)(y\otimes z)\in A\otimes A$ 
and $(y\otimes z)(\Delta x)\in A\otimes A$, showing that $\Delta(A)\subseteq M(A\otimes A)$.  In other words, we see that 
$\Delta$ is a ${}^*$-representation from $A$ into $M(A\otimes A)$.

Let $m,n\in M(A)$, and consider the expression $(p\otimes q)\widetilde{\Delta}(m)\widetilde{\Delta}(n)
=(p\otimes q)E\widetilde{\Delta}(m)\widetilde{\Delta}(n)$, for arbitrary $p,q\in {\mathcal A}$.  
Here, we used the fact that $EW^*=W^*$ (Proposition~\ref{lemmaW}), from which we have $E\widetilde{\Delta}(m)
=EW^*(1\otimes m)W=W^*(1\otimes m)W=\widetilde{\Delta}(m)$.  
Since $({\mathcal A}\odot{\mathcal A})\Delta({\mathcal A})=({\mathcal A}\odot{\mathcal A})E$
(see Lemma~\ref{canonicalE}), we know $(p\otimes q)E$ can be written as a sum of the expressions $(a\otimes b)(\Delta c)$. 
But then, we know that for any $c\in A$ and any $m\in M(A)$, we have
$$
\Delta(c)\widetilde{\Delta}(m)=(\Delta c)W^*(1\otimes m)W=W^*(1\otimes c)(1\otimes m)W=W^*(1\otimes cm)W=\Delta(cm),
$$
by Proposition~\ref{lemmaW}.  Using this result twice, it follows that 
$$
(a\otimes b)(\Delta c)\widetilde{\Delta}(m)\widetilde{\Delta}(n)=(a\otimes b)\Delta(cm)\widetilde{\Delta}(n)=(a\otimes b)\Delta(cmn)
=(a\otimes b)(\Delta c)\widetilde{\Delta}(mn).
$$
As noted above, any $(p\otimes q)E$ can be written as a sum of the expressions $(a\otimes b)(\Delta c)$. This means that we have:
$$
(p\otimes q)\widetilde{\Delta}(m)\widetilde{\Delta}(n)=(p\otimes q)E\widetilde{\Delta}(m)\widetilde{\Delta}(n)
=(p\otimes q)E\widetilde{\Delta}(mn)=(p\otimes q)\widetilde{\Delta}(mn).
$$
This result is true for arbitrary $p,q\in{\mathcal A}$.  Therefore, 
we can conclude that $\widetilde{\Delta}(m)\widetilde{\Delta}(n)=\widetilde{\Delta}(mn)$, showing that $\widetilde{\Delta}|_{M(A)}$ 
preserves the multiplicativity.  The ${}^*$-property is immediate, and it is evident that $\widetilde{\Delta}\bigl(M(A)\bigr)\subseteq M(A\otimes A)$.  
Therefore, we see that $\widetilde{\Delta}|_{M(A)}$ is a ${}^*$-representation from $M(A)$ to $M(A\otimes A)$, which naturally extends 
the comultiplication on ${\mathcal A}$ and on $A$.  As such, we may from this point on write $\Delta=\widetilde{\Delta}|_{M(A)}$.

Turning to the rest of the results in the proposition, the ``fullness'' property, given in (2), is a consequence of the fullness of $\Delta$ 
at the ${}^*$-algebra level, as the spanned space is exactly ${\mathcal A}$, which is norm-dense in $A$.

Finally, the coassociativity property:  We have already seen that $\Delta$ is well established at the level of both $A$ and $M(A)$, 
and that it extends the comultiplication at the dense ${}^*$-algebra level.  As one can expect, it indeed satisfies the coassociativity 
at the $C^*$-algebra level.  To confirm this, consider $a,b,c\in{\mathcal A}$.  We have:
\begin{align}
&(\Delta\otimes\operatorname{id})\bigl(\Delta(\pi(a))\bigr)
\bigl(\pi(b)\otimes1\otimes\pi(c)\bigr) 
=(\Delta\otimes\operatorname{id})\bigl(\Delta(\pi(a))(1\otimes\pi(c))\bigr)\bigl(\pi(b)\otimes1\otimes1\bigr)
\notag \\
&=(\pi\otimes\pi\otimes\pi)\bigl((\Delta\otimes\operatorname{id})((\Delta a)(1\otimes c))(b\otimes1\otimes1)\bigr)
\notag \\
&=(\pi\otimes\pi\otimes\pi)\bigl((\operatorname{id}\otimes\Delta)((\Delta a)(b\otimes1))(1\otimes1\otimes c)\bigr)
\notag \\
&=(\operatorname{id}\otimes\Delta)\bigl(\Delta(\pi(a))(\pi(b)\otimes1)\bigr)\bigl(1\otimes1\otimes\pi(c)\bigr)
\notag \\
&=(\operatorname{id}\otimes\Delta)\bigl(\Delta(\pi(a))\bigr)
\bigl(\pi(b)\otimes1\otimes\pi(c)\bigr).
\notag
\end{align}
As $A$ acts non-degenerately, this shows that 
$(\Delta\otimes\operatorname{id})\bigl(\Delta(\pi(a))\bigr)=(\operatorname{id}\otimes\Delta)\bigl(\Delta(\pi(a))\bigr)$, 
$\forall a\in{\mathcal A}$.  Since $\pi({\mathcal A})$ is dense in $A$, we see that $(\Delta\otimes\operatorname{id})\Delta
=(\operatorname{id}\otimes\Delta)\Delta$ on $A$.
\end{proof}

In this way, we have shown that $(A,\Delta)$ is a $C^*$-bialgebra, with the comultiplication $\Delta$ satisfying 
all the conditions prescribed in Definition~3.1 of \cite{BJKVD_qgroupoid1}.

\subsection{The multiplicative partial isometry $W$} \label{sub3.4}

We observed above that the behavior of our partial isometry $W$ quite resembles that of a multiplicative unitary operator 
(in the sense of \cite{BS}, \cite{Wr7}) in the quantum group setting.  While it is not unitary, it actually satisfies relations such that 
it can be referred to as a {\em multiplicative partial isometry\/}.  Such a notion has been noted in the finite-dimensional setting 
of weak $C^*$-Hopf algebras \cite{BSzMultIso}.  See also \cite{BJK_mpi}, where an axiomatic discussion on multiplicative 
partial isometries is given for a class of $C^*$-algebraic quantum groupoids.  

Here indeed, we can show that our partial isometry $W$ satisfies the defining axioms given in \cite{BSzMultIso} for being 
a multiplicative partial isometry (The conditions in \cite{BJK_mpi} are slightly different, but equivalent.):

\begin{prop}\label{multiplicativeW}
The operator $W$ satisfies the following conditions:
\begin{align}
W_{12}W_{13}W_{23}&=W_{23}W_{12} \label{(mpi5)}  \\
W_{12}^*W_{12}W_{13}&=W_{13}W_{23}W_{23}^* \label{(mpi6)} \\
W_{23}^*W_{23}W_{12}&=W_{12}W_{23}^*W_{23} \label{(mpi3)} \\
W_{12}W_{12}^*W_{23}&=W_{23}W_{12}W_{12}^* \label{(mpi4)} 
\end{align}
\end{prop}

\begin{proof}
Let $a,b,c\in{\mathcal A}$.  By the characterization of $W^*$ given in Proposition~\ref{W}, we have
\begin{align}
W^*_{12}W^*_{23}\bigl(\Lambda(a)\otimes\Lambda(b)\otimes\Lambda(c)\bigr)
&=W^*_{12}(\Lambda\otimes\Lambda\otimes\Lambda)\bigl((a\otimes(\Delta c))(1\otimes b\otimes 1)\bigr) \notag \\
&=(\Lambda\otimes\Lambda\otimes\Lambda)\bigl((\Delta\otimes\operatorname{id})(\Delta c)((\Delta b)\otimes 1)(a\otimes1\otimes1)\bigr),
\label{(multiplicativeW_eqn1)}
\end{align}
while we also have
\begin{align}
W^*_{23}W^*_{13}W^*_{12}\bigl(\Lambda(a)\otimes\Lambda(b)\otimes\Lambda(c)\bigr)
&=W^*_{23}W^*_{13}(\Lambda\otimes\Lambda\otimes\Lambda)\bigl(((\Delta b)\otimes c)(a\otimes1\otimes1)\bigr) \notag \\
&=W^*_{23}(\Lambda\otimes\Lambda\otimes\Lambda)\bigl((\Delta c)_{13}((\Delta b)\otimes 1)(a\otimes1\otimes1)\bigr)  \notag \\
&=(\Lambda\otimes\Lambda\otimes\Lambda)\bigl((\operatorname{id}\otimes\Delta)(\Delta c)((\Delta b)\otimes 1)(a\otimes1\otimes1)\bigr).
\label{(multiplicativeW_eqn2)}
\end{align}
Comparing Equations~\eqref{(multiplicativeW_eqn1)} and \eqref{(multiplicativeW_eqn2)}, with the knowledge that $\Delta$ is coassociative, 
namely $(\Delta\otimes\operatorname{id})(\Delta c)=(\operatorname{id}\otimes\Delta)(\Delta c)$, we conclude that 
$W^*_{12}W^*_{23}=W^*_{23}W^*_{13}W^*_{12}$.  Or equivalently, we have $W_{12}W_{13}W_{23}=W_{23}W_{12}$.  This is in fact 
the famous ``pentagon equation''.

In the case of a unitary, the other three equations of the proposition, namely Equations~\eqref{(mpi6)}, \eqref{(mpi3)}, \eqref{(mpi4)}, would 
not need a proof.  That is not so in our case, and we need separate proofs for these three cases.  We will just prove Equation~\eqref{(mpi6)} here.

For this, we will quote a result from the algebraic framework, namely Equation~(4.4) of \cite{VDWangwha1} that appear in the proof of 
Proposition~4.3 of that paper: It says 
$$
(c\otimes d)E=\sum_{(c)}c_{(1)}\otimes dS^{-1}(c_{(3)})c_{(2)}, \quad {\text { for $c,d\in{\mathcal A}$.}}
$$
Modifying it a little, we can obtain the following:
\begin{equation}\label{(multiplicativeW_eqn3)}
(c\otimes 1)E(1\otimes b)=\sum_{(c)}c_{(1)}\otimes S^{-1}(c_{(3)})c_{(2)}b, \quad {\text { for $c,b\in{\mathcal A}$.}}
\end{equation}
[There can be a bit more efficient way, using the idempotent map $F_3$, but to avoid digging too deep into the algebraic theory, we will 
make use of the above relation.] 

To prove Equation~\eqref{(mpi6)}, let $a,b,c\in{\mathcal A}$ be arbitrary, and compute.  Note first that 
\begin{align}
W^*_{13}W^*_{12}W_{12}\bigl(\Lambda(a)\otimes\Lambda(b)\otimes\Lambda(c)\bigr)
&=W^*_{13}(\Lambda\otimes\Lambda\otimes\Lambda)\bigl(E(a\otimes b)\otimes c\bigr) \notag \\
&=(\Lambda\otimes\Lambda\otimes\Lambda)\bigl((\Delta c)_{13}(E(a\otimes b)\otimes1)\bigr),
\label{(multiplicativeW_eqn4)}
\end{align}
because $W^*W=(\pi\otimes\pi)(E)$.  Meanwhile, we have
\begin{align}
W_{23}W^*_{23}W^*_{13}\bigl(\Lambda(a)\otimes\Lambda(b)\otimes\Lambda(c)\bigr)
&=W_{23}W^*_{23}(\Lambda\otimes\Lambda\otimes\Lambda)\bigl((\Delta c)_{13}(a\otimes b\otimes1)\bigr) \notag \\
&=W_{23}(\Lambda\otimes\Lambda\otimes\Lambda)\left(\sum_{(c)}c_{(1)}a\otimes c_{(2)}b\otimes c_{(3)}\right)  \notag \\
&=(\Lambda\otimes\Lambda\otimes\Lambda)\left(\sum_{(c)}c_{(1)}a\otimes S^{-1}(c_{(3)})c_{(2)}b\otimes c_{(4)}\right),
\notag
\end{align}
where we are using the characterization of $W$ given in Proposition~\ref{W}\,(2), expressed in the Sweedler notation.
Apply here the algebraic result Equation~\eqref{(multiplicativeW_eqn3)}.  Then it becomes
\begin{equation}\label{(multiplicativeW_eqn5)}
W_{23}W^*_{23}W^*_{13}\bigl(\Lambda(a)\otimes\Lambda(b)\otimes\Lambda(c)\bigr)
=(\Lambda\otimes\Lambda\otimes\Lambda)\bigl((\Delta c)_{13}(E(a\otimes b)\otimes1)\bigr).
\end{equation}
Comparing Equations~\eqref{(multiplicativeW_eqn4)} and \eqref{(multiplicativeW_eqn5)}, we conclude that 
$W^*_{13}W^*_{12}W_{12}=W_{23}W^*_{23}W^*_{13}$.  Or equivalently, we have $W^*_{12}W_{12}W_{13}=W_{13}W_{23}W^*_{23}$.

The other two can be proved similarly, using analogous algebraic results.  But as they are not directly needed below, and since they can be 
proved using an alternative method (see \cite{BJKVD_qgroupoid2}) once the full construction of the quantum groupoid is carried out, we will 
skip the details here.
\end{proof}

\begin{prop}\label{multiplicativeWW}
The operator $W$ satisfies the following conditions:
\begin{align}
W_{23}W_{12}W^*_{23}&=W_{12}W_{13} \label{(mpi1)}  \\
W_{12}^*W_{23}W_{12}&=W_{13}W_{23} \label{(mpi2)}  \\
W_{12}W^*_{23}&=W^*_{23}W_{12}W_{13} \label{(mpi7)}  \\
W_{12}^*W_{23}&=W_{13}W_{23}W^*_{12} \label{(mpi8)}  \\
W_{13}^*W_{13}W_{23}&=W_{23}W^*_{12}W_{12} \label{(mpi9)}  \\
W_{12}W_{13}W^*_{13}&=W_{23}W^*_{23}W_{12} \label{(mpi10)} 
\end{align}
\end{prop}

\begin{proof}
These can be all proved as a quick consequence of the four conditions that appear in Proposition~\ref{multiplicativeW}. Since 
we will be needing only the second one later, we will just give the proof for Equation~\eqref{(mpi2)}.  For more discussion, 
refer to \cite{BJK_mpi}.

From Proposition~\ref{multiplicativeW}, we saw that $W_{23}W_{12}=W_{12}W_{13}W_{23}$.  Multiply here $W^*_{12}$, 
from the left, to obtain: 
$$
W^*_{12}W_{23}W_{12}=W^*_{12}W_{12}W_{13}W_{23}=W_{13}W_{23}W^*_{23}W_{23}=W_{13}W_{23},
$$
using Equation~\eqref{(mpi6)} and the fact that $WW^*W=W^*$, being a partial isometry.
\end{proof}

\subsection{The idempotent $E$} \label{sub3.5}

We already noted that the canonical idempotent element $E$ at the ${}^*$-algebra level can be considered 
as the operator $E=(\pi\otimes\pi)(E)\in M(A\otimes A)\subset {\mathcal B}({\mathcal H}\otimes{\mathcal H})$, 
by the GNS-representation $\pi$.  As such, its properties will be inherited from those at the ${}^*$-algebra level, 
which we gather below. 

\begin{prop}\label{EatC*level}
Consider the canonical idempotent $E$, regarded as $E=(\pi\otimes\pi)(E)\in M(A\otimes A)\subset 
{\mathcal B}({\mathcal H}\otimes{\mathcal H})$, such that $E^*=E$ and $E^2=E$.  We have:
\begin{enumerate}
\item $\overline{\Delta(A)(A\otimes A)}^{\|\ \|}=E(A\otimes A)$ and $\overline{(A\otimes A)\Delta(A)}^{\|\ \|}=(A\otimes A)E$
\item $E(\Delta x)=\Delta x=(\Delta x)E$, for all $x\in A$
\item $E\otimes1$ and $1\otimes E$ commute, and we also have
$$
(\operatorname{id}\otimes\Delta)(E)=(E\otimes1)(1\otimes E)=(1\otimes E)(E\otimes1)=(\Delta\otimes\operatorname{id})(E).
$$
\item There exists a ${}^*$-anti-isomorphism $R=R_{BC}:B\to C$, and together with the KMS weight $\nu$ on $B$ and 
$E\in M(B\otimes C)$, we obtain a separability triple $(E,B,\nu)$, in the sense of \cite{BJKVD_SepId}.
\end{enumerate}
\end{prop}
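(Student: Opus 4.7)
The plan is to derive each of the four parts from corresponding facts established either at the purely algebraic level (mainly Lemma~\ref{canonicalE}, the weak comultiplicativity of the unit, and coassociativity) or already at the $C^*$-level in Section~\ref{sec2}. Throughout, the tensor product of GNS representations $\pi\otimes\pi$ and the norm-density of $\pi({\mathcal A})$ in $A$ will be the mechanism for transporting the algebraic identities to the $C^*$ setting, after which continuity of bounded multiplication in $M(A\otimes A)$ does the rest.

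For parts (1) and (2), I would start from the identity $\Delta({\mathcal A})({\mathcal A}\odot{\mathcal A}) = E({\mathcal A}\odot{\mathcal A})$ of Lemma~\ref{canonicalE} and apply the contractive $*$-homomorphism $\pi\otimes\pi$, which sends this to the equality of algebraic products of $\Delta(\pi({\mathcal A}))$ and $E$ against $\pi({\mathcal A})\odot\pi({\mathcal A})$. Taking norm closure on both sides and using that $\pi({\mathcal A})$ is norm-dense in $A$ (together with the boundedness of $E$ and of each $\Delta(x)$ in $M(A\otimes A)$) yields $\overline{\Delta(A)(A\otimes A)}^{\|\ \|} = E(A\otimes A)$; note that $E(A\otimes A)$ is already norm-closed because $E$ is a self-adjoint projection in $M(A\otimes A)$, so no additional closure is needed on the right. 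The second equality in (1) is symmetric. Part (2) follows from the algebraic identity $E(\Delta a) = \Delta a = (\Delta a)E$ for $a\in{\mathcal A}$ by continuity: both left and right multiplication by $E$ on $M(A\otimes A)$ are continuous, and $\Delta:A\to M(A\otimes A)$ is a $*$-representation, so the identity extends to all $x\in A$.

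For part (3), the weak comultiplicativity of the unit, Equation~\eqref{(weakcomultiplicativity)}, gives
\[
(\operatorname{id}\otimes\Delta)(E) = (E\otimes 1)(1\otimes E) = (1\otimes E)(E\otimes 1)
\]
at the algebraic level, and this is transported to $M(A\otimes A\otimes A)$ by $\pi\otimes\pi\otimes\pi$; the last two equalities are literally the statement that the self-adjoint projections $E\otimes 1$ and $1\otimes E$ commute. The remaining equality $(\Delta\otimes\operatorname{id})(E) = (\operatorname{id}\otimes\Delta)(E)$ comes from viewing $E$ as $\Delta(1_{M(A)})$ (after extending $\Delta$ to the multiplier algebra as in the discussion in Section~\ref{sub1.2}, now at the $C^*$-level) and applying the coassociativity of $\Delta$ established in Proposition~\ref{comultiplicationrepresentation}(3), which extends to $M(A)$ by non-degeneracy.

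For part (4), essentially nothing new is needed. The $*$-anti-isomorphism $R = \widetilde{R}|_B: B \to C$ is exactly the map constructed in the paragraph preceding Proposition~\ref{Esepid}, and that proposition itself verifies the two defining properties $(\nu\otimes\operatorname{id})(E) = 1$ and $(\nu\otimes\operatorname{id})(E(b\otimes 1)) = (R\circ\sigma^{\nu}_{\frac{i}{2}})(b)$ of a separability triple in the sense of \cite{BJKVD_SepId}. We only need to observe that when $E$ is now regarded as $(\pi\otimes\pi)(E) \in M(A\otimes A)$, it still lies in $M(B\otimes C)$, which is immediate from the natural inclusions $B, C \subseteq M(A)$ described at the end of Section~\ref{sub3.2}. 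The main (and mild) subtlety in the whole proof is making sure in (1) that passing from algebraic to norm closure does not lose information; this is handled by the fact that $E$ is a projection, so that $E(A\otimes A)$ is automatically norm-closed and equals the norm-closure of the algebraic range.
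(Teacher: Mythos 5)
Your proposal is correct and follows essentially the same route as the paper: parts (1) and (2) are transported from Lemma~\ref{canonicalE} via $\pi\otimes\pi$ and norm density of $\pi({\mathcal A})$ in $A$, part (3) from the weak comultiplicativity of the unit (with coassociativity giving the $(\Delta\otimes\operatorname{id})(E)$ equality), and part (4) by simply regarding $E\in M(B\otimes C)$ and invoking Proposition~\ref{Esepid}. The extra details you supply (e.g.\ that $E(A\otimes A)$ is already norm-closed because $E$ is a projection in $M(A\otimes A)$) are accurate elaborations of what the paper leaves implicit.
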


\begin{proof} 
(1), (2) are the consequences of Lemma~\ref{canonicalE} at the ${}^*$-algebra level.  Since ${\mathcal A}$ is norm 
dense in $A$, the results follow immediately.  As shown in Proposition~3.3 in  \cite{BJKVD_qgroupoid1}, this uniquely 
determines $E$.

(3) is the weak comultiplicativity of the unit, noted already in Equation~\eqref{(weakcomultiplicativity)}.

(4). As $E\in M({\mathcal B}\odot{\mathcal C})$, it can be also considered as an element in $M(B\otimes C)$, 
where $B$ and $C$ are the $C^*$-subalgebras of $M(A)$ we saw earlier.  We saw in Proposition~\ref{Esepid} 
that $E$ satisfies the properties of being a {\em separability idempotent\/}, in the $C^*$-algebraic sense 
(see \cite{BJKVD_SepId}).

\end{proof}

In this way, we have shown that $E\in M(B\otimes C)\subseteq M(A\otimes A)$ satisfies all the conditions 
for being the {\em canonical idempotent\/} for $(A,\Delta)$, as prescribed in Definition~3.7 of \cite{BJKVD_qgroupoid1}.

\subsection{The weight $\tilde{\varphi}$ and the modular automorphism group} \label{sub3.6}

So far, our $\varphi$ has been only a linear functional on ${\mathcal A}$.  Going forward, we will need to consider its extension 
to the operator algebra level.  The full construction of a KMS weight on the $C^*$-algebra $A$, extending the functional 
$\varphi$ and satisfying a proper left invariant condition, needs some additional preparation so will be postponed to a later 
section.  However, we can do some initial work in this subsection.

Observe first that $\Lambda({\mathcal A})\subseteq{\mathcal H}$ obtained from the functional $\varphi$ at the 
${}^*$-algebra level, as given in the beginning part of Section~\ref{sec3}, is a left Hilbert algebra, with respect to 
the multiplication and the ${}*$-structure inherited from ${\mathcal A}$.  We will skip the proof, which is essentially 
no different in nature from that of Proposition~\ref{hilbertalgebra} in Section~\ref{sec2}.

By the general theory on left Hilbert algebras (see \cite{Tk2}), we can associate to $\Lambda({\mathcal A})$ 
a von Neumann algebra $M$.  In our case, in terms of the GNS-representation $\pi$, it would be exactly 
$M=\pi({\mathcal A})''$.  Also by the general theory on left Hilbert algebras, we obtain a normal semi-finite faithful (n.s.f.)
weight $\tilde{\varphi}$ on $M$.  

We can consider the associated spaces ${\mathfrak N}_{\tilde{\varphi}}=\bigl\{x\in M:\tilde{\varphi}(x^*x)<\infty\bigr\}$ 
and ${\mathfrak M}_{\tilde{\varphi}}={\mathfrak N}_{\tilde{\varphi}}^*{\mathfrak N}_{\tilde{\varphi}}$.  The associated 
GNS map $\Lambda_{\tilde{\varphi}}$ is an injective map from ${\mathfrak N}_{\tilde{\varphi}}$ to ${\mathcal H}$ 
(same Hilbert space), which extends $\Lambda$.  By the standard left Hilbert algebra theory, we know that the weight 
$\tilde{\varphi}$ extends the functional $\varphi$.  In particular, we have $\tilde{\varphi}\bigl(\pi(a)^*\pi(a)\bigr)=\varphi(a^*a)$, 
for all $a\in{\mathcal A}$.  For any $x\in{\mathfrak N}_{\tilde{\varphi}}$, there exists a sequence $(a_n)_n$ in ${\mathcal A}$ 
such that $\Lambda(a_n)\xrightarrow{{\text { (in ${\mathcal H}$) }}}\Lambda_{\tilde{\varphi}}(x)$ and 
$\pi(a_n)\xrightarrow{{\text { ($\sigma$-strong-${}^*$) }}}x$.

Denote by $T$ the closure of the involution $\Lambda(x)\mapsto\Lambda(x^*)$ on $\Lambda({\mathcal A})$. 
As before, there exists a polar decomposition, $T=J\nabla^{\frac12}$, where $\nabla=T^*T$ is the modular operator, 
and $J$ is the modular conjugation, which is anti-unitary.  Note also that $J\nabla J=\nabla^{-1}$, so we can also write 
$T=\nabla^{-\frac12}J$.

According to the modular theory in the von Neumann algebra setting (\cite{Tk2}, \cite{Str}), the modular operator defines a strongly 
continuous one-parameter group of automorphisms $\tilde{\sigma}=(\tilde{\sigma}_t)_{t\in\mathbb{R}}$, by $\tilde{\sigma}_t(a)=\nabla^{it}a\nabla^{-it}$, 
for $a\in M$, $t\in\mathbb{R}$.  We have $\tilde{\varphi}\circ\tilde{\sigma}_t=\tilde{\varphi}$, $t\in\mathbb{R}$, and $(\tilde{\sigma}_t)$ 
satisfies a certain KMS boundary condition.  In particular, the weak KMS property at the ${}^*$-algebra level, $\varphi(ab)
=\varphi\bigl(b\sigma(a)\bigr)$, $a,b\in{\mathcal A}$, extends to the von Neumann algebra as $\tilde{\varphi}(xy)
=\tilde{\varphi}\bigl(y\tilde{\sigma}_{-i}(x)\bigr)$, $x\in{\mathfrak M}_{\tilde{\varphi}}$, $y\in{\mathcal D}(\tilde{\sigma}_{-i})$.  
Meanwhile, the modular conjugation $J$ can be characterized by $J\Lambda_{\tilde{\varphi}}(x)
=\Lambda_{\tilde{\varphi}}\bigl(\tilde{\sigma}_{\frac{i}{2}}(x)^*\bigr)$, for $x\in{\mathfrak N}_{\tilde{\varphi}}$. 

From the n.s.f. weight $\tilde{\varphi}$ on the von Neumann $M=\pi({\mathcal A})''$, we can restrict it to the level 
of the $C^*$-algebra $A=\overline{\pi({\mathcal A})}^{\|\ \|}$, to obtain a faithful lower semi-continuous weight $\varphi$ 
on $A$.  However, as was the case for the weights $\tilde{\nu}$ and $\tilde{\mu}$ earlier (Section~\ref{sec2}), the main issues 
are whether the restriction of the modular automorphism group $(\tilde{\sigma}_t)$ to the $C^*$-algebra level would leave 
$A$ invariant, and whether the restriction is norm-continuous.  These are not automatic consequences of the modular theory, 
so this needs more work.  We will return to this matter in Section~\ref{sec5}.  

There is also the matter of properly establishing the left invariance property of the weight $\varphi$, and we also wish 
to construct a right-invariant weight $\psi$.  That is also postponed to Section~\ref{sec5}.

\section{Polar decomposition of the antipode}\label{sec4}

At present, the antipode map, $S$, is considered only at the ${}^*$-algebra level.  It is an anti-isomorphism of 
${\mathcal A}$ onto itself, which can be extended to the multiplier algebra $M({\mathcal A})$, such that 
$S({\mathcal B})={\mathcal C}$ and $S({\mathcal C})={\mathcal B}$. 

To regard it as a map at the operator algebra level, a convenient way is to express it as a polar decomposition.  This is much like 
the case of a locally compact quantum group \cite{KuVD}, \cite{MNW}, \cite{KuVa}, \cite{VDvN}, which has been also adopted to 
the cases of quantum groupoids \cite{LesSMF}, \cite{EnSMF}, \cite{BJKVD_qgroupoid2}. As such, some portion of the construction 
process below would look familiar.

\subsection{The involutive operator $K$ and its polar decomposition}\label{sub4.1}

Let us first define an anti-linear involutive operator $K$ below, which incorporates the antipode at the operator level:

\begin{prop}\label{K}
For $a\in{\mathcal A}$, define: 
$$
K_0\Lambda(a):=\Lambda\bigl(S(a)^*\bigr).
$$
Then:
\begin{enumerate}
\item $K_0$ is a well-defined map, from $\Lambda({\mathcal A})$ into ${\mathcal H}$.
\item $K_0$ is closable, so we may consider its closure $K$. It becomes a closed, densely-defined operator from ${\mathcal H}$ 
into ${\mathcal H}$, such that $\Lambda({\mathcal A})$ forms its core and it has a dense range.
\item $K$ is anti-linear and involutive.
\end{enumerate}
\end{prop}

\begin{proof}
As $S$ is well-defined from ${\mathcal A}$ onto itself, and since $\Lambda({\mathcal A})$ is dense in ${\mathcal H}$, it is clear that $K_0$ is 
well-defined, densely-defined, and has a dense range.  Meanwhile, for $a,b\in{\mathcal A}$, note that 
$$
\bigl\langle \Lambda(S(a)^*),\Lambda(b)\bigr\rangle=\varphi\bigl(b^*S(a)^*\bigr)=\overline{\varphi\bigl(S(a)b\bigr)}
=\overline{(\varphi\circ S)\bigl(S^{-1}(b)a\bigr)}=\overline{\varphi\bigl(S^{-1}(b)a\delta\bigr)}
=\overline{\varphi\bigl(a\delta\sigma(S^{-1}(b))\bigr)},
$$
where we used the fact that $\varphi$ is positive, and $\delta\in M({\mathcal A})$ is the modular element (see the last paragraph 
of \S\,\ref{sub1.5}).  From this, it is not difficult to show that $K_0$ is closable.  We can denote by $K$ its 
closure.  Thus $K$ becomes a closed, densely-defined operator having a dense range, with $\Lambda({\mathcal A})$ forming 
a core.

Finally, note that $K$ is anti-linear because $S$ is, and note also that $K$ is involutive by the property of the antipode:
$$
K^2\Lambda(a)=K\Lambda\bigl(S(a)^*\bigr)=K\Lambda\bigl(S^{-1}(a^*)\bigr)=\Lambda\bigl(S(S^{-1}(a^*))^*\bigr)
=\Lambda(a),
$$
for any $a\in{\mathcal A}$.
\end{proof}

We can consider the polar decomposition of this linear involutive operator $K$, given by $K=IL^{\frac12}$, where $L=K^*K$ is 
a positive operator, and $I$ is a conjugate linear isomorphism. We will have $I^*=I$, $I^2=1$, and $ILI=L^{-1}$.  Also 
$K=L^{-\frac12}I$.  In addition, we can consider the ``scaling group'' $(\tau_t)_{t\in\mathbb{R}}$, a norm-continuous 
automorphism group on ${\mathcal B}({\mathcal H})$ given by $\tau_t(\,\cdot\,)=L^{it}\,\cdot\,L^{-it}$.  

Here is a useful result that relates the operator $W$ with $L$ and $\nabla$, where $\nabla$ is the operator arising from 
the modular theory for the n.s.f. weight $\tilde{\varphi}$, such that $T=J\nabla^{\frac12}$.

\begin{prop}\label{WLNabla}
For any $a,b\in{\mathcal A}$, we have:
$$
W(L\otimes\nabla)\bigl(\Lambda(a)\otimes\Lambda(b)\bigr)=(L\otimes\nabla)W\bigl(\Lambda(a)\otimes\Lambda(b)\bigr).
$$
\end{prop}

\begin{proof}
For $a,b\in{\mathcal A}$, we have:
\begin{align}
W^*(K\otimes T)\bigl(\Lambda(a)\otimes\Lambda(b)\bigr)&=W^*\bigl(\Lambda(S(a)^*)\otimes\Lambda(b^*)\bigr)  \notag \\
&=(\Lambda\otimes\Lambda)\bigl(\Delta(b^*)(S(a)^*\otimes1)\bigr) 
=(\Lambda\otimes\Lambda)\bigl([(S(a)\otimes1)(\Delta b)]^*\bigr)  \notag \\
&=(\Lambda\otimes\Lambda)\bigl([(S\otimes\operatorname{id})((S^{-1}\otimes\operatorname{id})(\Delta b)(a\otimes1))]^*\bigr)
\notag \\
&=(K\otimes T)(\Lambda\otimes\Lambda)\bigl((S^{-1}\otimes\operatorname{id})(\Delta b)(a\otimes1)\bigr)  \notag \\
&=(K\otimes T)W\bigl(\Lambda(a)\otimes\Lambda(b)\bigr),
\label{(WLNabla_eqn1)}
\end{align}
using Proposition~\ref{W}. On the other hand, we also have 
\begin{align}
W(K\otimes T)\bigl(\Lambda(a)\otimes\Lambda(b)\bigr)&=W\bigl(\Lambda(S(a)^*)\otimes\Lambda(b^*)\bigr)  
=(\Lambda\otimes\Lambda)\bigl((S^{-1}\otimes\operatorname{id})(\Delta(b)^*)(S(a)^*\otimes1)\bigr)  \notag \\
&=(\Lambda\otimes\Lambda)\bigl([(S(a)\otimes1)(S\otimes\operatorname{id})(\Delta b)]^*\bigr)
\notag \\
&=(\Lambda\otimes\Lambda)\bigl([(S\otimes\operatorname{id})((\Delta b)(a\otimes1))]^*\bigr)
=(K\otimes T)(\Lambda\otimes\Lambda)\bigl((\Delta b)(a\otimes1)\bigr)  \notag \\
&=(K\otimes T)W^*\bigl(\Lambda(a)\otimes\Lambda(b)\bigr),
\label{(WLNabla_eqn2)}
\end{align} 
from which we have
\begin{equation}\label{(WLNabla_eqn3)}
W(K^*\otimes T^*)\bigl(\Lambda(a)\otimes\Lambda(b)\bigr)=(K^*\otimes T^*)W^*\bigl(\Lambda(a)\otimes\Lambda(b)\bigr), 
\quad {\text { for $a,b\in{\mathcal A}$,}}
\end{equation}
by taking the adjoint.

Combining Equations~\eqref{(WLNabla_eqn1)} and \eqref{(WLNabla_eqn3)}, we have:
\begin{align}
W(L\otimes\nabla)\bigl(\Lambda(a)\otimes\Lambda(b)\bigr)&=W(K^*K\otimes T^*T)\bigl(\Lambda(a)\otimes\Lambda(b)\bigr)
=(K^*\otimes T^*)W^*(K\otimes T)\bigl(\Lambda(a)\otimes\Lambda(b)\bigr) \notag \\
&=(K^*K\otimes T^*T)W\bigl(\Lambda(a)\otimes\Lambda(b)\bigr)
=(L\otimes\nabla)W\bigl(\Lambda(a)\otimes\Lambda(b)\bigr).
\notag 
\end{align}
\end{proof}

Note, by the way, that since we are working with unbounded operators $L$ and $\nabla$, which are only densely-defined, 
the above result does not necessarily mean $W(L\otimes\nabla)=(L\otimes\nabla)W$.  As it is possible that some of the 
elements contained in $\operatorname{Ker}(W)$ may not be contained in ${\mathcal D}(L\otimes\nabla)$, we would have 
${\mathcal D}\bigl(W(L\otimes\nabla)\bigr)\subsetneq{\mathcal D}\bigl((L\otimes\nabla)W\bigr)$. So, to be precise, we can 
only write $W(L\otimes\nabla)\subseteq(L\otimes\nabla)W$, or $W^*(L\otimes\nabla)\subseteq(L\otimes\nabla)W^*$.

If $W$ was a unitary operator, then there exists a clever method one can use to quickly establish $W(L\otimes\nabla)=(L\otimes\nabla)W$ 
(see, for instance, Lemma~5.9 in \cite{KuVa}), which helps us proceed along. That, however, is not the case here.  We need a more 
roundabout approach.  We can almost verbatim follow the steps carried out in Propositions~4.13 -- 4.17 in \cite{BJKVD_qgroupoid2}, 
and we obtain the following.  We will skip the details of the proof, but see the remark following the proposition:

\begin{prop}\label{Lnabla_main}
\begin{enumerate}
\item For any $z\in\mathbb{C}$, we have:
$$
W(L^z\otimes\nabla^z)\subseteq(L^z\otimes\nabla^z)W \quad 
{\text { and }} \quad
W^*(L^z\otimes\nabla^z)\subseteq(L^z\otimes\nabla^z)W^*.
$$
\item Let $t\in\mathbb{R}$.  Then the following is true on the whole space ${\mathcal H}\otimes{\mathcal H}$:
$$
W(L^{it}\otimes\nabla^{it})=(L^{it}\otimes\nabla^{it})W \quad 
{\text { and }} \quad
W^*(L^{it}\otimes\nabla^{it})=(L^{it}\otimes\nabla^{it})W^*.
$$
\end{enumerate}
\end{prop}

\begin{rem}
Our Proposition~\ref{WLNabla} gives rise to the same conclusion as in Proposition~4.12 in \cite{BJKVD_qgroupoid2}. 
The contexts are different, so of course the proofs are not the same.  Nonetheless, the steps taken in the paragraphs 
following that proposition can be taken almost word for word, up to Propositions~4.17 in \cite{BJKVD_qgroupoid2}.  That is 
because this is more about dealing with the unbounded operators in general than any specifics about quantum groupoids. 

The results (1), (2) in Proposition~\ref{Lnabla_main} above are basically Propositions~4.17 and 4.18 of 
\cite{BJKVD_qgroupoid2}.  The key point is that for any $z\in\mathbb{C}$, it turns out that 
$(L^z\otimes\nabla^z)|_{\operatorname{Ran}(E)}$ acts as an operator on ${\operatorname{Ran}(E)}$, where $E=W^*W$, 
and also that $(L^z\otimes\nabla^z)|_{\operatorname{Ran}(E)^{\perp}}$ acts as an operator on 
${\operatorname{Ran}(E)^{\perp}}=\operatorname{Ker}(W)$.  Since ${\mathcal H}\otimes{\mathcal H}
={\operatorname{Ran}(W^*W)}\otimes\operatorname{Ker}(W)$, the partial isometry $W$ behaves as a unitary map 
from $\operatorname{Ran}(W^*W)$ onto $\operatorname{Ker}(W^*)^{\perp}=\operatorname{Ran}(WW^*)$, and similarly 
for the partial isometry $W^*$, allowing us the result that resembles the case when $W$ is unitary.

Since $W$ is not unitary, we only have ``$\subseteq$'' in general.  On the other hand, the situation improves 
if $z\in\mathbb{C}$ is purely imaginary, or $z=it$, $t\in\mathbb{R}$. If so, the operators $L^{it}$ and $\nabla^{it}$ are bounded, 
so the domain of $L^{it}\otimes\nabla^{it}$ becomes the whole space ${\mathcal H}\otimes{\mathcal H}$.
\end{rem}

Next is another useful result, relating the operator $W$ with the anti-unitary operators $I$ (coming from $K=IL^{\frac12}$) 
and $J$ (coming from $T=J\nabla^{\frac12}$):

\begin{prop}\label{IJWequality}
We have:
$$
(I\otimes J)W(I\otimes J)=W^*.
$$
\end{prop}

\begin{proof}
Let $a,b\in{\mathcal A}$ be arbitrary. By Proposition~\ref{Lnabla_main}\,(1), when $z=\frac12$, we have:
\begin{align}
&(I\otimes J)W(L^{\frac12}\otimes\nabla^{\frac12})\big(\Lambda(a)\otimes\Lambda(b)\bigr)
=(I\otimes J)(L^{\frac12}\otimes\nabla^{\frac12})W\big(\Lambda(a)\otimes\Lambda(b)\bigr)  \notag \\
&=(K\otimes T)W\big(\Lambda(a)\otimes\Lambda(b)\bigr)=W^*(K\otimes T)\big(\Lambda(a)\otimes\Lambda(b)\bigr)
\notag \\
&=W^*(I\otimes J)(L^{\frac12}\otimes\nabla^{\frac12})\big(\Lambda(a)\otimes\Lambda(b)\bigr), 
\notag
\end{align}
where we used Equation~\eqref{(WLNabla_eqn1)}.  Even though these expressions involve unbounded operators, note that 
$(L^{\frac12}\otimes\nabla^{\frac12})$ has a dense range in ${\mathcal H}\otimes{\mathcal H}$, while $(I\otimes J)W$ and 
$W^*(I\otimes J)$ are bounded operators, so no domain issues.  Therefore, we can conclude that 
$$
(I\otimes J)W=W^*(I\otimes J),
$$
on the whole space ${\mathcal H}\otimes{\mathcal H}$. This is equivalent to saying that $(I\otimes J)W(I\otimes J)=W^*$.
\end{proof}

\subsection{Antipode map in terms of its polar decomposition}\label{sub4.2}

Now that we have gathered some results regarding the operators $K$ and $T$, including their polar decompositions, we can 
carry on with the construction of the polar decomposition for the antipode.

Note first that the scaling group on ${\mathcal B}({\mathcal H})$ can be defined at the level of the $C^*$-algebra $A$ and also 
at the level of the von Neumann algebra $M$:

\begin{prop}\label{scalinggroup}
For $x\in A$, define $\tau_t(x):=L^{it}xL^{-it}$, for $t\in\mathbb{R}$.  Then $\tau_t(A)=A$, and $(\tau_t)_{t\in\mathbb{R}}$ is 
a norm-continuous one-parameter group of automorphisms on $A$, referred to as the scaling group on $A$.

Similarly, $M\in z\mapsto\tau_t(z)=L^{it}zL^{-it}$, $t\in\mathbb{R}$, defines a strongly-continuous one-parameter group of
automorphisms on $M$.
\end{prop}

\begin{proof}
From Proposition~\ref{Lnabla_main}\,(2), we know $(L^{it}\otimes\nabla^{it})W=W(L^{it}\otimes\nabla^{it})$, for all $t\in\mathbb{R}$. 
Multiply $(\operatorname{id}\otimes\nabla^{-it})$ from the left and multiply $(L^{-it}\otimes\operatorname{id})$ from the right, to obtain:
$$
(L^{it}\otimes\operatorname{id})W(L^{-it}\otimes\operatorname{id})=(\operatorname{id}\otimes\nabla^{-it})W(\operatorname{id}\otimes\nabla^{it}).
$$
Then we can see that for any $\omega\in{\mathcal B}({\mathcal H})_*$, we have:
\begin{equation}\label{(scalingeqn)}
L^{it}(\operatorname{id}\otimes\omega)(W)L^{-it}=\bigl(\operatorname{id}\otimes
\omega(\nabla^{-it}\,\cdot\,\nabla^{it})\bigr)(W).
\end{equation}

Recall that the elements of the form $x=(\operatorname{id}\otimes\omega)(W)$ generate the $C^*$-algebra $A=\overline{\pi({\mathcal A})}^{\|\ \|}$ 
(see Definition~\ref{C*algebraA}) as well as the von Neumann algebra $M=\pi({\mathcal A})''$.  Therefore, we can see quickly from 
Equation~\eqref{(scalingeqn)} that $\tau_t(A)=A$ and $\tau_t(M)=M$, for all $t\in\mathbb{R}$.  The norm-continuity of 
$\bigl(\tau_t|_A\bigr)_{t\in\mathbb{R}}$ and the strong continuity of $\bigl(\tau_t|_M\bigr)_{t\in\mathbb{R}}$ also follow immediately.
\end{proof}

The result of Proposition~\ref{scalinggroup} indicates that the elements of the form $(\operatorname{id}\otimes\omega)(W)$, 
$\omega\in{\mathcal B}({\mathcal H})_*$, span a core for $\tau$.  We can also say that the elements $\pi(a)$, $a\in{\mathcal A}$, are analytic elements 
of $\tau$.  

\begin{prop}\label{analyticfortau}
Let $a\in{\mathcal A}$  Then $\pi(a)$ is an analytic element of $\tau$, and that 
$$
\tau_{ni}\bigl(\pi(a)\bigr)=\pi\bigl(S^{-2n}(a)\bigr), \quad {\text { for any $n\in\mathbb{Z}$.}}
$$
\end{prop}

\begin{proof}
Let $a\in{\mathcal A}$.  For any $b\in{\mathcal A}$, note that
\begin{align}
\pi(a)K\Lambda(b)&=\pi(a)\Lambda\bigl(S(b)^*\bigr)=\Lambda\bigl(aS(b)^*\bigr)=\Lambda\bigl([S(b)a^*]^*\bigr)
=\Lambda\bigl([S(S^{-1}(a^*)b)]^*\bigr)   \notag  \\
&=\Lambda\bigl([S(S(a)^*b)]^*\bigr)=K\Lambda\bigl(S(a)^*b\bigr)=K\pi\bigl(S(a)^*\bigr)\Lambda(b).
\notag
\end{align}
From this, we can conclude that $\pi(a)K\subseteq K\pi\bigl(S(a)^*\bigr)$.  Similarly, we have 
$\pi(a)K^*\subseteq K^*\pi\bigl(S(a^*)\bigr)$.

Recall that $L=K^*K$.  The above observations mean that we have the following:
\begin{align}
\pi(a)L&=\pi(a)K^*K\subseteq K^*\pi\bigl(S(a^*)\bigr)K  \notag \\
&\subseteq K^*K\pi\bigl(S(S(a^*))^*\bigr)
=K^*K\pi\bigl(S^{-1}(S(a^*)^*)\bigr)=K^*K\pi\bigl(S^{-1}(S^{-1}(a)\bigr)
=L\pi\bigl(S^{-2}(a)\bigr).
\notag 
\end{align}
It follows that for any $n\in\mathbb{Z}$, we have:
\begin{equation}\label{(analyticfortau_eqn)}
\pi(a)L^n\subseteq L^n\pi\bigl(S^{-2n}(a)\bigr),
\end{equation}
which is true for all $a\in{\mathcal A}$. 

In this way, we can see that for any $a\in{\mathcal A}$, we have $\pi(a)\in{\mathcal D}(\tau_{ni})$, and that 
$\tau_{ni}\bigl(\pi(a)\bigr)=\pi\bigl(S^{-2n}(a)\bigr)$, for all $n\in\mathbb{Z}$. Continuing, we can also see that 
$\pi(a)\in{\mathcal D}(\tau_z)$, for any $z\in\mathbb{C}$.
\end{proof}

The other main ingredient for the polar decomposition of the antipode is the following ${}^*$-anti-isomorphism $R_A:A\to A$, 
called the unitary antipode.

\begin{prop}\label{unitaryantipode}
For $x\in A$, define $R_A(x)$ by $R_A(x)=Ix^*I$.  It is a ${}^*$-anti-isomporphism from $A$ onto $A$, and is involutive.  In particular, 
for any $\omega\in{\mathcal B}({\mathcal H})_*$, we have:
$$
R:(\operatorname{id}\otimes\omega)(W)\mapsto(\operatorname{id}\otimes\theta)(W),
$$
where $\theta\in{\mathcal B}({\mathcal H})_*$ is such that $\theta=\bar{\omega}(J\,\cdot\,J)$.

It has a natural extension $\widetilde{R}_M:M\to M$, which is a ${}^*$-anti-isomporphism from $M$ onto $M$.
\end{prop}

\begin{proof}
Since $I$ is an anti-unitary, it is evident that $R_A:x\mapsto Ix^*I$ is anti-multiplicative and that 
$R_A(x^*)=R_A(x)^*$, for all $x\in A$.  

To see if $R_A$ is indeed a map from $A$ onto itself, recall first Proposition~\ref{IJWequality}, saying 
$(I\otimes J)W=W^*(I\otimes J)$. It may be re-written as 
\begin{equation}\label{(unitaryantipode_eqn)}
(I\otimes1)W^*(I\otimes1)=(1\otimes J)W(1\otimes J).
\end{equation}
Consider $x=(\operatorname{id}\otimes\omega)(W)$, for $\omega\in{\mathcal B}({\mathcal H})_*$.  Then 
by Equation~\eqref{(unitaryantipode_eqn)}, we have:
\begin{align}
Ix^*I&=I(\operatorname{id}\otimes\bar{\omega})(W^*)I
=(\operatorname{id}\otimes\bar{\omega})\bigl((I\otimes1)W^*(I\otimes1)\bigr)
\notag \\
&=(\operatorname{id}\otimes\bar{\omega})\bigl((1\otimes J)W(1\otimes J)\bigr)
=(\operatorname{id}\otimes\theta)(W),
\notag
\end{align}
where $\theta\in{\mathcal B}({\mathcal H})_*$ is such that $\theta=\bar{\omega}(J\,\cdot\,J)$.  Since $A$ (and $M$) are generated by the elements 
of the form $(\operatorname{id}\otimes\omega)(W)$, for $\omega\in{\mathcal B}({\mathcal H})_*$, we can see that the map $x\mapsto Ix^*I$ sends 
a dense subspace of $A$ into a dense subspace of $A$.  But then, since $R_A$ is a ${}^*$-anti-homomorphism, it must be bounded.  We can 
thus conclude that $R_A$ extends to all of $A$, becoming a ${}^*$-anti-automorphism from $A$ onto $A$.  We can say the same for 
$\widetilde{R}_M:M\to M$, which is a ${}^*$-anti-automorphism from $M$ onto $M$.
\end{proof}

\begin{rem}
Since $R_A$ is a ${}^*$-anti-isomorphism, it can be naturally extended to the level of the multiplier algebra $M(A)$.  Or as a restriction of 
$\widetilde{R}_M:M\to M$ to the level of $M(A)$.  For this map $R_A:M(A)\to M(A)$, it can be shown (see \cite{BJKVD_qgroupoid2}) that 
$R_A(B)=C$ and that the restriction of $R_A$ to $B$ coincides with the map $R_{BC}:B\to C$ that appears in Proposition~\ref{EatC*level}. 
For this reason, we have been rather lazy with the notations in Sections~\ref{sec2} and \ref{sec3}, and we will continue to do so rest of the way: 
The ${}^*$-anti-isomorphism will be all written as $R$, whether it is considered as the unitary antipode $R_A$ on $A$, or as $R_{BC}:B\to C$, 
or its inverse $R_{CB}:C\to B$.
\end{rem}

\begin{prop}\label{Rtaucoomute}
$R$ and $\tau_t$ commute, for all $t\in\mathbb{R}$.  That is, $R\circ\tau_t=\tau_t\circ R$.
\end{prop}

\begin{proof} 
We know that $ILI=L^{-1}$.  Since $I$ is conjugate linear, we thus have $IL^{it}I=L^{it}$.  Therefore, we have
$$
R\bigl(\tau_t(x)\bigr)=I\tau_t(x)^*I=IL^{it}x^*L^{-it}I=L^{it}Ix^*L^{-it}=\tau_t\bigl(R(x)\bigr).
$$
\end{proof}

Consider $\tau_{-\frac{i}2}$, the analytic generator of $\tau=(\tau_t)$ at $t=-\frac{i}2$.  From Proposition~\ref{analyticfortau}, we can deduce that 
it is a densely-defined map. Using this map, we are now ready to describe the polar decomposition of the antipode. 

\begin{thm}\label{antipodepolardecomposition}
Consider the densely-defined map $S$ on $A$, defined by
$$
S:=R\circ\tau_{-\frac{i}{2}}.
$$
This is the {\em antipode\/}, extending the antipode map at the level of ${\mathcal A}$.  In fact, for any $a\in{\mathcal A}$, we have:
$$
\pi\bigl(S(a)\bigr)=R\bigl(\tau_{-\frac{i}2}(\pi(a))\bigr).
$$

At the von Neumann algebra level, on $M=\pi({\mathcal A})''$, we can consider the extended antipode map, 
$\widetilde{S}=\widetilde{R}\circ\tau_{-\frac{i}2}$.
\end{thm}

\begin{proof}
We saw in the proof of Proposition~\ref{analyticfortau} that for any $a\in{\mathcal A}$, we have $\pi(a)K\subseteq K\pi\bigl(S(a)^*\bigr)$. Since 
$K=IL^{\frac12}=L^{-\frac12}I$ (shown earlier, which can be proved using $ILI=L^{-1}$), it becomes 
$$
\pi(a)L^{-\frac12}I\subseteq L^{-\frac12}I\pi\bigl(S(a)^*\bigr).
$$
Then we have
$$
\pi(a)L^{-\frac12}\subseteq L^{-\frac12}I\pi\bigl(S(a)^*\bigr)I=L^{-\frac12}R\bigl(\pi(S(a))\bigr).
$$
It follows that $\tau_{-\frac{i}2}\bigl(\pi(a)\bigr)=R\bigl(\pi(S(a))\bigr)$, or equivalently, $R\bigl(\tau_{-\frac{i}2}(\pi(a))\bigr)=\pi\bigl(S(a)\bigr)$, 
because $R$ is involutive.
\end{proof}

Theorem~\ref{antipodepolardecomposition} agrees with Definition~4.25 of \cite{BJKVD_qgroupoid2}.  In this way, the antipode 
map can be properly considered at the operator algebra level.  The definition also resembles the quantum group case, such as 
Definition~5.21 of \cite{KuVa}.

We can refer to sections 4 and 5 of \cite{BJKVD_qgroupoid2} for more properties of the antipode map at the $C^*$-algebra 
level, once our full construction is done.  Nonetheless, some of those results are actually needed for our purposes in a later 
section.  Among those, here are some results involving $R$, $\tau$, $\tilde{\sigma}$, $\Delta$.  The following four propositions 
(Propositions~\ref{Deltasigma_t}, \ref{Deltatau_t}, \ref{DeltaR}, \ref{tausigmaR_corollaryE}) are none other than 
Propositions~5.1, 5.2, 5.4, and 5.5 of \cite{BJKVD_qgroupoid2}). 

\begin{prop}\label{Deltasigma_t}
For all $x\in A$ and $t\in\mathbb{R}$, we have:
$\Delta\bigl(\tilde{\sigma}_t(x)\bigr)=(\tau_t\otimes\tilde{\sigma}_t)(\Delta x)$.
\end{prop}

\begin{proof}
Let $x\in A$. Then
\begin{align}
\Delta\bigl(\tilde{\sigma}_t(x)\bigr)&=W^*\bigl(1\otimes\tilde{\sigma}_t(x)\bigr)W=W^*\bigl(1\otimes\nabla^{it}x\nabla^{-it}\bigr)W
\notag \\
&=W^*(L^{it}\otimes\nabla^{it})(1\otimes x)(L^{-it}\otimes\nabla^{-it})W  \notag \\
&=(L^{it}\otimes\nabla^{it})W^*(1\otimes x)W(L^{-it}\otimes\nabla^{-it})
=(\tau_t\otimes\tilde{\sigma}_t)(\Delta x),
\notag
\end{align}
by Proposition~\ref{Lnabla_main}\,(2).
\end{proof}

\begin{prop}\label{Deltatau_t}
For all $x\in A$ and $t\in\mathbb{R}$, we have:
$\Delta\bigl(\tau_t(x)\bigr)=(\tau_t\otimes\tau_t)(\Delta x)$.
\end{prop}

\begin{proof}
For any $a\in A$, by the coassociativity of $\Delta$ and by Proposition~\ref{Deltasigma_t}, we have
$$
(\Delta\otimes\operatorname{id})\Delta\bigl(\tilde{\sigma}_t(a)\bigr)
=(\operatorname{id}\otimes\Delta)\Delta\bigl(\tilde{\sigma}_t(a)\bigr)
=(\tau_t\otimes\tau_t\otimes\tilde{\sigma}_t)\bigl((\operatorname{id}\otimes\Delta)(\Delta a)\bigr).
$$
Applying again the proposition (in the left) and the coassociativity (in the right), this becomes:
$$
\bigl((\Delta\circ\tau_t)\otimes\tilde{\sigma}_t)(\Delta a)
=(\tau_t\otimes\tau_t\otimes\tilde{\sigma}_t)\bigl((\Delta\otimes\operatorname{id})(\Delta a)\bigr).
$$
By applying the automorphism $\tilde{\sigma}_{-t}$ to the third leg, this becomes:
$$
\bigl((\Delta\circ\tau_t)\otimes\operatorname{id}\bigr)(\Delta a)
=\bigl((\tau_t\otimes\tau_t)\circ\Delta\otimes\operatorname{id}\bigr)(\Delta a).
$$
Apply now $\operatorname{id}\otimes\operatorname{id}\otimes\omega$, for $\omega\in{\mathcal A}^*$.  Then we have:
$$
\Delta\bigl(\tau_t((\operatorname{id}\otimes\omega)(\Delta a))\bigr)
=(\tau_t\otimes\tau_t)\bigl(\Delta((\operatorname{id}\otimes\omega)(\Delta a))\bigr).
$$
Note that the elements of the form $(\operatorname{id}\otimes\omega)(\Delta a)$, for $a\in A$, $\omega\in A^*$, generate 
all of $A$.  It follows that $\Delta\bigl(\tau_t(x)\bigr)=(\tau_t\otimes\tau_t)(\Delta x)$, for all $x\in A$.
\end{proof}

\begin{prop}\label{DeltaR}
For all $x\in A$, we have: 
$$
(R\otimes R)(\Delta x)=\Delta^{\operatorname{cop}}\bigl(R(x)\bigr),
$$
where $\Delta^{\operatorname{cop}}$ denotes the coopposite comultiplication, given by $\Delta^{\operatorname{cop}}(x)
=\varsigma\bigl(\Delta(x)\bigr)$.

At the von Neumann algebra level, we have: 
$(\widetilde{R}\otimes\widetilde{R})(\Delta x)=\Delta^{\operatorname{cop}}\bigl(\widetilde{R}(x)\bigr)$, for $x\in M$.
\end{prop}

\begin{proof}
We know that the elements of the form $(\operatorname{id}\otimes\omega)(W)$, for $\omega\in{\mathcal B}({\mathcal H})_*$, 
generate all of $A$ (and $M$). Therefore, to prove the above result, it is sufficient to prove the following:
$$
(R\otimes R)\bigl(\Delta((\operatorname{id}\otimes\omega)(W))\bigr)
=\Delta^{\operatorname{cop}}\bigl(R((\operatorname{id}\otimes\omega)(W))\bigr).
$$

Remembering the definition of $R$ (see Proposition~\ref{unitaryantipode}), we have:
\begin{align}
(R\otimes R)\bigl(\Delta((\operatorname{id}\otimes\omega)(W))\bigr)
&=(I\otimes I)\Delta\bigl((\operatorname{id}\otimes\omega)(W)^*\bigr)(I\otimes I) \notag \\
&=(I\otimes I)W^*\bigl(1\otimes(\operatorname{id}\otimes\bar{\omega})(W^*)\bigr)W(I\otimes I) \notag \\
&=(I\otimes I)(\operatorname{id}\otimes\operatorname{id}\otimes\bar{\omega})(W^*_{12}W^*_{23}W_{12})(I\otimes I)  \notag \\
&=(I\otimes I)(\operatorname{id}\otimes\operatorname{id}\otimes\bar{\omega})(W^*_{23}W^*_{13})(I\otimes I)  \notag \\
&=(\operatorname{id}\otimes\operatorname{id}\otimes\bar{\omega})\bigl((I\otimes I\otimes1)W^*_{23}W^*_{13}(I\otimes I\otimes1)\bigr),
\notag
\end{align}
where we used Equation~\eqref{(mpi2)}.  Since $(I\otimes J)W^*(I\otimes J)=W$ from Proposition~\ref{IJWequality}, we see 
that $(I\otimes1)W^*(I\otimes1)=(1\otimes J)W(1\otimes J)$. Using this, the above expression becomes:
\begin{equation}\label{(DeltaR_eqn1)}
(R\otimes R)\bigl(\Delta((\operatorname{id}\otimes\omega)(W))\bigr)
=(\operatorname{id}\otimes\operatorname{id}\otimes\bar{\omega})\bigl((1\otimes1\otimes J)W_{23}W_{13}(1\otimes1\otimes J)\bigr).
\end{equation}

Meanwhile, we also have
\begin{align}
\Delta\bigl(R((\operatorname{id}\otimes\omega)(W)\bigr)&=\Delta\bigl(I(\operatorname{id}\otimes\omega)(W)^*I\bigr)
=W^*\bigl(1\otimes I(\operatorname{id}\otimes\bar{\omega})(W^*)I\bigr)W   \notag \\
&=(\operatorname{id}\otimes\operatorname{id}\otimes\bar{\omega})\bigl(W^*_{12}(1\otimes I\otimes1)W^*_{23}(1\otimes I\otimes1)W_{12}\bigr)
\notag \\
&=(\operatorname{id}\otimes\operatorname{id}\otimes\bar{\omega})\bigl(W^*_{12}(1\otimes1\otimes J)W_{23}(1\otimes1\otimes J)W_{12}\bigr)
\notag \\
&=(\operatorname{id}\otimes\operatorname{id}\otimes\bar{\omega})\bigl((1\otimes1\otimes J)W^*_{12}W_{23}W_{12}(1\otimes1\otimes J)\bigr)
\notag \\
&=(\operatorname{id}\otimes\operatorname{id}\otimes\bar{\omega})\bigl((1\otimes1\otimes J)W_{13}W_{23}(1\otimes1\otimes J)\bigr),
\notag
\end{align}
using again $(I\otimes1)W^*(I\otimes1)=(1\otimes J)W(1\otimes J)$ and also using Equation~\eqref{(mpi2)}. Applying the flip map, we then have:
\begin{equation}\label{(DeltaR_eqn2)}
\Delta^{\operatorname{cop}}\bigl(R((\operatorname{id}\otimes\omega)(W)\bigr)
=(\operatorname{id}\otimes\operatorname{id}\otimes\bar{\omega})\bigl((1\otimes1\otimes J)W_{23}W_{13}(1\otimes1\otimes J)\bigr).
\end{equation}

Comparing Equations~\eqref{(DeltaR_eqn1)} and \eqref{(DeltaR_eqn2)}, we prove the result: 
$(R\otimes R)\bigl(\Delta((\operatorname{id}\otimes\omega)(W))\bigr)=\Delta^{\operatorname{cop}}\bigl(R((\operatorname{id}\otimes\omega)(W))\bigr)$.
\end{proof}

\begin{prop}\label{tausigmaR_corollaryE}
As an immediate consequence of Propositions~\ref{Deltasigma_t}, \ref{Deltatau_t}, 
\ref{DeltaR}, we have, for all $t\in\mathbb{R}$, the following results: 
\begin{enumerate}
  \item $(\tau_t\otimes\sigma_t)(E)=E$
  \item $(\tau_t\otimes\tau_t)(E)=E$
  \item $(R\otimes R)(E)=\varsigma E$
\end{enumerate}
\end{prop}

\begin{proof}
The results follow from the earlier propositions, with $E=\Delta(1)$.
\end{proof}

\section{Left and right invariant weights}\label{sec5}

We have so far established our $C^*$-algebra $A$; the comultiplication $\Delta$; the base $C^*$-algebra $B$ (and $C$) 
as a $C^*$-subalgebra of $M(A)$; a KMS weight $\nu$ on $B$ (and a KMS weight $\mu$ on $C$); as well as the canonical idempotent $E$.  
We have formulated a polar decomposition for the antipode $S$, making it properly defined as an unbounded map at the $C^*$-algebra level.  
In view of the definition of a $C^*$-algebraic quantum groupoid of separable type (Definition~4.8 of  \cite{BJKVD_qgroupoid1} and 
Definition~1.2 of  \cite{BJKVD_qgroupoid2}), what remains is showing the existence of a suitable left invariant weight and a right invariant weight. 

Some initial work was carried out in \S\ref{sub3.6}, where we constructed an n.s.f. weight $\tilde{\varphi}$ at the von Neumann algebra level, 
extending the functional $\varphi$ at the ${}^*$-algebra level.  We looked at some of its modular theoretic data, such as the operators $\nabla$, $J$, 
and the modular automorphism group $(\tilde{\sigma}_t)_{t\in\mathbb{R}}$.  However, there are still a good amount of work remaining: We need to 
obtain a KMS weight $\varphi$ at the $C^*$-algebra level; We also need to establish the left invariance of the weight $\varphi$; Finally, there 
is also the task of constructing a right invariant weight $\psi$.  These are the tasks that we will carry out in the subsections below.

\subsection{The left invariance property of $\tilde{\varphi}$}\label{sub5.1}

Consider the weight $\tilde{\varphi}$, which we constructed in subsection~\S\ref{sub3.6}.  It is an n.s.f. weight at the 
von Neumann algebra level, which we noted to extend the functional $\varphi$ on ${\mathcal A}$.  Note also that by 
the general modular theory (see Theorem VI.1.26 of \cite{Tk2}), for any $x\in{\mathfrak N}_{\tilde{\varphi}}$, there exists 
a sequence $(a_n)_n$ in ${\mathcal A}$ such that $\Lambda(a_n)\xrightarrow{{\text { (in ${\mathcal H}$) }}}
\Lambda_{\tilde{\varphi}}(x)$ and $\pi(a_n)\xrightarrow{{\text { ($\sigma$-strong-${}^*$) }}}x$.  As such, we expect that 
the left invariance property of $\varphi$ (Definition~\ref{invariantintegrals}) would carry over to $\tilde{\varphi}$. 
Let us discuss this matter here. 

\begin{prop}\label{lemma_invariance}
\begin{enumerate}
\item Let $x\in{\mathfrak N}_{\tilde{\varphi}}$ and let $\omega\in A^*_+$.  Then $(\omega\otimes\operatorname{id})\bigl(\Delta(x^*x)\bigr)
\in\mathfrak M_{\tilde{\varphi}}^+$.
\item Let $x\in{\mathfrak N}_{\tilde{\varphi}}$ and let $\omega\in A^*$.  Then $(\omega\otimes\operatorname{id})(\Delta x)
\in\mathfrak N_{\tilde{\varphi}}$.
\end{enumerate}
\end{prop}

\begin{proof}
(1). Without loss of generality, we may write $\omega\in A^*_+$ as $\omega=\theta(y^*\,\cdot\,y)$, for some $\theta\in A^*_+$ and $y\in A$. 
Find sequences $(a_n)$ and $(b_n)$ in ${\mathcal A}$, such that $(b_n)_{n=1}^{\infty}\to y$, $(a_n)_{n=1}^{\infty}\to x$, 
and $\bigl(\Lambda(a_n)\bigr)_{n=1}^{\infty}\to\Lambda_{\tilde{\varphi}}(x)$. For convenience, we may regard $a_n=\pi(a_n)$ 
and $b_n=\pi(b_n)$, so that they can be also viewed as elements in $\pi({\mathcal A})$. Then we will have
$\bigl((\theta(b_n^*\,\cdot\,b_n)\otimes\operatorname{id})\Delta(a_n^*a_n)\bigr)_{n=1}^{\infty}\to (\omega\otimes\operatorname{id})\Delta(x^*x)$.

Apply $\tilde{\varphi}$.  Since $\tilde{\varphi}$ extends the functional $\varphi$ on ${\mathcal A}$, we have
$$
\tilde{\varphi}\bigl((\theta(b_n^*\,\cdot\,b_n)\otimes\operatorname{id})\Delta(a_n^*a_n)\bigr)
=(\theta\otimes\varphi)\bigl((b_n^*\otimes1)\Delta(a_n^*a_n)(b_n\otimes1)\bigr)
=\theta(b_n^*c_nb_n),
$$
where we wrote $c_n=(\operatorname{id}\otimes\varphi)\bigl(\Delta(a_n^*a_n)\bigr)\in M({\mathcal C})$, by the left invariance property of $\varphi$. 
As $\Delta$ is a ${}^*$-representation, we have $\bigl\|(\pi\otimes\pi)(\Delta(a^*a))\bigr\|\le\bigl\|\pi(a^*a)\bigr\|$, for $a\in{\mathcal A}$.  
So for each $n$, we can see that $\bigl\|\pi(c_n)\bigr\|\le\bigl|\varphi(a_n^*a_n)\bigr|$.  

Since $\tilde{\varphi}$ is an n.s.f. weight, we have
\begin{align}
\tilde{\varphi}\bigl((\omega\otimes\operatorname{id})\Delta(x^*x)\bigr)
&\le\liminf\bigl(\tilde{\varphi}((\theta(b_n^*\,\cdot\,b_n)\otimes\operatorname{id})\Delta(a_n^*a_n))\bigr)_{n=1}^{\infty}  \notag \\
&\le\liminf\bigl(\theta(b_n^*c_nb_n)\bigr)_{n=1}^{\infty}
\le\liminf\bigl|\varphi(a_n^*a_n)\bigr|\theta(b_n^*b_n)=\varphi(x^*x)\|\omega\|.
\notag
\end{align}
This shows that $(\omega\otimes\operatorname{id})\bigl(\Delta(x^*x)\bigr)\in\mathfrak M_{\tilde{\varphi}}^+$.

(2). By standard theory on linear functionals (see, for instance Lemma~4.4 of \cite{BJKVD_qgroupoid1} or Proposition~3.6.7 of 
\cite{Pedersenbook}), there exists a unique positive linear functional $|\omega|\in A^*$ such that $\bigl\||\omega|\bigr\|=\|\omega\|$ 
and $\overline{\omega(a)}\omega(a)=\bigl|\omega(a)\bigr|^2\le\|\omega\||\omega|(a^*a)$, for $a\in A$.  Then we have
$$
(\operatorname{\omega}\otimes\operatorname{id})\bigl(\Delta x\bigr)^*(\operatorname{\omega}\otimes\operatorname{id})\bigl(\Delta x\bigr)
\le\|\omega\|(|\omega|\otimes\operatorname{id})\bigl(\Delta(x^*x)\bigr),
$$
for all $x\in A$.  By using (1), we see that 
$$
\tilde{\varphi}\bigl((\operatorname{\omega}\otimes\operatorname{id})(\Delta x)^*(\operatorname{\omega}\otimes\operatorname{id})(\Delta x)\bigr)
\le\|\omega\|\tilde{\varphi}\bigl((|\omega|\otimes\operatorname{id})\Delta(x^*x)\bigr)
\le\|\omega\|^2\tilde{\varphi}(x^*x).
$$
This shows that $(\omega\otimes\operatorname{id})(\Delta x)\in\mathfrak N_{\tilde{\varphi}}$.
\end{proof}

The results of the above proposition are true for any functional $\omega$.  It suggests that for any $x\in{\mathfrak M}_{\tilde{\varphi}}$, we have 
$\Delta x\in{\mathfrak M}_{\operatorname{id}\otimes\tilde{\varphi}}$.  Moreover, we can actually show the left invariance of $\tilde{\varphi}$ 
at the von Neumann algebra level.  See below:

\begin{prop}\label{leftinvariant_phiext}
Let $x\in{\mathfrak M}_{\tilde{\varphi}}$.  Then $\Delta x\in{\mathfrak M}_{\operatorname{id}\otimes\tilde{\varphi}}$.  We also have:
$$
(\operatorname{id}\otimes\tilde{\varphi})\bigl(\Delta x\bigr)\in\pi({\mathcal C})'',
$$
where $\pi({\mathcal C})''$ is the enveloping von Neumann algebra of $C$, containing $M(C)$.
\end{prop}

\begin{proof}
Let $y\in{\mathfrak N}_{\tilde{\varphi}}$, and let $z\in A$ be arbitrary.  Find sequences $(a_n)$ and $(b_n)$ in ${\mathcal A}$, 
such that $\bigl(\pi(a_n)\bigr)_{n=1}^{\infty}\to y$, $\bigl(\pi(b_n)\bigr)_{n=1}^{\infty}\to z$, 
and $\bigl(\Lambda(a_n)\bigr)_{n=1}^{\infty}\to\Lambda_{\tilde{\varphi}}(y)$. Note that 
$$
\bigl\|(\operatorname{id}\otimes\Lambda_{\tilde{\varphi}})(\Delta(\pi(a_n))(\pi(b_n)\otimes1))\bigr\|^2
=\bigl\|\pi\bigl((\operatorname{id}\otimes\varphi)((b_n^*\otimes1)\Delta(a_n^*a_n)(b_n\otimes1))\bigr)\bigr\|
=\bigl\|\pi(b_n^*c_nb_n)\bigr\|,
$$
where we wrote $c_n=(\operatorname{id}\otimes\varphi)\bigl(\Delta(a_n^*a_n)\bigr)\in M({\mathcal C})$, by the left invariance of $\varphi$. 

As observed in the proof of Proposition~\ref{lemma_invariance}, we know that $\bigl\|\pi(c_n)\bigr\|\le\bigl|\varphi(a_n^*a_n)\bigr|$, for each $n$. 
As $\bigl(\Lambda(a_n)\bigr)_{n=1}^{\infty}\to\Lambda_{\tilde{\varphi}}(y)$, which also means $\bigl(\varphi(a_n^*a_n)\bigr)_{n=1}^{\infty}
\to\tilde{\varphi}(y^*y)$, this implies that the sequence $\bigl(\pi(c_n)\bigr)_{n=1}^{\infty}$ must be Cauchy, hence convergent. Combining 
these results, we can see that $\Delta(y)(z\otimes1)\in{\mathfrak N}_{\operatorname{id}\otimes\tilde{\varphi}}$, or equivalently that 
$(z^*\otimes1)\Delta(y^*y)(z\otimes1)\in{\mathfrak M}_{\operatorname{id}\otimes\tilde{\varphi}}$, such that 
\begin{align}
(\operatorname{id}\otimes\tilde{\varphi})\bigl((z^*\otimes1)\Delta(y^*y)(z\otimes1)\bigr)
&=\lim\bigl(\pi\bigl((\operatorname{id}\otimes\varphi)((b_n^*\otimes1)\Delta(a_n^*a_n)(b_n\otimes1))\bigr)\bigr)_{n=1}^{\infty} \notag \\
&=\lim\pi(b_n^*c_nb_n)_{n=1}^{\infty}=z^*cz,
\notag
\end{align}
where $c=\lim_{n=1}^{\infty}\pi(c_n)$.  Since each $c_n\in M({\mathcal C})$, it is evident $c=\lim_{n=1}^{\infty}\pi(c_n)\in\pi({\mathcal C})''$.  

This is true for any $z\in A$.  In other words, for $y\in{\mathfrak N}_{\tilde{\varphi}}$, we have 
$\Delta(y^*y)\in{\mathfrak M}_{\operatorname{id}\otimes\tilde{\varphi}}$, such that
$$
(\operatorname{id}\otimes\tilde{\varphi})\bigl(\Delta(y^*y)\bigr)=\lim_{n=1}^{\infty}\pi(c_n)\in\pi({\mathcal C})''.
$$
By polarization, it follows that 
for any $x\in{\mathfrak M}_{\tilde{\varphi}}$, we have $\Delta x\in{\mathfrak M}_{\operatorname{id}\otimes\tilde{\varphi}}$, and that 
$$
(\operatorname{id}\otimes\tilde{\varphi})(\Delta x)\in\pi({\mathcal C})''.
$$
\end{proof}

\subsection{The weight $\tilde{\psi}$}

Again at the von Neumann algebra level, let us define another weight $\tilde{\psi}$, as follows:

\begin{defn}\label{weightpsi}
Define the weight $\tilde{\psi}=\tilde{\varphi}\circ\widetilde{R}$.  Then $\tilde{\psi}$ is an n.s.f. weight on $M=\pi({\mathcal A})''$.
\end{defn}

Since $\widetilde{R}:M\to M$ is an involutive ${}^*$-anti-isomorphism, it is not too difficult to see that it is an n.s.f.~weight, 
with its associated modular automorphism group $\tilde{\sigma}':=\sigma^{\tilde{\psi}}$ satisfying $\tilde{\sigma}'_t
=\widetilde{R}\circ\tilde{\sigma}_{-t}\circ\widetilde{R}$, for all $t$.

Here is a result that is analogous to Proposition~\ref{Deltasigma_t}, this time in terms of the modular automorphism group 
$(\tilde{\sigma}'_t)$. 

\begin{prop}\label{Deltasigma'_t}
For all $x\in A$ and $t\in\mathbb{R}$, we have: 
$\Delta\bigl(\tilde{\sigma}'_t(x)\bigr)=(\tilde{\sigma}'_t\otimes\tau_{-t})(\Delta x)$.
\end{prop}

\begin{proof}
We can use the fact $\tilde{\sigma}'_t=\widetilde{R}\circ\tilde{\sigma}_{-t}\circ\widetilde{R}$, together with the result of 
Proposition~\ref{DeltaR}, saying $(\widetilde{R}\otimes\widetilde{R})(\Delta x)
=\Delta^{\operatorname{cop}}\bigl(\widetilde{R}(x)\bigr)$.  See below:
\begin{align}
\Delta\bigl(\tilde{\sigma}'_t(x)\bigr)&=\Delta\bigl((\widetilde{R}\circ\tilde{\sigma}_{-t}\circ\widetilde{R})(x)\bigr)
=\varsigma(\widetilde{R}\otimes\widetilde{R})\bigl(\Delta(\tilde{\sigma}_{-t}(\widetilde{R}(x)))\bigr) 
=\varsigma(\widetilde{R}\otimes\widetilde{R})\bigl((\tau_{-t}\otimes\tilde{\sigma}_{-t})(\Delta(\widetilde{R}(x)))\bigr)  \notag \\
&=(\widetilde{R}\otimes\widetilde{R})\bigl((\tilde{\sigma}_{-t}\otimes\tau_{-t})(\Delta^{\operatorname{cop}}(\widetilde{R}(x)))\bigr)
=\bigl((\widetilde{R}\otimes\widetilde{R})\circ(\tilde{\sigma}_{-t}\otimes\tau_{-t})\circ(\widetilde{R}\otimes\widetilde{R})\bigr)(\Delta x) 
\notag \\
&=(\tilde{\sigma}'_t\otimes\tau_{-t})(\Delta x),
\notag
\end{align}
because $\tau_t$ commutes with $\widetilde{R}$.
\end{proof}

As a consequence of Propositions~\ref{Deltasigma_t} and \ref{Deltasigma'_t}, we obtain the following result, which is essentially a converse result 
of Proposition~\ref{DeltaonBandC}, providing us with useful characterizations of $M(B)$ and $M(C)$.

\begin{prop}\label{DeltaonBandCconv}
Let $x,y\in M(A)$.  We have:
\begin{enumerate}
  \item $x\in M(C)$ if and only if $\Delta x=(x\otimes1)E=E(x\otimes1)$,
  \item $y\in M(B)$ if and only if $\Delta y=E(1\otimes y)=(1\otimes y)E$.
\end{enumerate}
\end{prop}

\begin{proof}
We can use mostly the same proof as in Proposition~5.22 in \cite{BJKVD_qgroupoid2}, with only some minor modifications.  Even though 
our weights $\tilde{\varphi}$ and $\tilde{\psi}$ are at the von Neumann algebra level at present, the results still stand because the proof characterizes 
$x\in M(C)$ as a double centralizer for $C$, and similarly for $y\in M(B)$.  The first result uses Proposition~\ref{Deltasigma_t}, and the second 
one uses Proposition~\ref{Deltasigma'_t}.
\end{proof}

Here are a couple more useful results, regarding the base algebras $M(B)$ and $M(C)$:

\begin{prop}\label{tauonBandC}
The scaling group $(\tau_t)$ leaves both $M(B)$ and $M(C)$ invariant.
\end{prop}

\begin{proof}
See Proposition~5.23\,(1) of \cite{BJKVD_qgroupoid2} for the proof. Since it uses only the characterizations of $M(B)$ and $M(C)$ 
noted in Proposition~\ref{DeltaonBandCconv}, as well as the results of Propositions~\ref{Deltatau_t} and \ref{tausigmaR_corollaryE}, this is all right.
\end{proof}

\begin{prop}\label{RonBandC}
We have: $R\bigl(M(B)\bigr)=M(C)$ and $R\bigl(M(C)\bigr)=M(B)$. 

At the von Neumann algebra level, we have: $\widetilde{R}\bigl(\pi({\mathcal B})''\bigr)=\pi({\mathcal C})''$ 
and $\widetilde{R}\bigl(\pi({\mathcal C})''\bigr)=\pi({\mathcal B})''$.
\end{prop}

\begin{proof}
See Proposition~5.23\,(2) of \cite{BJKVD_qgroupoid2} for the proof of the first result. Since it uses only the characterizations of $M(B)$ and $M(C)$ 
noted in Proposition~\ref{DeltaonBandCconv}, as well as the results of Propositions~\ref{DeltaR} and \ref{tausigmaR_corollaryE}, this is all right.

The second result is a direct consequence of the first result.
\end{proof}

\begin{rem}
In section~5 of \cite{BJKVD_qgroupoid2}, there are more results obtained regarding $S$, $R$, $\tau$.  In particular, not only 
$R\bigl(M(B)\bigr)=M(C)$ and $R\bigl(M(C)\bigr)=M(B)$, but in fact, it turns out that $R|_B=R_{BC}:B\to C$, the 
${}^*$-anti-isomorphism that appear in Proposition~\ref{EatC*level}.  Not all the results in \cite{BJKVD_qgroupoid2} 
can be used (or only allowed with an alternative proof), because the full construction is not yet done.  However, eventually 
all these results would be obtained as consequences, once we successfully carry out the construction of our $C^*$-algebraic 
quantum groupoid.
\end{rem}

With these results regarding the isomorphism $\widetilde{R}$ gathered, we can now show that the weight $\tilde{\psi}$ satisfies the right invariance 
property at the von Neumann algebra level:

\begin{prop}\label{rightinvariant_psiext}
Let $x\in{\mathfrak M}_{\tilde{\psi}}$.  Then $\Delta x\in{\mathfrak M}_{\operatorname{id}\otimes\tilde{\psi}}$.  We also have:
$$
(\tilde{\psi}\otimes\operatorname{id})\bigl(\Delta x\bigr)\in\pi({\mathcal B})'',
$$
where $\pi({\mathcal B})''$ is the enveloping von Neumann algebra of $B$, containing $M(B)$.
\end{prop}

\begin{proof}
Let $x\in{\mathfrak M}_{\tilde{\psi}}$. Since $\tilde{\psi}=\tilde{\varphi}\circ\widetilde{R}$, this immediately means that 
$\widetilde{R}(x)\in{\mathfrak M}_{\tilde{\varphi}}$, which in turn means 
$\Delta\bigl(\widetilde{R}(x)\bigr)\in{\mathfrak M}_{\operatorname{id}\otimes\tilde{\varphi}}$, by Proposition~\ref{leftinvariant_phiext}. 
It follows that
\begin{align}
(\tilde{\psi}\otimes\operatorname{id})(\Delta x)
&=\bigl((\tilde{\varphi}\circ\widetilde{R})\otimes\operatorname{id}\bigr)(\Delta x)  \notag \\
&=(\tilde{\varphi}\otimes\widetilde{R})\bigl((\widetilde{R}\otimes\widetilde{R})(\Delta x)\bigr)
=(\tilde{\varphi}\otimes\widetilde{R})\bigl(\Delta^{\operatorname{cop}}(\widetilde{R}(x))\bigr)
=\widetilde{R}\bigl((\operatorname{id}\otimes\tilde{\varphi})(\Delta(\widetilde{R}(x)))\bigr),
\notag
\end{align}
which is a valid element in $\widetilde{R}\bigl(\pi({\mathcal C})''\bigr)$, by Proposition~\ref{leftinvariant_phiext}. 
Since $\widetilde{R}\bigl(\pi({\mathcal C})''\bigr)=\pi({\mathcal B})''$, we prove that 
$(\tilde{\psi}\otimes\operatorname{id})\bigl(\Delta x\bigr)\in\pi({\mathcal B})''$.
\end{proof}

With this result, we see that the weight $\tilde{\psi}$ is a right invariant weight.  One word of caution, however: Unlike $\tilde{\varphi}$, which 
extends the functional $\varphi$ on ${\mathcal A}$, it may not necessarily be the case that $\tilde{\psi}=\tilde{\varphi}\circ\widetilde{R}$ is 
an extension of the functional $\psi$ we began with.  There is no complete uniqueness property for the right invariant functionals and weights, 
so while they are related they may not be exactly same.  

Meanwhile, for the operator algebraic theory to work properly, we not only need the weights $\tilde{\varphi}$ and $\tilde{\psi}$, 
but we actually need a certain condition such that we are able to have a Radon--Nikodym derivative between the two 
weights.  This means that we need an additional requirement, already at the ${}^*$-algebra level.  See the discussion in 
the next subsection.

\subsection{Quasi-invariance assumption}\label{sub5.3}

So far, at least at the von Neumann algebra level, we have established that the weights $\tilde{\varphi}$ and $\tilde{\psi}$ 
are left and right invariant, respectively.  

Unlike the purely algebraic theory, however, for things to work properly in the operator algebraic setting, we need the two 
weights to allow a Radon--Nikodym derivative between them.  Even in the classical setting of locally compact groupoids, 
certain {\em quasi-invariance\/} condition was assumed as a part of the definition (see \cite{Renbook}, \cite{Patbook}).  

Since we are constructing an operator algebraic object from purely algebraic data, some form of an additional condition 
needs to be required at the algebra level to allow this development.  With this in mind, we introduce the following 
{\em quasi-invariance assumption\/} at the purely algebraic level.  This is the one small (but necessary) additional condition 
we are going to require:

\begin{assumption} [Quasi-invariance]
We will assume that $\sigma|_{\mathcal B}$, the restriction of $\sigma$ to the base algebra ${\mathcal B}$, leaves 
${\mathcal B}$ invariant, and that $\nu\circ\sigma|_{\mathcal B}=\nu$.
\end{assumption}

\begin{rem}
This is a purely algebraic condition, where $\nu$ is the distinguished functional on ${\mathcal B}$, and $\sigma$ is the automorphism on 
${\mathcal A}$, as given in Proposition~\ref{phi_modularautomorphism}. 
It can be shown that the restriction $\sigma|_{\mathcal C}$ leaves ${\mathcal C}$ invariant with $\mu\circ\sigma_{\mathcal C}=\mu$, 
because it turns out that $\sigma|_{\mathcal C}=\sigma^{\mu}$.  Similarly, it turns out that $\sigma^{\varphi\circ S}|_{\mathcal B}=\sigma^{\nu}$, 
thus it leaves ${\mathcal B}$ invariant with $\nu\circ\sigma^{\varphi\circ S}|_{\mathcal B}=\nu$.  See Proposition~\ref{sigmarestriction} 
in Appendix (Section~\S\ref{appx}).  On the other hand, we do not know whether similar properties hold for the restrictions $\sigma|_{\mathcal B}$ 
and $\sigma^{\varphi\circ S}|_{\mathcal C}$.  There is no reason why they should.  As such, our quasi-invariance requirement is an extra assumption.
\end{rem}

\begin{rem}
It is possible to actually prove the invariance of ${\mathcal B}$ under $\sigma|_{\mathcal B}$, but not the 
$\nu\circ\sigma|_{\mathcal B}=\nu$ result.  Symmetrically, if the assumption holds, then we can quickly show that 
${\mathcal C}$ is invariant under $\sigma^{\varphi\circ S}|_{\mathcal C}$ and that 
$\mu\circ\sigma^{\varphi\circ S}|_{\mathcal C}=\mu$.  We termed this assumption as ``quasi-invariance'', because 
this assumption indeed gives rise to the quasi-invariance property at the $C^*$-algebraic framework.  See the remarks 
given later in the subsection.  
\end{rem}

In this subsection, we will collect some consequences of the quasi-invariance assumption.  Many of these results refer to 
$\varphi$ and $\sigma$, as well as $\varphi\circ S$ and $\sigma^{\varphi\circ S}$.  See \S\ref{suba.1}, \S\ref{suba.2} of 
Appendix (Section~\S\ref{appx}) for discussion on some algebraic results on these objects.

\begin{lem}\label{lemmaqi}
Under the above quasi-invariance assumption, we have:
\begin{enumerate}
\item  $(\varphi\circ S)\bigl((\sigma\circ S^{-2})(a)\bigr)=(\varphi\circ S)(a)$, for all $a\in{\mathcal A}$.
\item $(\sigma^{\varphi\circ S}\circ\sigma\circ S^{-2})(a)=(\sigma\circ S^{-2}\circ\sigma^{\varphi\circ S})(a)$, for all $a\in{\mathcal A}$.
\end{enumerate}
\end{lem}

\begin{proof}
(1). Let $a\in{\mathcal A}$.  Since $\varphi\circ S$ is a right integral, by Proposition~\ref{phiSresults}\,(1) 
(or Proposition~\ref{muphinupsi}), we have: 
$$
(\varphi\circ S)\bigl((\sigma\circ S^{-2})(a)\bigr)=\nu\bigl(((\varphi\circ S)\otimes\operatorname{id})(\Delta((\sigma\circ S^{-2})(a)))\bigr)
=\nu\bigl((\sigma\circ S^{-2})(((\varphi\circ S)\otimes\operatorname{id})(\Delta a))\bigr).
$$
Note here that we also used Corollary of Proposition~\ref{Deltasigma}.

In the right side of the above equation, since we know $\bigl((\varphi\circ S)\otimes\operatorname{id})(\Delta a)\in M({\mathcal B})$ 
by the right invariance of $\varphi\circ S$, we may regard the maps $\sigma$ and $S^{-2}$ above as $\sigma|_{\mathcal B}$ and 
$S^{-2}|_{\mathcal B}$, respectively.   As $\nu\circ\sigma|_{\mathcal B}=\nu$ (by the quasi-invariance assumption) and 
$\nu\circ S^{-2}|_{\mathcal B}=\nu\circ\sigma^{\nu}=\nu$ (by Proposition~\ref{sigmarestriction}), the equation becomes:
$$
(\varphi\circ S)\bigl((\sigma\circ S^{-2})(a)\bigr)=\nu\bigl(((\varphi\circ S)\otimes\operatorname{id})(\Delta a)\bigr)=(\varphi\circ S)(a).
$$

(2). Let $a,x\in{\mathcal A}$ be arbitrary. We have:
\begin{align}
(\varphi\circ S)(ax)&=(\varphi\circ S)\bigl(x\sigma^{\varphi\circ S}(a)\bigr)
=(\varphi\circ S)\bigl((\sigma\circ S^{-2})(x\sigma^{\varphi\circ S}(a))\bigr)
\notag \\
&=(\varphi\circ S)\bigl((\sigma\circ S^{-2})(x)(\sigma\circ S^{-2}\circ\sigma^{\varphi\circ S})(a)\bigr),
\label{(5a)}
\end{align}
by (1) above and the fact that $\sigma$ and $S^{-2}$ are multiplicative (being isomorphisms on ${\mathcal A}$).

On the other hand, we also have: 
\begin{align}
(\varphi\circ S)(ax)&=(\varphi\circ S)\bigl((\sigma\circ S^{-2})(ax)\bigr)=(\varphi\circ S)\bigl((\sigma\circ S^{-2})(a)(\sigma\circ S^{-2})(x)\bigr)
\notag \\
&=(\varphi\circ S)\bigl((\sigma\circ S^{-2})(x)(\sigma^{\varphi\circ S}\circ\sigma\circ S^{-2})(a)\bigr).
\label{(5b)}
\end{align}

Comparing Equations~\eqref{(5a)} and \eqref{(5b)}, since $x\in{\mathcal A}$ is arbitrary and since $\varphi\circ S$ is faithful, 
it follows that $(\sigma\circ S^{-2}\circ\sigma^{\varphi\circ S})(a)=(\sigma^{\varphi\circ S}\circ\sigma\circ S^{-2})(a)$, for all $a\in{\mathcal A}$.
\end{proof}

This lemma helps us prove the following commutativity results:

\begin{prop} \label{sigmasigma'commute}
Under the quasi-invariance assumption above, we have:
\begin{enumerate}
\item $\sigma\circ S^2=S^2\circ\sigma$
\item $\sigma^{\varphi\circ S}\circ S^2=S^2\circ\sigma^{\varphi\circ S}$
\item $\sigma\circ\sigma^{\varphi\circ S}=\sigma^{\varphi\circ S}\circ\sigma$
\end{enumerate}
\end{prop}

\begin{proof}
(1). Let $a\in{\mathcal A}$ be arbitrary. By Corollary following Proposition~\ref{Deltasigma} (in Appendix), and by using the fact that 
$\sigma\circ S^{-2}=[\sigma^{\varphi\circ S}]^{-1}\circ\sigma\circ S^{-2}\circ\sigma^{\varphi\circ s}$ (from Lemma~\ref{lemmaqi}), 
we have: 
\begin{equation}\label{(5c)}
\bigl(\operatorname{id}\otimes(\sigma\circ S^{-2})\bigr)(\Delta a)=\Delta\bigl(\sigma(S^{-2}(a))\bigr)
=\Delta\bigl(([\sigma^{\varphi\circ S}]^{-1}\circ\sigma\circ S^{-2}\circ\sigma^{\varphi\circ S})(a)\bigr).
\end{equation}

Meanwhile, by Proposition~\ref{Deltasigma}\,(2), where we have $\Delta\bigl(\sigma^{\varphi\circ S}(x)\bigr)=(\sigma^{\varphi\circ S}\otimes S^{-2})
(\Delta x)$, for all $x\in{\mathcal A}$, we know that $\bigl([\sigma^{\varphi\circ S}]^{-1}\otimes S^2\bigr)\bigl(\Delta(\sigma^{\varphi\circ S}(x))\bigr)
=\Delta x$. Letting $x=[\sigma^{\varphi\circ S}]^{-1}(a)$, for $a\in{\mathcal A}$, we have:
\begin{equation}\label{(5d)}
\Delta\bigl([\sigma^{\varphi\circ S}]^{-1}(a)\bigr)=\bigl([\sigma^{\varphi\circ S}]^{-1}\otimes S^2\bigr)(\Delta a).
\end{equation}

Combining Equation~\eqref{(5c)} and \eqref{(5d)}, and using again Corollary of Proposition~\ref{Deltasigma}, we have:
\begin{align}
\bigl(\operatorname{id}\otimes(\sigma\circ S^{-2})\bigr)(\Delta a)&=\Delta\bigl(\sigma(S^{-2}(a))\bigr)
=\bigl([\sigma^{\varphi\circ S}]^{-1}\otimes S^2\bigr)\bigl(\Delta((\sigma\circ S^{-2})(\sigma^{\varphi\circ S}(a)))\bigr)  \notag \\
&=\bigl([\sigma^{\varphi\circ S}]^{-1}\otimes(S^2\circ\sigma\circ S^{-2})\bigr)\bigl(\Delta(\sigma^{\varphi\circ s}(a))\bigr).
\notag
\end{align}
Note that $\Delta\bigl(\sigma^{\varphi\circ s}(a)\bigr)=(\sigma^{\varphi\circ S}\otimes S^{-2})(\Delta a)$, by Proposition~\ref{Deltasigma}. 
As a result, we thus obtain the following:
$$
\bigl(\operatorname{id}\otimes(\sigma\circ S^{-2})\bigr)(\Delta a)
=\bigl(\operatorname{id}\otimes(S^2\circ\sigma\circ S^{-2}\circ S^{-2})\bigr)(\Delta a).
$$

Since $\Delta$ is full, it follows that $\sigma\circ S^{-2}=S^2\circ\sigma\circ S^{-2}\circ S^{-2}$.
Since $S^2$ and $S^{-2}$ are automorphisms on ${\mathcal A}$, we thus have: $S^2\circ\sigma=\sigma\circ S^2$.

(2). Note (see proof of Proposition~\ref{automorphismsigmaS}) that $\sigma^{\varphi\circ S}=S^{-1}\circ\sigma^{-1}\circ S$.  By (1), 
we know $S^2\circ\sigma=\sigma\circ S^2$, or equivalently $\sigma^{-1}\circ S^2=S^2\circ\sigma^{-1}$.  So we have:
$$
\sigma^{\varphi\circ S}\circ S^2=S^{-1}\circ\sigma^{-1}\circ S^3=S\circ\sigma^{-1}\circ S=S^2\circ S^{-1}\circ\sigma^{-1}\circ S
=S^2\circ\sigma^{\varphi\circ S}.
$$

(3). Let $a\in{\mathcal A}$ be arbitrary.  We have:
\begin{align}
\Delta\bigl(\sigma(\sigma^{\varphi\circ S}(a))\bigr)&=(S^2\otimes\sigma)\bigl(\Delta(\sigma^{\varphi\circ S}(a))\bigr)  \notag \\
&=\bigl((S^2\circ\sigma^{\varphi\circ S})\otimes(\sigma\circ S^{-2})\bigr)(\Delta a)
=\bigl((\sigma^{\varphi\circ S}\circ S^2)\otimes(S^{-2}\circ\sigma)\bigr)(\Delta a)  \notag \\
&=(\sigma^{\varphi\circ S}\otimes S^{-2})\bigl(\Delta(\sigma(a))\bigr)=\Delta\bigl(\sigma^{\varphi\circ S}(\sigma(a))\bigr),
\notag
\end{align}
by Proposition~\ref{Deltasigma} and by (1), (2) above (the commutativity of $\sigma$ and $S^2$, and of $\sigma^{\varphi\circ S}$ and $S^2$).  
As $\Delta$ is injective, this shows that $\sigma\circ\sigma^{\varphi\circ S}=\sigma^{\varphi\circ S}\circ\sigma$.
\end{proof}

So far, the results we obtained in this subsection have been all at the ${}^*$-algebra level, under the {\em quasi-invariance 
assumption\/} (formulated at the ${}^*$-algebra level), and using only the algebraic properties summarized in Appendix 
(\S\ref{appx}).  However, the results of Proposition~\ref{sigmasigma'commute} now allows us to obtain some results at 
the operator algebra level:

\begin{prop}\label{weightssigma'sigmacommute}
Under the quasi-invariance assumption, we have the following commutativity results involving the automorphism groups 
$(\tau_t)$, $(\tilde{\sigma}_t)$, and $(\tilde{\sigma}'_t)$.
\begin{enumerate}
\item The automorphism groups $(\tau_t)$ and $(\tilde{\sigma}_t)$ commute.
\item The automorphism groups $(\tilde{\sigma}_t)$ for the weight $\tilde{\varphi}$ and $(\tilde{\sigma}'_t)$ 
for the weight $\tilde{\psi}$ commute.
\end{enumerate}
\end{prop}

\begin{proof}
(1). From Proposition~\ref{sigmasigma'commute}\,(1), we saw that $\sigma\circ S^2=S^2\circ\sigma$, on ${\mathcal A}$. 
Since $\tilde{\sigma}_{-i}\bigl(\pi(a)\bigr)=\sigma(a)$ and $\tau_{-i}\bigl(\pi(a)\bigr)=S^2(a)$, while ${\mathcal A}$ forms 
a core for both, this means that $\tilde{\sigma}_{-i}\circ\tau_{-i}=\tau_{-i}\circ\tilde{\sigma}_{-i}$.

Note that $\tilde{\sigma}_{-i}$ is an analytic generator for the automorphism group $(\tilde{\sigma}_t)$ at $-i$, while 
$\tau_{-i}$ is an analytic generator for $(\tau_t)$. Since the analytic generators of the two automorphism groups commute, 
we can show without difficulty that the automorphism groups $(\tau_t)$ and $(\tilde{\sigma}_t)$ commute.  

For instance, we have $\tilde{\sigma}_{-i}\tau_{-2i}
=\tilde{\sigma}_{-i}\tau_{-i}\tau_{-i}=\tau_{-i}\tilde{\sigma}_{-i}\tau_{-i}=\tau_{-2i}\tilde{\sigma}_{-i}$.  Also, we have 
$(\tilde{\sigma}_{-i}\tau_{-\frac{i}2}\tilde{\sigma}_{i})\circ(\tilde{\sigma}_{-i}\tau_{\frac{i}2}\tilde{\sigma}_{i})
=\tilde{\sigma}_{-i}\tau_{-i}\tilde{\sigma}_{i}=\tau_{-i}$, so $\tilde{\sigma}_{-i}\tau_{\frac{i}2}\tilde{\sigma}_{i}=(\tau_{-i})^{\frac12}=\tau_{-\frac{i}2}$. 
That also means $\tilde{\sigma}_{-i}\tau_{-\frac{i}2}=\tau_{-\frac{i}2}\tilde{\sigma}_{-i}$.  Continuing, we can show that $\tilde{\sigma}_{-i}\tau_s
=\tau_s\tilde{\sigma}_{-i}$, for all $s$.  We can do the same for $\tilde{\sigma}_t$, so we would have 
\begin{equation}\label{(tausigmacommute)}
\tau_s\circ\tilde{\sigma}_t=\tilde{\sigma}_t\circ\tau_s, \ {\text { for all $s,t\in\mathbb{C}$.}}
\end{equation}

(2). In Proposition~\ref{sigmasigma'commute}\,(3), we saw that $\sigma\circ\sigma^{\varphi\circ S}=\sigma^{\varphi\circ S}\circ\sigma$, 
on ${\mathcal A}$.  This means that on ${\mathcal A}$, we have: $\sigma\circ(S^{-1}\circ\sigma^{-1}\circ S)=(S^{-1}\circ\sigma^{-1}\circ S)\circ\sigma$.

Going up to the operator algebra level, recall that $\tilde{\psi}=\varphi\circ\widetilde{R}$, with $\tilde{\sigma}'_t
=\widetilde{R}\circ\tilde{\sigma}_{-t}\circ\widetilde{R}$. In particular, at $t=-i$, we have 
$$
\tilde{\sigma}'_{-i}=\widetilde{R}\circ\tilde{\sigma}_i\circ\widetilde{R}
=\widetilde{R}\circ\tau_{\frac{i}2}\circ\tilde{\sigma}_i\circ\tau_{-\frac{i}2}\circ\widetilde{R}=S^{-1}\circ\tilde{\sigma}_i\circ S,
$$
because $\tau$ commutes with $\widetilde{R}$ (Proposition~\ref{Rtaucoomute}) and also with $\tilde{\sigma}$ (see (1) above). We also used 
the polar decomposition of the antipode.  

Note that $\tilde{\sigma}_i$ coincides with $\sigma^{-1}$ on ${\mathcal A}$.  So on ${\mathcal A}$, the map 
$S^{-1}\circ\tilde{\sigma}_i\circ S$ coincides with $S^{-1}\circ\sigma^{-1}\circ S$, which has been observed to be commuting 
with $\sigma$. What all this means is that $\tilde{\sigma}'_{-i}$ commutes with $\tilde{\sigma}_{-i}$. 

As above, since $\tilde{\sigma}'_{-i}$ is an analytic generator for $(\tilde{\sigma}'_t)$ and since $\tilde{\sigma}_{-i}$ is an 
analytic generator for $(\tilde{\sigma}_t)$, the commutativity of the analytic generators mean the commutativity of the 
automorphism groups $(\tilde{\sigma}'_t)$ and $(\tilde{\sigma}_t)$.  In particular, we have $\tilde{\sigma}'_s\circ\tilde{\sigma}_t
=\tilde{\sigma}_t\circ\tilde{\sigma}'_s$, for all $s,t$.
\end{proof}

The commutativity of the modular automorphism groups of the weights $\tilde{\varphi}$ and $\tilde{\psi}$ is significant, because we have 
the following general result by Vaes:

\begin{prop} \label{vaesRN}
(Vaes \cite{VaRN}): Let $\varphi$ and $\psi$ be two n.s.f. weights on a von Neumann algebra $M$. Then the following 
are equivalent:
\begin{itemize}
\item[(i).] The modular automorphism groups $\sigma^{\psi}$ and $\sigma^{\varphi}$ commute.
\item[(ii).] There exist a strictly positive operator $\tilde{\delta}$ affiliated with $M$ and a strictly positive operator $\lambda$ 
affiliated with the center of $M$ such that $\sigma^{\varphi}_s(\tilde{\delta}^{it})=\lambda^{ist}\tilde{\delta}^{it}$ for all $s,t\in\mathbb{R}$ 
and such that $\psi=\varphi_\delta=\varphi(\tilde{\delta}^{\frac12}\,\cdot\,\tilde{\delta}^{\frac12})$.
\item[(iii).] There exist a strictly positive operator $\delta$ affiliated with $M$ and a strictly positive operator $\lambda$ 
affiliated with the center of $M$ such that $[D\psi:D\varphi]_t=\lambda^{\frac12it^2}\tilde{\delta}^{it}$ for all $t\in\mathbb{R}$.
\end{itemize}
\end{prop}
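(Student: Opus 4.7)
The plan is to establish the cycle of implications (ii)$\Rightarrow$(i)$\Rightarrow$(iii)$\Rightarrow$(ii), with the real content concentrated in (i)$\Rightarrow$(iii). The other two steps are essentially manipulations inside the Connes--Takesaki modular calculus together with the Pedersen--Takesaki spatial Radon--Nikodym construction, while (i)$\Rightarrow$(iii) is where the 2-cocycle analysis lives.

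For (ii)$\Rightarrow$(i) I would compute $\sigma^\psi_t = \operatorname{Ad}(\delta^{it}) \circ \sigma^\varphi_t$ and expand both $\sigma^\psi_s \sigma^\varphi_t(x)$ and $\sigma^\varphi_t \sigma^\psi_s(x)$; the covariance $\sigma^\varphi_t(\delta^{is}) = \lambda^{ist}\delta^{is}$ only produces a central factor $\lambda^{ist}$ which commutes through and cancels, giving commuting modular groups. For (iii)$\Rightarrow$(ii), substituting the explicit form $[D\psi:D\varphi]_t = \lambda^{\frac12 it^2}\delta^{it}$ into the cocycle identity $[D\psi:D\varphi]_{s+t} = [D\psi:D\varphi]_s\,\sigma^\varphi_s([D\psi:D\varphi]_t)$ and matching the two sides forces the covariance $\sigma^\varphi_s(\delta^{it}) = \lambda^{ist}\delta^{it}$; the identification $\psi = \varphi_\delta$ then follows by recognizing that the Pedersen--Takesaki weight $\varphi(\delta^{1/2}\cdot\delta^{1/2})$, interpreted in the appropriate unbounded sense, has precisely $\lambda^{\frac12 it^2}\delta^{it}$ as its Connes cocycle with $\varphi$ under this covariance, so that uniqueness of weights with a given cocycle forces equality.

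The heart of the argument is (i)$\Rightarrow$(iii). Set $u_t := [D\psi:D\varphi]_t$, a strongly continuous family of unitaries in $M$ with
\begin{equation*}
\sigma^\psi_t = \operatorname{Ad}(u_t) \circ \sigma^\varphi_t, \qquad u_{s+t} = u_s\,\sigma^\varphi_s(u_t).
\end{equation*}
Expanding the hypothesis $\sigma^\psi_t \sigma^\varphi_s = \sigma^\varphi_s \sigma^\psi_t$ forces $u_t^*\sigma^\varphi_s(u_t)$ to commute with every $\sigma^\varphi_{s+t}(x)$, hence to lie in $Z(M)$; call this central unitary $c(s,t)$, so $\sigma^\varphi_s(u_t) = c(s,t)u_t$. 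Because $\sigma^\varphi$ acts trivially on the center, the $s$-group law of $\sigma^\varphi$ yields $c(s_1+s_2,t) = c(s_1,t)c(s_2,t)$, and Stone's theorem produces a positive self-adjoint $\lambda_t$ affiliated with $Z(M)$ with $c(s,t) = \lambda_t^{is}$. Applying $\sigma^\varphi_r$ to $u_{s+t} = c(s,t)u_s u_t$ and comparing with the direct expansion forces $c(r,s+t) = c(r,s)c(r,t)$, hence $\lambda_{s+t} = \lambda_s\lambda_t$, so $\lambda_t = \lambda^t$ for a single positive $\lambda$ affiliated with $Z(M)$ and $c(s,t) = \lambda^{ist}$. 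Finally, set $\delta^{it} := \lambda^{-\frac12 it^2}u_t$ and check via the cocycle identity together with $\sigma^\varphi_s(u_t) = \lambda^{ist}u_t$ that $\delta^{is}\delta^{it} = \delta^{i(s+t)}$; Stone's theorem then produces a positive self-adjoint $\delta$ affiliated with $M$ with $u_t = \lambda^{\frac12 it^2}\delta^{it}$, which is exactly (iii).

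The main obstacle is pinning down that the central 2-cocycle $c(s,t)$ has the clean bi-exponential shape $\lambda^{ist}$ rather than remaining an arbitrary continuous cocycle $\mathbb{R}^2 \to Z(M)$; this rests on using the $s$-group law of $\sigma^\varphi$ and the Connes cocycle identity for $u_t$ in tandem, and ultimately reduces to the triviality of the symmetric part of the 2-cocycle. A secondary technical point is the affiliated-operator bookkeeping: since $\delta$ and $\lambda$ are in general unbounded, every identity must be read spectrally, and the weight $\varphi_\delta$ appearing in (ii) must be interpreted within Pedersen--Takesaki's framework; justifying the explicit cocycle formula $[D\varphi_\delta:D\varphi]_t = \lambda^{\frac12 it^2}\delta^{it}$ in this \emph{non}-$\sigma^\varphi$-invariant case is precisely the extension of spatial Radon--Nikodym theory needed to close the cycle at (iii)$\Rightarrow$(ii).
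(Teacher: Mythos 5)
You should note at the outset that the paper does not prove this proposition at all: it is quoted from Vaes, and the paper's ``proof'' is the single citation to Proposition~5.2 of \cite{VaRN}. So the only meaningful comparison is with Vaes's own argument, and your sketch is essentially a reconstruction of it. The leg (i)$\Rightarrow$(iii) is correct and complete up to routine technicalities: with $u_t=[D\psi:D\varphi]_t$, commutativity of the modular groups does force $u_t^*\sigma^{\varphi}_s(u_t)\in M'\cap M=Z(M)$; the fact that $\sigma^{\varphi}$ fixes $Z(M)$ pointwise gives additivity in $s$, the cocycle identity gives additivity in $t$, so $c(s,t)=\lambda^{ist}$, and one checks directly that $\delta^{it}:=\lambda^{-\frac12 it^2}u_t$ is a strongly continuous one-parameter unitary group, whence Stone's theorem yields $u_t=\lambda^{\frac12 it^2}\delta^{it}$ together with the covariance $\sigma^{\varphi}_s(\delta^{it})=\lambda^{ist}\delta^{it}$. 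This is indeed how the implication is obtained in \cite{VaRN}.

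The genuine gap is in closing the cycle. Both (iii)$\Rightarrow$(ii) and (ii)$\Rightarrow$(i) rest on the assertion that $\varphi_{\delta}=\varphi(\delta^{\frac12}\,\cdot\,\delta^{\frac12})$ can be defined as an n.s.f.\ weight for a strictly positive $\delta$ satisfying only the covariance relation $\sigma^{\varphi}_s(\delta^{it})=\lambda^{ist}\delta^{it}$ (not $\sigma^{\varphi}$-invariance), and that $[D\varphi_{\delta}:D\varphi]_t=\lambda^{\frac12 it^2}\delta^{it}$; only then does uniqueness of the Connes cocycle give $\psi=\varphi_{\delta}$ in (iii)$\Rightarrow$(ii), and only then is the formula $\sigma^{\psi}_t=\operatorname{Ad}(\delta^{it})\circ\sigma^{\varphi}_t$ available in (ii)$\Rightarrow$(i). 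You flag this as ``affiliated-operator bookkeeping'' and an ``extension of spatial Radon--Nikodym theory,'' but it is not a secondary point: it is the main theorem of Vaes's paper. Pedersen--Takesaki theory covers only the case where $\delta$ is affiliated with the centralizer ($\lambda=1$); in the covariant case one has to construct the GNS data for $\varphi_{\delta}$ (or realize it as a limit of bounded perturbations), prove normality, semifiniteness and faithfulness, and then verify the cocycle formula --- none of which is supplied or follows from what you wrote. So as a self-contained proof the proposal is incomplete precisely at the step that constitutes the substance of the cited result; as a description of how the result is proved in \cite{VaRN}, with that construction granted, the outline is accurate.
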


\begin{proof} 
See Proposition~5.2 in \cite{VaRN}.
\end{proof}

In our case, we should consider the weights $\tilde{\varphi}$, $\tilde{\psi}$, and the commuting modular automorphism groups 
$(\tilde{\sigma}_t)$, $(\tilde{\sigma}'_t)$. The commutativity is a consequence of our (purely algebraic) quasi-invariance 
assumption.  As Vaes's result (Proposition~\ref{vaesRN} above) indicates, this assures us the existence of a suitable 
Radon--Nikodym derivative between our left and right invariant weights, justifying the term ``quasi-invariance''.  Recall that 
in the theory of (classical) locally compact groupoids (see \cite{Renbook}, \cite{Patbook}), the existence of a Radon--Nikodym 
derivative between the left integral and the right integral is typically assumed as part of the definition, which is referred to as t
he {\em quasi-invariance\/} condition.

Note also the existence of a positive operator $\tilde{\delta}$ affiliated with $M$, and also an existence of another positive 
operator $\lambda$.  We re-named Vaes's $\delta$ as $\tilde{\delta}$ here, to avoid confusion with our modular element 
$\delta$ at the ${}^*$-algebra level. But then, it seems reasonable to expect that the operator $\tilde{\delta}$ is closely related 
with the modular element $\delta$. See discussion below. As for $\lambda$, it seems to be a generalization of the ``scaling 
constant'' in the quantum group theory, but now no longer a scalar.  In the rest of the subsection, we will attempt to make 
a connection between the operator $\tilde{\delta}$ and the modular element $\delta\in{\mathcal A}$.  

At the ${}^*$-algebra level, we have been working with the functionals $\varphi$ and $\varphi\circ S$, which are faithful left 
and right integrals, respectively.  On the other hand, at the operator algebra level, we have been considering the weights 
$\tilde{\varphi}$ (which extends $\varphi$) and $\tilde{\psi}=\tilde{\varphi}\circ\widetilde{R}$.  As such, any discussion 
regarding the modular element $\delta$ and the modular operator $\tilde{\delta}$ should begin with clarifying the relationship 
between the functional $\varphi\circ S$ and the weight $\tilde{\psi}$.  For this, we will make use of the polar decomposition 
of the antipode: $S=R\circ\tau_{-\frac{i}2}$.

Let us consider the weight $\tilde{\varphi}\circ\tau_{\frac{i}2}$.  It is not difficult to show that it satisfies the left invariance property:
\begin{prop}\label{phitauleftintegral}
The weight $\tilde{\varphi}\circ\tau_{\frac{i}2}$ is left invariant:
$$
\bigl(\operatorname{id}\otimes(\tilde{\varphi}\circ\tau_{\frac{i}2})\bigr)(\Delta x)\in M({\mathcal C})'', 
\quad {\text { for all $x\in{\mathfrak M}_{\tilde{\varphi}}$.}}
$$
\end{prop}

\begin{proof}
Let $x\in{\mathfrak M}_{\tilde{\varphi}}$. By Proposition~\ref{Deltatau_t}, we have
\begin{align}
\bigl(\operatorname{id}\otimes(\tilde{\varphi}\circ\tau_{\frac{i}2})\bigr)(\Delta x)
&=(\tau_{-\frac{i}2}\otimes\tilde{\varphi})\bigl((\tau_{\frac{i}2}\otimes\tau_{\frac{i}2})(\Delta x)\bigr)
=(\tau_{-\frac{i}2}\otimes\tilde{\varphi})\bigl(\Delta(\tau_{\frac{i}2}(x))\bigr)  \notag \\
&=\tau_{-\frac{i}2}\bigl((\operatorname{id}\otimes\tilde{\varphi})(\Delta(\tau_{\frac{i}2}(x)))\bigr)
\in\tau_{-\frac{i}2}\bigl(M({\mathcal C})''\bigr)=M({\mathcal C})''.
\notag
\end{align}
We used also the left invariance property of $\tilde{\varphi}$ (Proposition~\ref{leftinvariant_phiext}) and also Proposition~\ref{tauonBandC}.
\end{proof}

While $\tilde{\varphi}\circ\tau_{\frac{i}2}$ is a weight at the von Neumann algebra level, it can be considered as a functional 
on ${\mathcal A}$, by $(\tilde{\varphi}\circ\tau_{\frac{i}2})|_{\mathcal A}(a):=\tilde{\varphi}\bigl(\tau_{\frac{i}2}(\pi(a))\bigr)$. 
Note that all the elements $\pi(a)$, $a\in{\mathcal A}$, are analytic elements of $\tau$ (Proposition~\ref{analyticfortau}), 
and the commutativity between $\tau$ and $\tilde{\sigma}$ as noted in Equation~\eqref{(tausigmacommute)} implies that 
all $\tau_{\frac{i}2}\bigl(\pi(a)\bigr)$, $a\in{\mathcal A}$, are analytic for $\tilde{\sigma}$.  As such, we see that 
$\tau_{\frac{i}2}\bigl(\pi(a)\bigr)\in{\mathfrak M}_{\tilde{\varphi}}$, making $(\tilde{\varphi}\circ\tau_{\frac{i}2})|_{\mathcal A}$ 
a valid functional defined on all of ${\mathcal A}$.  This functional turns out to be a left integral.  See below:

\begin{prop}
The functional $(\tilde{\varphi}\circ\tau_{\frac{i}2})|_{\mathcal A}$ is a left integral on ${\mathcal A}$.
\end{prop}

\begin{proof}
Let $a\in{\mathcal A}$. Note that $x=\bigl(\operatorname{id}\otimes(\tilde{\varphi}\circ\tau_{\frac{i}2})|_{\mathcal A}\bigr)
(\Delta a)\in M({\mathcal A})$, and by Proposition~\ref{phitauleftintegral} we have $\pi(x)\in M({\mathcal C})''$. 
Then it must be the case that $\Delta x=E(x\otimes1)$. 
Therefore, by a result at the algebra level (see Proposition~2.9 in \cite{BJKVD_LSthm} and Proposition~2.16 in \cite{VDWangwha2}), this means $x\in M({\mathcal C})$. Or 
$\bigl(\operatorname{id}\otimes(\tilde{\varphi}\circ\tau_{\frac{i}2})|_{\mathcal A}\bigr)(\Delta a)\in M({\mathcal C})$, 
for all $a\in{\mathcal A}$.
\end{proof}

\begin{cor}
There exists an element $y\in M({\mathcal B})$ such that $(\tilde{\varphi}\circ\tau_{\frac{i}2})|_{\mathcal A}(x)=\varphi(xy)$, 
for all $x\in{\mathcal A}$.
\end{cor}

\begin{proof}
This is a consequence of $(\tilde{\varphi}\circ\tau_{\frac{i}2})|_{\mathcal A}$ being a left integral on ${\mathcal A}$. 
See Proposition~\ref{phiandotherleft}.
\end{proof}

\begin{cor}
There exists an element $z\in M(B)$ such that $(\tilde{\varphi}\circ\tau_{\frac{i}2})(\,\cdot\,)=\tilde\varphi(\,\cdot\,z)$.
\end{cor}

\begin{proof}
Take $z=\pi(y)$, where $y\in M({\mathcal B})$ is as in the previous Corollary.
\end{proof}

Let us now consider the weight $\tilde{\psi}=\tilde{\varphi}\circ\widetilde{R}$.  First, from Proposition~\ref{vaesRN}, we saw that we can write 
$$
\tilde{\psi}=\tilde{\varphi}(\tilde{\delta}^{\frac12}\,\cdot\,\tilde{\delta}^{\frac12}),
$$ 
where $\tilde{\delta}$ is the modular operator.  By the polar decomposition $S=\widetilde{R}\circ\tau_{-\frac{i}2}$, we know 
$\widetilde{R}=\tau_{\frac{i}2}\circ S$.  So we can write the above as
$$
(\tilde{\varphi}\circ\tau_{\frac{i}2})\bigl(S(x)\bigr)=\tilde{\varphi}(\tilde{\delta}^{\frac12}x\tilde{\delta}^{\frac12}),\quad x\in{\mathfrak M}_{\tilde{\psi}}.
$$
At the same time, by Corollary above, there exists $z=\pi(y)$, $y\in M({\mathcal B})$, such that 
$$
(\tilde{\varphi}\circ\tau_{\frac{i}2})\bigl(S(x)\bigr)=\tilde{\varphi}\bigl(S(x)z\bigr),\quad x\in{\mathfrak M}_{\tilde{\psi}}.
$$

In particular, if $a\in{\mathcal A}$, then it becomes:
\begin{align}
\varphi\bigl(S(a)y\bigr)&=\tilde{\varphi}\bigl(S(\pi(a))z\bigr)=(\tilde{\varphi}\circ\tau_{\frac{i}2})\bigl(S(\pi(a))\bigr)
=\tilde{\varphi}(\tilde{\delta}^{\frac12}\pi(a)\tilde{\delta}^{\frac12})   \notag \\
&=\tilde{\varphi}\bigl(\pi(a)\tilde{\delta}^{\frac12}\tilde{\sigma}_{-i}(\tilde{\delta}^{\frac12})\bigr)
=\tilde{\varphi}\bigl(\pi(a)\tilde{\delta}^{\frac12}\lambda^{-\frac{i}2}\tilde{\delta}^{\frac12}\bigr)
=\tilde{\varphi}\bigl(\lambda^{-\frac{i}2}\pi(a)\tilde{\delta}\bigr)
\label{(eqn_modular1)}
\end{align}
by Proposition~\ref{vaesRN}, where it is noted that $\tilde{\sigma}_s(\tilde{\delta}^{it})=\lambda^{ist}\tilde{\delta}^{it}$ and that 
$\lambda$ is central.  We took $s=-i$, $t=-\frac{i}2$ in Equation~\eqref{(eqn_modular1)}. Note also that
\begin{equation}\label{(eqn_modular2)}
\varphi\bigl(S(a)y\bigr)=(\varphi\circ S)\bigl(S^{-1}(y)a\bigr)=\varphi\bigl(S^{-1}(y)a\delta\bigr).
\end{equation}

Compare Equations~\eqref{(eqn_modular1)} and \eqref{(eqn_modular2)}.  Since $\varphi$ is faithful on ${\mathcal A}$ (a core for $\tilde{\varphi}$), 
which in turn means the uniqueness of $\delta$ and $y$, this has to mean that $\delta\equiv\tilde{\delta}$ and $S^{-1}(y)\equiv\lambda^{-\frac{i}2}$, 
modulo possibly multiplication by positive real numbers.  In particular, it follows that $\delta$ is positive (self-adjoint) and that 
$y$ is central.  This is not saying that $y$ has to be a scalar, but that $y\in M({\mathcal B})\cap M({\mathcal C})$. Also, this is not saying that 
$\tilde{\delta}$ extends $\delta$, but that $\tilde{\delta}$ is possibly an extension of $p\delta$, where $p$ is a positive real number.  See below:

\begin{prop}\label{deltaisselfadjoint}
Let $\delta$ be the modular element, as defined in Proposition~\ref{modular} and Proposition~\ref{phiSresults}\,(3), such that 
$(\varphi\circ S)(x)=\varphi(x\delta)$, for all $x\in{\mathcal A}$.  Then under the quasi-invariance assumption, it turns out that 
$\delta$ is a positive self-adjoint element in ${\mathcal A}$.
\end{prop}

\begin{proof}
See the discussion given in the preceding paragraphs.
\end{proof}

This observation means that under the quasi-invariance assumption, the modular element $\delta$ is self-adjoint, so 
the results of Proposition~\ref{deltasa} (in Appendix) can be used.  Since the assumption was made at the ${}^*$-algebra 
level and the results are at the ${}^*$-algebra level, this indicates that there is possibly a way to give a direct, purely algebraic 
proof of the self-adjointness of the modular element $\delta$ from the quasi-invariance assumption.  We will not pursue 
that question here.  

Meanwhile, the operator $\lambda$ would be a generalization of the ``scaling constant'' in the quantum group theory 
(see Proposition~6.8 in \cite{KuVa}).  For our current purposes, its role will be downplayed.  In a future paper (such as \cite{BJKVD_qgroupoid3}, 
when we study the duality theory for $C^*$-algebraic quantum groupoids), we will have more occasions to discuss 
further implications of having $\tilde{\delta}$ and $\lambda$.

\subsection{Some additional consequences of the quasi-invariance condition}\label{sub5.4}

We will gather here a few additional technical results that are consequences of our quasi-invariance assumption.  They will 
be useful in the next subsection.

In Proposition~\ref{sigmasigma'delta} (in Appendix \S\ref{appx}), we gathered some results on the modular element 
$\delta$ under the modular automorphisms $\sigma$ and $\sigma^{\varphi\circ S}$.  With the quasi-invariance assumption, 
we can prove another result:

\begin{prop}\label{sigmasigma'delta_ex}
Consider the modular automorphisms $\sigma$ and $\sigma^{\varphi\circ S}$, which can be naturally extended to the 
multiplier algebra level. Under the quasi-invariance assumption, we have:
$$
\sigma^{-1}(a)=\delta[\sigma^{\varphi\circ S}]^{-1}(a)\delta^{-1} \ \ {\text and } \ \ [\sigma^{\varphi\circ S}]^{-1}(a)=\delta^{-1}\sigma^{-1}(a)\delta,
$$  
for any $a\in{\mathcal A}$.
\end{prop}

\begin{proof}
As a consequence of the quasi-invariance assumption, we can use Proposition~\ref{sigmasigma'commute}\,(3), 
the commutativity of $\sigma$ and $\sigma^{\varphi\circ S}$.  Since $\sigma^{\varphi\circ S}\circ\sigma=\sigma\circ\sigma^{\varphi\circ S}$, 
we can see quickly that $[\sigma^{\varphi\circ S}]^{-1}\circ\sigma=\sigma\circ[\sigma^{\varphi\circ S}]^{-1}$.

Applying this commutativity result to Proposition~\ref{sigmasigma'delta}\,(4), we obtain: 
$$
[\sigma^{\varphi\circ S}]^{-1}\bigl(\sigma(x)\bigr)=\sigma\bigl([\sigma^{\varphi\circ S}]^{-1}(x)\bigr)
=\delta^{-1}x\delta.  
$$
Here let $x=\sigma^{-1}(a)$, for $a\in{\mathcal A}$.   Then it becomes: 
$$[\sigma^{\varphi\circ S}]^{-1}(a)=\delta^{-1}\sigma^{-1}(a)\delta,$$ 
true for any $a\in{\mathcal A}$. 
Equivalently, we have: $\sigma^{-1}(a)=\delta[\sigma^{\varphi\circ S}]^{-1}(a)\delta^{-1}$, $\forall a\in{\mathcal A}$.
\end{proof}

In the below is one more consequence of the quasi-invariance assumption and the self-adjointness of $\delta$ (itself a consequence 
of the quasi-invariance):

\begin{prop}\label{sigmasigma'Delta}
Given the quasi-invariance assumption, we have:
$$
(\sigma^{-1}\otimes\sigma^{\varphi\circ S})(\Delta x)=\Delta\bigl(S^{-2}(x)\bigr).
$$
for any $x\in{\mathcal A}$.
\end{prop}

\begin{proof}
Note that for any $a\in{\mathcal A}$, we have $\sigma^{\varphi\circ S}(a)=\delta\sigma(a)\delta^{-1}$ by Proposition~\ref{sigmasigma'delta}\,(3), 
and due to the quasi-invariance, we have $\sigma^{-1}(a)=\delta[\sigma^{\varphi\circ S}]^{-1}(a)\delta^{-1}$, by Proposition~\ref{sigmasigma'delta_ex}. 
We thus have, for any $x\in{\mathcal A}$,
$$
(\sigma^{-1}\otimes\sigma^{\varphi\circ S})(\Delta x)=(\delta\otimes\delta)\bigl([\sigma^{\varphi\circ S}]^{-1}\otimes\sigma\bigr)(\Delta x)
(\delta^{-1}\otimes\delta^{-1}).
$$

Apply here the result $\Delta\bigl([\sigma^{\varphi\circ S}]^{-1}(x)\bigr)=\bigl([\sigma^{\varphi\circ S}]^{-1}\otimes S^2)(\Delta x)$, which is essentially 
Proposition~\ref{Deltasigma}\,(2).  Then the above expression becomes:
\begin{align}
(\sigma^{-1}\otimes\sigma^{\varphi\circ S})(\Delta x)=\dots
&=(\delta\otimes\delta)\bigl[(\operatorname{id}\otimes (\sigma\circ S^{-2}))(\Delta([\sigma^{\varphi\circ S}]^{-1}(x)))\bigr](\delta^{-1}\otimes\delta^{-1}) 
\notag \\
&=(\delta\otimes\delta)\bigl[(\operatorname{id}\otimes (S^{-2}\circ\sigma))(\Delta([\sigma^{\varphi\circ S}]^{-1}(x)))\bigr](\delta^{-1}\otimes\delta^{-1}), 
\notag
\end{align}
as $\sigma$ and $S^{-2}$ commute, again by the quasi-invariance (see Proposition~\ref{sigmasigma'commute}).

Use here the result $\Delta\bigl(\sigma(a)\bigr)=(S^2\otimes\sigma)(\Delta a)$, $a\in{\mathcal A}$, from Proposition~\ref{Deltasigma}, 
which can be also written as $(S^{-2}\otimes\operatorname{id})\Delta\bigl(\sigma(a)\bigr)=(\operatorname{id}\otimes\sigma)(\Delta a)$. 
Then the above expression becomes:
\begin{align}
&=(\delta\otimes\delta)\bigl[(S^{-2}\otimes S^{-2})(\Delta(\sigma([\sigma^{\varphi\circ S}]^{-1}(x))))\bigr](\delta^{-1}\otimes\delta^{-1})  \notag \\
&=(\delta\otimes\delta)\bigl[(S^{-2}\otimes S^{-2})(\Delta(\delta^{-1}x\delta))\bigr](\delta^{-1}\otimes\delta^{-1}),
\notag
\end{align}
where we used the result $\sigma\bigl([\sigma^{\varphi\circ S}]^{-1}(x)\bigr)=\delta^{-1}x\delta$, from Proposition~\ref{sigmasigma'delta}\,(4).

Note that by Proposition~\ref{deltasa} (because $\delta$ is self-adjoint), we have: 
$$
\Delta(\delta^{-1}x\delta)=\Delta(\delta^{-1})(\Delta x)\Delta(\delta)
=(\delta^{-1}\otimes\delta^{-1})E(\Delta x)E(\delta\otimes\delta)
=(\delta^{-1}\otimes\delta^{-1})(\Delta x)(\delta\otimes\delta).
$$
Combining all these observations, we thus have:
\begin{align}
(\sigma^{-1}\otimes\sigma^{\varphi\circ S})(\Delta x)=\dots
&=(\delta\otimes\delta)\bigl[(S^{-2}\otimes S^{-2})(\Delta(\delta^{-1}x\delta))\bigr](\delta^{-1}\otimes\delta^{-1})  \notag \\
&=(\delta\otimes\delta)\bigl[(S^{-2}\otimes S^{-2})((\delta^{-1}\otimes\delta^{-1})(\Delta x)(\delta\otimes\delta))\bigr]
(\delta^{-1}\otimes\delta^{-1})  \notag \\
&=(\delta\otimes\delta)(\delta^{-1}\otimes\delta^{-1})[(S^{-2}\otimes S^{-2})(\Delta x)]
(\delta\otimes\delta)(\delta^{-1}\otimes\delta^{-1})  \notag \\
&=\Delta\bigl(S^{-2}(x)\bigr).
\notag
\end{align}
Here, we used the result that $S(\delta)=\delta^{-1}$ and $S^2(\delta)=\delta$, from Proposition~\ref{deltasa}, 
and the property of the antipode that $(S\otimes S)(\Delta a)=\Delta^{\operatorname{cop}}\bigl(S(a)\bigr)$, 
$\forall a\in{\mathcal A}$, from Proposition~\ref{antipodeS}, applied twice.
\end{proof}

\subsection{The KMS weight $\varphi$}\label{sub5.5}

We have been working with an n.s.f. weight $\tilde{\varphi}$ at the von Neumann algebra level, but the time has come to consider its restriction 
to the $C^*$-algebra level.  By restricting the weight $\tilde{\varphi}$ on the von Neumann algebra $M=\pi({\mathcal A})''$ to the level of the 
$C^*$-algebra $A=\overline{\pi({\mathcal A})}^{\|\ \|}$, we obtain a faithful lower semi-continuous weight $\varphi$ on $A$.  

As noted earlier, the weight $\varphi$ extends the linear functional $\varphi$ on ${\mathcal A}$, in the sense that $\varphi\bigl(\pi(a)\bigr)=\varphi(a)$, 
for $a\in{\mathcal A}$.  For convenience, let us use the same notation for our $C^*$-algebra weight as the linear functional at the ${}^*$-algebra level. 
Denote the associated spaces by ${\mathfrak N}_{\varphi}=\bigl\{x\in A:\varphi(x^*x)<\infty\bigr\}$ and ${\mathfrak M}_{\varphi}
={\mathfrak N}_{\varphi}^*{\mathfrak N}_{\varphi}$.  Write $\Lambda_{\varphi}$ to denote the GNS map from ${\mathfrak N}_{\varphi}$ into 
${\mathcal H}$, where we can take our Hilbert space to be the same as before.

We can consider the operators $T$, $\nabla$, $J$ as before, because the Hilbert space remains the same. However, as noted in \S\ref{sub3.6}, 
we do not know whether the restriction of the modular automorphism group $(\tilde{\sigma}_t)$ to the $C^*$-algebra level would leave $A$ invariant, 
and whether the restriction is norm-continuous.  These are not automatic consequences of the modular theory.  Earlier, for the weights $\nu$ and $\mu$ 
at the base $C^*$-algebra level, we were benefitted by the existence of the canonical idempotent $E$.  However, that is not possible this time. 
We need a different approach.

Let us return back down to the ${}^*$-algebra level, and consider the modular element $\delta\in M({\mathcal A})$. For its properties, 
see Appendix (Section~\ref{appx}).  Note that due to our {\em quasi-invariance\/} assumption (see discussion given in \S\ref{sub5.3}, 
in particular Proposition~\ref{deltaisselfadjoint}), we can use the fact that $\delta$ is positive self-adjoint.  Also, the results of \S\ref{sub5.4} 
can be all used.

Using $\delta$, define a new Hilbert space ${\mathcal H}_{\delta}$, as follows:

\begin{prop}\label{H_delta}
Let $a,b\in{\mathcal A}$.  Then as $\delta$ is a positive element, we can define a positive sesquilinear form:
$$
(a,b)\mapsto\varphi(b^*\delta a).
$$
In this way, we can define a Hilbert space ${\mathcal H}_{\delta}$, together with an injective linear map 
$\Lambda_{\delta}:{\mathcal A}\to{\mathcal H}_{\delta}$, having a dense range in ${\mathcal H}_{\delta}$, 
such that 
$$
\bigl\langle\Lambda_{\delta}(a),\Lambda_{\delta}(b)\bigr\rangle=\varphi(b^*\delta a), \quad 
{\text { for all $a,b\in{\mathcal A}$.}}
$$
\end{prop}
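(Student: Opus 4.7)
The plan is to verify that $(a,b)\mapsto\varphi(b^*\delta a)$ defines a bona fide inner product on ${\mathcal A}$ (not merely a positive sesquilinear form), and then to obtain $({\mathcal H}_\delta,\Lambda_\delta)$ by Hilbert space completion. Sesquilinearity is immediate from the linearity of $\varphi$ and of the algebra product, and Hermitian symmetry $\overline{\varphi(b^*\delta a)}=\varphi(a^*\delta b)$ follows at once from the self-adjointness of $\delta$ (established under the quasi-invariance Assumption; see \S\ref{suba.3}, in particular Proposition~\ref{deltasa}) together with $\varphi$ being a positive linear functional, since $(b^*\delta a)^*=a^*\delta b$.

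The substantive point is non-negativity: $\varphi(a^*\delta a)\ge0$ for all $a\in{\mathcal A}$. My plan is to pass to the operator-algebraic extension of $\varphi$ to the n.s.f.~weight $\tilde\varphi$ on $M=\pi({\mathcal A})''$, where the quasi-invariance Assumption together with Vaes's Radon--Nikodym theorem (Proposition~\ref{vaesRN}) realises $\delta$ as a strictly positive self-adjoint operator affiliated with $M$. The functional calculus then produces $\delta^{\frac12}$ affiliated with $M$, and I would factorise $a^*\delta a=(\delta^{\frac12}a)^*(\delta^{\frac12}a)$ at the level of (possibly unbounded) operators on ${\mathcal H}$. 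Since $a^*\delta a$ already lies in ${\mathcal A}$ (on which $\varphi$ is finite), this factorisation turns the desired expression into $\bigl\|\Lambda_{\tilde\varphi}(\delta^{\frac12}a)\bigr\|^2\ge0$. Definiteness then comes for free: $\varphi(a^*\delta a)=0$ forces $\delta^{\frac12}a=0$ as an operator, and since $\delta$ is invertible in $M({\mathcal A})$ by Proposition~\ref{modular}, so is $\delta^{\frac12}$ as an affiliated operator, whence $a=0$. The Hilbert space ${\mathcal H}_\delta$ is then the standard completion of $({\mathcal A},\langle\,\cdot\,,\,\cdot\,\rangle_\delta)$, with $\Lambda_\delta$ the canonical inclusion, automatically injective with dense range.

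The main obstacle in this outline is the operator-theoretic identity $\varphi(a^*\delta a)=\tilde\varphi\bigl((\delta^{\frac12}a)^*(\delta^{\frac12}a)\bigr)$: because $\delta^{\frac12}$ is in general unbounded, one has to verify that $\delta^{\frac12}a$ belongs to ${\mathfrak N}_{\tilde\varphi}$ and that the identity is valid in the weight-theoretic sense. I would dispose of this either by truncation, replacing $\delta$ by the bounded positive elements $\delta_n=\delta(1+n^{-1}\delta)^{-1}$ and passing to the limit using lower semicontinuity of $\tilde\varphi$, or by appealing directly to the standard theory of weights of the form $\tilde\varphi_\delta=\tilde\varphi(\delta^{\frac12}\,\cdot\,\delta^{\frac12})$ in \cite{VaRN}, in which the required identity is essentially built into the construction.
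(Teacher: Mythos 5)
Your outline is essentially sound, but it takes a noticeably heavier route than the paper's own proof, which stays entirely at the ${}^*$-algebra level once the positivity of $\delta$ is granted. The paper simply accepts $\varphi(a^*\delta a)\ge 0$ as immediate from the positivity of $\delta$ (which the statement itself assumes, justified earlier in \S\ref{sub4.1} via Proposition~\ref{vaesRN}), and then settles definiteness in three lines: the Cauchy--Schwarz inequality for the positive sesquilinear form gives $\bigl|\varphi(b^*\delta a)\bigr|^2\le\varphi(b^*\delta b)\,\varphi(a^*\delta a)$, so $\varphi(a^*\delta a)=0$ forces $\varphi(b^*\delta a)=0$ for all $b$; since $\delta a\in{\mathcal A}$ (because $\delta\in M({\mathcal A})$) and $\varphi$ is faithful, this yields $\delta a=0$, and the algebraic invertibility of $\delta$ from Proposition~\ref{modular} gives $a=0$ --- no unbounded operators, no weight theory. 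Your proposal instead routes both positivity and definiteness through the von Neumann algebra picture, factorising $a^*\delta a=(\delta^{\frac12}a)^*(\delta^{\frac12}a)$ with $\delta^{\frac12}$ affiliated with $M$; this is workable and has the merit of making explicit the operator-theoretic content that the paper compresses into ``as $\delta$ is positive, it is clear'', but it buys you the very domain and closure issues you flag (membership of $\delta^{\frac12}a$ in ${\mathfrak N}_{\tilde\varphi}$, the meaning of ``$\delta^{\frac12}a=0$ as an operator''), which then need the truncation or the appeal to \cite{VaRN} to discharge. In particular your definiteness step could be simplified: once positivity of the form is in hand, the purely algebraic Cauchy--Schwarz argument above is available and avoids the affiliated-operator machinery altogether; alternatively, working with vectors rather than operators ($\varphi(a^*\delta a)=\|\hat\delta^{\frac12}\Lambda(a)\|^2$ and injectivity of $\hat\delta^{\frac12}$) is a lighter version of your argument. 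So: correct in substance, different in method, and more technical than necessary for the definiteness half.
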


\begin{proof}
As $\delta$ is positive, it is clear that $\varphi(a^*\delta a)$ is positive, for any $a\in{\mathcal A}$. 
Next, assume that $\varphi(a^*\delta a)=0$.  Then by the Schwarz inequality, we have, for any $b\in{\mathcal A}$, 
$$
\bigl|\varphi(b^*\delta a)\bigr|^2\le\varphi(b^*\delta b)\varphi(a^*\delta a),
$$
so $\varphi(b^*\delta a)=0$. Since $\varphi$ is faithful and since this is true for any $b\in{\mathcal A}$, 
this means that $\delta a=0$.  As $\delta$ is invertible, we must have $a=0$.  We see that 
$\varphi(a^*\delta a)=0$ if and only if $a=0$.

We thus have the positive definiteness, and so we obtain an inner product on ${\mathcal A}$.  By completing ${\mathcal A}$ 
with respect to the induced norm, we thereby obtain the Hilbert space ${\mathcal H}_{\delta}$, with the natural 
inclusion $\Lambda_{\delta}:{\mathcal A}\to{\mathcal H}_{\delta}$.
\end{proof}

We define an anti-linear, closed (unbounded) operator $Z$ from ${\mathcal H}$ to ${\mathcal H}_{\delta}$, 
in the following proposition:

\begin{prop}\label{Zoperator}
For $a\in{\mathcal A}$, define:
$$
Z_0\Lambda(a):=\Lambda_{\delta}\bigl(S(a^*)\bigr).
$$
Then:
\begin{enumerate}
\item $Z_0$ is a well-defined map from $\Lambda({\mathcal A})$ into ${\mathcal H}_{\delta}$.
\item $Z_0$ is a closable, so we can consider its closure $Z$. Then $Z$ becomes a closed, densely-defined, injective operator from ${\mathcal H}$ 
into ${\mathcal H}_{\delta}$, such that $\Lambda({\mathcal A})$ forms a core and $Z$ has a dense range.
\item $Z$ is anti-linear.
\item $\Lambda_{\delta}({\mathcal A})$ forms a core for $Z^*$, which is also a densely-defined, injective, and has a dense range, and given by
$$
Z^*\Lambda_{\delta}(a)=\Lambda\bigl(\delta^{-1}S(a)^*\delta\bigr), \quad {\text { for $a\in{\mathcal A}$.}}
$$
\end{enumerate}
\end{prop}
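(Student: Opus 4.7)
The plan is to verify the elementary assertions (1) and (3) by unpacking definitions, then introduce a candidate for $Z^*$ explicitly on $\Lambda_\delta(\mathcal{A})$ and establish the key adjointness relation linking it to $Z$. Once that identity is in hand, (2) and (4) follow by standard functional-analytic arguments for closable anti-linear operators.

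Part (1) is immediate: $\Lambda$ is injective by the faithfulness of $\varphi$, $S$ is a bijection on $\mathcal{A}$ (Proposition~\ref{antipodeS}(2)), and $\Lambda_\delta$ is injective by Proposition~\ref{H_delta}, so the assignment $\Lambda(a)\mapsto\Lambda_\delta(S(a^*))$ is unambiguous. Part (3) is the anti-linearity of involution composed with the linear maps $S$ and $\Lambda_\delta$. Injectivity of the map on $\Lambda(\mathcal{A})$ is also obvious from the same three ingredients. For the dense range claim in (2), observe that $Z(\Lambda(\mathcal{A})) = \Lambda_\delta(S(\mathcal{A}^*)) = \Lambda_\delta(\mathcal{A})$, dense in $\mathcal{H}_\delta$ by construction.

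The crux is to show $Z$ is closable, with adjoint behaving as in (4). I would introduce $Y:\Lambda_\delta(\mathcal{A})\to\mathcal{H}$ by $Y\Lambda_\delta(a) := \Lambda(\delta^{-1}S(a)^*\delta)$. Well-definedness requires $\delta^{-1}S(a)^*\delta \in \mathcal{A}$, which follows from the invertibility of $\delta \in M(\mathcal{A})$ (Proposition~\ref{modular}) together with the Appendix facts that conjugation by $\delta^{\pm 1}$ preserves $\mathcal{A}$ (a consequence of the quasi-invariance assumption). The heart of the argument is the identity
\begin{equation}\label{(Zadjoint_plan)}
\bigl\langle Z\Lambda(a),\Lambda_\delta(b)\bigr\rangle_{\mathcal{H}_\delta}
= \overline{\bigl\langle \Lambda(a), Y\Lambda_\delta(b)\bigr\rangle_{\mathcal{H}}},
\qquad a,b\in\mathcal{A}.
\end{equation}
Unpacking the inner products and using $\overline{\varphi(x)} = \varphi(x^*)$, the self-adjointness $\delta^* = \delta$ (Appendix), and $S(a^*)^* = S^{-1}(a)$ (Proposition~\ref{antipodeS}(6)), this reduces to the algebraic identity $\varphi(b^*\delta S(a^*)) = \varphi(S^{-1}(a)\delta b)$, which I would verify by combining the weak KMS property of $\varphi$ (Proposition~\ref{phi_modularautomorphism}), the relations $\psi=\varphi(\,\cdot\,\delta)=\varphi\circ S$, the action $S(\delta^{\pm1})=\delta^{\mp1}$, and the commutation of $\sigma$ with $\delta$ provided by the quasi-invariance assumption, all gathered in the Appendix.

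Granted \eqref{(Zadjoint_plan)}, we have $Y\subseteq Z^*$ (in the sense appropriate to anti-linear operators), so $Z^*$ is densely defined; hence $Z$ is closable. Taking the closure, $\Lambda(\mathcal{A})$ is a core for $Z$ by construction. The symmetric computation starting from $Y$ shows $Z\subseteq Y^*$, and combining $Y\subseteq Z^*$ with the duality $(Z^*)^* = \overline{Z}$ yields $Z^* = \overline{Y}$, so $\Lambda_\delta(\mathcal{A})$ is a core for $Z^*$ and the formula of (4) holds. Injectivity and dense range of $Z^*$ transfer from the evident injectivity and dense range of $Y$ on $\Lambda_\delta(\mathcal{A})$, since $Y(\Lambda_\delta(\mathcal{A})) = \Lambda(\delta^{-1}S(\mathcal{A})^*\delta) = \Lambda(\mathcal{A})$, dense in $\mathcal{H}$. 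The main obstacle I anticipate is the algebraic identity underlying \eqref{(Zadjoint_plan)}: though it is essentially bookkeeping, making the modular automorphism, antipode, involution, and modular element cooperate correctly requires care, and crucially leans on the Appendix facts that themselves depend on the quasi-invariance assumption.
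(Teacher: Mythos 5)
Your overall route coincides with the paper's: define the candidate adjoint $Y\Lambda_{\delta}(b)=\Lambda\bigl(\delta^{-1}S(b)^*\delta\bigr)$ on $\Lambda_{\delta}({\mathcal A})$, verify the anti-linear adjointness relation between $Z$ and $Y$, and let standard unbounded-operator facts give (2) and (4). However, the ``heart of the argument'' as you state it contains a conjugation slip that makes your target identity false. Unpacking your displayed relation with the paper's convention $\langle x,y\rangle=\varphi(y^*x)$ and $\delta^*=\delta$, the left side is $\varphi\bigl(b^*\delta S(a^*)\bigr)$ and the right side is $\overline{\varphi\bigl(\delta S(b)\delta^{-1}a\bigr)}=\varphi\bigl(a^*\delta^{-1}S(b)^*\delta\bigr)$; so the identity you actually need is
$$
\varphi\bigl(b^*\delta S(a^*)\bigr)=\varphi\bigl(a^*\delta^{-1}S(b)^*\delta\bigr).
$$
The identity you wrote instead, $\varphi\bigl(b^*\delta S(a^*)\bigr)=\varphi\bigl(S^{-1}(a)\delta b\bigr)$, is what you get if the bar in your equation is dropped: since $\bigl(b^*\delta S(a^*)\bigr)^*=S^{-1}(a)\delta b$, your right-hand side equals $\overline{\varphi\bigl(b^*\delta S(a^*)\bigr)}$, so your ``reduced identity'' asserts that $\varphi\bigl(b^*\delta S(a^*)\bigr)$ is real for all $a,b$, which is false in general (replace $b$ by $ib$ and compare). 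The correct identity is exactly what the paper proves, in two lines, and it needs neither the weak KMS property of $\varphi$ nor any commutation of $\sigma$ with $\delta$: by anti-multiplicativity, $S\bigl(a^*\delta^{-1}S(b)^*\bigr)=S\bigl(S(b)^*\bigr)S(\delta^{-1})S(a^*)=b^*\delta S(a^*)$, using $S\bigl(S(b)^*\bigr)=b^*$ and $S(\delta^{-1})=\delta$; then $\varphi\circ S=\varphi(\,\cdot\,\delta)$ gives $\varphi\bigl(b^*\delta S(a^*)\bigr)=\varphi\bigl(a^*\delta^{-1}S(b)^*\delta\bigr)$, as required. Fix this one step and your argument matches the paper's.

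Two smaller points. First, well-definedness of $Y$ needs nothing from the Appendix beyond $\delta,\delta^{-1}\in M({\mathcal A})$: multipliers carry ${\mathcal A}$ into ${\mathcal A}$, so $\delta^{-1}S(b)^*\delta\in{\mathcal A}$ automatically. Second, from the two inclusions $Y\subseteq Z^*$ and $Z\subseteq Y^*$ alone one cannot conclude $Z^*=\overline{Y}$; taking adjoints of $Y\subseteq Z^*$ only yields $\overline{Z}\subseteq Y^*$, not $Z^*\subseteq\overline{Y}$, so the core claim in (4) needs a further argument (the paper is equally terse here, so this is not a defect relative to its proof, but be aware the deduction as you phrased it is not valid in general).
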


\begin{proof}
As $S$ is well-defined from ${\mathcal A}$ onto itself, and since $\Lambda({\mathcal A})$ is dense in ${\mathcal H}$ while 
$\Lambda_{\delta}({\mathcal A})$is dense in ${\mathcal H}_{\delta}$, with respect to the relevant norms, it is clear that $Z_0$ is well-defined, 
densely-defined, and has a dense range.  Meanwhile, for $a,b\in{\mathcal A}$, note that 
$$
\bigl\langle\Lambda_{\delta}(S(a^*)),\Lambda_{\delta}(b)\bigr\rangle=\varphi\bigl(b^*\delta S(a^*)\bigr)
=\varphi\bigl(S(a^*\delta^{-1}S^{-1}(b^*)\bigr)=\varphi\bigl(S(a^*\delta^{-1}S(b)^*)\bigr),
$$
because $S(\delta)=\delta^{-1}$ (see Proposition~\ref{deltasa} in Appendix) 
and $S\bigl(S(b)^*\bigr)^*=b$.  Since $\varphi\circ S=\varphi(\,\cdot\,\delta)$, 
this becomes:
$$
\bigl\langle\Lambda_{\delta}(S(a^*)),\Lambda_{\delta}(b)\bigr\rangle
=\varphi\bigl(a^*\delta^{-1}S(b)^*\delta\bigr)
=\bigl\langle\Lambda(\delta^{-1}S(b)^*\delta),\Lambda(a)\bigr\rangle.
$$
From this, it is not difficult to show that $Z_0$ is closable.  Let us denote by $Z$ its closure.  Then $Z$ becomes closed, densely-defined, 
and has a dense range, with $\Lambda({\mathcal A})$ forming a core. It is anti-linear since $S$ is.

With $Z^*\Lambda_{\delta}(b)=\Lambda\bigl(\delta^{-1}S(b)^*\delta\bigr)$, for $b\in{\mathcal A}$, this result can be 
expressed as
$$
\bigl\langle Z\Lambda(a),\Lambda_{\delta}(b)\bigr\rangle=\bigl\langle Z^*\Lambda_{\delta}(b),\Lambda(a)\bigr\rangle
=\overline{\bigl\langle\Lambda(a),Z^*\Lambda_{\delta}(b)\bigr\rangle}.
$$
From this, it is easy to notice that $Z$ is injective. It is also apparent that $Z^*$ is itself a closed, injective, anti-linear operator, that is densely-defined 
with a dense range.
\end{proof}

Define $P:=Z^*Z$.  As a consequence of Proposition~\ref{Zoperator}, we see that $P$ is a closed, positive, injective operator on ${\mathcal H}$, 
which is densely-defined and has a dense range.  It is clear that $\Lambda({\mathcal A})$ forms a core for $P$.  Moreover, we have:
\begin{equation}\label{(Poperator)}
P\Lambda(a)=Z^*Z\Lambda(a)=Z^*\Lambda_{\delta}\bigl(S(a^*)\bigr)=\Lambda\bigl(\delta^{-1}S(S(a^*))^*\delta\bigr)
=\Lambda\bigl(\delta^{-1}S^{-2}(a)\delta\bigr),
\end{equation}
because $S(x^*)=S^{-1}(x)^*$, which is applied twice.  Next proposition gives a useful relationship between the operators 
$W$, $\nabla(=T^*T)$, and $P(=Z^*Z)$:

\begin{prop}\label{WnablaP}
For any $a,b\in{\mathcal A}$, we have:
$$
W(\nabla\otimes P)\bigl(\Lambda(a)\otimes\Lambda(b)\bigr)
=(\nabla\otimes\nabla)W\bigl(\Lambda(a)\otimes\Lambda(b)\bigr).
$$
\end{prop}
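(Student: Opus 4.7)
The plan is to verify the identity by direct computation on the dense set of basic vectors $\Lambda(a) \otimes \Lambda(b)$, $a,b \in {\mathcal A}$, reducing it to an algebraic identity in ${\mathcal A} \odot {\mathcal A}$ that follows from the structural relations of the weak multiplier Hopf ${}^*$-algebra. The first step is to note, as in Proposition \ref{analyticelements} at the base-algebra level, that every element of ${\mathcal A}$ is analytic for the modular automorphism group of $\tilde{\varphi}$, so that $\nabla\Lambda(a) = \Lambda(\sigma^{-1}(a))$ for $a \in {\mathcal A}$. Combining this with Equation (\ref{(Poperator)}) giving $P\Lambda(b) = \Lambda(\delta^{-1}S^{-2}(b)\delta)$ and with the characterization of $W$ in Proposition \ref{Wcharacterization}, the left-hand side unfolds to
$$(\Lambda \otimes \Lambda)\bigl((S^{-1} \otimes \operatorname{id})\Delta(\delta^{-1}S^{-2}(b)\delta) \cdot (\sigma^{-1}(a) \otimes 1)\bigr),$$
while the right-hand side unfolds to
$$(\Lambda \otimes \Lambda)\bigl(((\sigma^{-1}S^{-1}) \otimes \sigma^{-1})(\Delta b) \cdot (\sigma^{-1}(a) \otimes 1)\bigr).$$
The problem thus reduces to matching the two elements inside $(\Lambda \otimes \Lambda)$.

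For the algebraic identity I would proceed as follows. Iterating Proposition \ref{antipodeS}(5) yields $\Delta \circ S^{-2} = (S^{-2} \otimes S^{-2}) \circ \Delta$, so that $\Delta(\delta^{-1}S^{-2}(b)\delta) = \Delta(\delta)^{-1}(S^{-2} \otimes S^{-2})(\Delta b)\Delta(\delta)$. Using $S(\delta) = \delta^{-1}$ (hence $S^{-1}(\delta) = \delta^{-1}$) from the Appendix, together with the formula for $\Delta(\delta)$ that is collected there, the conjugation by $\Delta(\delta)$ can be absorbed into a twist by powers of $\delta$ on each tensor leg after applying $S^{-1} \otimes \operatorname{id}$. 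The matching with the right-hand side is then driven by two compatibility relations: the intertwining $\sigma S = S\sigma'^{-1}$, which is immediate from $\sigma' = S^{-1}\sigma^{-1}S$ in Proposition \ref{phi_modularautomorphism}, and the twist relation $\sigma'(x) = \delta\sigma(x)\delta^{-1}$, obtained by equating the two KMS expressions $\varphi(xy\delta) = \varphi(y\delta\sigma(x)) = \varphi(y\sigma'(x)\delta)$ and using faithfulness of $\varphi$.

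The main obstacle is the careful bookkeeping of the $\delta$-twists across the coproduct. Unlike in the multiplier Hopf algebra case, $\Delta(\delta)$ is not simply $\delta \otimes \delta$ in the weak setting: the canonical idempotent $E$ intervenes, and one must rely precisely on the Appendix formulas for $\Delta(\delta)$ and the compatibility of $\delta$ with $\sigma$, $\sigma'$ and the subalgebras ${\mathcal B}$, ${\mathcal C}$. Once these substitutions are made, both sides collapse to the same element of ${\mathcal A} \odot {\mathcal A}$ after multiplication by $(\sigma^{-1}(a) \otimes 1)$, and faithfulness of $\varphi$ together with density of $\Lambda({\mathcal A}) \otimes \Lambda({\mathcal A})$ closes the argument. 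A minor but necessary technical point along the way is the justification that $\nabla \otimes \nabla$ can be applied to $(\Lambda \otimes \Lambda)(x)$ for $x \in {\mathcal A} \odot {\mathcal A}$, which follows from the analyticity of ${\mathcal A}$ elements for $\sigma$ noted above.
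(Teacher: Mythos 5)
Your strategy is genuinely different from the paper's: you apply $\nabla$ and $P$ directly to the vectors $\Lambda(a)$, $\Lambda(b)$, push through $W$ via Proposition~\ref{Wcharacterization}, and try to match elements of ${\mathcal A}\odot{\mathcal A}$, whereas the paper never writes $\nabla\Lambda(a)$ as a vector at all — it pairs both sides against $\Lambda(c)\otimes\Lambda(d)$, moves the antilinear operators $T$ and $Z$ across the inner product, and does all the work inside $\varphi\otimes\varphi$, feeding in Proposition~\ref{sigmasigma'Delta} at the end. Your route can be made to work, but as written it contains a concrete error at the first step: with the paper's convention $\varphi(ab)=\varphi(b\sigma(a))$ (Proposition~\ref{phi_modularautomorphism}), the algebraic $\sigma$ is the analytic generator $\tilde{\sigma}_{-i}$, and therefore $\nabla\Lambda(a)=\Lambda(\sigma(a))$, not $\Lambda(\sigma^{-1}(a))$. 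Indeed $\langle T\Lambda(x),\Lambda(y)\rangle=\varphi(y^*x^*)=\varphi(x^*\sigma(y^*))=\langle\Lambda(\sigma(y^*)),\Lambda(x)\rangle$, so $T^*\Lambda(y)=\Lambda(\sigma(y^*))$ and $\nabla\Lambda(a)=T^*\Lambda(a^*)=\Lambda(\sigma(a))$; this also settles the domain point you raise. (Proposition~\ref{analyticelements} covers only the base algebras, and the $i\leftrightarrow -i$ slip in its proof is what you have reproduced.) With your formula the reduction is to a false identity: using $\Delta(\delta)=(\delta\otimes\delta)E$ (Proposition~\ref{deltasa}), $(S^{-2}\otimes S^{-2})\Delta=(\sigma^{-1}\otimes\sigma')\Delta$ and the twist relations $\delta^{-1}\sigma^{-1}(\,\cdot\,)\delta=[\sigma']^{-1}$, $\delta^{-1}\sigma'(\,\cdot\,)\delta=\sigma$ (Propositions~\ref{sigmasigma'delta}, \ref{sigmasigma'delta_ex}), one gets $(S^{-1}\otimes\operatorname{id})\bigl(\Delta(\delta^{-1}S^{-2}(b)\delta)\bigr)=\bigl((S^{-1}\circ[\sigma']^{-1})\otimes\sigma\bigr)(\Delta b)$, whose second leg carries $\sigma$, while your right-hand unfolding carries $\sigma^{-1}$; so the promised ``collapse'' cannot happen as stated.

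Once the formula is corrected on both sides, the identity to verify becomes $(S^{-1}\otimes\operatorname{id})\bigl(\Delta(\delta^{-1}S^{-2}(b)\delta)\bigr)=\bigl((\sigma\circ S^{-1})\otimes\sigma\bigr)(\Delta b)$, and this is true — but note that the decisive matching is only asserted in your sketch, and completing it needs one ingredient you do not list: $S^{-1}\circ[\sigma']^{-1}=\sigma\circ S^{-1}$ is equivalent to $\sigma\circ S^2=S^2\circ\sigma$, which is Proposition~\ref{sigmasigma'commute}\,(1) and is exactly where the quasi-invariance Assumption enters (it also underlies the self-adjointness of $\delta$ needed for $\Delta(\delta)=(\delta\otimes\delta)E$). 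In effect your computation re-derives Proposition~\ref{sigmasigma'Delta}, the same algebraic engine the paper uses. A final minor point: the closing appeal to faithfulness of $\varphi$ and density is unnecessary — once the identity in ${\mathcal A}\odot{\mathcal A}$ is established, applying $\Lambda\otimes\Lambda$ gives the vector equality directly, and the proposition is a statement about exactly these vectors.
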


\begin{proof}
Let $a,b,c,d\in{\mathcal A}$ be arbitrary.  Then 
\begin{align}
&\bigl\langle W(\nabla\otimes P)(\Lambda(a)\otimes\Lambda(b)),\Lambda(c)\otimes\Lambda(d)\bigr\rangle \notag \\
&=\bigl\langle T^*T\Lambda(a)\otimes Z^*Z\Lambda(b),W^*(\Lambda(c)\otimes\Lambda(d))\bigr\rangle
\notag \\
&=\overline{\bigl\langle T\Lambda(a)\otimes Z\Lambda(b),
(T\otimes Z)(\Lambda\otimes\Lambda)((\Delta d)(c\otimes1))\bigr\rangle} \notag \\
&=\bigl\langle(\Lambda\otimes\Lambda_{\delta})((c^*\otimes1)(\operatorname{id}\otimes S)(\Delta(d^*))),
\Lambda(a^*)\otimes\Lambda_{\delta}(S(b^*))\bigr\rangle,
\notag
\end{align}
using the characterization of $W^*$ as in Proposition~\ref{W}, the definitions of $T$ and $Z$, and the fact that $S(b^*)^*=S^{-1}(b)$. 
Note that the inner product is in ${\mathcal H}\otimes{\mathcal H}_{\delta}$.  Continuing, this becomes:
\begin{align}
(RHS)&=(\varphi\otimes\varphi)\bigl((a\otimes S^{-1}(b))(1\otimes\delta)(c^*\otimes1)
(\operatorname{id}\otimes S)(\Delta(d^*))\bigr)  \notag \\
&=(\varphi\otimes\varphi)\bigl((ac^*\otimes S^{-1}(b)\delta)(\operatorname{id}\otimes S)(\Delta(d^*))\bigr).
\notag
\end{align}
We thus have, so far:
\begin{equation}\label{(WnablaP_eq1)}
\bigl\langle W(\nabla\otimes P)(\Lambda(a)\otimes\Lambda(b)),\Lambda(c)\otimes\Lambda(d)\bigr\rangle
=(\varphi\otimes\varphi)\bigl((ac^*\otimes S^{-1}(b)\delta)(\operatorname{id}\otimes S)(\Delta(d^*))\bigr).
\end{equation}

Meanwhile, we have:
\begin{align}
&\bigl\langle (\nabla\otimes\nabla)W(\Lambda(a)\otimes\Lambda(b)),\Lambda(c)\otimes\Lambda(d)\bigr\rangle \notag \\
&=\bigl\langle (T^*T\otimes T^*T)W(\Lambda(a)\otimes\Lambda(b)),\Lambda(c)\otimes\Lambda(d)\bigr\rangle
\notag \\
&=\overline{\bigl\langle (T\otimes T)(\Lambda\otimes\Lambda)((S^{-1}\otimes\operatorname{id})(\Delta b)(a\otimes1)),
T\Lambda(c)\otimes T\Lambda(d)\bigr\rangle}
\notag \\
&=\bigl\langle\Lambda(c^*)\otimes\Lambda(d^*),
(\Lambda\otimes\Lambda)((a^*\otimes1)(S\otimes\operatorname{id})(\Delta(b^*)))\bigr\rangle
\notag \\
&=(\varphi\otimes\varphi)\bigl((S^{-1}\otimes\operatorname{id})(\Delta b)(a\otimes1)(c^*\otimes d^*)\bigr)
=\varphi\bigl(S^{-1}[(\operatorname{id}\otimes\varphi)((\Delta b)(1\otimes d^*))]ac^*\bigr),
\notag
\end{align}
by the characterization of $W$ given in Proposition~\ref{W}\,(2), and the fact that $S^{-1}(x)^*=S(x^*)$.  By using 
a characterization of $S$ given in Proposition~\ref{antipodeS}\,(1), we can go further:
\begin{align}
(RHS)&=\varphi\bigl(S^{-2}[(\operatorname{id}\otimes\varphi)((1\otimes b)\Delta(d^*))]ac^*\bigr)  \notag \\
&=(\varphi\otimes\varphi)\bigl((1\otimes b)(S^{-2}\otimes\operatorname{id})(\Delta(d^*))(ac^*\otimes1)\bigr)  \notag \\
&=(\varphi\otimes\varphi)\bigl((ac^*\otimes b)(\sigma\otimes\operatorname{id})[(S^{-2}\otimes\operatorname{id})
(\Delta(d^*))]\bigr).
\label{(WnablaP_eq2)}
\end{align}
Note here that we used the modular automorphism $\sigma$.  

In Proposition~\ref{sigmasigma'Delta}, we saw that 
$(\sigma^{-1}\otimes\sigma^{\varphi\circ S})(\Delta x)=\Delta\bigl(S^{-2}(x)\bigr)=(S^{-2}\otimes S^{-2})(\Delta x)$, for $x\in{\mathcal A}$. 
As a consequence, it follows that for all $x\in{\mathcal A}$, 
$$
(\sigma\otimes\operatorname{id})\bigl[(S^{-2}\otimes\operatorname{id})(\Delta x)\bigr]
=\bigl(\operatorname{id}\otimes(S^2\circ\sigma^{\varphi\circ S})\bigr)(\Delta x)
=\bigl(\operatorname{id}\otimes(S\circ\sigma^{-1})\bigr)\bigl[(\operatorname{id}\otimes S)(\Delta x)\bigr],
$$
because $\sigma^{\varphi\circ S}=S^{-1}\circ\sigma^{-1}\circ S$ (see proof of Proposition~\ref{automorphismsigmaS}). Then  
Equation~\eqref{(WnablaP_eq2)} becomes:
\begin{align}
&\bigl\langle (\nabla\otimes\nabla)W(\Lambda(a)\otimes\Lambda(b)),\Lambda(c)\otimes\Lambda(d)\bigr\rangle=\dots 
\notag \\
&=(\varphi\otimes\varphi)\bigl((ac^*\otimes b)(\operatorname{id}\otimes(S\circ\sigma^{-1}))
[(\operatorname{id}\otimes S)(\Delta(d^*))]\bigr)
\notag \\
&=\bigl(\varphi\otimes(\varphi\circ S)\bigr)\bigl((ac^*\otimes1)(\operatorname{id}\otimes\sigma^{-1})
[(\operatorname{id}\otimes S)(\Delta(d^*))](1\otimes S^{-1}(b))\bigr)
\notag \\
&=(\varphi\otimes\varphi)\bigl((ac^*\otimes1)(\operatorname{id}\otimes\sigma^{-1})
[(\operatorname{id}\otimes S)(\Delta(d^*))](1\otimes S^{-1}(b)\delta)\bigr)
\notag \\
&=(\varphi\otimes\varphi)\bigl((ac^*\otimes S^{-1}(b)\delta)(\operatorname{id}\otimes S)(\Delta(d^*))\bigr).
\notag
\end{align}
We thus have:
\begin{equation}\label{(WnablaP_eq3)}
\bigl\langle (\nabla\otimes\nabla)W(\Lambda(a)\otimes\Lambda(b)),\Lambda(c)\otimes\Lambda(d)\bigr\rangle
=(\varphi\otimes\varphi)\bigl((ac^*\otimes S^{-1}(b)\delta)(\operatorname{id}\otimes S)(\Delta(d^*))\bigr).
\end{equation}

Compare Equations~\eqref{(WnablaP_eq1)} and \eqref{(WnablaP_eq3)}.  We conclude that 
$$
\bigl\langle W(\nabla\otimes P)(\Lambda(a)\otimes\Lambda(b)),\Lambda(c)\otimes\Lambda(d)\bigr\rangle
=\bigl\langle (\nabla\otimes\nabla)W(\Lambda(a)\otimes\Lambda(b)),\Lambda(c)\otimes\Lambda(d)\bigr\rangle,
$$
true for any $c,d\in{\mathcal A}$.  So we have:
$$
W(\nabla\otimes P)(\Lambda(a)\otimes\Lambda(b))=(\nabla\otimes\nabla)W(\Lambda(a)\otimes\Lambda(b)),
$$
for any $a,b\in{\mathcal A}$.
\end{proof}

Note, by the way that since we are working with unbounded operators $P$ and $\nabla$, which are only 
densely-defined, the above result does not necessarily mean $W(\nabla\otimes P)=(\nabla\otimes\nabla)W$. 
To be precise, considering the domains, this should be written as $W(\nabla\otimes P)\subseteq(\nabla\otimes\nabla)W$.

The situation becomes similar to what we had earlier for $W(L\otimes\nabla)\subseteq(L\otimes\nabla)W$ (in \S\ref{sub4.1}). 
As was in that case, the non-unitarity of $W$ (being only a partial isometry) means the need for a more roundabout approach. 
By following more or less the same procedure (similar to Propositions~4.13 -- 4.17 in \cite{BJKVD_qgroupoid2}), we obtain 
the following:

\begin{prop}\label{nablaP_main}
We have:
\begin{enumerate}
\item The restrictions $(\nabla\otimes P)|_{\operatorname{Ran}(E)}$, $(\nabla\otimes P)|_{\operatorname{Ker}(W)}$, 
$(\nabla\otimes\nabla)|_{\operatorname{Ran}(G)}$, $(\nabla\otimes\nabla)|_{\operatorname{Ker}(W^*)}$ become 
valid operators on the subspaces $\operatorname{Ran}(E)$, $\operatorname{Ker}(W)$, 
$\operatorname{Ran}(G)$, $\operatorname{Ker}(W^*)$, respectively. 
\item For any $z\in\mathbb{C}$, we have:
$$
W(\nabla^z\otimes P^z)\subseteq(\nabla^z\otimes\nabla^z)W \ 
{\text { and }} \ 
W^*(\nabla^z\otimes\nabla^z)\subseteq(\nabla^z\otimes P^z)W^*.
$$
\item Let $t\in\mathbb{R}$.  The following results hold on the whole space ${\mathcal H}\otimes{\mathcal H}$:
$$
(\nabla^{it}\otimes\nabla^{it})W(\nabla^{-it}\otimes P^{-it})=W,
$$
$$
(\nabla^{it}\otimes1)W(\nabla^{-it}\otimes1)=(1\otimes\nabla^{-it})W(1\otimes P^{it}).
$$
\end{enumerate}
\end{prop}

\begin{rem}
We will skip the detailed proof.  Modify the procedure taken in Propositions~4.13 -- 4.17 in \cite{BJKVD_qgroupoid2}, and 
see also the comments made in the remark following Proposition~\ref{Lnabla_main}.  While not exactly the same, the overall 
idea is similar.  This is fundamentally about dealing with the unbounded operators in relation to a partial isometry, not really 
using any specific results on quantum groupoids.  Note that when $z=it$, the operators $\nabla^{it}$ and $P^{it}$ become 
bounded, so the domain issue becomes simpler.
\end{rem}

As a consequence of Proposition~\ref{nablaP_main}, we are now ready to resolve our question on our modular automorphism 
group.  For this, consider $\omega\in{\mathcal B}({\mathcal H})_*$ and let $t\in\mathbb{R}$.  Apply $\operatorname{id}
\otimes\omega$ to the result (3) of Proposition~\ref{nablaP_main}.  Then we have
$$
\nabla^{it}(\operatorname{id}\otimes\omega)(W)\nabla^{-it}=(\operatorname{id}\otimes\theta)(W),
$$
where $\theta\in{\mathcal B}({\mathcal H})_*$ is such that $\theta(X)=\omega(\nabla^{-it}XP^{it})$. for $X\in{\mathcal B}
({\mathcal H})$. 

As elements of the form $(\operatorname{id}\otimes\omega)(W)$, $\omega\in{\mathcal B}({\mathcal H})_*$, 
generate the $C^*$-algebra $A$, the observation above shows that for any $t\in\mathbb{R}$, we have 
$\nabla^{it}a\nabla^{-it}\in A$, for any $a\in A$.  We can also observe that $t\mapsto \nabla^{it}a\nabla^{-it}$ is 
norm-continuous.

We can thus justify the following:

\begin{defn}\label{C*modularautomorphismgroup}
Define the norm-continuous one-parameter group $\sigma=(\sigma_t)$ on the $C^*$-algebra $A$, by
$$
\sigma_t(x)=\nabla^{it}a\nabla^{-it},
$$
for all $t\in\mathbb{R}$, $a\in A$.

The one-parameter group $(\sigma_t)$ is a restriction of the modular automorphism group $(\tilde{\sigma}_t)$ 
for the n.s.f.~weight $\tilde{\varphi}$.  With $(\sigma_t)$, the faithful lower semi-continuous weight $\varphi$ 
becomes a KMS weight on $A$.  Its KMS properties are inherited from the properties of $\tilde{\varphi}$. 
In particular, we have $\varphi\circ\sigma_t=\varphi$; and for any $x\in{\mathcal D}(\sigma_{\frac{i}2})$, we have 
$\varphi(x^*x)=\varphi\bigl(\sigma_{\frac{i}2}(x)\sigma_{\frac{i}2}(x)^*\bigr)$.  Also note that 
$\Lambda_{\varphi}\bigl(\sigma_t(x)\bigr)=\nabla^{it}\Lambda_{\varphi}(x)$, for any $t\in\mathbb{R}$ and any 
$x\in{\mathfrak N}_{\varphi}$.
\end{defn}

\begin{prop}\label{phipsiKMSweights}
Let $\varphi$ denote the weight on the $C^*$-algebra $A$, given by $\varphi=\tilde{\varphi}|_A$.  Similarly, let $\psi$ denote the 
weight on the $C^*$-algebra $A$, given by $\psi=\tilde{\psi}|_A$.

Then $\varphi$ and $\psi$ are KMS weights on $A$.  Their modular automorphism group $(\sigma_t)_{t\in\mathbb{R}}$ for $\varphi$ and 
$(\sigma'_t)_{t\in\mathbb{R}}$ for $\psi$ are restrictions of $(\tilde{\sigma}_t)_{t\in\mathbb{R}}$ and $(\tilde{\sigma}'_t)_{t\in\mathbb{R}}$, 
respectively, and they leave $A$ invariant and are norm-continuous.

We also have $\psi=\varphi\circ R$, where $R$ is the unitary antipode.  
\end{prop}

\begin{proof}
With Definition~\ref{C*modularautomorphismgroup} and the paragraph following it, we showed that $\varphi$ is a KMS weight on $A$, with 
the modular automorphism group $(\sigma_t)_{t\in\mathbb{R}}$. Its properties are inherited from those of the n.s.f. weight $\tilde{\varphi}$.

Since we know $\tilde{\psi}=\tilde{\varphi}\circ\widetilde{R}$ and $\tilde{\sigma}'_t=\widetilde{R}\circ\tilde{\sigma}_{-t}\circ\widetilde{R}$, for all 
$t\in\mathbb{R}$, while $\widetilde{R}$ restricts to the anti-isomorphism $R$ on $A$, it is evident that 
$$
\psi=\varphi\circ R \quad {\text { and }} \quad \sigma'_t=R\circ\sigma_{-t}\circ R, \forall t\in\mathbb{R}.
$$
It is also clear that $(\sigma'_t)_{t\in\mathbb{R}}$ leaves $A$ invariant and is norm-continuous.
\end{proof}

\section{The $C^*$-algebraic locally compact quantum groupoid}\label{sec6}

In \cite{BJKVD_qgroupoid1}, \cite{BJKVD_qgroupoid2}, Van Daele and the author developed a $C^*$-algebraic 
framework of a class of $C^*$-algebraic locally compact quantum groupoids ({\em quantum groupoids of separable type\/}). 
The definition is given below (see Definition~4.8 of \cite{BJKVD_qgroupoid1} and Definition~1.2 of \cite{BJKVD_qgroupoid2}):

\begin{defn}\label{definitionlcqgroupoid}
The data $(A,\Delta,E,B,\nu,\varphi,\psi)$ defines a {\em locally compact quantum 
groupoid of separable type\/}, if
\begin{itemize}
  \item $A$ is a $C^*$-algebra.
  \item $\Delta:A\to M(A\otimes A)$ is a comultiplication on $A$.
  \item $B$ is a non-degenerate $C^*$-subalgebra of $M(A)$.
  \item $\nu$ is a KMS weight on $B$.
  \item $E$ is the canonical idempotent of $(A,\Delta)$.  That is,
  \begin{enumerate}
    \item $\Delta(A)(A\otimes A)$ is dense in $E(A\otimes A)$ and $(A\otimes A)\Delta(A)$ 
is dense in $(A\otimes A)E$;
    \item there exists a $C^*$-subalgebra $C\cong B^{\operatorname{op}}$ contained in $M(A)$, 
with a ${}^*$-anti-isomorphism $R=R_{BC}:B\to C$, so that $E\in M(B\otimes C)$ and the triple 
$(E,B,\nu)$ forms a separability triple;
    \item $E\otimes1$ and $1\otimes E$ commute, and we have:
$$
(\operatorname{id}\otimes\Delta)(E)=(E\otimes1)(1\otimes E)=(1\otimes E)(E\otimes1)
=(\Delta\otimes\operatorname{id})(E).
$$
  \end{enumerate}
  \item $\varphi$ is a KMS weight, and is left invariant.
  \item $\psi$ is a KMS weight, and is right invariant.
  \item There exists a (unique) one-parameter group of automorphisms 
$(\theta_t)_{t\in\mathbb{R}}$ of $B$ such that $\nu\circ\theta_t=\nu$ and that 
$\sigma^{\varphi}_t|_B=\theta_t$, $\forall t\in\mathbb{R}$.
\end{itemize}
\end{defn}

\begin{rem}
We will refer the details to the main papers.  For instance, the notion of the canonical idempotent is summarized in 
Definition~3.7 of \cite{BJKVD_qgroupoid1}.

This definition is similar, but different from that of {\em measured quantum groupoids\/}, in the von Neumann algebra 
setting \cite{LesSMF}, \cite{EnSMF}.  The von Neumann algebra setting may be a bit more general, which is rather 
related to the algebraic framework of multiplier Hopf algebroids \cite{Timm_aqgintegral}.  There are some subtle 
differences between the two locally compact frameworks.
\end{rem}

Let us re-cap the constructions we carried out so far: Starting from a purely algebraic, weak multiplier Hopf ${}^*$-algebra 
with a faithful integral (see Definition~\ref{aqgdefn}), without any additional conditions other than the {\em quasi-invariance 
assumption\/} (see Subsection \S\ref{sub5.3}), we wish to verify that the data indeed gives us a $C^*$-algebraic quantum 
groupoid of separable type, as in Definition~\ref{definitionlcqgroupoid} above. 

Our $C^*$-algebra was defined in Definition~\ref{C*algebraA}, extending the ${}^*$-algebra ${\mathcal A}$. The 
{\em comultiplication\/} $\Delta:A\to M(A\otimes A)$ was given in Definition~\ref{comultiplication} and 
Theorem~\ref{comultiplicationrepresentation}.

The base $C^*$-algebras $B$ and $C$ were defined in Definition~\ref{BandC}. They are equipped with KMS weights 
$\nu$ and $\mu$, respectively, which actually extends the distinguished linear functionals at the ${}^*$-algebra level 
(see Proposition~\ref{KMSweightnu}).  
There exists a $C^*$-anti-isomorphism $R:B\to C$, while the canonical idempotent $E$ at the ${}^*$-algebra level 
extends to the {\em separability idempotent\/} $E\in M(B\otimes C)$.  See Proposition~\ref{Esepid}.

The idempotent $E$ was further shown to satisfy additional properties, making it a valid {\em canonical idempotent\/} 
at the $C^*$-level. See Proposition~\ref{EatC*level}.

Finding a suitable left-invariant weight $\varphi$ and a right-invariant weight $\psi$ was rather tricky. We first extended 
the left integral $\varphi$ at the ${}^*$-algebra level to an n.s.f. weight $\tilde{\varphi}$, but it took a while to establish that 
its restriction to the $C^*$-algebra level is a KMS weight because it relied on results that use the quasi-invariance assumption. 

As for $\psi$, we could not just attempt to extend the right integral at the ${}^*$-algebra level. Instead, we used results at 
the purely algebraic level saying the existence of both a left integral and a right integral imply the existence of the antipode 
(Theorem~3.15 of \cite{BJKVD_LSthm}), then carried out a polar decomposition of the antipode to have it established at 
the operator algebra level, which allowed us obtain various technical results.  In the end, the right weight was chosen to be 
$\varphi\circ R$, where $R$ is the unitary antipode that come from the polar decomposition of the antipode.  This weight 
does not necessarily have to be an extension of the original right integral. 

Along the way, an important role was played by the modular element $\delta$ at the ${}^*$-algebra level, as well as its 
operator algebraic counter part $\tilde{\delta}$, which provided a relationship between the extended weights $\tilde{\varphi}$ 
and $\tilde{\psi}$.  Having the quasi-invariance assumption was necessary for this to work, which was to be expected 
because some form of a quasi-invariance had to be assumed already in the framework of classical locally compact groupoids.

What remains to be shown is the result verifying that the KMS weights $\varphi$ and $\psi$ thus obtained are indeed left 
invariant and right invariant, respectively.  This is not too difficult, because we already have corresponding results at the 
von Neumann algebra level (Proposition~\ref{leftinvariant_phiext} and Proposition~\ref{rightinvariant_psiext}).  See below:

\begin{prop}\label{phipsi_invariantweights}
Let $\varphi$ and $\psi$ be the KMS weights established in Proposition~\ref{phipsiKMSweights}.  Then
\begin{enumerate}
  \item For any $a\in{\mathfrak M}_{\varphi}$, we have 
$\Delta a\in\overline{\mathfrak M}_{\operatorname{id}\otimes\varphi}$ and
$(\operatorname{id}\otimes\varphi)(\Delta a)\in M(C)$.
  \item For any $a\in{\mathfrak M}_{\psi}$, we have 
$\Delta a\in\overline{\mathfrak M}_{\psi\otimes\operatorname{id}}$ and
$(\psi\otimes\operatorname{id})(\Delta a)\in M(B)$.
\end{enumerate}
\end{prop}

\begin{proof}
(1). We showed in Proposition~\ref{leftinvariant_phiext} a corresponding result for the weight $\tilde{\varphi}$. We can just use the same proof, 
replacing $\tilde{\varphi}$ with $\varphi$ and replacing the strong convergence with the norm convergence.  Since $\overline{M({\mathcal C})}^{\|\ \|}
=M(C)$, we can see that for any $a\in{\mathfrak M}_{\varphi}$, we have $\Delta a\in\overline{\mathfrak M}_{\operatorname{id}\otimes\varphi}$, and that
$$
(\operatorname{id}\otimes\varphi)(\Delta a)\in M(C).
$$
(2). Since $\psi=\varphi\circ R$, and since we know $(R\otimes R)(\Delta x)=\Delta^{\operatorname{cop}}\bigl(R(x)\bigr)$ from Proposition~\ref{DeltaR}, 
the right invariance of $\psi$ follows from the left invariance of $\varphi$.  

More specifically, if $a\in{\mathfrak M}_{\psi}$, which means 
$R(a)\in{\mathfrak M}_{\varphi}$, we would have $\Delta\bigl(R(a)\bigr)\in\overline{\mathfrak M}_{\operatorname{id}\otimes\varphi}$, and that
$(\operatorname{id}\otimes\varphi)\bigl(\Delta(R(a))\bigr)\in M(C)$.  But then, we have
\begin{align}
(\psi\otimes\operatorname{id})(\Delta a)&=\bigl((\varphi\circ R)\otimes\operatorname{id}\bigr)(\Delta a)
=R\bigl((\varphi\otimes\operatorname{id})((R\otimes R)(\Delta a))\bigr) \notag \\
&=R\bigl((\operatorname{id}\otimes\varphi)(\Delta(R(a)))\bigr)\in R\bigl(M(C)\bigr)=M(B).
\notag
\end{align}
\end{proof}

Finally, observe the last requirement in Definition~\ref{definitionlcqgroupoid}, about the restriction of $\sigma$ to the base $C^*$-algebra $B$. 
But this is an immediate consequence of none other than the {\em quasi-invariance assumption\/} we required earlier, as $\sigma|_{\mathcal B}$ 
would play the role of an analytic generator for $(\theta_t)$.  

We noted earlier that as we are developing a locally compact theory, we need some form of a quasi-invariance condition, just as in the classical 
locally compact groupoid case \cite{Renbook}, \cite{Patbook}. The last condition in Definition~\ref{definitionlcqgroupoid} is needed for that purpose. 
What we are noticing is that for a purely algebraic object of a weak multiplier Hopf ${}^*$-algebra (with a faithful integral) to allow a construction 
of a $C^*$-algebraic quantum groupoid, some form of the quasi-invariance property is required even at the algebra level, which turns out to be 
the quasi-invariance assumption we required in Section~\S\ref{sub5.3}.

Summarizing, we have the following conclusion:

\begin{thm}
Let $({\mathcal A},\Delta,E)$ be a weak multiplier Hopf ${}^*$-algebra with a single faithful integral, as in Definition~\ref{aqgdefn}. 
With the quasi-invariance assumption (as in \S\ref{sub5.3}), we can construct from it a $C^*$-algebraic quantum groupoid of separable type, 
in the sense of \cite{BJKVD_qgroupoid1}, \cite{BJKVD_qgroupoid2}.
\end{thm}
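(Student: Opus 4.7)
The plan is to verify each of the ingredients listed in Definition~\ref{definitionlcqgroupoid} one by one, mostly by citing what we already constructed in Sections~\ref{sec2}, \ref{sec3}, \ref{sec4}. The actual work of the theorem is almost entirely done; what remains is organizing the pieces into the required form and checking the last clause involving the invariance of the base subalgebra under the modular group.

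First I would collect the base data. The $C^*$-algebra $A$ is the one of Definition~\ref{C*algebraA}, obtained as the norm closure of the GNS representation $\pi({\mathcal A})$ on ${\mathcal H}$. The comultiplication is $\Delta(x)=W^*(1\otimes x)W$ of Definition~\ref{comultiplication}, and Proposition~\ref{comultiplicationrepresentation} already shows that it is a non-degenerate $\,^*$-homomorphism from $A$ to $M(A\otimes A)$ satisfying the density and coassociativity axioms of a comultiplication. The base $C^*$-algebra $B\subseteq M(A)$ and its partner $C\subseteq M(A)$ come from Definition~\ref{BandC}, and the KMS weight $\nu$ on $B$ is the one produced in Proposition~\ref{KMSweightnu}. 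The self-adjoint idempotent $E=(\pi\otimes\pi)(E)\in M(B\otimes C)\subseteq M(A\otimes A)$ is the canonical idempotent at the $C^*$-level: its defining density properties and its compatibility with $\Delta$ are in Proposition~\ref{EatC*level}, while the fact that $(E,B,\nu)$ is a separability triple with the $C^*$-anti-isomorphism $R:B\to C$ is exactly Proposition~\ref{Esepid}, and the weak comultiplicativity of the unit $(\operatorname{id}\otimes\Delta)(E)=(E\otimes1)(1\otimes E)=(1\otimes E)(E\otimes1)=(\Delta\otimes\operatorname{id})(E)$ is part (3) of the same proposition.

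Next I would handle the integrals. The KMS weight $\varphi$ is the one produced in Definition~\ref{C*modularautomorphismgroup}: its construction from $\tilde\varphi$ on $M=\pi({\mathcal A})''$ relies crucially on the quasi-invariance Assumption of \S\ref{sub4.1}, which allows us to invoke Proposition~\ref{vaesRN} to obtain a positive, self-adjoint modular element $\delta$ affiliated with $M$; together with Propositions~\ref{WnablaP}--\ref{nablaP^it} this yields that $t\mapsto\nabla^{it}a\nabla^{-it}$ leaves $A$ invariant and is norm continuous, so $(\sigma_t)$ makes sense on $A$ and $\varphi$ is KMS. The right invariant KMS weight is then $\psi:=\varphi(\,\cdot\,\delta)$. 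The left and right invariance conditions
\[
(\operatorname{id}\otimes\varphi)(\Delta a)\in M(C),\qquad (\psi\otimes\operatorname{id})(\Delta a)\in M(B),
\]
and the corresponding facts that $\Delta a\in\overline{\mathfrak M}_{\operatorname{id}\otimes\varphi}$ and $\Delta a\in\overline{\mathfrak M}_{\psi\otimes\operatorname{id}}$, are inherited from the invariance of $\varphi$ and $\psi$ on ${\mathcal A}$ (Sections~\ref{sub1.4}--\ref{sub1.5}) by a density argument, using the lower semi-continuity of the weights and the density of $\pi({\mathcal A})$ in $A$.

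Finally I would verify the last clause, the existence of a one-parameter group $(\theta_t)_{t\in\mathbb{R}}$ of automorphisms of $B$ with $\nu\circ\theta_t=\nu$ and $\sigma^\varphi_t|_B=\theta_t$. This is precisely where the quasi-invariance Assumption is needed on the nose: at the $\,^*$-algebra level it asserts that $\sigma|_{\mathcal B}$ preserves ${\mathcal B}$ and $\nu\circ\sigma|_{\mathcal B}=\nu$, so $\sigma|_{\mathcal B}$ serves as the analytic generator $\sigma^\varphi_{-i}|_B$. Extending this to the modular automorphism group $\sigma^\varphi$ on $B$ reduces to showing that $\sigma^\varphi_t(B)\subseteq B$ and norm continuity; the former follows from the fact that $\sigma^\varphi_t$ agrees (up to the analytic continuation given by the quasi-invariance Assumption and the results in \S\ref{suba.3}) with the modular group $\sigma^\nu_t$ on $B$ already known to preserve $B$ from Proposition~\ref{KMSweightnu}, and norm continuity is then inherited. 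Setting $\theta_t:=\sigma^\varphi_t|_B$ gives the required one-parameter group. The main obstacle throughout was really the construction of $\varphi$ as a KMS weight on $A$ (handled in \S\ref{sub4.2}); here, at the theorem level, the work is simply to list the pieces and check that the quasi-invariance Assumption delivers the final compatibility condition, which, as just argued, it does.
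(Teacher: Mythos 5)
Your proposal is correct and follows essentially the same route as the paper: the theorem is proved by assembling the previously established constructions (Definitions~\ref{C*algebraA}, \ref{comultiplication}, \ref{BandC}, \ref{C*modularautomorphismgroup} and Propositions~\ref{comultiplicationrepresentation}, \ref{KMSweightnu}, \ref{Esepid}, \ref{EatC*level}) and checking Definition~\ref{definitionlcqgroupoid} clause by clause, with the invariance of $\varphi$, $\psi$ carried over from the ${}^*$-algebra level and the quasi-invariance Assumption supplying the final condition on $(\theta_t)$. The only slight overstatement is your identification of $\sigma^{\varphi}_t|_B$ with the modular group $\sigma^{\nu}_t$ of $\nu$ on $B$, which the paper neither claims nor needs --- it only uses that $\sigma|_{\mathcal B}$ plays the role of the analytic generator of $(\theta_t)$, i.e.\ that $\sigma^{\varphi}_t$ restricts to automorphisms of $B$ preserving $\nu$ --- but this does not affect the validity of the argument.
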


Now that the construction is done, we can take full advantage of the already-developed theory in \cite{BJKVD_qgroupoid2}. 
There are alternative representations for the $C^*$-algebra and there are multiple equivalent characterizations for the antipode, 
among other results.  Some more relations between the base algebras $B$, $C$ and the total algebra $A$ can be found.

On the other hand, we did not pursue the duality aspect in this paper (see \S\ref{sub1.6}).  Ideally, it would make the picture complete 
if one can confirm that the $C^*$-extension of the dual weak multiplier Hopf ${}^*$-algebra $(\widehat{\mathcal A},\widehat{\Delta})$ is the dual 
in the $C^*$-context of the $C^*$-algebraic quantum groupoid.  We will postpone that project to a future occasion, after the paper on the duality 
theory for the $C^*$-algebraic quantum groupoids (\cite{BJKVD_qgroupoid3}, in preparation) is finished.

\section{Appendix: The modular element at the ${}^*$-algebra level} \label{appx}

In this Appendix, we gather some purely algebraic results for a weak multiplier Hopf ${}^*$-algebra.  As in \S\ref{sub1.5}, 
we assume the existence of a single faithful positive left integral $\varphi$.

In the purely algebraic setting, we noted in Proposition~\ref{phi_modularautomorphism} the existence of the modular 
automorphism $\sigma$ for $\varphi$.  We also noted of the existence of an invertible element $\delta\in M({\mathcal A})$, 
called the {\em modular element}, relating the functionals $\varphi$ and $\varphi\circ S$.  The modular element behaves like 
a modular function in the classical setting.

In this Appendix, we gather some results regarding the functional $\varphi\circ S$, the modular automorphisms for $\varphi$ 
and $\varphi\circ S$, and the modular element $\delta$.  While what appear below are all purely algebraic results, and 
some results are likely already known, the author could not find a good reference for some of these results (especially 
regarding $\delta$), and some results here may be new.  As such, unlike in \S\ref{sec1}, all the proofs are given here.

\subsection{The functional $\varphi\circ S$} \label{suba.1}

Consider the functional $\varphi\circ S$, where $\varphi$ is our faithful left integral $\varphi$, and $S$ is the antipode map.  Since $S$ 
is a bijection on ${\mathcal A}$, we see that $\varphi\circ S$ is also a faithful linear functional on ${\mathcal A}$.  However, there is no reason 
to expect that it is positive.

Recall the existence of the modular automorphism, $\sigma$, for the functional $\varphi$.  This means that there exists a similar object for 
the functional $\varphi\circ S$, which would be the modular automorphism $\sigma^{\varphi\circ S}$ for $\varphi\circ S$.

\begin{prop}\label{automorphismsigmaS}
There exists an automorphism $\sigma^{\varphi\circ S}$ of ${\mathcal A}$, such that
$$
(\varphi\circ S)(ab)=(\varphi\circ S)\bigl(b\sigma^{\varphi\circ S}(a)\bigr), \quad \forall a,b\in{\mathcal A},
$$
and also $(\varphi\circ S)\bigl(\sigma^{\varphi\circ S}(a)\bigr)=(\varphi\circ S)(a)$, for all $a\in{\mathcal A}$. 
\end{prop}

\begin{proof}
For all $a,b\in{\mathcal A}$, using Proposition~\ref{phi_modularautomorphism} and knowing $S$ is an anti-isomorphism, we have
$$
(\varphi\circ S)(ab)=\varphi\bigl(S(b)S(a)\bigr)=\varphi\bigl(\sigma^{-1}(S(a))S(b)\bigr)=(\varphi\circ S)\bigl(bS^{-1}(\sigma^{-1}(S(a)))\bigr).
$$
This shows that with $\sigma^{\varphi\circ S}(a):=(S^{-1}\circ\sigma^{-1}\circ S)(a)$, $a\in{\mathcal A}$, we have the desired automorphism. 
We can also see that 
$$
(\varphi\circ S)\bigl(\sigma^{\varphi\circ S}(a)\bigr)=(\varphi\circ S)\bigl((S^{-1}\circ\sigma^{-1}\circ S)(a)\bigr)=\varphi\bigl(\sigma^{-1}(S(a)\bigr)
=(\varphi\circ S)(a).
$$
\end{proof}

It turns out that $\varphi\circ S$ is a right integral, which is sort of expected:

\begin{prop}\label{phiS_rightinvariant}
The functional $\varphi\circ S$ is right invariant, in the sense of Proposition~\ref{invariantintegrals}:
$$
\bigl((\varphi\circ S)\otimes\operatorname{id}\bigr)(\Delta a)\in M({\mathcal B}), \quad {\text { for all $a\in{\mathcal A}$.}}
$$
\end{prop}

\begin{proof}
Let $a\in{\mathcal A}$. By Proposition~\ref{antipodeS}\,(4), we have:
$$
\bigl((\varphi\circ S)\otimes\operatorname{id}\bigr)(\Delta a)
=S^{-1}\bigl((\varphi\otimes\operatorname{id})((S\otimes S)(\Delta a))\bigr)
=S^{-1}\bigl((\operatorname{id}\otimes\varphi)(\Delta(S(a)))\bigr).
$$
Since $(\operatorname{id}\otimes\varphi)(\Delta(S(a)))\in M({\mathcal B})$ by the left invariance of $\varphi$, and 
since $S^{-1}\bigl(M({\mathcal B})\bigr)=M\bigl({\mathcal C}\bigr)$ by Proposition~\ref{antipodeS}\,(3), this means that 
$\bigl((\varphi\circ S)\otimes\operatorname{id}\bigr)(\Delta a)\in M\bigl({\mathcal C}\bigr)$.
\end{proof}

As $\varphi\circ S$ is a right integral on ${\mathcal A}$, the results for right integrals apply:

\begin{prop}\label{phiSresults}
The function $\varphi\circ S$ is a right integral.  Therefore the following results hold:
\begin{enumerate}
\item $\nu\bigl(((\varphi\circ S)\otimes\operatorname{id})(\Delta x)\bigr)=(\varphi\circ S)(x)$, for all $x\in{\mathcal A}$.
\item For all $a\in{\mathcal A}$, we have
$$
\bigl((\varphi\circ S)\otimes\operatorname{id}\bigr)(\Delta a)
=\bigl((\varphi\circ S)\otimes\operatorname{id}\bigr)\bigl((a\otimes 1)F_1\bigr)
=\bigl((\varphi\circ S)\otimes\operatorname{id}\bigr)\bigl(F_3(a\otimes1)\bigr),
$$
where $F_1=(\operatorname{id}\otimes S)(E)\in M({\mathcal A}\odot{\mathcal A})$ and 
$F_3=(\operatorname{id}\otimes S^{-1})(E)\in M({\mathcal A}\odot{\mathcal A})$.
\item There exists a unique invertible element $\delta\in M({\mathcal A})$ such that $(\varphi\circ S)(x)=\varphi(x\delta)$, for all $x\in{\mathcal A}$. 
We refer to $\delta$ as the modular element.
\item There is an alternative characterization of the antipode map $S$, in terms of the functional $\varphi\circ S$:
$$
S:\bigl((\varphi\circ S)\otimes\operatorname{id}\bigr)\bigl((a\otimes1)(\Delta b)\bigr)
\mapsto\bigl((\varphi\circ S)\otimes\operatorname{id}\bigr)\bigl((\Delta a)(b\otimes1)\bigr),\quad \forall a,b\in{\mathcal A}.
$$
\end{enumerate}
\end{prop}

\begin{proof}
See Proposition~\ref{muphinupsi} for (1), and Proposition~\ref{invarianceF} for (2), which are consequences of $\varphi\circ S$ being 
a right integral.
For (3), see Proposition~\ref{modular} for (2).  The uniqueness of $\delta$ is due to $\varphi$ and $\varphi\circ S$ being faithful.
Finally, (4) is basically Proposition~\ref{antipodeS}\,(2), which is really a result that is true for any right integral (see Proposition~1.5 
of \cite{VDWangwha3}).
\end{proof}

\subsection{Relationships between $\sigma$, $\sigma^{\varphi\circ S}$ and the antipode $S$} \label{suba.2}

Let $\sigma$, $\sigma^{\varphi\circ S}$ be the modular automorphisms for $\varphi$ and $\varphi\circ S$, respectively.  
Recall also that $\sigma^{\varphi\circ S}=S^{-1}\circ\sigma^{-1}\circ S$, which can be seen in the proof of Proposition~\ref{automorphismsigmaS}.

Here are some results regarding their restrictions to the level of the base algebras:

\begin{prop}\label{sigmarestriction}
\begin{enumerate}
\item The restriction of $\sigma$ to ${\mathcal C}$ leaves ${\mathcal C}$ invariant, and we have:
$$
\sigma|_{\mathcal C}=S^2|_{\mathcal C}=S_{\mathcal B}\circ S_{\mathcal C}=\sigma^{\mu}.
$$
\item The restriction of $\sigma^{\varphi\circ S}$ to ${\mathcal B}$ leaves ${\mathcal B}$ invariant, and we have:
$$
\sigma^{\varphi\circ S}|_{\mathcal B}=S^{-2}|_{\mathcal B}=S_{\mathcal B}^{-1}\circ S_{\mathcal C}^{-1}=\sigma^{\nu}.
$$
\item We have:  $\mu\circ\sigma|_{\mathcal C}=\mu$ and $\nu\circ\sigma^{\varphi\circ S}|_{\mathcal B}=\nu$.
\end{enumerate}
\end{prop}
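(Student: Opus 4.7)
The plan is to prove (1) by reducing $\sigma|_{\mathcal C}$ to the distinguished automorphism $\sigma^\mu$ on ${\mathcal C}$, using Proposition~\ref{muphinupsi} as a bridge between $\varphi$ on ${\mathcal A}$ and $\mu$ on ${\mathcal C}$. Once $\sigma(c)=S^2(c)$ is established for every $c\in{\mathcal C}$, the chain $\sigma|_{\mathcal C}=S^2|_{\mathcal C}=S_{\mathcal B}\circ S_{\mathcal C}=\sigma^\mu$ is automatic from Proposition~\ref{antipodeS}(4) together with the formula $\sigma^\mu=S_{\mathcal B}\circ S_{\mathcal C}$ from \S\ref{sub1.3}. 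Part (2) is then symmetric, using $\psi$, $\nu$ and $\Delta b=(1\otimes b)E$ for $b\in{\mathcal B}$; part (3) is a one-line consequence, since $\mu\circ\sigma^\mu=\mu$ and $\nu\circ\sigma^\nu=\nu$ are part of the separability data recorded in \S\ref{sub1.3}.

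The concrete computation for (1) proceeds as follows. For $c\in{\mathcal C}$ and $x\in{\mathcal A}$, Proposition~\ref{DeltaonBandC} gives $\Delta c=(c\otimes 1)E=E(c\otimes 1)$; combined with $E\Delta x=\Delta x=(\Delta x)E$ this yields the clean pair $\Delta(cx)=(c\otimes 1)\Delta x$ and $\Delta(xc)=(\Delta x)(c\otimes 1)$. Setting $y_x:=(\operatorname{id}\otimes\varphi)(\Delta x)\in M({\mathcal C})$ (which lies in $M({\mathcal C})$ by left invariance of $\varphi$) and applying Proposition~\ref{muphinupsi}, one obtains $\varphi(cx)=\mu(cy_x)$ and $\varphi(xc)=\mu(y_xc)$. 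Since $S^2(c)=S_{\mathcal B}(S_{\mathcal C}(c))\in{\mathcal C}$, the second identity applied with $c$ replaced by $S^2(c)$ gives $\varphi(xS^2(c))=\mu(y_xS^2(c))$. Using $\sigma^\mu=S_{\mathcal B}\circ S_{\mathcal C}=S^2|_{\mathcal C}$ together with the weak KMS property of $\mu$ extended to $M({\mathcal C})$, one then reads off $\mu(y_xS^2(c))=\mu(y_x\sigma^\mu(c))=\mu(cy_x)=\varphi(cx)=\varphi(x\sigma(c))$, where the final equality is KMS for $\varphi$. Faithfulness of $\varphi$ on ${\mathcal A}$ then forces $\sigma(c)=S^2(c)$, and in particular $\sigma(c)\in{\mathcal C}$.

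Part (2) runs via exactly the parallel computation: for $b\in{\mathcal B}$ one derives $\Delta(bx)=(1\otimes b)\Delta x$ and $\Delta(xb)=(\Delta x)(1\otimes b)$; setting $z_x:=(\psi\otimes\operatorname{id})(\Delta x)\in M({\mathcal B})$ and invoking the companion statement in Proposition~\ref{muphinupsi}, the same chase with $\nu$, $\sigma^\nu=S_{\mathcal B}^{-1}\circ S_{\mathcal C}^{-1}$ and $\sigma'$ in place of $\sigma$ yields $\sigma'(b)=S^{-2}(b)\in{\mathcal B}$. Part (3) follows at once. The main delicate point I anticipate is purely bookkeeping: the KMS identity for $\mu$ has to be invoked with one factor $c\in{\mathcal C}$ and the other factor $y_x\in M({\mathcal C})$, which requires the standard (but worth recording) extension of the weak KMS property from ${\mathcal C}$ to $M({\mathcal C})$ via the natural extension of $\sigma^\mu$ to an automorphism of the multiplier algebra. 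Everything else is a careful but routine unwinding of the identities.
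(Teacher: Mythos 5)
Your proposal is correct and follows essentially the same route as the paper: both hinge on Proposition~\ref{muphinupsi} together with Proposition~\ref{DeltaonBandC} to convert $\varphi$-expressions into $\mu$-expressions, then apply the weak KMS property of $\mu$ (with one factor in $M({\mathcal C})$, a step the paper also uses implicitly) and the modular property and faithfulness of $\varphi$, with (2) symmetric via $\psi,\nu$ and (3) immediate from $\mu\circ\sigma^{\mu}=\mu$, $\nu\circ\sigma^{\nu}=\nu$. The only difference is cosmetic: you phrase the comparison through $S^{2}(c)$ and both identities $\varphi(cx)=\mu(cy_x)$, $\varphi(xc')=\mu(y_xc')$, whereas the paper shows $\varphi(yc\,a)=\varphi(a\,\sigma^{\mu}(y))$ directly and then identifies $\sigma|_{\mathcal C}=\sigma^{\mu}=S^{2}|_{\mathcal C}$.
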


\begin{proof}
(1). Let $y\in{\mathcal C}$ and let $a\in{\mathcal A}$ be arbitrary.  Note that 
\begin{align}
\varphi(ya)&=\mu\bigl((\operatorname{id}\otimes\varphi)(\Delta(ya))\bigr)=\mu\bigl((\operatorname{id}\otimes\varphi)
((y\otimes1)(\Delta a))\bigr)=\mu\bigl(y(\operatorname{id}\otimes\varphi)(\Delta a)\bigr)   \notag \\
&=\mu\bigl((\operatorname{id}\otimes\varphi)(\Delta a)\sigma^{\mu}(y)\bigr)
=\mu\bigl((\operatorname{id}\otimes\varphi)((\Delta a)(\sigma^{\mu}(y)\otimes1))\bigr)
\notag \\
&=\mu\bigl((\operatorname{id}\otimes\varphi)(\Delta(a\sigma^{\mu}(y)))\bigr) 
=\varphi\bigl(a\sigma^{\mu}(y)\bigr).
\notag 
\end{align}
We used here the result of Proposition~\ref{muphinupsi}, and the fact that $\Delta y=(y\otimes1)E=E(y\otimes1)$ 
and $\Delta\bigl(\sigma^{\mu}(y)\bigr)=\bigl(\sigma^{\mu}(y)\otimes1\bigr)E=E\bigl(\sigma^{\mu}(y)\otimes1\bigr)$, 
because $y,\sigma^{\mu}(y)\in{\mathcal C}$ (Proposition~\ref{DeltaonBandC}).

Meanwhile, note that $\varphi(ya)=\varphi\bigl(a\sigma(y)\bigr)$.  Combining the two observations, we see that 
$$
\varphi\bigl(a\sigma(y)\bigr)=\varphi\bigl(a\sigma^{\mu}(y)\bigr).
$$
Since $\varphi$ is faithful and since the result is true for any $a\in{\mathcal A}$, this shows that  $\sigma(y)=
\sigma^{\mu}(y)$, for all $y\in{\mathcal C}$, that is, $\sigma|_{\mathcal C}=\sigma^{\mu}$.  We already know from 
\S\ref{sub1.3} that $\sigma^{\mu}=S_{\mathcal B}\circ S_{\mathcal C}$.  We also know that $S|_{\mathcal B}
=S_{\mathcal B}$ and $S|_{\mathcal C}=S_{\mathcal C}$ (Proposition~\ref{antipodeS}), so we have: 
$\sigma|_{\mathcal C}=\sigma^{\mu}=S^2|_{\mathcal C}$.

(2). The proof for the restriction $\sigma^{\varphi\circ S}|_{\mathcal B}=\sigma^{\nu}=S^{-2}|_{\mathcal B}$ is similar, 
with Proposition~\ref{phiSresults}\,(1).

(3). As it is known that $\mu\circ\sigma^{\mu}=\mu$ and $\nu\circ\sigma^{\nu}=\nu$, the results follow immediately 
from (1) and (2).
\end{proof}

The following results show how $\sigma$ and $\sigma^{\varphi\circ S}$ behave when the comultiplication map is applied: 

\begin{prop}\label{Deltasigma}
We have:
\begin{enumerate}
\item $\Delta\bigl(\sigma(a)\bigr)=(S^2\otimes\sigma)(\Delta a), \quad {\text { for all $a\in{\mathcal A}$,}}$
\item $\Delta\bigl(\sigma^{\varphi\circ S}(a)\bigr)=(\sigma^{\varphi\circ S}\otimes S^{-2})(\Delta a), \quad {\text { for all $a\in{\mathcal A}$.}}$
\end{enumerate}
\end{prop}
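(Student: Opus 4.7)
The plan is to prove~(1) by a direct calculation combining the integral characterization of the antipode (Proposition~\ref{antipodeS}(2)) with the KMS property $\varphi(xy) = \varphi(y\sigma(x))$, and then to derive~(2) from~(1) using Propositions~\ref{phi_modularautomorphism} and~\ref{antipodeS}(5). For~(1), since $\Delta(\sigma(a))$ and $(S^2\otimes\sigma)(\Delta a)$ both lie in $M(\mathcal{A}\odot\mathcal{A})$ and have right products with $1\otimes b$ in $\mathcal{A}\odot\mathcal{A}$ (by~\eqref{(comult1)}, noting that $\sigma$ and $S^2$ preserve $\mathcal{A}$), it suffices to show that for every $b\in\mathcal{A}$ the two slices $(\operatorname{id}\otimes\varphi)\bigl(\Delta(\sigma(a))(1\otimes b)\bigr)$ and $(\operatorname{id}\otimes\varphi)\bigl((S^2\otimes\sigma)(\Delta a)(1\otimes b)\bigr)$ agree as elements of $\mathcal{A}$. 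Both slices are well-defined by the left invariance of $\varphi$.

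I would rewrite the first slice using Proposition~\ref{antipodeS}(2) as $S^{-1}\bigl((\operatorname{id}\otimes\varphi)((1\otimes\sigma(a))\Delta b)\bigr)$, then shift $\sigma(a)$ past $\varphi$ via the KMS identity (producing a $\sigma^2(a)$), and apply the antipode characterization a second time to re-express the slice in terms of $\Delta\sigma^2(a)$. The second slice, after a single KMS step moving $\sigma$ out of the right leg, can be recast in terms of $(\operatorname{id}\otimes\varphi)((\Delta a)(1\otimes b))$ with $S^2$ acting on the left. The two expressions are then matched using the identity $\Delta\circ S^2 = (S^2\otimes S^2)\circ\Delta$, which follows from Proposition~\ref{antipodeS}(5), together with the coassociativity of $\Delta$. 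The main obstacle is the careful Sweedler-style bookkeeping of how $\sigma$ and $S^{\pm 1}$ migrate across the two tensor legs: each invocation of Proposition~\ref{antipodeS}(2) swaps which leg is sliced and introduces a power of $S$, while each KMS move introduces a power of $\sigma$, so only the correct combination collapses both sides to the same element. Once the slices agree for all $b\in\mathcal{A}$, the faithfulness of $\varphi$ and the fullness of $\Delta$ force the identity in $M(\mathcal{A}\odot\mathcal{A})$.

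For~(2), one first inverts~(1) to obtain $\Delta\circ\sigma^{-1} = (S^{-2}\otimes\sigma^{-1})\circ\Delta$. Then, using $\sigma' = S^{-1}\circ\sigma^{-1}\circ S$ from Proposition~\ref{phi_modularautomorphism} and $\Delta\circ S^{\pm 1} = \varsigma\circ(S^{\pm 1}\otimes S^{\pm 1})\circ\Delta$ from Proposition~\ref{antipodeS}(5) (applied twice), one computes
$$\Delta(\sigma'(a)) = \varsigma(S^{-1}\otimes S^{-1})(S^{-2}\otimes\sigma^{-1})\varsigma(S\otimes S)(\Delta a).$$
Conjugating the inner factor through the flip by means of $\varsigma(X\otimes Y)\varsigma = Y\otimes X$ collapses the right-hand side to $(S^{-1}\sigma^{-1}S\otimes S^{-2})(\Delta a) = (\sigma'\otimes S^{-2})(\Delta a)$, which is~(2).
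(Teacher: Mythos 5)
Your reduction of (2) to (1) is fine: granting (1), the computation with $\sigma'=S^{-1}\circ\sigma^{-1}\circ S$ and $\Delta\circ S^{\pm1}=\varsigma\circ(S^{\pm1}\otimes S^{\pm1})\circ\Delta$ does collapse to $(\sigma'\otimes S^{-2})(\Delta a)$, and this is a legitimate alternative to the paper's route, which instead proves (2) by a mirror of the argument for (1) using $\psi$ and Proposition~\ref{antipodeS}\,(3). The problem is your proof of (1): as described, it is circular. Trace your own manipulations of the first slice: Proposition~\ref{antipodeS}\,(2) gives $(\operatorname{id}\otimes\varphi)\bigl((\Delta\sigma(a))(1\otimes b)\bigr)=S^{-1}\bigl((\operatorname{id}\otimes\varphi)((1\otimes\sigma(a))(\Delta b))\bigr)$, the KMS move turns the inside into $(\operatorname{id}\otimes\varphi)\bigl((\Delta b)(1\otimes\sigma^2(a))\bigr)$, and the second use of the characterization turns the whole expression into $S^{-2}\bigl((\operatorname{id}\otimes\varphi)((1\otimes b)(\Delta\sigma^2(a)))\bigr)$. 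The unknown object $\Delta\circ\sigma^2$ has reappeared, and these three steps are reversible, so you have returned to your starting point. Meanwhile the second slice becomes $S^{2}\bigl((\operatorname{id}\otimes\varphi)((\Delta a)(1\otimes\sigma^{-1}(b)))\bigr)$ (note the $\sigma^{-1}(b)$, not $b$, a minor slip). Equating these two expressions for all $b$ is equivalent to knowing how $\Delta$ intertwines $\sigma^{\pm1}$, i.e.\ essentially to statement (1) itself (applied twice). The identities you invoke for the matching, $\Delta\circ S^2=(S^2\otimes S^2)\circ\Delta$ and coassociativity, involve only $S$ and $\Delta$ and say nothing about $\sigma$; and $\sigma\ne S^2$ on ${\mathcal A}$ (they agree only on ${\mathcal C}$), so they cannot close this gap.

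The fix is to slice on the other side, which is exactly what the paper does: compute $(\operatorname{id}\otimes\varphi)\bigl((1\otimes x)\Delta(\sigma(a))\bigr)$ for arbitrary $x\in{\mathcal A}$. Proposition~\ref{antipodeS}\,(2) gives $S\bigl((\operatorname{id}\otimes\varphi)((\Delta x)(1\otimes\sigma(a)))\bigr)$, and now a single KMS move removes $\sigma$ entirely: $(\operatorname{id}\otimes\varphi)\bigl((\Delta x)(1\otimes\sigma(a))\bigr)=(\operatorname{id}\otimes\varphi)\bigl((1\otimes a)(\Delta x)\bigr)$. A second use of the characterization converts this to $S^{2}\bigl((\operatorname{id}\otimes\varphi)((\Delta a)(1\otimes x))\bigr)$, and one final KMS move yields $(\operatorname{id}\otimes\varphi)\bigl((1\otimes x)(S^2\otimes\sigma)(\Delta a)\bigr)$; faithfulness of $\varphi$ (with $x$ arbitrary) then gives (1). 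The essential point your plan misses is that the left-handed slice $(1\otimes x)(\,\cdot\,)$ places $\sigma(a)$ in a position where one KMS step turns it into plain $a$, so $\Delta\circ\sigma^{k}$ never reappears, whereas the right-handed slice $(\,\cdot\,)(1\otimes b)$ forces it back in.
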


\begin{proof}
(1). Let $a,x\in{\mathcal A}$ be arbitrary.  By a characterization of the antipode $S$ given in 
Proposition~\ref{antipodeS}\,(1), we have:
\begin{align}
(\operatorname{id}\otimes\varphi)\bigl((1\otimes x)\Delta((\sigma(a))\bigr)
&=(\operatorname{id}\otimes\varphi)\bigl(S((\Delta x)(1\otimes\sigma(a)))\bigr)   
=S\bigl((\operatorname{id}\otimes\varphi)((\Delta x)(1\otimes\sigma(a)))\bigr)     \notag \\
&=S\bigl((\operatorname{id}\otimes\varphi)((1\otimes a)(\Delta x))\bigr)
=S^2\bigl((\operatorname{id}\otimes\varphi)((\Delta a)(1\otimes x))\bigr)
\notag \\
&=S^2\bigl((\operatorname{id}\otimes\varphi)((1\otimes x)(\operatorname{id}\otimes\sigma)(\Delta a))\bigr)
\notag \\
&=(\operatorname{id}\otimes\varphi)\bigl((1\otimes x)(S^2\otimes\sigma)(\Delta a)\bigr).
\notag
\end{align}
Since $\varphi$ is faithful and since $x\in{\mathcal A}$ is arbitrary, this shows that $\Delta\bigl(\sigma(a)\bigr)
=(S^2\otimes\sigma)(\Delta a)$, for all $a\in{\mathcal A}$.

(2). Proof for $\Delta\bigl(\sigma^{\varphi\circ S}(a)\bigr)=(\sigma^{\varphi\circ S}\otimes S^{-2})(\Delta a)$, $\forall a\in{\mathcal A}$, is similar, 
using an alternative characterization of $S$, namely, $S\bigl(((\varphi\circ S)\otimes\operatorname{id})((a\otimes1)(\Delta x))\bigr)
=((\varphi\circ S)\otimes\operatorname{id})\bigl((\Delta a)(x\otimes1)\bigr)$, noted in Proposition~\ref{phiSresults}\,(4).
\end{proof}

\begin{rem}
For $a\in{\mathcal A}$, we know that $\sigma(a)=\tilde{\sigma}_{-i}\bigl(\pi(a)\bigr)$ and that $\tau_{-i}\bigl(\pi(a)\bigr)
=S^2\bigl(\pi(a)\bigr)=S^2(a)$ (by the polar decomposition of $S$).  As such, the result (1) above is essentially Proposition~\ref{Deltasigma_t}, when $t=-i$.  The second result is analogous to Proposition~\ref{Deltasigma'_t}.
\end{rem}

\begin{cor}
We have: 
$$
\Delta\bigl(\sigma(S^{-2}(a))\bigr)=\bigl(\operatorname{id}\otimes(\sigma\circ S^{-2})\big)(\Delta a), 
\quad \forall a\in{\mathcal A}.
$$
\end{cor}

\begin{proof}
By Proposition~\ref{antipodeS}\,(4), applied twice, we know $\Delta\bigl(S^{-2}(a)\bigr)
=(S^{-2}\otimes S^{-2})(\Delta a)$.  Combine this result with (1) of Proposition~\ref{Deltasigma}.
\end{proof}

\subsection{Some results on the modular element} \label{suba.3}

In Proposition~\ref{phiSresults}\,(3), we noted the existence of a unique invertible element $\delta\in M({\mathcal A})$ 
such that $(\varphi\circ S)(a)=\varphi(a\delta)$, for all $a\in{\mathcal A}$.  In what follows, we will gather some additional results about $\delta$.

Let us begin with a lemma, which gives a similar result for $\varphi\circ S^{-1}$, also a right invariant functional:

\begin{lem} \label{lemmaphiSinverse}
We have: $(\varphi\circ S^{-1})(a)=\varphi(\delta^*a)$, for all $a\in{\mathcal A}$.
\end{lem}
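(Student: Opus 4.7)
The plan is to chain together three ingredients: the ${}^*$-compatibility of the antipode, the positivity (and hence ${}^*$-preserving character) of $\varphi$, and the defining property of the modular element $\delta$.

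First, I would rewrite $S^{-1}$ in terms of $S$ and the involution. By Proposition~\ref{antipodeS}(6), we have $S\bigl(S(x)^*\bigr)^*=x$ for all $x\in{\mathcal A}$. Setting $y=S(x)$ so that $x=S^{-1}(y)$, this yields the identity
\begin{equation*}
S^{-1}(y)=S(y^*)^*, \qquad \forall y\in{\mathcal A}.
\end{equation*}
Applying $\varphi$, this gives $(\varphi\circ S^{-1})(a)=\varphi\bigl(S(a^*)^*\bigr)$ for any $a\in{\mathcal A}$.

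Second, since $\varphi$ is a positive linear functional on a ${}^*$-algebra, it satisfies $\varphi(x^*)=\overline{\varphi(x)}$, so
\begin{equation*}
\varphi\bigl(S(a^*)^*\bigr)=\overline{\varphi\bigl(S(a^*)\bigr)}=\overline{\psi(a^*)},
\end{equation*}
using our convention $\psi=\varphi\circ S$ from \S\ref{sub1.5}.

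Third, I would invoke Proposition~\ref{modular}, which gives $\psi(x)=\varphi(x\delta)$ for all $x\in{\mathcal A}$. In particular $\psi(a^*)=\varphi(a^*\delta)$, so
\begin{equation*}
\overline{\psi(a^*)}=\overline{\varphi(a^*\delta)}=\varphi\bigl((a^*\delta)^*\bigr)=\varphi(\delta^*a),
\end{equation*}
again using that $\varphi$ is ${}^*$-preserving in the conjugate sense. Concatenating the three equalities produces the claimed identity. I do not anticipate any obstacle here, since every step is a one-line consequence of a previously stated property; the only thing to double-check is that all elements involved ($S(a^*)$, $a^*\delta$, $\delta^*a$) lie in ${\mathcal A}$, which follows from $S({\mathcal A})={\mathcal A}$ and $\delta,\delta^*\in M({\mathcal A})$.
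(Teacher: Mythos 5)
Your proof is correct and follows essentially the same route as the paper: rewrite $S^{-1}(a)$ as $S(a^*)^*$ using $S\bigl(S(x)^*\bigr)^*=x$, use that the positive functional $\varphi$ satisfies $\varphi(x^*)=\overline{\varphi(x)}$, apply $\psi=\varphi\circ S$ and $\psi(\,\cdot\,)=\varphi(\,\cdot\,\delta)$, and conjugate back. No gaps.
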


\begin{proof}
Let $a\in{\mathcal A}$ be arbitrary.  Recall from Proposition~\ref{antipodeS}\,(5) that $S\bigl(S(a)^*\bigr)^*=a$.  It follows that $S^{-1}(a)^*=S(a^*)$. 
As $\varphi$ is a positive functional, we thus have:
$$
(\varphi\circ S^{-1})(a)=\varphi\bigl(S^{-1}(a)\bigr)=\overline{\varphi\bigl(S^{-1}(a)^*\bigr)}
=\overline{\varphi\bigl(S(a^*)\bigr)}=\overline{\varphi(a^*\delta)}=\overline{\varphi\bigl((\delta^*a)^*\bigr)}
=\varphi(\delta^*a).
$$
\end{proof}

In the below is a result showing what happens when the antipode map $S$, when extended to the multiplier algebra level, 
is applied to $\delta$:

\begin{prop}\label{Sdelta}
We have: $S(\delta)=(\delta^*)^{-1}$.
\end{prop}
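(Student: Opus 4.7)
The plan is to reduce the identity to the defining equation $\psi(a)=\varphi(a\delta)$ combined with the preceding Lemma~\ref{lemmaphiSinverse}, which gives $(\varphi\circ S^{-1})(a)=\varphi(\delta^*a)$. The key observation is that the two formulas for the two right integrals $\varphi\circ S$ and $\varphi\circ S^{-1}$, when connected through $S$, must be compatible, and this forces the relation $\delta^*S(\delta)=1$.

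Concretely, I would proceed as follows. First, rewrite $\psi(a)=\varphi(a\delta)$ as $\varphi\bigl(S(a)\bigr)=\varphi(a\delta)$, and substitute $a=S^{-1}(b)$ for an arbitrary $b\in{\mathcal A}$, obtaining
$$
\varphi(b)=\varphi\bigl(S^{-1}(b)\,\delta\bigr).
$$
Next, apply the anti-multiplicativity of $S$ (and the fact that $S$ extends to $M({\mathcal A})$, so $S(\delta)$ makes sense) to recognize the right-hand side via $S$: writing $S^{-1}(b)\delta=S^{-1}\bigl(S(\delta)\,b\bigr)$, we get
$$
\varphi(b)=\varphi\bigl(S^{-1}(S(\delta)\,b)\bigr)=(\varphi\circ S^{-1})\bigl(S(\delta)\,b\bigr).
$$
Then invoke Lemma~\ref{lemmaphiSinverse} to rewrite the right-hand side as
$$
\varphi(b)=\varphi\bigl(\delta^*\,S(\delta)\,b\bigr), \qquad \forall b\in{\mathcal A}.
$$

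Finally, faithfulness of $\varphi$ (applied on ${\mathcal A}$, with $\delta^*S(\delta)-1$ regarded as a multiplier of ${\mathcal A}$) forces $\delta^*S(\delta)=1$ in $M({\mathcal A})$, whence $S(\delta)=(\delta^*)^{-1}$. The proof is essentially a one-line manipulation once Lemma~\ref{lemmaphiSinverse} is in hand; the only subtle point is tracking that $S$ acts correctly on multipliers such as $\delta$ and that the intermediate expression $S(\delta)\,b$ lies in ${\mathcal A}$ so that the faithfulness argument applies, but both are standard given the extension of $S$ to $M({\mathcal A})$ discussed in \S\ref{sub1.5}. I do not anticipate a serious obstacle.
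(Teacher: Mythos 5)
Your proposal is correct and is essentially the paper's own proof: both rewrite $\varphi(b)=\varphi(S^{-1}(b)\delta)=(\varphi\circ S^{-1})(S(\delta)b)$ using anti-multiplicativity, invoke Lemma~\ref{lemmaphiSinverse} to get $\varphi(b)=\varphi(\delta^*S(\delta)b)$, and conclude $\delta^*S(\delta)=1$ by faithfulness of $\varphi$.
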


\begin{proof}
Let $x\in{\mathcal A}$ be arbitrary.  As $S$ is anti-multiplicative, we have:
$$
\varphi(x)=(\varphi\circ S)\bigl(S^{-1}(x)\bigr)=\varphi\bigl(S^{-1}(x)\delta\bigr)=(\varphi\circ S^{-1})\bigl(S(\delta)x\bigr)
=\varphi\bigl(\delta^*S(\delta)x\bigr),
$$
where we used the result of Lemma~\ref{lemmaphiSinverse}.

As $\varphi$ is faithful and since $x\in{\mathcal A}$ is arbitrary, it follows that $\delta^*S(\delta)=1$. So we have: 
$S(\delta)^{-1}=\delta^*$ and $S(\delta)=(\delta^*)^{-1}$.
\end{proof}

Next, we gather some results regarding the modular automorphisms $\sigma$ and $\sigma^{\varphi\circ S}$:

\begin{prop}\label{sigmasigma'delta}
Under the modular automorphisms $\sigma$ and $\sigma^{\varphi\circ S}$, which can be naturally extended to the multiplier algebra level, 
we have:
\begin{enumerate}
\item $\sigma^{-1}(\delta)=[\sigma^{\varphi\circ S}]^{-1}(\delta)$;
\item $\sigma\bigl([\sigma^{\varphi\circ S}]^{-1}(\delta)\bigr)=\delta$ and $\sigma^{\varphi\circ S}\bigl(\sigma^{-1}(\delta)\bigr)=\delta$;
\item $\sigma^{\varphi\circ S}(a)=\delta\sigma(a)\delta^{-1}$ and $\sigma(a)=\delta^{-1}\sigma^{\varphi\circ S}(a)\delta$, for any $a\in{\mathcal A}$;
\item $\sigma\bigl([\sigma^{\varphi\circ S}]^{-1}(a)\bigr)=\delta^{-1}a\delta$,  for any $a\in{\mathcal A}$;
\end{enumerate}
\end{prop}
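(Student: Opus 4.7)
The plan is to prove (3) first and then derive (1), (2), and (4) as quick consequences, since each of the latter is obtained by specializing the algebraic identity in (3).

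For (3), the strategy is to play the two KMS properties against each other through the defining relation $\psi=\varphi(\,\cdot\,\delta)$. Let $a,b\in{\mathcal A}$ be arbitrary. On the one hand, applying the KMS property of $\psi$ (Proposition~\ref{phi_modularautomorphism}) and then the Radon--Nikodym relation of Proposition~\ref{modular},
\[
\varphi\bigl(b\sigma'(a)\delta\bigr)=\psi\bigl(b\sigma'(a)\bigr)=\psi(ab)=\varphi(ab\delta).
\]
On the other hand, since $\delta\in M({\mathcal A})$ we have $b\delta\in{\mathcal A}$, so we may apply the KMS property of $\varphi$ to the product $a\cdot(b\delta)$,
\[
\varphi(ab\delta)=\varphi\bigl(a(b\delta)\bigr)=\varphi\bigl((b\delta)\sigma(a)\bigr)=\varphi\bigl(b\delta\sigma(a)\bigr).
\]
Comparing, $\varphi\bigl(b[\sigma'(a)\delta-\delta\sigma(a)]\bigr)=0$ for every $b\in{\mathcal A}$; faithfulness of $\varphi$ then forces $\sigma'(a)\delta=\delta\sigma(a)$, and invertibility of $\delta$ in $M({\mathcal A})$ yields $\sigma'(a)=\delta\sigma(a)\delta^{-1}$ and equivalently $\sigma(a)=\delta^{-1}\sigma'(a)\delta$.

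Once (3) is available, the remaining items are immediate. For (1) and (2), substitute $a=\sigma^{-1}(\delta)$ into $\sigma'(a)=\delta\sigma(a)\delta^{-1}$; since $\sigma(\sigma^{-1}(\delta))=\delta$, the right-hand side collapses to $\delta\cdot\delta\cdot\delta^{-1}=\delta$, giving $\sigma'\bigl(\sigma^{-1}(\delta)\bigr)=\delta$, which is one half of (2) and is equivalent to $\sigma^{-1}(\delta)=[\sigma']^{-1}(\delta)$, the statement of (1). The other half of (2), namely $\sigma\bigl([\sigma']^{-1}(\delta)\bigr)=\delta$, follows symmetrically by substituting $a=[\sigma']^{-1}(\delta)$ in the companion identity $\sigma(a)=\delta^{-1}\sigma'(a)\delta$ (or equivalently by applying (1) together with the first half of (2)). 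For (4), substitute $a\mapsto[\sigma']^{-1}(a)$ in $\sigma(a)=\delta^{-1}\sigma'(a)\delta$ to obtain $\sigma\bigl([\sigma']^{-1}(a)\bigr)=\delta^{-1}a\delta$.

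There is no serious obstacle here: the argument is a direct manipulation using faithfulness and the invertibility of $\delta$. The only mild point to watch is the legitimacy of writing $b\delta\in{\mathcal A}$ and of applying the KMS property of $\varphi$ to such an element, but this is automatic from $\delta\in M({\mathcal A})$ and the fact that the KMS property at the ${}^*$-algebra level extends to the multiplier algebra in the sense that $\varphi(am)=\varphi(m\sigma(a))$ whenever $am,m\in{\mathcal A}$. The logical structure is that (3) is the substantive computation and (1), (2), (4) are formal corollaries obtained by well-chosen substitutions.
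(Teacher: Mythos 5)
Your proof is correct, and your computation for part (3) is in substance identical to the paper's: both compare $\psi(ax)=\varphi(ax\delta)=\varphi(x\delta\sigma(a))$ with $\psi(ax)=\psi(x\sigma'(a))=\varphi(x\sigma'(a)\delta)$ and invoke faithfulness of $\varphi$ and invertibility of $\delta$; your derivation of (4) by the substitution $a\mapsto[\sigma']^{-1}(a)$ is also exactly the paper's. Where you differ is the order of logical dependence: the paper proves (1) first by a separate direct computation, namely $\psi(a)=\varphi(a\delta)=\varphi\bigl(\sigma^{-1}(\delta)a\bigr)$ versus $\psi(a)=\psi(a\delta^{-1}\delta)=\psi\bigl([\sigma']^{-1}(\delta)a\delta^{-1}\bigr)=\varphi\bigl([\sigma']^{-1}(\delta)a\bigr)$, and then gets (2) from (1), whereas you obtain (1) and (2) by substituting $a=\sigma^{-1}(\delta)$ (resp.\ $a=[\sigma']^{-1}(\delta)$) into (3). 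That substitution is legitimate but deserves one extra line: (3) is established for $a\in{\mathcal A}$, while $\sigma^{-1}(\delta)\in M({\mathcal A})$, so you should note that the identity $\delta\sigma(a)=\sigma'(a)\delta$ passes to multipliers --- e.g.\ for $m\in M({\mathcal A})$ and $a\in{\mathcal A}$ one has $\sigma'(m)\sigma'(a)=\sigma'(ma)=\delta\sigma(m)\sigma(a)\delta^{-1}=\delta\sigma(m)\delta^{-1}\sigma'(a)$, and surjectivity of $\sigma'$ on ${\mathcal A}$ together with non-degeneracy gives $\sigma'(m)=\delta\sigma(m)\delta^{-1}$. The paper's direct proof of (1) avoids this multiplier-extension step; your route buys a more economical presentation (one substantive computation, three formal corollaries) at the cost of that routine but necessary remark.
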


\begin{proof}
(1). Let $a\in{\mathcal A}$.  We have:
$$
(\varphi\circ S)(a)=\varphi(a\delta)=\varphi\bigl(\sigma^{-1}(\delta)a\bigr).
$$
Meanwhile, we have:
$$
(\varphi\circ S)(a)=(\varphi\circ S)(a\delta^{-1}\delta)=(\varphi\circ S)\bigl([\sigma^{\varphi\circ S}]^{-1}(\delta)a\delta^{-1}\bigr)
=\varphi\bigl([\sigma^{\varphi\circ S}]^{-1}(\delta)a\bigr).
$$
Compare the two equations.  Since $\varphi$ is faithful and since $a\in{\mathcal A}$ is arbitrary, this shows that 
$\sigma^{-1}(\delta)=[\sigma^{\varphi\circ S}]^{-1}(\delta)$.  

(2).  As a consequence of (1), we have: $\sigma\bigl([\sigma^{\varphi\circ S}]^{-1}(\delta)\bigr)=\delta$ and 
$\sigma^{\varphi\circ S}\bigl(\sigma^{-1}(\delta)\bigr)=\delta$.

(3). Let $a,x\in{\mathcal A}$. We have:
$$
(\varphi\circ S)(ax)=\varphi(ax\delta)=\varphi\bigl(x\delta\sigma(a)\bigr).
$$
Meanwhile,
$$
(\varphi\circ S)(ax)=(\varphi\circ S)\bigl(x\sigma^{\varphi\circ S}(a)\bigr)=\varphi\bigl(x\sigma^{\varphi\circ S}(a)\delta\bigr).
$$
Compare the two expressions.  Since $\varphi$ is faithful and since $x\in{\mathcal A}$ is arbitrary, this shows that 
$\delta\sigma(a)=\sigma^{\varphi\circ S}(a)\delta$. Or, equivalently, $\sigma^{\varphi\circ S}(a)=\delta\sigma(a)\delta^{-1}$ 
and $\sigma(a)=\delta^{-1}\sigma^{\varphi\circ S}(a)\delta$, true for any $a\in{\mathcal A}$.

(4). We may consider $x=[\sigma^{\varphi\circ S}]^{-1}(a)$, for $a\in{\mathcal A}$, and apply (3).  Then we have: 
$$
\sigma\bigl([\sigma^{\varphi\circ S}]^{-1}(a)\bigr)=\sigma(x)=\delta^{-1}\sigma^{\varphi\circ S}(x)\delta=\delta^{-1}a\delta.
$$
\end{proof}

The following result is a consequence of $\varphi$ and $\varphi\circ S$ being left and right invariant, respectively.  See 
Proposition~\ref{invarianceF} for the definitions of $F_1$, $F_2$ $F_3$, $F_4$, which are elements in $M({\mathcal A}\odot{\mathcal A})$.

\begin{prop}\label{invariancemodular}
Let $a\in{\mathcal A}$.  We have:
$$
(\varphi\otimes\operatorname{id})(\Delta a)=(\operatorname{id}\otimes\varphi)\bigl(F_1(1\otimes a)\bigr)\delta
=\delta^*(\operatorname{id}\otimes\varphi)\bigl((1\otimes a)F_3\bigr).
$$
Here, $F_1=(\operatorname{id}\otimes S)(E)\in M({\mathcal A}\odot{\mathcal A})$ and 
$F_3=(\operatorname{id}\otimes S^{-1})(E)\in M({\mathcal A}\odot{\mathcal A})$.
\end{prop}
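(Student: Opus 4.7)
The plan is to transfer $(\varphi\otimes\operatorname{id})(\Delta a)$ into a $(\psi\otimes\operatorname{id})$-type expression, apply the right invariance of $\psi$ given in Proposition~\ref{invarianceF}, and then re-express the result using the separability identity $(S\otimes S)(E)=\varsigma E$. The crucial input from Appendix~\S\ref{suba.3} is the comultiplication formula for the modular element, namely $\Delta\delta = (\delta\otimes\delta)E = E(\delta\otimes\delta)$, together with its counterparts for $\delta^{-1}$ and $\delta^*$. Combined with $(\Delta a)E = \Delta a$, these yield the key algebraic identity $\Delta(a\delta^{-1}) = (\Delta a)(\delta^{-1}\otimes\delta^{-1})$ that drives the argument.

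For the first equality, I would compute, writing $E = \sum e_1\otimes e_2$,
\begin{align*}
(\psi\otimes\operatorname{id})(\Delta(a\delta^{-1}))
&= (\psi\otimes\operatorname{id})((\Delta a)(\delta^{-1}\otimes\delta^{-1})) \\
&= \sum\psi(a_{(1)}\delta^{-1})\,a_{(2)}\delta^{-1} = \sum\varphi(a_{(1)})\,a_{(2)}\delta^{-1} \\
&= (\varphi\otimes\operatorname{id})(\Delta a)\,\delta^{-1},
\end{align*}
so that $(\varphi\otimes\operatorname{id})(\Delta a) = (\psi\otimes\operatorname{id})(\Delta(a\delta^{-1}))\,\delta$. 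The right invariance of $\psi$ from Proposition~\ref{invarianceF} gives
$$(\psi\otimes\operatorname{id})(\Delta(a\delta^{-1})) = (\psi\otimes\operatorname{id})(F_3(a\delta^{-1}\otimes 1)) = \sum\varphi(e_1 a)\,S^{-1}(e_2).$$
The separability-idempotent relation $(S\otimes S)(E)=\varsigma E$ yields the alternative expression $F_1 = \sum S^{-1}(e_2)\otimes e_1$, which gives $(\operatorname{id}\otimes\varphi)(F_1(1\otimes a)) = \sum S^{-1}(e_2)\,\varphi(e_1 a)$, matching the above. Right-multiplying by $\delta$ completes the first equality.

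For the second equality, the argument is parallel, using the functional $\tilde\psi := \varphi\circ S^{-1}(\,\cdot\,) = \varphi(\delta^*\,\cdot\,)$ from Lemma~\ref{lemmaphiSinverse}, which is also a right integral. Taking adjoints in $\Delta\delta = (\delta\otimes\delta)E$ yields $\Delta\delta^* = E(\delta^*\otimes\delta^*) = (\delta^*\otimes\delta^*)E$, and hence $\Delta((\delta^*)^{-1}a) = ((\delta^*)^{-1}\otimes(\delta^*)^{-1})(\Delta a)$. A parallel Sweedler calculation yields $(\varphi\otimes\operatorname{id})(\Delta a) = \delta^*(\tilde\psi\otimes\operatorname{id})(\Delta((\delta^*)^{-1}a))$. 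Applying the other form of right invariance, $(\tilde\psi\otimes\operatorname{id})(\Delta x) = (\tilde\psi\otimes\operatorname{id})((x\otimes 1)F_1)$, and using the flip identity $(\varphi\otimes\operatorname{id})((a\otimes 1)F_1) = (\operatorname{id}\otimes\varphi)((1\otimes a)F_3)$ (again a consequence of $(S\otimes S)(E)=\varsigma E$ together with the algebra-morphism property of $\varsigma$), produces the claimed form.

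The main obstacle is establishing, in the purely algebraic setting, the comultiplication formulas $\Delta\delta^{\pm 1} = (\delta^{\pm 1}\otimes\delta^{\pm 1})E$ and the corresponding identities for $\delta^*$; these are nontrivial and form the central content of Appendix~\S\ref{suba.3}. Once they are in hand, the remainder of the argument is a careful bookkeeping of Sweedler notation combined with the symmetry $(S\otimes S)(E)=\varsigma E$, which enables the flip between $F_1$ and $F_3$ needed to reconcile the two sides of each claimed identity.
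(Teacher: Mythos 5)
There is a genuine gap: your argument is circular relative to the way these facts are (and have to be) established. Your ``crucial input'' is the formula $\Delta(\delta)=(\delta\otimes\delta)E$ together with its variants for $\delta^{-1}$ and $\delta^*$, and you defer its proof to the Appendix. But in the paper those formulas come \emph{after} Proposition~\ref{invariancemodular} and their proofs depend on it in an essential way: Proposition~\ref{Deltadelta_prep} is proved by applying Proposition~\ref{invariancemodular} to $\Delta(b\delta)$, and part (5) of Proposition~\ref{Deltadelta} invokes Proposition~\ref{invariancemodular} again to rewrite $x\delta=\delta^*\tilde{x}$. So the identity you take as given is itself a downstream consequence of the statement you are trying to prove, and no independent derivation of it is offered. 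There is a second, related problem: the clean forms $\Delta(\delta)=(\delta\otimes\delta)E=E(\delta\otimes\delta)$ and $\Delta(\delta^*)=(\delta^*\otimes\delta^*)E$ that your Sweedler computation and your adjoint step use are only valid once $\delta$ is known to be self-adjoint (Proposition~\ref{deltasa}); in the generality of the present statement one only has $\Delta(\delta)=\bigl(\delta\otimes S^{-2}(\delta^*)\bigr)E=E(\delta\otimes\delta)E$ and $\Delta(\delta^*)=(\delta\otimes\delta^*)E$, and the self-adjointness of $\delta$ is obtained in this paper only later, from the quasi-invariance Assumption via the von Neumann algebraic Radon--Nikodym theorem. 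Since Proposition~\ref{invariancemodular} is stated for general $\delta$ (note that it distinguishes $\delta$ from $\delta^*$) and is needed before self-adjointness is available, your route cannot be repaired simply by citing \S\ref{suba.3}.

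For comparison, the paper's proof needs none of this machinery: for arbitrary $x\in{\mathcal A}$ it computes $\varphi\bigl(x\,(\varphi\otimes\operatorname{id})(\Delta a)\bigr)$, converts the inner expression using the characterization of $S$ in Proposition~\ref{antipodeS}\,(2) and $\psi=\varphi\circ S$, applies the right invariance $(\psi\otimes\operatorname{id})(\Delta x)=(\psi\otimes\operatorname{id})\bigl((x\otimes1)F_1\bigr)$ from Proposition~\ref{invarianceF}, and then replaces $\psi$ by $\varphi(\,\cdot\,\delta)$; faithfulness of $\varphi$ then gives the first equality, and the second follows by taking adjoints and observing $F_1^*=F_3$. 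If you want to salvage your approach, you would first need a proof of the $\Delta(\delta)$ formulas that does not pass through Proposition~\ref{invariancemodular} (and that does not presuppose $\delta=\delta^*$); as written, the logical order is reversed.
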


\begin{proof}
Let $a,x\in{\mathcal A}$.  Observe:
\begin{align}
\varphi\bigl(x(\varphi\otimes\operatorname{id})(\Delta a)\bigr)&=(\varphi\otimes\varphi)\bigl((1\otimes x)(\Delta a)\bigr)
=\varphi\bigl((\operatorname{id}\otimes\varphi)((1\otimes x)(\Delta a))\bigr) \notag  \\
&=\varphi\bigl(S((\operatorname{id}\otimes\varphi)((\Delta x)(1\otimes a)))\bigr)
=\varphi\bigl(((\varphi\otimes S)\otimes\operatorname{id})(\Delta x)a\bigr)   \notag \\
&=\varphi\bigl(((\varphi\circ S)\otimes\operatorname{id})((x\otimes1)F_1(1\otimes a))\bigr)
=(\varphi\circ S)\bigl(x(\operatorname{id}\otimes\varphi)(F_1(1\otimes a))\bigr)  \notag \\
&=\varphi\bigl(x(\operatorname{id}\otimes\varphi)(F_1(1\otimes a))\delta\bigr).
\notag
\end{align}
Third equality is using the characterization of the antipode given in Proposition~\ref{antipodeS}\,(1). The fifth is an application of 
Proposition~\ref{phiSresults}\,(2), as $\varphi\circ S$ is a right integral.  The last equality is remembering $(\varphi\circ S)(\,\cdot\,)
=\varphi(\,\cdot\,\delta)$.

Since $x\in{\mathcal A}$ is arbitrary and since $\varphi$ is faithful, this shows that 
$$
(\varphi\otimes\operatorname{id})(\Delta a)=(\operatorname{id}\otimes\varphi)(F_1(1\otimes a))\delta, 
\quad {\text { for all $a\in{\mathcal A}$.}}
$$
Taking the adjoint, we have:
$(\varphi\otimes\operatorname{id})(\Delta (a^*))=\delta^*(\operatorname{id}\otimes\varphi)((1\otimes a^*)F_1^*)$, $\forall 
a\in{\mathcal A}$, since $\varphi$ is a positive functional.  Or, equivalently, 
$(\varphi\otimes\operatorname{id})(\Delta a)=\delta^*(\operatorname{id}\otimes\varphi)((1\otimes a)F_1^*)$. Note here 
that since $F_1=(\operatorname{id}\otimes S)(E)$ and since $S(a)^*=S^{-1}(a^*)$ for any $a\in{\mathcal A}$, we have 
$F_1^*=(\operatorname{id}\otimes S^{-1})(E^*)=(\operatorname{id}\otimes S^{-1})(E)=F_3$.  In other words, we have:
$$
(\varphi\otimes\operatorname{id})(\Delta a)=\delta^*(\operatorname{id}\otimes\varphi)((1\otimes a)F_3),
\quad {\text { for all $a\in{\mathcal A}$.}}
$$
\end{proof}

Eventually, we wish to find what $\Delta(\delta)$ is.  At present it is not clear, but the following proposition will help us 
in that direction.

\begin{prop}\label{Deltadelta_prep}
Let $p,q\in{\mathcal A}$, and $\delta\in M({\mathcal A})$ be the modular element.  We have:
$$
(\varphi\otimes\varphi)\bigl((p\otimes q)\Delta(\delta)\bigr)=\bigl((\varphi\circ S)\otimes(\varphi\circ S)\bigr)\bigl((p\otimes q)E\bigr),
$$
$$
(\varphi\otimes\varphi)\bigl(\Delta(\delta^*)(p\otimes q)\bigr)=\bigl((\varphi\circ S)\otimes(\varphi\circ S)\bigr)\bigl(E(p\otimes q)\bigr).
$$
\end{prop}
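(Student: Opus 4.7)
The plan is to use the Radon--Nikodym relation $\psi(\cdot)=\varphi(\cdot\,\delta)$ from Proposition~\ref{modular} to reduce both equations to statements involving only the left integral $\varphi$, and then verify these using the characterization of the antipode.

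First, for any $X\in\mathcal{A}\odot\mathcal{A}$, applying $\psi(y)=\varphi(y\delta)$ in each tensor slot gives $(\psi\otimes\psi)(X)=(\varphi\otimes\varphi)\bigl(X(\delta\otimes\delta)\bigr)$. With $X=(p\otimes q)E$, the first equation becomes
\begin{equation*}
(\varphi\otimes\varphi)\bigl((p\otimes q)\Delta(\delta)\bigr)=(\varphi\otimes\varphi)\bigl((p\otimes q)E(\delta\otimes\delta)\bigr),\quad\forall p,q\in\mathcal{A}.
\end{equation*}
The second equation is the complex conjugate of the first after replacing $(p,q)$ by $(p^*,q^*)$, using positivity of $\varphi$, the fact that $\Delta$ is a $*$-homomorphism (so $\Delta(\delta^*)=\Delta(\delta)^*$), and self-adjointness of $E$; so it suffices to prove the first.

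For the first equation, I would slice with $(\operatorname{id}\otimes\varphi)$. Applying the antipode characterization in Proposition~\ref{antipodeS}(2) with $a=q$ and $b=\delta$, and using $\varphi(\cdot\,\delta)=\psi$, I obtain
\begin{equation*}
(\operatorname{id}\otimes\varphi)\bigl((1\otimes q)\Delta(\delta)\bigr)=S\bigl((\operatorname{id}\otimes\varphi)((\Delta q)(1\otimes\delta))\bigr)=S\bigl((\operatorname{id}\otimes\psi)(\Delta q)\bigr).
\end{equation*}
Then, multiplying on the left by $p$ and applying $\varphi$, together with the identity $\varphi(p\,S(x))=\psi(x\,S^{-1}(p))$ (which follows from $\varphi\circ S=\psi$ and anti-multiplicativity of $S$), the LHS rewrites as $(\psi\otimes\psi)\bigl((\Delta q)(S^{-1}(p)\otimes 1)\bigr)$. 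The RHS, after unwinding the Radon--Nikodym conversion, equals $(\psi\otimes\psi)\bigl((p\otimes q)E\bigr)$.

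It therefore remains to verify
\begin{equation*}
(\psi\otimes\psi)\bigl((\Delta q)(S^{-1}(p)\otimes 1)\bigr)=(\psi\otimes\psi)\bigl((p\otimes q)E\bigr),
\end{equation*}
which I would do by combining the right-invariance formulas of Proposition~\ref{invarianceF} (specifically $(\psi\otimes\operatorname{id})(\Delta q)=(\psi\otimes\operatorname{id})(F_3(q\otimes 1))$ with $F_3=(\operatorname{id}\otimes S^{-1})(E)$) together with the coproduct-antipode relation $\Delta(S^{-1}(p))=(S^{-1}\otimes S^{-1})\bigl(\varsigma(\Delta p)\bigr)$ from Proposition~\ref{antipodeS}(5), and matching terms using the properties of $E$ (including $(S_{\mathcal{B}}\otimes S_{\mathcal{C}})(E)=\varsigma E$ and the relations $E(b\otimes 1)=E(1\otimes S_{\mathcal{B}}(b))$). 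The main obstacle will be this last matching: one has to bookkeep the antipode, the modular element $\delta$, and the factor $E$ (which, unlike in the multiplier Hopf algebra case, is not $1\otimes 1$) so the two sides line up exactly. The identity $S(\delta)=(\delta^*)^{-1}$ from Proposition~\ref{Sdelta}, and the KMS relation $\psi(yx)=\psi(x\sigma'(y))$, may be needed to close the computation.
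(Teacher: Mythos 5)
Your reduction of the second identity to the first (take adjoints, replace $(p,q)$ by $(p^*,q^*)$, use positivity of $\varphi$, $\psi$ and self-adjointness of $E$) is fine and matches the paper, as is the conversion $(\psi\otimes\psi)(X)=(\varphi\otimes\varphi)\bigl(X(\delta\otimes\delta)\bigr)$. The problems are in the core of the argument. First, your key step applies the characterization of $S$ in Proposition~\ref{antipodeS}\,(2) with $b=\delta$, but that proposition is stated (and proved) only for $a,b\in{\mathcal A}$; $\delta$ is merely a multiplier, and extending the formula $S\bigl((\operatorname{id}\otimes\varphi)((\Delta q)(1\otimes\delta))\bigr)=(\operatorname{id}\otimes\varphi)\bigl((1\otimes q)\Delta(\delta)\bigr)$ to $M({\mathcal A})$ is not a routine matter here: in this appendix the interaction of $S$, $\varphi$, $\psi$ with $\delta$ is exactly what is being established, and the correct statements systematically involve $\delta^*$ rather than $\delta$ (compare Lemma~\ref{lemmaphiSinverse}, Proposition~\ref{Sdelta}, Proposition~\ref{invariancemodular}); since $\delta$ is not yet known to be self-adjoint at this stage, asserting this extension without proof is a genuine gap (and one also has to make sense of the slices $(\operatorname{id}\otimes\varphi)$ of $(\Delta q)(1\otimes\delta)$ and $(1\otimes q)\Delta(\delta)$, which are not in ${\mathcal A}\odot{\mathcal A}$ on the nose). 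Second, even granting that step, your argument only reduces the claim to the identity $(\psi\otimes\psi)\bigl((\Delta q)(S^{-1}(p)\otimes1)\bigr)=(\psi\otimes\psi)\bigl((p\otimes q)E\bigr)$, which you do not prove; you list tools and acknowledge that the matching is the main obstacle. That identity is essentially equivalent in difficulty to the proposition itself, so the proof is not complete.

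For comparison, the paper avoids both issues by never applying the antipode characterization to $\delta$: it tests the claimed identity against elements of the form $(1\otimes a)(\Delta b)$, which span $({\mathcal A}\odot{\mathcal A})E$ — and this suffices because both sides of the identity only depend on $(p\otimes q)E$, since $\Delta(\delta)=E\,\Delta(\delta)$ and $E=EE$. For such elements the computation closes in a few lines: Proposition~\ref{invariancemodular} gives $(\varphi\otimes\operatorname{id})\bigl(\Delta(b\delta)\bigr)=(\operatorname{id}\otimes\varphi)\bigl(F_1(1\otimes b\delta)\bigr)\delta$, then $\varphi(\,\cdot\,\delta)=\psi$ converts everything to $\psi\otimes\psi$, the flip identity $\varsigma F_1=F_3$ is used, and the right-invariance formula $(\psi\otimes\operatorname{id})\bigl(F_3(b\otimes1)\bigr)=(\psi\otimes\operatorname{id})(\Delta b)$ from Proposition~\ref{invarianceF} finishes it. If you want to salvage your route, you would need either to prove the multiplier extension of Proposition~\ref{antipodeS}\,(2) used in your slicing step, or to adopt a spanning/testing device of the above kind so that all antipode and invariance formulas are only ever applied to elements of ${\mathcal A}$.
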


\begin{proof}
The second result can be obtained immediately from the first one by taking the adjoint.  So let us just prove the first result.

Let $a,b\in{\mathcal A}$ be arbitrary.  By Proposition~\ref{invariancemodular}, we have:
$$
(\varphi\otimes\varphi)\bigl((1\otimes a)(\Delta b)\Delta(\delta)\bigr)=\varphi\bigl(a(\varphi\otimes\operatorname{id})
(\Delta(b\delta))\bigr)=\varphi\bigl(a(\operatorname{id}\otimes\varphi)(F_1(1\otimes b\delta))\delta\bigr).
$$
Since $\varphi(\,\cdot\,\delta)=\varphi\circ S$, this becomes:
$$
(\varphi\otimes\varphi)\bigl((1\otimes a)(\Delta b)\Delta(\delta)\bigr)
=\bigl((\varphi\circ S)\otimes(\varphi\circ S)\bigr)\bigl((a\otimes1)F_1(1\otimes b)\bigr)
=\bigl((\varphi\circ S)\otimes(\varphi\circ S)\bigr)\bigl((1\otimes a)\varsigma F_1(b\otimes1)\bigr),
$$
where $\varsigma$ denotes taking the flip on $M({\mathcal A}\odot{\mathcal A})$.  

Note that $\varsigma F_1=\varsigma\bigl((\operatorname{id}\otimes S)E\bigr)=(S\otimes\operatorname{id})(\varsigma E)$. 
Since $\varsigma E=(S^{-1}\otimes S^{-1})(E)$, we thus have $\varsigma F_1=(\operatorname{id}\otimes S^{-1})(E)=F_3$. 
Apply this result to the above, then we have:
\begin{equation}\label{(Deltadelta_prepeqn1)}
(\varphi\otimes\varphi)\bigl((1\otimes a)(\Delta b)\Delta(\delta)\bigr)
=\bigl((\varphi\circ S)\otimes(\varphi\circ S)\bigr(\bigl((1\otimes a)F_3(b\otimes1)\bigr)
=\bigl((\varphi\circ S)\otimes(\varphi\circ S)\bigr)\bigl((1\otimes a)(\Delta b)\bigr),
\end{equation}
where we used here the right invariance result $\bigl((\varphi\circ S)\otimes\operatorname{id}\bigr)\bigl(F_3(b\otimes1)\bigr)
=\bigl((\varphi\circ S)\otimes\operatorname{id}\bigr)(\Delta b)$, as in Proposition~\ref{phiSresults}\,(2).

Equation~\eqref{(Deltadelta_prepeqn1)} holds true for any $a,b\in{\mathcal A}$.  But note that the elements of the form $(1\otimes a)(\Delta b)$ 
span $({\mathcal A}\odot{\mathcal A})E$.  Therefore, it is equivalent to saying
$$
(\varphi\otimes\varphi)\bigl((p\otimes q)\Delta(\delta)\bigr)=\bigl((\varphi\circ S)\otimes(\varphi\circ S)\bigr)\bigr((p\otimes q)E\bigr), \quad 
{\text { for all $p,q\in{\mathcal A}$.}}
$$
\end{proof}

Before we find $\Delta(\delta)$, we first prove the following lemma, which is yet another consequence of the right invariance 
of the functional $\varphi\circ S$.

\begin{lem}\label{rinvariancelemma}
Let $p,q\in{\mathcal A}$.  Then we have:
$$
\bigl((\varphi\circ S)\otimes\operatorname{id}\bigr)\bigl((p\otimes q)E\bigr)=qS^{-1}\bigl(((\varphi\circ S)\otimes\operatorname{id})(\Delta p)\bigr).
$$
\end{lem}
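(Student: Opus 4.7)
My plan is to derive the identity directly from the right invariance of $\psi$, using the characterization given in Proposition~\ref{invarianceF}, together with the anti-multiplicativity of $S$ on the multiplier algebra.

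First I would rewrite the left-hand side by pulling $q$ out of the second leg: since $\psi$ only acts on the first tensor factor, I have
$$
(\psi\otimes\operatorname{id})\bigl((p\otimes q)E\bigr)
=(\psi\otimes\operatorname{id})\bigl((1\otimes q)(p\otimes1)E\bigr)
=q\,(\psi\otimes\operatorname{id})\bigl((p\otimes1)E\bigr).
$$
So the lemma reduces to showing
$$
(\psi\otimes\operatorname{id})\bigl((p\otimes1)E\bigr)=S^{-1}\bigl((\psi\otimes\operatorname{id})(\Delta p)\bigr).
$$

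Next I would invoke Proposition~\ref{invarianceF}, which tells us that $(\psi\otimes\operatorname{id})(\Delta p)=(\psi\otimes\operatorname{id})\bigl((p\otimes1)F_1\bigr)$ with $F_1=(\operatorname{id}\otimes S)(E)$. Since $S$ acts only on the second leg, we can pull it outside the partial evaluation against $\psi$: for any $X\in M({\mathcal A}\odot{\mathcal A})$ of the form $\sum a_i\otimes b_i$ we have
$$
(\psi\otimes\operatorname{id})\bigl((\operatorname{id}\otimes S)(X)\bigr)=\sum\psi(a_i)S(b_i)=S\bigl((\psi\otimes\operatorname{id})(X)\bigr).
$$
Applied to $X=(p\otimes1)E$, and noting $(p\otimes1)F_1=(\operatorname{id}\otimes S)((p\otimes1)E)$, this gives
$$
(\psi\otimes\operatorname{id})(\Delta p)=S\bigl((\psi\otimes\operatorname{id})((p\otimes1)E)\bigr).
$$
Applying $S^{-1}$ to both sides yields exactly the reduced identity above, and substituting back completes the proof.

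I do not expect any real obstacle here: the computation is a direct manipulation of the tensor legs, and all the ingredients (the right-invariance formula involving $F_1$, and the compatibility of $\operatorname{id}\otimes S$ with partial evaluation by $\psi$) are already in hand. The only care needed is to remember that $(\psi\otimes\operatorname{id})((p\otimes1)E)$ lies in $M({\mathcal A})$, so that applying $S^{-1}$ (which extends to $M({\mathcal A})$) makes sense.
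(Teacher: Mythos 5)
Your argument is correct, but it follows a genuinely different route from the paper's. You reduce the identity to $(\psi\otimes\operatorname{id})\bigl((p\otimes1)E\bigr)=S^{-1}\bigl((\psi\otimes\operatorname{id})(\Delta p)\bigr)$ and get it directly from the right-invariance formula $(\psi\otimes\operatorname{id})(\Delta p)=(\psi\otimes\operatorname{id})\bigl((p\otimes1)F_1\bigr)$ of Proposition~\ref{invarianceF}, together with the elementary interchange $(\psi\otimes\operatorname{id})\circ(\operatorname{id}\otimes S)=S\circ(\psi\otimes\operatorname{id})$ applied to $(p\otimes1)E$, which lies in ${\mathcal A}\odot{\mathcal C}$ (write $p\in{\mathcal A}{\mathcal B}$ and use $(b\otimes1)E\in{\mathcal B}\odot{\mathcal C}$), so that all manipulations happen on finite sums and $S$ on the second leg is just $S_{\mathcal C}$. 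The paper instead pairs the left-hand side with an arbitrary $\omega\in{\mathcal A}^*$, sets $x=(\operatorname{id}\otimes\omega)\bigl((1\otimes q)E\bigr)\in{\mathcal B}$, and runs the computation through the base algebra: Proposition~\ref{muphinupsi} ($\psi(px)=\nu\bigl((\psi\otimes\operatorname{id})(\Delta p)x\bigr)$ via $\Delta x=E(1\otimes x)$ from Proposition~\ref{DeltaonBandC}), then the separability-idempotent identity \eqref{(S_Cmap)} and $(\nu\otimes\operatorname{id})(E)=1$ to extract $S_{\mathcal C}^{-1}$. Your route is shorter and makes the mechanism transparent (the $F_1$-formula already encodes the antipode's appearance), at the cost of leaning on Proposition~\ref{invarianceF} as a black box and on the standard covering conventions for applying $\operatorname{id}\otimes S$ after multiplying into ${\mathcal A}\odot{\mathcal C}$; the paper's route only uses the ingredients it has already developed in situ ($\nu$, $\mu$, Proposition~\ref{muphinupsi}, and the separability structure of $E$), which is why it passes through ${\mathcal B}$ and $\nu$ rather than through $F_1$. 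Either way the identity holds, and your closing remark that the sliced element lies in $M({\mathcal A})$ (in fact in ${\mathcal C}$, with $(\psi\otimes\operatorname{id})(\Delta p)\in M({\mathcal B})$) is the right point to check before applying $S^{-1}$ and multiplying by $q$.
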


\begin{proof}
Let $\omega\in{\mathcal A}^*$ be arbitrary.  Then:
$$
\omega\bigl(((\varphi\circ S)\otimes\operatorname{id})((p\otimes q)E)\bigr)
=(\varphi\circ S)\bigl(p(\operatorname{id}\otimes\omega)((1\otimes q)E)\bigr)=(\varphi\circ S)(px),
$$
where $x=(\operatorname{id}\otimes\omega)\bigl((1\otimes q)E\bigr)$.  As $E$ is full, note that such elements span the base 
algebra ${\mathcal B}$.  By Proposition~\ref{phiSresults}\,(1), we have:
$$
(\varphi\circ S)(px)=\nu\bigl(((\varphi\circ S)\otimes\operatorname{id})(\Delta(px))\bigr)
=\nu\bigl(((\varphi\circ S)\otimes\operatorname{id})((\Delta p)(1\otimes x))\bigr)=\nu\bigl(((\varphi\circ S)\otimes\operatorname{id})(\Delta p)x\bigr).
$$
Here, we used the fact that since $x\in{\mathcal B}$, we have $\Delta x=E(1\otimes x)$ by Proposition~\ref{DeltaonBandC}, 
from which it follows that $\Delta(px)=(\Delta p)(\Delta x)=(\Delta p)E(1\otimes x)=(\Delta p)(1\otimes x)$. 

By Proposition~\ref{phiS_rightinvariant}, we know $\bigl((\varphi\circ S)\otimes\operatorname{id}\big)(\Delta p)\in M({\mathcal B})$.
Therefore, combining the results:
$$
\omega\bigl(((\varphi\circ S)\otimes\operatorname{id})((p\otimes q)E)\bigr)
=\nu\bigl(((\varphi\circ S)\otimes\operatorname{id})(\Delta p)x\bigr)=
\omega\bigl(q(\nu\otimes\operatorname{id})([((\varphi\circ S)\otimes\operatorname{id})(\Delta p)\otimes1]E)\bigr).
$$
As $\omega\in{\mathcal A}^*$ is arbitrary, this means that 
$$
\bigl((\varphi\circ S)\otimes\operatorname{id}\bigr)\bigl((p\otimes q)E\bigr)
=q(\nu\otimes\operatorname{id})\bigl([((\varphi\circ S)\otimes\operatorname{id})(\Delta p)\otimes1]E\bigr).
$$

Since $\bigl((\varphi\circ S)\otimes\operatorname{id}\bigr)(\Delta p)\in M({\mathcal B})$, using the map $S_{\mathcal C}$, 
as characterized in Equation~\eqref{(S_Cmap)}, we have:
$\bigl[((\varphi\circ S)\otimes\operatorname{id})(\Delta p)\otimes1\bigr]E=\bigl[(1\otimes S_{\mathcal C}^{-1}
(((\varphi\circ S)\otimes\operatorname{id})(\Delta p))\bigr]E$.  Note also that $(\nu\otimes\operatorname{id})(E)=1$.  Putting all these 
together, we thus obtain:
$$
\bigl((\varphi\circ S)\otimes\operatorname{id}\bigr)\bigl((p\otimes q)E\bigr)
=qS_{\mathcal C}^{-1}\bigl(((\varphi\circ S)\otimes\operatorname{id})(\Delta p)\bigr)
=qS^{-1}\bigl(((\varphi\circ S)\otimes\operatorname{id})(\Delta p)\bigr),
$$
as $S^{-1}$ extends $S_{\mathcal C}^{-1}$. 
\end{proof}

The next proposition provides us with some characterizations of $\Delta(\delta)$ and $\Delta(\delta^*)$:

\begin{prop}\label{Deltadelta}
Let $\delta\in M({\mathcal A})$ be the modular element.  We have:
\begin{itemize}
\item $\Delta(\delta)=\bigl(\delta\otimes S^{-1}(\delta^{-1})\bigr)E=\bigl(\delta\otimes S^{-2}(\delta^*)\bigr)E
=E\bigl(\delta\otimes S^{-2}(\delta^*)\bigr)E
$.
\item $\Delta(\delta^*)=E\bigl(\delta^*\otimes S^2(\delta)\bigr)=E\bigl(\delta^*\otimes S^2(\delta)\bigr)E$
\item $\Delta(\delta^*)=(\delta\otimes\delta^*)E=E(\delta\otimes\delta^*)E$
\item $\Delta(\delta)=E(\delta^*\otimes\delta)=E(\delta^*\otimes\delta)E$
\item $\Delta(\delta)=E(\delta\otimes\delta)E$
\item $\Delta(\delta^*)=E(\delta^*\otimes\delta^*)E$
\end{itemize}
\end{prop}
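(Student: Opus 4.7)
The six formulas naturally fall into adjoint pairs --- (1,\,2), (3,\,4), (5,\,6) --- so it will suffice to prove one representative of each pair (the three formulas for $\Delta(\delta)$, namely bullets 5, 4, 1) and obtain the companion formulas for $\Delta(\delta^*)$ by taking adjoints, using that $\Delta$ is a ${}^*$-homomorphism and $E^* = E$.

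The anchor step is bullet 5. My plan is to start from the first identity of Proposition~\ref{Deltadelta_prep}, namely $(\varphi\otimes\varphi)\bigl((p\otimes q)\Delta(\delta)\bigr) = (\psi\otimes\psi)\bigl((p\otimes q)E\bigr)$, and to exploit the Radon--Nikodym relation $\psi = \varphi(\,\cdot\,\delta)$ of Proposition~\ref{modular} to rewrite the right-hand side as $(\varphi\otimes\varphi)\bigl((p\otimes q)E(\delta\otimes\delta)\bigr)$. Subtracting, $(\varphi\otimes\varphi)\bigl((p\otimes q)[\Delta(\delta) - E(\delta\otimes\delta)]\bigr) = 0$ for every $p,q \in {\mathcal A}$. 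The faithfulness of $\varphi\otimes\varphi$ on ${\mathcal A}\odot{\mathcal A}$, together with the non-degeneracy of that algebra, then forces $\Delta(\delta) = E(\delta\otimes\delta)$ in $M({\mathcal A}\odot{\mathcal A})$; post-multiplying by $E$ (which is absorbed by $\Delta(\delta)$) yields bullet 5. Bullet 6 is then its adjoint.

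For bullets 4 and 1, the idea is to rerun the faithfulness argument on the Prep identity, but to convert $\psi$ differently in each case. For bullet 4, $\Delta(\delta) = E(\delta^*\otimes\delta)$, I plan to first reduce $(\psi\otimes\operatorname{id})\bigl((p\otimes q)E\bigr) = qS^{-1}\bigl((\psi\otimes\operatorname{id})(\Delta p)\bigr)$ via Lemma~\ref{rinvariancelemma}, then apply the outer $\psi$ using $\psi = \varphi\circ S$; Lemma~\ref{lemmaphiSinverse}, $(\varphi\circ S^{-1})(a) = \varphi(\delta^*a)$, is what accounts for the $\delta^*$ on the left slot, while the other leg carries the $\delta$ inherited from $\psi = \varphi(\,\cdot\,\delta)$. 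For bullet 1, $\Delta(\delta) = (\delta\otimes S^{-1}(\delta^{-1}))E$, I intend to use the second formula in Proposition~\ref{invariancemodular}, namely $(\varphi\otimes\operatorname{id})(\Delta a) = \delta^*(\operatorname{id}\otimes\varphi)\bigl((1\otimes a)F_3\bigr)$ with $F_3 = (\operatorname{id}\otimes S^{-1})(E)$. Applying this with $a$ chosen to carry a factor of $\delta$ and using $\Delta(b\delta) = (\Delta b)\Delta(\delta)$, a faithfulness argument isolates $\Delta(\delta)$; the appearance of $S^{-1}(\delta^{-1})$ on the right leg then arises from the inner $S^{-1}$ in $F_3$ combined with $S(\delta) = (\delta^*)^{-1}$ from Proposition~\ref{Sdelta} and the anti-multiplicativity of $S$.

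The main obstacle is the bookkeeping in bullets 4 and 1, where the asymmetric appearance of $\delta$, $\delta^*$, and $S^{-1}(\delta^{-1})$ has to be tracked carefully. At this stage of the appendix $\delta$ is not yet known to be self-adjoint, so every occurrence of an adjoint or an antipode must be attributed to a definite source: either the positivity of $\varphi$ (giving $\varphi(a^*) = \overline{\varphi(a)}$), Lemma~\ref{lemmaphiSinverse}, Proposition~\ref{Sdelta}, or the intertwining $\sigma'(x) = \delta\sigma(x)\delta^{-1}$ of Proposition~\ref{sigmasigma'delta}(3). Throughout, the faithfulness argument yields equalities of multipliers in $M({\mathcal A}\odot{\mathcal A})$, and this requires the standard non-degeneracy trick to pass from vanishing upon pairing with every $(p\otimes q)$ to equality of the multipliers themselves.
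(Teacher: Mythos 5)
Your organization into adjoint pairs is fine in principle, and your sketch for item (1) lists exactly the ingredients the paper's own proof of (1) uses (Lemma~\ref{rinvariancelemma}, Proposition~\ref{invariancemodular}, $S(\delta)=(\delta^*)^{-1}$, anti-multiplicativity of $S^{-1}$), so that part is plausibly completable. The serious problem is the interaction between your anchor step for (5) and your claim (4). Your two-line argument for (5) --- rewriting $(\psi\otimes\psi)\bigl((p\otimes q)E\bigr)$ in Proposition~\ref{Deltadelta_prep} as $(\varphi\otimes\varphi)\bigl((p\otimes q)E(\delta\otimes\delta)\bigr)$ and invoking faithfulness --- does not give (5) but the strictly stronger one-sided identity $\Delta(\delta)=E(\delta\otimes\delta)$, with no trailing $E$. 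If you also have (4), $\Delta(\delta)=E(\delta^*\otimes\delta)$, then $E\bigl((\delta-\delta^*)\otimes1\bigr)=0$, and slicing the second leg with $\mu$ (using $E(1\otimes c)\in{\mathcal B}\odot{\mathcal C}$, $E(b\otimes1)=E\bigl(1\otimes S_{\mathcal B}(b)\bigr)$, $(\operatorname{id}\otimes\mu)(E)=1$, and non-degeneracy) forces $\delta^*=\delta$. That is precisely what is \emph{not} available at this purely algebraic stage: the remark closing \S\ref{suba.2} insists the distinction between $\delta$ and $\delta^*$ must be kept, and self-adjointness is only obtained later, under the quasi-invariance Assumption, by operator-algebraic means (\S\ref{sub4.1}). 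So your proposal, taken as a whole, proves far more than the proposition; either you are claiming an unconditional algebraic proof of $\delta=\delta^*$ --- which would have to be stated and defended explicitly, and which contradicts the paper's framing --- or one of your steps fails, and the proposal does not identify which. As written this is a genuine gap.

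It is also worth contrasting with how the paper avoids this trap. The paper never applies the Radon--Nikodym substitution $\psi=\varphi(\,\cdot\,\delta)$ to both legs at once: (5) is deduced from (3) by multiplying with elements $x=(\operatorname{id}\otimes\varphi)\bigl(F_1(1\otimes a)\bigr)\in{\mathcal B}$, using $x\delta=\delta^*\tilde{x}$ from Proposition~\ref{invariancemodular} and the non-degeneracy of ${\mathcal B}$, and only the $E$-sandwiched form is claimed; and (4) is never proved directly --- it is the adjoint of (3), which is obtained from (1) by computing $\Delta(\delta^{-1})$ and applying $\Delta\bigl(S(x)\bigr)=(S\otimes S)\bigl(\varsigma(\Delta x)\bigr)$ together with $S(\delta^{-1})=\delta^*$. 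Your proposed direct route to (4) (``apply the outer $\psi$; Lemma~\ref{lemmaphiSinverse} accounts for the $\delta^*$ on the left slot'') is a plan, not a proof, and it sits exactly where the one-sided $\delta$ versus $\delta^*$ bookkeeping is most delicate. Until that derivation is written out in full and the inconsistency above is resolved (is the culprit your strengthened (5), your (4), or the reliance on the unqualified one-sided formulas?), the proposal cannot be accepted as a proof of the proposition.
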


\begin{proof}
(1). Recall from Proposition~\ref{Deltadelta_prep} that $(\varphi\otimes\varphi)\bigl((p\otimes q)\Delta(\delta)\bigr)
=\bigl((\varphi\circ S)\otimes(\varphi\circ S)\bigr)\bigr((p\otimes q)E\bigr)$, for any $p,q\in{\mathcal A}$.  By Lemma~\ref{rinvariancelemma}, 
we have:
\begin{equation}\label{(Deltadelta_eq1)}
(\varphi\otimes\varphi)\bigl((p\otimes q)\Delta(\delta)\bigr)
=(\varphi\circ S)\bigl(((\varphi\circ S)\otimes\operatorname{id})((p\otimes q)E)\bigr)
=(\varphi\circ S)\bigl(qS^{-1}(((\varphi\circ S)\otimes\operatorname{id})(\Delta p))\bigr).
\end{equation}

Meanwhile, note that
\begin{align} \label{(Deltadelta_eq2)}
\bigl((\varphi\circ S)\otimes\operatorname{id}\bigr)(\Delta p)&=\bigl((\varphi\circ S)\otimes\operatorname{id}\bigr)\bigl(F_3(p\otimes1)\bigr)
=(\varphi\otimes\operatorname{id})\bigl(F_3(p\delta\otimes1)\bigr)  \notag \\
&=(\operatorname{id}\otimes\varphi)\bigl(\varsigma F_3(1\otimes p\delta)\bigr)
=(\operatorname{id}\otimes\varphi)\bigl(F_1(1\otimes p\delta)\bigr)  \notag \\
&=(\varphi\otimes\operatorname{id})\bigl(\Delta(p\delta)\bigr)\delta^{-1}  
=\delta^*(\operatorname{id}\otimes\varphi)\bigl((1\otimes p\delta)F_3\bigr)\delta^{-1}.
\end{align}
The first equality is Proposition~\ref{phiSresults}\,(2); The second used $\varphi\circ S=\varphi(\,\cdot\,\delta)$; In the third and fourth, the flip map 
is applied, together with the observation earlier (see proof of Proposition~\ref{Deltadelta_prep}) that $\varsigma F_1=F_3$; The fifth and sixth used 
the result of Proposition~\ref{invariancemodular}.

Insert into Equation~\eqref{(Deltadelta_eq1)} the result of Equation~\eqref{(Deltadelta_eq2)}.  Then we have:
\begin{align} \label{(Deltadelta_eq3)}
&(\varphi\otimes\varphi)\bigl((p\otimes q)\Delta(\delta)\bigr)=(\varphi\circ S)\bigl(qS^{-1}(((\varphi\circ S)\otimes\operatorname{id})(\Delta p))\bigr)
\notag \\
&=(\varphi\circ S)\bigl(qS^{-1}(\delta^*(\operatorname{id}\otimes\varphi)[(1\otimes p\delta)F_3]\delta^{-1})\bigr)
=\varphi\bigl(qS^{-1}(\delta^*(\operatorname{id}\otimes\varphi)[(1\otimes p\delta)F_3]\delta^{-1})\delta\bigr)
\notag \\
&=\varphi\bigl(qS^{-1}(S(\delta)\delta^*(\operatorname{id}\otimes\varphi)[(1\otimes p\delta)F_3]\delta^{-1})\bigr)
=\varphi\bigl(qS^{-1}((\operatorname{id}\otimes\varphi)[(1\otimes p\delta)F_3]\delta^{-1})\bigr)
\notag \\
&=\varphi\bigl(qS^{-1}(\delta^{-1})S^{-1}((\operatorname{id}\otimes\varphi)[(1\otimes p\delta)F_3])\bigr).
\end{align}
The fourth and sixth equalities used the anti-multiplicativity $S^{-1}$; In the fifth, we used the result 
of Proposition~\ref{Sdelta}, namely $S(\delta)=(\delta^*)^{-1}$.

Note that by applying the flip map and using again $\varsigma F_1=F_3$, we have: 
\begin{equation} \label{(Deltadelta_eq4)}
S^{-1}\bigl((\operatorname{id}\otimes\varphi)[(1\otimes p\delta)F_3]\bigr)
=S^{-1}\bigl((\varphi\otimes\operatorname{id})[(p\delta\otimes1)F_1]\bigr)
=(\varphi\otimes\operatorname{id})\bigl[(p\delta\otimes1)E\bigr],
\end{equation}
because $F_1=(\operatorname{id}\otimes S)(E)$.  Insert the result of Equation~\eqref{(Deltadelta_eq4)} into 
Equation~\eqref{(Deltadelta_eq3)}, to obtain:
\begin{align}
(\varphi\otimes\varphi)\bigl((p\otimes q)\Delta(\delta)\bigr)
&=\varphi\bigl(qS^{-1}(\delta^{-1})(\varphi\otimes\operatorname{id})[(p\delta\otimes1)E]\bigr)  \notag \\
&=(\varphi\otimes\varphi)\bigl((p\delta\otimes qS^{-1}(\delta^{-1}))E\bigr)   \notag \\
&=(\varphi\otimes\varphi)\bigl((p\otimes q)(\delta\otimes S^{-1}(\delta^{-1}))E\bigr).
\notag
\end{align}

Here, note that $\varphi$ is faithful and that $p,q\in{\mathcal A}$ are arbitrary, which means that we have
$$
\Delta(\delta)=\bigl(\delta\otimes S^{-1}(\delta^{-1})\bigr)E.
$$
Or $\Delta(\delta)=\bigl(\delta\otimes S^{-2}(\delta^*)\bigr)E$, by using the fact $\delta^*=S(\delta^{-1})$.  Meanwhile, noting that 
$\Delta(\delta)=E\Delta(\delta)$, we can also write this as $\Delta(\delta)=E\bigl(\delta\otimes S^{-2}(\delta^*)\bigr)E$.

(2). From (1), we know $\Delta(\delta)=\bigl(\delta\otimes S^{-2}(\delta^*)\bigr)E$.  Take the adjoint, to obtain
$\Delta(\delta^*)=E\bigl(\delta^*\otimes S^2(\delta)\bigr)$, using the property of $S$.  Also, as we should have 
$\Delta(\delta^*)=\Delta(\delta^*)E$, we can also write this as $\Delta(\delta^*)=E\bigl(\delta^*\otimes S^2(\delta)\bigr)E$.

(3). From (1), we saw $\Delta(\delta)=\bigl(\delta\otimes S^{-1}(\delta^{-1})\bigr)E$.  As the comultiplication preserves 
multiplication and since $\Delta(1)=E$, we can see from this quickly that
$$
\Delta(\delta^{-1})=E\bigl(\delta^{-1}\otimes[S^{-1}(\delta^{-1})]^{-1}\bigr)=E\bigl(\delta^{-1}\otimes S^{-1}(\delta)\bigr).
$$
As $S(\delta^{-1})=\delta^*$, and by using the result we just obtained on $\Delta(\delta^{-1})$, we have:
\begin{align}
\Delta(\delta^*)&=\Delta\bigl(S(\delta^{-1})\bigr)=(S\otimes S)\Delta^{\operatorname{cop}}(\delta^{-1})
=(S\otimes S)\bigl(\varsigma E(S^{-1}(\delta)\otimes\delta^{-1})\bigr)  \notag \\
&=\bigl(\delta\otimes S(\delta^{-1})\bigr)(S\otimes S)(\varsigma E)=\bigl(\delta\otimes\delta^*\bigr)E.
\notag
\end{align}
Again, as we should have $\Delta(\delta^*)=E\Delta(\delta^*)$, we can also write this as $\Delta(\delta^*)
=E\bigl(\delta\otimes\delta^*\bigr)E$.

(4). From (3), we saw $\Delta(\delta^*)=\bigl(\delta\otimes\delta^*\bigr)E$.  Take the adjoint, to obtain: 
$\Delta(\delta)=E\bigl(\delta^*\otimes\delta\bigr)$.  As before, this can be also written as 
$\Delta(\delta)=E\bigl(\delta^*\otimes\delta\bigr)E$.

(5). Consider $x=(\operatorname{id}\otimes\varphi)\bigl(F_1(1\otimes a)\bigr)
=\bigl(\operatorname{id}\otimes(\varphi\circ S)(S^{-1}(a)\,\cdot\,)\bigr)(E)$, for $a\in{\mathcal A}$.
It is an element in ${\mathcal B}$, 
and it is evident that such elements span ${\mathcal B}$.  Note that since $x\in{\mathcal B}$, we have $\Delta x
=E(1\otimes x)=(1\otimes x)E$.  We thus have:
$(1\otimes x)\Delta(\delta)=(1\otimes x)E\Delta(\delta)=\Delta(x\delta)$.
But note that by Proposition~\ref{invariancemodular}, we have:
$$
x\delta=(\operatorname{id}\otimes\varphi)\bigl(F_1(1\otimes a)\bigr)\delta=
\delta^*(\operatorname{id}\otimes\varphi)\bigl((1\otimes a)F_3\bigr)=\delta^*\tilde{x},
$$
where $\tilde{x}=(\operatorname{id}\otimes\varphi)\bigl((1\otimes a)F_3\bigr)$, also an element of ${\mathcal B}$, 
so we have $\Delta(\tilde{x})=E(1\otimes\tilde{x})=(1\otimes\tilde{x})E$.

Combining these observations, we have:
\begin{align}
(1\otimes x)\Delta(\delta)&=\Delta(x\delta)=\Delta(\delta^*\tilde{x})=\Delta(\delta^*)(1\otimes\tilde{x})  \notag \\
&=E(\delta\otimes\delta^*)E(1\otimes\tilde{x})=E(\delta\otimes\delta^*)(1\otimes\tilde{x})E  \notag \\
&=E(\delta\otimes\delta^*\tilde{x})E=E(\delta\otimes x\delta)E=E(1\otimes x)(\delta\otimes\delta)E  \notag \\
&=(1\otimes x)E(\delta\otimes\delta)E,
\notag
\end{align}
where we used $\Delta(\delta^*)=E(\delta\otimes\delta^*)E$, observed in (3) above. 

As noted above, the elements $x=(\operatorname{id}\otimes\varphi)\bigl(F_1(1\otimes a)\bigr)$, $a\in{\mathcal A}$, span ${\mathcal B}$. 
Note also that ${\mathcal B}$ is non-degenerate.  So this result means that
$$
\Delta(\delta)=E(\delta\otimes\delta)E.
$$

(6). From $\Delta(\delta)=E(\delta\otimes\delta)E$, take the adjoint, to obtain  
$\Delta(\delta^*)=E(\delta^*\otimes\delta^*)E$.
\end{proof}

We do not know whether $\delta$ is self-adjoint, so $\Delta(\delta)$ and $\Delta(\delta^*)$ can be different and we made 
a distinction so far. 

However, under the {\em quasi-invariance assumption\/} (see Section~\S\ref{sub5.3}), it seems that the modular element 
$\delta$ becomes self-adjoint.  See discussion given in \S\ref{sub5.3}, such as Proposition~\ref{vaesRN} and the paragraphs 
following it.  Considering this, we revisit in the below some of the results we obtained so far for the modular element $\delta$, 
when we restrict ourselves to the situation when $\delta$ is self-adjoint. The self-adjointness of $\delta$ makes things simpler.  

\begin{prop}\label{deltasa}
Assume that $\delta$ is self-adjoint.  Then we have the following simpler results:
\begin{enumerate}
\item $(\varphi\circ S)(a)=\varphi(a\delta)$ and $(\varphi\circ S^{-1})(a)=\varphi(\delta a)$, for all $a\in{\mathcal A}$.
\item $S(\delta)=\delta^{-1}$ and $S^2(\delta)=\delta$.
\item $(\varphi\otimes\operatorname{id})(\Delta a)=(\operatorname{id}\otimes\varphi)\bigl(F_1(1\otimes a)\bigr)\delta
=\delta(\operatorname{id}\otimes\varphi)\bigl((1\otimes a)F_3\bigr)$, for all $a\in{\mathcal A}$.
\item $\Delta(\delta)=(\delta\otimes\delta)E=E(\delta\otimes\delta)=E(\delta\otimes\delta)E$.
\end{enumerate}
\end{prop}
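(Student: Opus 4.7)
The plan is to derive each of the four statements as a direct specialization of results already established in \S\ref{suba.2}, by substituting the hypothesis $\delta^*=\delta$ into the formulas that were previously stated for a general modular element.

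For (1), the first identity $(\varphi\circ S)(a)=\varphi(a\delta)$ is literally the defining property of $\delta$ from Proposition~\ref{modular}, since $\psi=\varphi\circ S$. The second identity is then obtained from Lemma~\ref{lemmaphiSinverse}, which gives $(\varphi\circ S^{-1})(a)=\varphi(\delta^*a)$, by replacing $\delta^*$ with $\delta$. For (2), Proposition~\ref{Sdelta} states $S(\delta)=(\delta^*)^{-1}$, which collapses to $S(\delta)=\delta^{-1}$ under self-adjointness; applying $S$ once more, together with the fact that $S$ is an anti-isomorphism sending inverses to inverses, gives $S^2(\delta)=S(\delta^{-1})=S(\delta)^{-1}=\delta$.

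For (3), I would simply take Proposition~\ref{invariancemodular}, which reads
\[
(\varphi\otimes\operatorname{id})(\Delta a)=(\operatorname{id}\otimes\varphi)\bigl(F_1(1\otimes a)\bigr)\delta
=\delta^*(\operatorname{id}\otimes\varphi)\bigl((1\otimes a)F_3\bigr),
\]
and replace $\delta^*$ by $\delta$ in the last expression. For (4), the four asserted equalities will be assembled from the multiple characterizations gathered in Proposition~\ref{Deltadelta}. Specifically: part (3) of that proposition gives $\Delta(\delta^*)=(\delta\otimes\delta^*)E$, which becomes $\Delta(\delta)=(\delta\otimes\delta)E$; part (4) gives $\Delta(\delta)=E(\delta^*\otimes\delta)$, which becomes $\Delta(\delta)=E(\delta\otimes\delta)$; and part (5) already states $\Delta(\delta)=E(\delta\otimes\delta)E$, which is consistent with the previous two since $E$ is the canonical idempotent satisfying $E(\Delta\delta)=\Delta\delta=(\Delta\delta)E$.

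There is essentially no obstacle here once one trusts the results of \S\ref{suba.2}: every step is algebraic substitution, so the only thing to watch is that the expressions in Proposition~\ref{Deltadelta} are compatible under $\delta=\delta^*$, which they are because each of the forms $(\delta\otimes\delta)E$, $E(\delta\otimes\delta)$, and $E(\delta\otimes\delta)E$ differs only by absorbing $E$ on one or both sides of an element of $M({\mathcal A})\odot M({\mathcal A})$ that was already equal to $\Delta(\delta)$.
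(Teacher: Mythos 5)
Your proposal is correct and matches the paper's own proof, which likewise obtains all four parts by substituting $\delta^*=\delta$ into Lemma~\ref{lemmaphiSinverse}, Proposition~\ref{Sdelta}, Proposition~\ref{invariancemodular}, and Proposition~\ref{Deltadelta}. The only detail you add beyond the paper is the short explicit derivation $S^2(\delta)=S(\delta^{-1})=S(\delta)^{-1}=\delta$, which is a harmless (and valid) elaboration of the same argument.
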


\begin{proof}
(1). See Lemma~\ref{lemmaphiSinverse}, and let $\delta^*=\delta$.

(2). See Proposition~\ref{Sdelta}, and let $\delta^*=\delta$.

(3). See Proposition~\ref{invariancemodular}, and let $\delta^*=\delta$.

(4). See Proposition~\ref{Deltadelta}, and let $\delta^*=\delta$.
\end{proof}

\begin{rem}
As discussed in \S\ref{sub5.3} and as indicated above, it seems to be the case that the {\em quasi-invariance Assumption\/} 
leads to the self-adjointness of $\delta$.  The reasoning requires going up to the von Neumann algebra level and 
back down, so not purely algebraic.  It may be possible to find a more direct proof, but as the focus of the current paper 
is on the construction of a $C^*$-algebraic object, we did not make an attempt to develop such a proof.  Moreover, 
in the purely algebraic setting, as there is no reason to have to require the quasi-invariance condition, the need for 
such a proof is probably not too significant.
\end{rem}

\bigskip\bigskip


\begin{thebibliography}{10}

\bibitem{BS}
S.~Baaj and G.~Skandalis, \emph{Unitaires multiplicatifs et dualit\'e pour les produits crois\'es de 
  {$C^*$}-alg\`ebres}, Ann. Scient. \'Ec. Norm. Sup., $4^e$ s\'erie \textbf{t. 26} (1993), 425--488 (French).

\bibitem{BohmGomezLopez}
G.~B{\"o}hm, J.~G{\'o}mez-Torecillas, and E.~L{\'o}pez-Centella, \emph{Weak
  multiplier bialgebras}, Trans. Amer. Math. Soc. \textbf{367} (2015),
  8681--8872.

\bibitem{BNSwha1}
G.~B{\" o}hm, F.~Nill, and K.~Szlach{\' a}nyi, \emph{Weak {H}opf algebras {I}.
  {I}ntegral theory and {$C^*$}-structure}, J. Algebra \textbf{221} (1999),
  385--438.

\bibitem{BSzMultIso}
G.~B{\" o}hm and K.~Szlach{\' a}nyi, \emph{Weak {$C^*$-Hopf} algebras and multiplicative isometries}, 
  J. Operator Theory \textbf{45} (2001), 357--376.

\bibitem{EnSMF}
M.~Enock, \emph{{M}easured {Q}uantum {G}roupoids in {A}ction}, M{\'e}moires de
  la SMF, no. 114, Soc. Math. Fr., 2008.

\bibitem{nosinglefaithfulintegral}
M.C.~Iovanov and L.~Kadison, \emph{When weak quasi {H}opf algebras are
  {F}robenius}, Proc. Amer. Math. Soc. \textbf{138} (2010), 837--845.

\bibitem{BJK_mpi}
B.J.~Kahng, \emph{Multiplicative partial isometries and {$C^*$-algebraic}
  quantum groupoids}, preprint ({\texttt{arXiv:2002.01995}}).

\bibitem{BJKVD_LSthm}
B.J.~Kahng and A.~{Van Daele}, \emph{The {Larson--Sweedler} theorem for weak multiplier {H}opf
  algebras}, Comm. Algebra \textbf{46} (2018), 1--27.

\bibitem{BJKVD_SepId}
B.J.~Kahng and A.~{Van Daele}, \emph{Separability idempotents in {$C^*$}-algebras}, J. Noncommut.
  Geom. \textbf{12} (2018), 997--1040.

\bibitem{BJKVD_qgroupoid1}
B.J.~Kahng and A.~{Van Daele}, \emph{A class of {$C^*$-algebraic} locally compact quantum groupoids
  {Part I}. {M}otivation and definition}, Internat. J. Math. \textbf{29}
  (2018), 1850029.
  
\bibitem{BJKVD_qgroupoid2}
B.J.~Kahng and A.~{Van Daele}, \emph{A class of {$C^*$-algebraic} locally compact quantum groupoids
  {Part II}. {M}ain theory}, Adv. Math. \textbf{354} (2019), 106761.
  
\bibitem{BJKVD_qgroupoid3}
B.J.~Kahng and A.~{Van Daele}, \emph{A class of {$C^*$-algebraic} locally
  compact quantum groupoids {Part III}: {D}uality}, in preparation.  

\bibitem{KuKMS}
J.~Kustermans, \emph{{KMS}-weights on {$C^*$}-algebras}, preprint
  ({\texttt{funct-an/9704008}}).

\bibitem{KuVaweightC*}
J.~Kustermans and S.~Vaes, \emph{Weight theory for {$C^*$}-algebraic quantum
  groups}, preprint ({\texttt{arXiv:math/9901063}}).

\bibitem{KuVa}
J.~Kustermans and S.~Vaes, \emph{Locally compact quantum groups}, Ann. Scient. \'Ec. Norm. Sup.,
  $4^e$ s\'erie \textbf{t. 33} (2000), 837--934.

\bibitem{KuVD}
J.~Kustermans and A.~{Van Daele}, \emph{$C^*$-algebraic quantum groups arising from algebraic quantum groups}, Internat. J. Math.,
  \textbf{8} (1997), 1067--1139.

\bibitem{LesSMF}
F.~Lesieur, \emph{{M}easured {Q}uantum {G}roupoids}, M{\'e}moires de la SMF,
  no. 109, Soc. Math. Fr., 2007.

\bibitem{MNW}
T.~Masuda, Y.~Nakagami,  and  S.~Woronowicz, \emph{$C^*$-algebraic framework for quantum groups}, Internat. J. Math.,
  \textbf{14} (2003), 903--1001.

\bibitem{Patbook}
A.L.T.~Paterson, \emph{{G}roupoids, {I}nverse {S}emigroups, and their
  {O}perator {A}lgebras}, Progress in Mathematics, no. 170, Birkh{\" a}user,
  1998.

\bibitem{Pedersenbook}
G.K.~Pedersen, \emph{{$C^*$-algebras} and their {A}utomorphism {G}roups}, Academic Press,
  1979.

\bibitem{Renbook}
J.N.~Renault, \emph{{A} {G}roupoid {A}pproach to {$C^*$}-algebras}, Lecture
  Notes in Mathematics, no. 793, Springer-Verlag, 1980.

\bibitem{Str}
S.~Str{\u a}til{\u a}, \emph{{M}odular {T}heory in {O}perator {A}lgebras},
  Abacus Press, 1981.

\bibitem{Tk2}
M.~Takesaki, \emph{{T}heory of {O}perator {A}lgebras {II}}, Encyclopaedia of
  Mathematical Sciences, vol. 125, Springer-Verlag, 2002.

\bibitem{Timm_aqgdual}
T.~Timmermann, \emph{On duality of algebraic quantum groupoids}, preprint
  ({\texttt{arXiv:1605.06384}}).

\bibitem{Timm_aqgintegral}
T.~Timmermann, \emph{Integration on algebraic quantum groupoids}, Internat. J. Math.
  \textbf{27} (2016), 139--211.

\bibitem{TimmVD_wmhaalgebroid}
T.~Timmermann and A.~{Van Daele}, \emph{Multiplier {H}opf algebroids arising
  from weak multiplier {H}opf algebras},  \textbf{106} (2015), 73--110.

\bibitem{VaRN}
S.~Vaes, \emph{A {R}adon--{N}ikodym theorem for von {N}eumann algebras}, J.
  Operator Theory \textbf{46} (2001), 477--489.

\bibitem{VD_multiplierHopf}
A.~{Van Daele}, \emph{Multiplier {H}opf algebras}, Trans. Amer. Math. Soc.
  \textbf{342} (1994), no.~2, 917--932.

\bibitem{VD_multiplierHopfduality}
A.~{Van Daele}, \emph{An algebraic framework for group duality}, Adv. Math.
  \textbf{140} (1998), 323--366.

\bibitem{VDsepid}
A.~{Van Daele}, \emph{Separability idempotents and multiplier algebras}, 2013,
  preprint ({\texttt{arXiv:1301.4398v1}} and {\texttt{arXiv:1301.4398v2}}).

\bibitem{VD_aqg}
A.~{Van Daele}, \emph{Algebraic quantum groupoids---an example}, preprint
  ({\texttt{arXiv:1702.04903}}).

\bibitem{VDvN}
A.~{Van Daele}, \emph{Locally compact quantum groups. {A} von {N}eumann algebra approach}, 
 SIGMA \textbf{10} (2014), 082.
  
\bibitem{VDWangwha0}
A.~{Van Daele} and S.~Wang, \emph{Weak multiplier {H}opf algebras. {P}reliminaries, motivation and
  basic examples}, Operator algebras and quantum groups, vol.~98, Banach Center
  Publications, 2012, pp.~367--415.

\bibitem{VDWangwha1}
A.~{Van Daele} and S.~Wang, \emph{Weak multiplier {H}opf algebras. {T}he main theory}, J. Reine
  Angew. Math. (Crelles Journal) \textbf{705} (2015), 155--209.

\bibitem{VDWangwha2}
A.~{Van Daele} and S.~Wang, \emph{Weak multiplier {H}opf algebras {II}. {T}he
  source and target algebras}, preprint ({\texttt{arXiv:1403.7906}}).

\bibitem{VDWangwha3}
A.~{Van Daele} and S.~Wang, \emph{Weak multiplier {H}opf algebras {III}. {I}ntegrals and duality},
  preprint ({\texttt{arXiv:1701.04951}}).

\bibitem{Wr7}
S.~L. Woronowicz, \emph{From multiplicative unitaries to quantum groups},
  Internat. J. Math. \textbf{7} (1996), no.~1, 127--149.

\end{thebibliography}



\end{document}